\newcommand{\Fg}{\mathfrak{g}}
\newcommand{\Fh}{\mathfrak{h}}
\newcommand{\Fgl}{\mathfrak{gl}}
\newcommand{\Fsl}{\mathfrak{sl}}
\newcommand{\CL}{\mathcal{L}}
\newcommand{\CB}{\mathcal{B}}
\newcommand{\CT}{\mathcal{T}}
\newcommand{\CQ}{\mathcal{Q}}
\newcommand{\CR}{\mathcal{R}}
\newcommand{\BC}{\mathbb{C}}
\newcommand{\BR}{\mathbb{R}}
\newcommand{\BZ}{\mathbb{Z}}
\newcommand{\BB}{\mathbb{B}}
\newcommand{\eps}{\mathbf{e}}
\newcommand{\Par}{\mathcal{P}}
\newcommand{\bb}{\mathbf{b}}
\newcommand{\LR}{\mathsf{LR}}
\newcommand{\XLRm}[4]{X_{#1,\,#2}^{#3;\,#4}}
\newcommand{\BLRm}[4]{B_{#1,\,#2}^{#3;\,#4}}
\newcommand{\CLRm}[4]{C_{#1,\,#2}^{#3;\,#4}}
\newcommand{\DLRm}[4]{D_{#1,\,#2}^{#3;\,#4}}
\newcommand{\Hom}{\mathop{\rm Hom}\nolimits}
\newcommand{\wt}{\mathop{\rm wt}\nolimits}
\newcommand{\Bij}{\mathop{\rm Bij}\nolimits}
\newcommand{\dist}{\mathop{\rm dist}\nolimits}
\newcommand{\Dep}{\mathop{\rm dep}\nolimits}
\newcommand{\Supp}{\mathop{\rm Supp}\nolimits}
\newcommand{\sgn}{\mathop{\rm sgn}\nolimits}
\newcommand{\ch}{\mathop{\rm ch}\nolimits}
\newcommand{\EW}{P^{[1,\,\infty)}_{-}}
\newcommand{\Ldom}[1]{P^{[#1]}_{+}}
\newcommand{\Pn}{P^{[n]}_{+}(\lambda,\,\mu)}
\newcommand{\Po}{P^{[n+1]}_{+}(\lambda,\,\mu)}
\newcommand{\Pm}[1]{P^{[#1]}_{+}(\lambda,\,\mu)}
\newcommand{\Fgn}{\Fg_{[n]}}
\newcommand{\Fgm}[1]{\Fg_{[#1]}}
\newcommand{\Vn}{V_{[n]}}
\newcommand{\Vo}{V_{[n+1]}}
\newcommand{\Vm}[1]{V_{[#1]}}
\newcommand{\Wn}{W_{[n]}}
\newcommand{\Wo}{W_{[n+1]}}
\newcommand{\Wm}[1]{W_{[#1]}}
\newcommand{\CBn}{\CB_{[n]}}
\newcommand{\CBo}{\CB_{[n+1]}}
\newcommand{\BBn}{\BB_{[n]}}
\newcommand{\BBo}{\BB_{[n+1]}}
\newcommand{\BBm}[1]{\BB_{[#1]}}
\newcommand{\lamn}{\lambda_{[n]}}
\newcommand{\lamm}[1]{\lambda_{[#1]}}
\newcommand{\mun}{\mu_{[n]}}
\newcommand{\muo}{\mu_{[n+1]}}
\newcommand{\mum}[1]{\mu_{[#1]}}
\newcommand{\nun}{\nu_{[n]}}
\newcommand{\nuo}{\nu_{[n+1]}}
\newcommand{\num}[1]{\nu_{[#1]}}
\newcommand{\xin}{\xi_{[n]}}
\newcommand{\xio}{\xi_{[n+1]}}
\newcommand{\Jn}[1]{#1^{J,\,n}}
\newcommand{\Jo}[1]{#1^{J,\,n+1}}
\newcommand{\Jl}[1]{#1^{J,\,\ell}}
\newcommand{\BBhi}[1]{\BB^{[#1]}_{\mathrm{max}}}
\newcommand{\BBhiw}[2]{\BB^{[#1]}_{\mathrm{max},\,#2}}
\newcommand{\col}[1]{\lambda^{(#1)}}
\newcommand{\com}[1]{\mu^{(#1)}}
\newcommand{\con}[1]{\nu^{(#1)}}
\newcommand{\cox}[1]{\xi^{(#1)}}
\newcommand{\co}[2]{#1^{(#2)}}
\newcommand{\abs}[1]{\langle #1 \rangle}
\newcommand{\pair}[2]{\langle #1,\,#2 \rangle}
\newcommand{\ti}[1]{\widetilde{#1}}
\newcommand{\ud}[1]{\underline{#1}}
\newcommand{\ol}[1]{\overline{#1}}
\newcommand{\ve}{\varepsilon}
\newcommand{\vp}{\varphi}
\newcommand{\bzero}{{\bf 0}}
\renewcommand\section{\@startsection{section}{1}{0pt}
{-3.5ex plus -1ex minus -.2ex}{1.0ex plus .2ex}{\large\bf}}
\renewcommand\subsection{\@startsection{subsection}{1}{0pt}
{2.5ex plus 1ex minus .2ex}{-1em}{\bf}}
\newcommand{\vsp}{\vspace{3mm}}
\theoremstyle{plain}
\newtheorem{thm}{Theorem}[section]
\newtheorem{lem}[thm]{Lemma}
\newtheorem{prop}[thm]{Proposition}
\newtheorem{cor}[thm]{Corollary}
\newtheorem{claim}{Claim}[thm]
\newtheorem*{claim*}{Claim}
\newcommand{\bqed}{\quad \hbox{\rule[-0.5pt]{3pt}{8pt}}}
\theoremstyle{definition}
\newtheorem{dfn}[thm]{Definition}
\theoremstyle{remark}
\newtheorem{rem}[thm]{Remark}
\begin{document}

\setlength{\baselineskip}{18pt}

%
\title{\Large\bf Tensor product multiplicities \\[1.5mm] for crystal bases 
 of extremal weight modules \\[1.5mm] 
 over quantum infinite rank affine algebras \\[1.5mm] 
 of types $B_{\infty}$, $C_{\infty}$, and $D_{\infty}$}
\author{
 Satoshi Naito \\ 
 \small Institute of Mathematics, University of Tsukuba, \\
 \small Tsukuba, Ibaraki 305-8571, Japan \ 
 (e-mail: {\tt naito@math.tsukuba.ac.jp})
 \\[2mm] and \\[2mm]
 Daisuke Sagaki \\ 
 \small Institute of Mathematics, University of Tsukuba, \\
 \small Tsukuba, Ibaraki 305-8571, Japan \ 
 (e-mail: {\tt sagaki@math.tsukuba.ac.jp})
}
\date{}
\maketitle

%
\begin{abstract} \setlength{\baselineskip}{16pt}
Using Lakshmibai-Seshadri paths, we give a combinatorial realization of 
the crystal basis of an extremal weight module of integral extremal weight 
over the quantized universal enveloping algebra associated to the infinite 
rank affine Lie algebra of type $B_{\infty}$, $C_{\infty}$, or $D_{\infty}$. 
Moreover, via this realization, we obtain an explicit description 
(in terms of Littlewood-Richardson coefficients) of how tensor products of 
these crystal bases decompose into connected components when their extremal 
weights are of nonnegative levels. These results, in types $B_{\infty}$, 
$C_{\infty}$, and $D_{\infty}$, extend the corresponding results due to Kwon, 
in types $A_{+\infty}$ and $A_{\infty}$; our results above also include, as 
a special case, the corresponding results (concerning crystal bases) due to
Lecouvey, in types $B_{\infty}$, $C_{\infty}$, and $D_{\infty}$, where 
the extremal weights are of level zero. 
\end{abstract}
%
%
\section{Introduction.}
\label{sec:intro}
Let $U_{q}(\Fg)$ be the quantized universal enveloping algebra over $\BC(q)$ 
associated to the infinite rank affine Lie algebra $\Fg$ of type $A_{+\infty}$, 
$A_{\infty}$, $B_{\infty}$, $C_{\infty}$, or $D_{\infty}$ with Cartan 
subalgebra $\Fh=\bigoplus_{i \in I}\BC h_{i}$ and integral weight lattice 
$P=\bigoplus_{i \in I}\BZ\Lambda_{i} \subset \Fh^{\ast}$, where $I$ is 
the (infinite) index set for the simple roots. In \cite{Kw-Adv}, \cite{Kw-Ep}, 
Kwon studied the crystal basis $\CB(\lambda)$ of the extremal weight 
$U_{q}(\Fg)$-module $V(\lambda)$ of extremal weight $\lambda \in P$, 
in the cases that $\Fg$ is of type $A_{+\infty}$ and type $A_{\infty}$; 
in these papers, he gave a combinatorial realization of the crystal basis 
$\CB(\lambda)$ for $\lambda \in P$ of level zero (see also 
\cite[\S4.1]{Kw-JA} for the case of dominant $\lambda \in P$), 
by using semistandard Young tableaux 
with entries in the crystal basis of the vector representation 
of $U_{q}(\Fg)$, and furthermore described explicitly 
(in terms of Littlewood-Richardson coefficients) how the tensor product 
$\CB(\lambda) \otimes \CB(\mu)$ decomposes into connected components 
when $\lambda,\,\mu \in P$ are of nonnegative levels. Also, in \cite{Lec-TAMS}, 
Lecouvey studied the crystal bases (and canonical bases) of certain irreducible 
$U_{q}(\Fg)$-modules, which are neither highest weight modules
nor lowest weight modules, in the cases that $\Fg$ is of type $A_{+\infty}$, 
$B_{\infty}$, $C_{\infty}$, and $D_{\infty}$, and furthermore showed that 
tensor product multiplicities for these modules are precisely the corresponding 
Littlewood-Richardson coefficients (not depending on the types $A_{+\infty}$, 
$B_{\infty}$, $C_{\infty}$, and $D_{\infty}$); these modules will surely 
turn out to be extremal weight $U_{q}(\Fg)$-modules of extremal weight of 
level zero.

In this paper, we extend the results above of \cite{Kw-Adv}, \cite{Kw-Ep} 
(in types $A_{+\infty}$ and $A_{\infty}$) to the cases of type $B_{\infty}$, 
$C_{\infty}$, and $D_{\infty}$ in such a way that the results above 
(concerning crystal bases) of \cite{Lec-TAMS} are also included. 
We emphasize that our approach is quite different from those in 
\cite{Kw-Adv}, \cite{Kw-Ep}, and \cite{Lec-TAMS}, where some double 
crystals (or bicrystals) play essential roles; we obtain 
our results entirely within the framework of Littelmann's path model, 
which enable us to give a unified proof in all types $B_{\infty}$, 
$C_{\infty}$, and $D_{\infty}$. In fact, we could also have included 
the cases of type $A_{+\infty}$ and $A_{\infty}$, but we do not dare do so, 
since this would make our notation more complicated and 
since these cases are already treated in \cite{Kw-Adv}, \cite{Kw-Ep}. 

Let us explain our results more precisely. We set $I:=\BZ_{\ge 0}$, 
and $[m]:=\bigl\{0,\,1,\,\dots,\,m\bigr\}$ for $m \in \BZ_{\ge 0}$. 
In addition, we denote by $P_{+} \subset P$ the set of dominant 
integral weights, and by $E$ the subset of $P$ consisting of 
all elements of level zero. For $\lambda \in P$, let $\BB(\lambda)$ 
denote the $U_{q}(\Fg)$-crystal consisting of all Lakshmibai-Seshadri 
(LS for short) paths of shape $\lambda$. 
Our first main result (Theorem~\ref{thm:isom})
states that for each $\lambda \in P$, the $U_{q}(\Fg)$-crystal $\BB(\lambda)$ 
gives a combinatorial realization of the crystal basis $\CB(\lambda)$; 
in addition, the crystal graph of $\BB(\lambda) \cong \CB(\lambda)$ 
is connected. Then, by means of this realization of $\CB(\lambda)$, 
we study how the tensor product $\CB(\lambda) \otimes \CB(\mu) \cong 
\BB(\lambda) \otimes \BB(\mu)$ decomposes into connected components 
when $\lambda,\,\mu \in P$ are of nonnegative levels, 
dividing the problem into four cases: 
the case $\lambda,\,\mu \in E$ (see \S\ref{subsec:EE}); 
the case $\lambda \in E$, $\mu \in P_{+}$ (see \S\ref{subsec:ED});
the case $\lambda \in P_{+}$, $\mu \in E$ (see \S\ref{subsec:DE});
the case $\lambda,\,\mu \in P_{+}$ (see \S\ref{subsec:DD}).
It turns out that in all the cases above, each connected component
of the tensor product $\BB(\lambda) \otimes \BB(\mu)$ is isomorphic 
to $\BB(\nu) \ (\cong \CB(\nu))$ for some $\nu \in P$ of nonnegative level. 
In particular, in the (most difficult) case 
$\lambda \in P_{+}$, $\mu \in E$, our result (Theorem~\ref{thm:DE})
states that for some $m \in \BZ_{\ge 3}$, the multiplicity of each connected 
component $\BB(\nu)$ in $\BB(\lambda) \otimes \BB(\mu)$ is equal to the 
(corresponding) tensor product multiplicity of finite-dimensional irreducible 
highest weight $U_{q}(\Fgm{m})$-modules of highest weight 
$\lambda$, $\mu$, and $\nu$, respectively. Here, $\Fgm{m}$ denotes 
the ``reductive'' Lie subalgebra of $\Fg$ 
(of type $B_{m+1}$, $C_{m+1}$, or $D_{m+1}$) 
corresponding to the subset $[m] \subset I=\BZ_{\ge 0}$; 
note that this $m \in \BZ_{\ge 3}$ does not depend on 
the connected components $\BB(\nu)$. Moreover, by virtue of tensor 
product multiplicity formulas in \cite{Ko97}, \cite{Ko98}, we obtain 
an explicit description of this multiplicity in terms of 
Littlewood-Richardson coefficients (see \S\ref{subsec:KT}). 

Now, from the argument in \S\ref{subsec:ED}, we also observe 
that for each $\lambda \in P$ of nonnegative level, there exist 
$\lambda^{0} \in E$ and $\lambda^{+} \in P_{+}$ such that 
$\BB(\lambda) \cong \BB(\lambda^{0}) \otimes \BB(\lambda^{+})$ 
as $U_{q}(\Fg)$-crystals. Therefore, by combining the results in 
the four cases above, we finally obtain 
our second main result (Theorem~\ref{thm:general}), 
which yields an explicit description (in terms of Littlewood-Richardson 
coefficients) of the multiplicity of each connected component 
$\BB(\nu)$ in $\BB(\lambda) \otimes \BB(\mu)$ for general 
$\lambda,\,\mu \in P$ of nonnegative levels. 

This paper is organized as follows. 
In \S\ref{sec:notation}, we introduce basic notation for 
infinite rank affine Lie algebras and their quantized universal enveloping 
algebras. In \S\ref{sec:crystal}, we first recall standard facts about 
crystal bases of extremal weight modules, and show the connectedness of 
(the crystal graph of) the crystal basis $\CB(\lambda)$ for $\lambda \in P$.
Then, we give a combinatorial realization of $\CB(\lambda)$ as the crystal 
$\BB(\lambda)$ of all LS paths of shape $\lambda$. In \S\ref{sec:decomp}, 
we describe explicitly how the tensor product $\BB(\lambda) \otimes \BB(\mu) 
\cong \CB(\lambda) \otimes \CB(\mu)$ decomposes into connected components 
when $\lambda,\,\mu \in P$ are of nonnegative levels, deferring the proof of 
Proposition~\ref{prop:DE-bij} (used to prove Theorem~\ref{thm:DE}) to 
\S\ref{sec:DE-bij}. Finally, in \S\ref{sec:DE-bij}, 
after reviewing tensor product multiplicity formulas 
in \cite{Ko97}, \cite{Ko98}, we show Proposition~\ref{prop:DE-bij}, 
thereby completing the proof of Theorem~\ref{thm:DE} 
(and hence Theorem~\ref{thm:general}). 

After having finished writing up this article, 
we were informed by Jae-Hoon Kwon that in \cite{Kw-Ep10}, 
he further obtained a description of how the tensor product 
$\CB(\lambda) \otimes \CB(\mu)$ decomposes into connected 
components for $\lambda \in P_{+}$ and $\mu \in -P_{+}$, 
in type $A_{\infty}$. 

%
\section{Basic notation for infinite rank affine Lie algebras.}
\label{sec:notation}
%
%
\subsection{Infinite rank affine Lie algebras.}
\label{subsec:infrank}

Let $\Fg$ be the infinite rank affine Lie algebra of 
type $B_{\infty}$, $C_{\infty}$, or $D_{\infty}$ 
(see \cite[\S7.11]{Kac}), that is, the (symmetrizable) Kac-Moody algebra 
of infinite rank associated to one of the following Dynkin diagrams: 

\vsp

\begin{center}
\hspace*{-10mm}
\unitlength 0.1in
\begin{picture}( 44.7500, 30.1500)(  9.2500,-39.1500)
%
\special{pn 8}%
\special{sh 0.600}%
\special{ar 4600 1000 50 50  0.0000000 6.2831853}%
%
\special{pn 8}%
\special{sh 0.600}%
\special{ar 2200 1000 50 50  0.0000000 6.2831853}%
%
\special{pn 8}%
\special{sh 0.600}%
\special{ar 2800 1000 50 50  0.0000000 6.2831853}%
%
\special{pn 8}%
\special{sh 0.600}%
\special{ar 3400 1000 50 50  0.0000000 6.2831853}%
%
\special{pn 8}%
\special{sh 0.600}%
\special{ar 4000 1000 50 50  0.0000000 6.2831853}%
\put(28.0000,-12.0000){\makebox(0,0){$1$}}%
\put(34.0000,-12.0000){\makebox(0,0){$2$}}%
\put(40.0000,-12.0000){\makebox(0,0){$3$}}%
\put(22.0000,-12.0000){\makebox(0,0){$0$}}%
\put(46.0000,-12.0000){\makebox(0,0){$4$}}%
%
\special{pn 8}%
\special{pa 5000 1000}%
\special{pa 5400 1000}%
\special{dt 0.045}%
%
\special{pn 8}%
\special{pa 2800 970}%
\special{pa 2200 970}%
\special{fp}%
%
\special{pn 8}%
\special{pa 2800 1030}%
\special{pa 2200 1030}%
\special{fp}%
%
\special{pn 8}%
\special{pa 2800 1000}%
\special{pa 5000 1000}%
\special{fp}%
%
\special{pn 8}%
\special{pa 2400 1000}%
\special{pa 2500 900}%
\special{fp}%
%
\special{pn 8}%
\special{pa 2400 1000}%
\special{pa 2500 1100}%
\special{fp}%
%
\special{pn 8}%
\special{sh 0.600}%
\special{ar 4596 2000 50 50  0.0000000 6.2831853}%
%
\special{pn 8}%
\special{sh 0.600}%
\special{ar 2196 2000 50 50  0.0000000 6.2831853}%
%
\special{pn 8}%
\special{sh 0.600}%
\special{ar 2796 2000 50 50  0.0000000 6.2831853}%
%
\special{pn 8}%
\special{sh 0.600}%
\special{ar 3396 2000 50 50  0.0000000 6.2831853}%
%
\special{pn 8}%
\special{sh 0.600}%
\special{ar 3996 2000 50 50  0.0000000 6.2831853}%
\put(27.9500,-22.0000){\makebox(0,0){$1$}}%
\put(33.9500,-22.0000){\makebox(0,0){$2$}}%
\put(39.9500,-22.0000){\makebox(0,0){$3$}}%
\put(21.9500,-22.0000){\makebox(0,0){$0$}}%
\put(45.9500,-22.0000){\makebox(0,0){$4$}}%
%
\special{pn 8}%
\special{pa 4996 2000}%
\special{pa 5396 2000}%
\special{dt 0.045}%
%
\special{pn 8}%
\special{pa 2796 1970}%
\special{pa 2196 1970}%
\special{fp}%
%
\special{pn 8}%
\special{pa 2796 2030}%
\special{pa 2196 2030}%
\special{fp}%
%
\special{pn 8}%
\special{pa 2796 2000}%
\special{pa 4996 2000}%
\special{fp}%
%
\special{pn 8}%
\special{pa 2600 2000}%
\special{pa 2500 1900}%
\special{fp}%
%
\special{pn 8}%
\special{pa 2600 2000}%
\special{pa 2510 2100}%
\special{fp}%
\special{pa 2510 2100}%
\special{pa 2510 2100}%
\special{fp}%
%
\special{pn 8}%
\special{sh 0.600}%
\special{ar 2200 3000 50 50  0.0000000 6.2831853}%
%
\special{pn 8}%
\special{sh 0.600}%
\special{ar 2800 3400 50 50  0.0000000 6.2831853}%
%
\special{pn 8}%
\special{sh 0.600}%
\special{ar 2200 3800 50 50  0.0000000 6.2831853}%
%
\special{pn 8}%
\special{sh 0.600}%
\special{ar 3400 3400 50 50  0.0000000 6.2831853}%
%
\special{pn 8}%
\special{sh 0.600}%
\special{ar 4000 3400 50 50  0.0000000 6.2831853}%
%
\special{pn 8}%
\special{sh 0.600}%
\special{ar 4600 3400 50 50  0.0000000 6.2831853}%
%
\special{pn 8}%
\special{pa 2200 3000}%
\special{pa 2800 3400}%
\special{fp}%
\special{pa 2800 3400}%
\special{pa 2200 3800}%
\special{fp}%
\special{pa 2800 3400}%
\special{pa 5000 3400}%
\special{fp}%
%
\special{pn 8}%
\special{pa 5000 3400}%
\special{pa 5400 3400}%
\special{dt 0.045}%
\put(22.0000,-32.0000){\makebox(0,0){$0$}}%
\put(22.0000,-40.0000){\makebox(0,0){$1$}}%
\put(28.0000,-36.0000){\makebox(0,0){$2$}}%
\put(34.0000,-36.0000){\makebox(0,0){$3$}}%
\put(40.0000,-36.0000){\makebox(0,0){$4$}}%
\put(46.0000,-36.0000){\makebox(0,0){$5$}}%
\put(16.0000,-10.0000){\makebox(0,0){$B_{\infty}$ : }}%
\put(16.0000,-20.0000){\makebox(0,0){$C_{\infty}$ : }}%
\put(16.0000,-34.0000){\makebox(0,0){$D_{\infty}$ : }}%
\end{picture}%
\end{center}

\vsp

\noindent
Following \cite[\S7.11]{Kac}, we realize this Lie algebra $\Fg$ 
as a Lie subalgebra of the Lie algebra $\Fgl_{\infty}(\BC)$ of 
complex matrices $(a_{ij})_{i,j \in \BZ}$ 
with finitely many nonzero entries as follows. 
If $\Fg$ is of type $B_{\infty}$ (resp., $C_{\infty}$, 
$D_{\infty}$), then we define elements $x_{i},\,y_{i},\,h_{i} \in 
\Fgl_{\infty}(\BC)$ for $i \in I:=\BZ_{\ge 0}$ 
by \eqref{eq:binf} (resp., \eqref{eq:cinf}, \eqref{eq:dinf}): 
%
%
\begin{equation} \label{eq:binf}
\begin{cases}
x_{0}:=E_{0,1}+E_{-1,0}, \quad 
x_{i}:=E_{i,i+1}+E_{-i-1,-i} \quad \text{for $i \in \BZ_{\ge 1}$}, \\[1.5mm]
y_{0}:=2(E_{1,0}+E_{0,-1}), \quad
y_{i}:=E_{i+1,i}+E_{-i,-i-1} \quad \text{for $i \in \BZ_{\ge 1}$}, \\[1.5mm]
h_{0}:=2\eps'_{0}, \quad
h_{i}:=-\eps'_{i-1}+\eps'_{i} \quad \text{for $i \in \BZ_{\ge 1}$}. 
\end{cases}
\end{equation}
%
%
\begin{equation} \label{eq:cinf}
\begin{cases}
x_{0}:=E_{0,1}, \quad 
x_{i}:=E_{i,i+1}+E_{-i,-i+1} \quad \text{for $i \in \BZ_{\ge 1}$}, \\[1.5mm]
y_{0}:=E_{1,0}, \quad
y_{i}:=E_{i+1,i}+E_{-i+1,-i} \quad \text{for $i \in \BZ_{\ge 1}$}, \\[1.5mm]
h_{0}:=\eps'_{0}, \quad
h_{i}:=-\eps'_{i-1}+\eps'_{i} \quad \text{for $i \in \BZ_{\ge 1}$}. 
\end{cases}
\end{equation}
%
%
\begin{equation} \label{eq:dinf}
\begin{cases}
x_{0}:=E_{0,2}-E_{-1,1}, \quad 
x_{i}:=E_{i,i+1}-E_{-i,-i+1} \quad \text{for $i \in \BZ_{\ge 1}$}, \\[1.5mm]
y_{0}:=E_{2,0}-E_{1,-1}, \quad
y_{i}:=E_{i,i+1}-E_{-i+1,-i} \quad \text{for $i \in \BZ_{\ge 1}$}, \\[1.5mm]
h_{0}:=\eps'_{0}+\eps'_{1}, \quad
h_{i}:=-\eps'_{i-1}+\eps'_{i} \quad \text{for $i \in \BZ_{\ge 1}$}.
\end{cases}
\end{equation}
Here, for each $i,\,j \in \BZ$, $E_{i,j} \in \Fgl_{\infty}(\BC)$ is the matrix 
with a $1$ in the $(i,\,j)$ position, and $0$ elsewhere, and 
\begin{equation*}
\eps'_{j}:=
\begin{cases}
E_{-j-1,-j-1}-E_{j+1,j+1} & 
 \text{if $\Fg$ is of type $B_{\infty}$}, \\[1.5mm]
E_{-j,-j}-E_{j+1,j+1} & 
 \text{if $\Fg$ is of type $C_{\infty}$ or $D_{\infty}$},
\end{cases}
\qquad \text{for $j \in \BZ_{\ge 0}$}.
\end{equation*}
The infinite rank affine Lie algebra $\Fg$ 
is isomorphic to the Lie subalgebra of $\Fgl_{\infty}(\BC)$ 
generated by $x_{i},\,y_{i},\,h_{i}$ for $i \in I$; 
the elements $x_{i},\,y_{i}$ for $i \in I$ are the Chevalley generators, 
$\Fh:=\bigoplus_{i \in I} \BC h_{i}$ is the Cartan subalgebra, 
and $\Pi^{\vee}:=\{h_{i}\}_{i \in I}$ is the set of simple coroots. 
Let $\Pi:=\{\alpha_{i}\}_{i \in I} \subset 
\Fh^{\ast}:=\Hom_{\BC}(\Fh,\,\BC)$ be the set of simple roots for $\Fg$.
Then we have 
%
%
\begin{equation} \label{eq:simple}
\begin{cases}
\alpha_{0}=\eps_{0}, \quad
\alpha_{i}=-\eps_{i-1}+\eps_{i} \quad
\text{for $i \in \BZ_{\ge 1}$}, 
& \text{if $\Fg$ is of type $B_{\infty}$}, \\[1.5mm]
\alpha_{0}=2\eps_{0}, \quad
\alpha_{i}=-\eps_{i-1}+\eps_{i} \quad
\text{for $i \in \BZ_{\ge 1}$}, 
& \text{if $\Fg$ is of type $C_{\infty}$}, \\[1.5mm]
\alpha_{0}=\eps_{0}+\eps_{1}, \quad
\alpha_{i}=-\eps_{i-1}+\eps_{i} \quad
\text{for $i \in \BZ_{\ge 1}$}, 
& \text{if $\Fg$ is of type $D_{\infty}$}.
\end{cases}
\end{equation}
Here, for each $j \in \BZ_{\ge 0}$, 
we define $\eps_{j} \in \Fh^{\ast}=\Hom_{\BC}(\Fh,\BC)$ 
by: $\pair{\eps_{j}}{\eps'_{k}}=\delta_{jk}$ 
for $k \in \BZ_{\ge 0}$, 
where $\pair{\cdot\,}{\cdot}$ denotes the natural pairing of 
$\Fh^{\ast}$ and $\Fh$; note that 
$\Fh=\bigoplus_{j \in \BZ_{\ge 0}} \BC \eps'_{j}$. 

Let $W=\langle r_{i} \mid i \in I \rangle \subset GL(\Fh^{\ast})$ 
denote the Weyl group of $\Fg$, where $r_{i}$ denotes 
the simple reflection corresponding to the simple root 
$\alpha_{i}$ for $i \in I$. 
If $\Fg$ is of type $B_{\infty}$ (resp., $C_{\infty}$, $D_{\infty}$), 
then the following equation \eqref{eq:binf-r} 
(resp., \eqref{eq:cinf-r}, \eqref{eq:dinf-r}) holds 
for $j \in \BZ_{\ge 0}$: 
%
%
\begin{equation} \label{eq:binf-r}
r_{0}(\eps_{j})=
  \begin{cases}
  -\eps_{0} & \text{if $j=0$}, \\[1mm]
  \eps_{j} & \text{otherwise},
  \end{cases}
\qquad
r_{i}(\eps_{j})=
  \begin{cases}
  \eps_{i} & \text{if $j=i-1$}, \\[1mm]
  \eps_{i-1} & \text{if $j=i$}, \\[1mm]
  \eps_{j} & \text{otherwise},
  \end{cases}
\quad \text{for $i \in \BZ_{\ge 1}$};
\end{equation}
%
%
\begin{equation} \label{eq:cinf-r}
r_{0}(\eps_{j})=
  \begin{cases}
  -\eps_{0} & \text{if $j=0$}, \\[1mm]
  \eps_{j} & \text{otherwise},
  \end{cases}
\qquad
r_{i}(\eps_{j})=
  \begin{cases}
  \eps_{i} & \text{if $j=i-1$}, \\[1mm]
  \eps_{i-1} & \text{if $j=i$}, \\[1mm]
  \eps_{j} & \text{otherwise},
  \end{cases}
\quad \text{for $i \in \BZ_{\ge 1}$};
\end{equation}
%
%
\begin{equation} \label{eq:dinf-r}
r_{0}(\eps_{j})=
  \begin{cases}
  -\eps_{1} & \text{if $j=0$}, \\[1mm]
  -\eps_{0} & \text{if $j=1$}, \\[1mm]
  \eps_{j} & \text{otherwise},
  \end{cases}
\qquad
r_{i}(\eps_{j})=
  \begin{cases}
  \eps_{i} & \text{if $j=i-1$}, \\[1mm]
  \eps_{i-1} & \text{if $j=i$}, \\[1mm]
  \eps_{j} & \text{otherwise},
  \end{cases}
\quad \text{for $i \in \BZ_{\ge 1}$}.
\end{equation}
Also, for the action of $W$ on $\Fh$, entirely similar formulas hold 
in all the cases above, with $\eps$ replaced by $\eps'$. 

Let $\Delta:=W\Pi$ be the set of roots for $\Fg$, and 
$\Delta^{+}:=\Delta \cap \sum_{i \in I} \BZ_{\ge 0}\alpha_{i}$ 
the set of positive roots for $\Fg$. 
If $\Fg$ is of type $B_{\infty}$ 
(resp., $C_{\infty}$, $D_{\infty}$), then 
the sets $\Delta$ and $\Delta^{+}$ 
are given by \eqref{eq:binf-rs} 
(resp., \eqref{eq:cinf-rs}, \eqref{eq:dinf-rs}): 
%
%
\begin{equation} \label{eq:binf-rs}
\begin{cases}
\Delta=\bigl\{\pm \eps_{j} \mid 1 \le j\bigr\} \cup 
\bigl\{\pm \eps_{j} \pm \eps_{i} \mid 0 \le j < i\bigr\}, \\[1mm]
\Delta^{+}=\bigl\{\eps_{j} \mid 1 \le j\bigr\} \cup 
\bigl\{-\eps_{j}+\eps_{i},\ \eps_{j}+\eps_{i} \mid 0 \le j < i\bigr\};
\end{cases}
\end{equation}
%
%
\begin{equation} \label{eq:cinf-rs}
\begin{cases}
\Delta=\bigl\{\pm 2\eps_{j} \mid 1 \le j\bigr\} \cup 
\bigl\{\pm \eps_{j} \pm \eps_{i} \mid 0 \le j < i\bigr\}, \\[1mm]
\Delta^{+}=\bigl\{2\eps_{j} \mid 1 \le j\bigr\} \cup 
\bigl\{-\eps_{j}+\eps_{i},\ \eps_{j}+\eps_{i} \mid 0 \le j < i\bigr\};
\end{cases}
\end{equation}
%
%
\begin{equation} \label{eq:dinf-rs}
\begin{cases}
\Delta=
\bigl\{\pm \eps_{j} \pm \eps_{i} \mid 0 \le j < i\bigr\}, \\[1mm]
\Delta^{+}=
\bigl\{-\eps_{j}+\eps_{i},\ \eps_{j}+\eps_{i} \mid 0 \le j < i\bigr\}. 
\end{cases}
\end{equation}
We set $E:=\bigoplus_{j \in \BZ_{\ge 0}} \BZ \eps_{j} 
\subset \Fh^{\ast}$, and then 
$E_{\BC}:=\BC \otimes_{\BZ} E=
\bigoplus_{j \in \BZ_{\ge 0}} \BC \eps_{j} 
\subset \Fh^{\ast}$; note that $\Delta$ is contained in $E$, and 
$E$ is stable under the action of the Weyl group $W$.
Define a $W$-invariant, symmetric $\BC$-bilinear form 
$(\cdot\,,\,\cdot)$ on $E_{\BC}$ by: 
$(\eps_{i},\,\eps_{j})=\delta_{ij}$ for $i,\,j \in \BZ_{\ge 0}$, 
and a $\BC$-linear isomorphism 
$\psi:
 E_{\BC}=\bigoplus_{j \in \BZ_{\ge 0}} \BC \eps_{j} 
 \rightarrow 
 \Fh=\bigoplus_{j \in \BZ_{\ge 0}} \BC \eps'_{j}$ by: 
$\psi(\eps_{j})=\eps_{j}'$ for $j \in \BZ_{\ge 0}$.
Also, we define the dual root $\beta^{\vee} \in \Fh$ of 
a root $\beta \in \Delta$ by: 
$\beta^{\vee}=2\psi(\beta)/(\beta,\beta)$; 
observe that $\alpha_{i}^{\vee}=h_{i}$ for all $i \in I$. 

%
\subsection{Integral weights and their levels.}
\label{subsec:intwt}
For each $i \in I$, let $\Lambda_{i} \in \Fh^{\ast}$ denote
the $i$-th fundamental weight for $\Fg$. 
We should warn the reader that 
these elements $\Lambda_{i}$, $i \in I$, are not contained 
in the subspace $E_{\BC}=\bigoplus_{j \in \BZ_{\ge 0}}\BC \eps_{j}$ 
of $\Fh^{\ast}=\prod_{j \in \BZ_{\ge 0}} \BC \eps_{j}$, 
where the vector space $\prod_{j \in \BZ_{\ge 0}} \BC \eps_{j}$ 
is thought of as a certain completion of 
$\bigoplus_{j \in \BZ_{\ge 0}}\BC \eps_{j}$. 
In fact, we have 
%
%
\begin{equation} \label{eq:binf-f}
\begin{cases}
\Lambda_{0}=\dfrac{1}{2}(\eps_{0}+\eps_{1}+\eps_{2}+\cdots), \\[5mm]
\Lambda_{i}=\eps_{i}+\eps_{i+1}+\eps_{i+2}+\cdots \quad 
  \text{for $i \in \BZ_{\ge 1}$}, 
\end{cases}
\quad \text{if $\Fg$ is of type $B_{\infty}$}, 
\end{equation}
%
%
\begin{equation} \label{eq:cinf-f}
\Lambda_{i}=\eps_{i}+\eps_{i+1}+\eps_{i+2}+\cdots \quad 
  \text{for $i \in I$}, \quad
  \text{if $\Fg$ is of type $C_{\infty}$}, 
\end{equation}
%
%
\begin{equation} \label{eq:dinf-f}
\begin{cases}
\Lambda_{0}=\dfrac{1}{2}(\eps_{0}+\eps_{1}+\eps_{2}+\cdots), \\[5mm]
\Lambda_{1}=\dfrac{1}{2}(-\eps_{0}+\eps_{1}+\eps_{2}+\cdots), \\[5mm]
\Lambda_{i}=\eps_{i}+\eps_{i+1}+\eps_{i+2}+\cdots \quad 
  \text{for $i \in \BZ_{\ge 2}$},
\end{cases}
\quad \text{if $\Fg$ is of type $D_{\infty}$}.
\end{equation}
Let $P:=\bigoplus_{i \in I}\BZ\Lambda_{i} \subset \Fh^{\ast}$ 
denote the integral weight lattice for $\Fg$, and 
let $P_{+}:=\sum_{i \in I}\BZ_{\ge 0}\Lambda_{i} \subset P$ 
be the set of dominant integral weights for $\Fg$. 

For an integral weight $\lambda \in P$, 
we define $\col{j} \in (1/2)\BZ$ for $j \in \BZ_{\ge 0}$ by: 
\begin{equation*}
\col{j}:=\pair{\lambda}{\eps'_{j}}, \quad 
\text{or equivalently}, \quad
\lambda=\col{0}\eps_{0}+\col{1}\eps_{1}+
\col{2}\eps_{2}+\cdots;
\end{equation*} 
note that either 
$\col{j} \in \BZ$ for all $j \in \BZ_{\ge 0}$, or 
$\col{j} \in 1/2+\BZ$ for all $j \in \BZ_{\ge 0}$.
If $\lambda \in P$, then for $n$ sufficiently large, 
we have
\begin{equation*}
\col{n}=\col{n+1}=\col{n+2}= \cdots, 
\end{equation*}
since $\lambda \in P$ is a finite sum of 
integer multiples of fundamental weights; 
in this case, we set $L_{\lambda}:=\col{n}$, 
and call it the level of the integral weight $\lambda$. 
Note that $E \subset P$ is identical to the set 
of integral weights of level zero, 
and that if $\lambda \in P$ is a dominant integral weight 
not equal to $0$, then the level $L_{\lambda}$ of 
$\lambda$ is positive. 
%
%
\begin{rem}[{cf. \cite[statements (2)--(4) on p.\,82]{Ko98}}] \label{rem:dom}
Let $\lambda \in P_{+}$. 
Then, it follows that 
\begin{equation*}
0 \le \underbrace{\abs{\col{0}} \le \col{1} \le \col{2} \le 
\cdots \le \col{n-1} <}_{\text{If $n=0$, then this part is omitted.}} 
L_{\lambda}= \col{n}=\col{n+1}=\cdots
\end{equation*}
for some $n \in \BZ_{\ge 0}$, 
where for $x \in (1/2)\BZ$, we set
\begin{equation*}
\abs{x}:=
 \begin{cases}
 x & \text{if $\Fg$ is of type $B_{\infty}$ or $C_{\infty}$}, \\[1mm]
 |x| & \text{if $\Fg$ is of type $D_{\infty}$}.
 \end{cases}
\end{equation*}
\end{rem}

%
\subsection{
  Quantized universal enveloping algebras and 
  their finite rank subalgebras.}
\label{subsec:levi}

We set $P^{\vee}:=\bigoplus_{i \in I}\BZ h_{i} \subset \Fh$, 
and let $U_{q}(\Fg)=\langle x_{i},\,y_{i},\,q^{h} \mid 
i \in I,\,h \in P^{\vee}\rangle$ denote 
the quantized universal enveloping algebra of $\Fg$ over $\BC(q)$
with integral weight lattice $P$, and 
Chevalley generators $x_{i},\,y_{i}$, $i \in I$.
Also, let $U_{q}^{+}(\Fg)$ (resp., $U^{-}_{q}(\Fg)$) denote 
the positive (resp., negative) part of $U_{q}(\Fg)$, 
that is, the $\BC(q)$-subalgebra of $U_{q}(\Fg)$ 
generated by $x_{i}$, $i \in I$ (resp., $y_{i}$, $i \in I$). 
We have a $\BC(q)$-algebra anti-automorphism 
$\ast:U_{q}(\Fg) \rightarrow U_{q}(\Fg)$ defined by:
%
%
\begin{equation} \label{eq:ast}
\begin{cases}
(q^{h})^{\ast}=q^{-h} 
  & \text{for $h \in P^{\vee}$}, \\[1.5mm]
x_{i}^{\ast}=x_{i},\ 
y_{i}^{\ast}=y_{i}
  & \text{for $i \in I$}.
\end{cases}
\end{equation}
Note that $U_{q}^{\pm}(\Fg)$ is stable 
under the $\BC(q)$-algebra anti-automorphism 
$\ast:U_{q}(\Fg) \rightarrow U_{q}(\Fg)$. 

For $m,\,n \in \BZ_{\ge 0}$ with $n \ge m$, 
we denote the finite interval 
$\bigl\{m,\,m+1,\,\dots,\,n\bigr\}$ in $I=\BZ_{\ge 0}$ 
by $[m,\,n]$. Also, for $n \in \BZ_{\ge 0}$, 
we write simply $[n]$ for the finite interval 
$[0,\,n]=\bigl\{0,\,1,\,\dots,\,n\bigr\}$ in $I=\BZ_{\ge 0}$.
Let $J$ be a finite interval in $I=\BZ_{\ge 0}$, 
which is of the form $[m,\,n]$ for some 
$m,\,n \in \BZ_{\ge 0}$ with $n \ge m$. 
We denote by $\Fg_{J}$ the (Lie) subalgebra of $\Fg$ 
generated by $x_{i},\,y_{i}$, $i \in J$, and $\Fh$. 
Then, the set $\Delta_{J}:=\Delta \cap 
\bigoplus_{i \in J}\BZ \alpha_{i}$ and 
$\Delta_{J}^{+}:=\Delta^{+} \cap 
\bigoplus_{i \in J}\BZ \alpha_{i}$ are 
the sets of roots and positive roots for $\Fg_{J}$, 
respectively. 
%
%
\begin{rem} \label{rem:Levi}
If $\Fg$ is of type $B_{\infty}$ 
(resp., $C_{\infty}$, $D_{\infty})$, and $n \in \BZ_{\ge 3}$, 
then the Lie subalgebra $\Fgn$ of $\Fg$ is a ``reductive'' Lie algebra 
of type $B_{n+1}$ (resp., $C_{n+1}$, $D_{n+1}$). 
Also, if $n \in \BZ_{\ge 1}$, then the Lie subalgebra 
$\Fgm{1,\,n}$ of $\Fg$ is a ``reductive'' Lie algebra 
of type $A_{n}$.
\end{rem}

Denote by $U_{q}(\Fg_{J})$ the $\BC(q)$-subalgebra of 
$U_{q}(\Fg)$ generated by $x_{i},\,y_{i}$, $i \in J$, 
and $q^{h}$, $h \in P^{\vee}$, which can be thought of as 
the quantized universal enveloping algebra of 
$\Fg_{J}$ over $\BC(q)$, and 
denote by $U_{q}^{+}(\Fg_{J})$ (resp., $U^{-}_{q}(\Fg_{J})$)
the positive (resp., negative) part of $U_{q}(\Fg_{J})$, 
that is, the $\BC(q)$-subalgebra of $U_{q}(\Fg_{J})$ 
generated by $x_{i}$, $i \in J$ (resp., $y_{i}$, $i \in J$). 
Then it is easily seen that $U_{q}(\Fg_{J})$ and 
$U_{q}^{\pm}(\Fg_{J})$ are stable 
under the $\BC(q)$-algebra anti-automorphism 
$\ast:U_{q}(\Fg) \rightarrow U_{q}(\Fg)$ 
given by \eqref{eq:ast}. 

Let $W_{J}$ denote the (finite) subgroup of $W$ generated 
by the $r_{i}$ for $i \in J$, which is the Weyl group of $\Fg_{J}$. 
An integral weight $\lambda \in P$ is said to 
be $J$-dominant (resp., $J$-antidominant) if 
$\pair{\lambda}{h_{i}} \ge 0$ (resp., $\pair{\lambda}{h_{i}} \le 0$)
for all $i \in J$. 
For each integral weight $\lambda \in P$, 
we denote by $\lambda_{J}$ the unique element of 
$W_{J}\lambda$ that is $J$-dominant.
%
%
\begin{rem}[{cf. \cite[statements (2)--(4) on p.\,82]{Ko98}}] \label{rem:n-dom}
Let $n \in \BZ_{\ge 3}$, and let $\lambda \in P$ 
be an $[n]$-dominant integral weight. 
Then it follows from \eqref{eq:simple} that 
\begin{equation*}
0 \le \abs{\col{0}} \le \col{1} \le \cdots \le \col{n-1} \le \col{n}. 
\end{equation*}
\end{rem}

Now, for $\lambda \in E=\bigoplus_{j \in \BZ_{\ge 0}}\BZ\eps_{j}$, 
we set
\begin{equation*}
\Supp(\lambda):=\bigl\{j \in \BZ_{\ge 0} \mid \col{j} \ne 0\bigr\}.
\end{equation*}
%
%
\begin{lem} \label{lem:lamn}
Let $\lambda \in E$, and set $p:=\#\Supp (\lambda)$. 
Let $n \in \BZ_{\ge 3}$ be such that 
$\Supp(\lambda) \subsetneqq [n]$. 
Then, the unique $[n]$-dominant element $\lamn$ 
of $\Wn\lambda$ satisfies the following property\,{\rm:}
\begin{equation*}
\Supp (\lamn)=[n-p+1,\,n], \quad 
\text{\rm and} \quad
0 < \co{\lamn}{n-p+1} \le \cdots \le 
\co{\lamn}{n-1} \le \co{\lamn}{n}.
\end{equation*}
\end{lem}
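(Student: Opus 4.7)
The plan is to combine two ingredients: an explicit description of the action of $\Wn$ on the coordinates $\eps'_0,\ldots,\eps'_n$, obtained from \eqref{eq:binf-r}--\eqref{eq:dinf-r} via $\psi$, and the uniform $[n]$-dominance inequality of Remark~\ref{rem:n-dom}.

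First, I would read off from the formulas for the $r_i$ that $\Wn$ fixes $\eps'_j$ pointwise for every $j \ge n+1$ and acts on $\{\eps'_0,\ldots,\eps'_n\}$ by signed permutations (with a sign-coupling of positions $0$ and $1$ in type $D_\infty$). Two immediate consequences for $\lamn$ follow: $\co{\lamn}{j} = 0$ for $j \ge n+1$, and the multiset $\{|\co{\lamn}{j}| : 0 \le j \le n\}$ equals the corresponding multiset for $\lambda$. In particular, $\lamn$ has exactly $p$ nonzero entries, all indexed by $[n]$.

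Next, Remark~\ref{rem:n-dom} applied to $\lamn$ yields $0 \le \abs{\co{\lamn}{0}} \le \co{\lamn}{1} \le \cdots \le \co{\lamn}{n}$. If $\co{\lamn}{0}$ were nonzero, this chain would force every $\co{\lamn}{j}$ with $j \ge 1$ to be strictly positive, so all $n+1$ coordinates in $[0,n]$ would be nonzero --- but this would give $p = n+1$, contradicting $\Supp(\lambda) \subsetneqq [n]$. Hence $\co{\lamn}{0} = 0$, and the $p$ nonzero entries of $\lamn$, being weakly increasing, must occupy exactly the tail $\co{\lamn}{n-p+1},\ldots,\co{\lamn}{n}$ with positive values, which is the desired conclusion.

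I do not expect a genuine obstacle: the argument is essentially bookkeeping about signed permutations, with the strict inclusion hypothesis used precisely once, to rule out the case $\co{\lamn}{0} \ne 0$. The type-$D_\infty$ sign-coupling at positions $(0,1)$ does not intrude, because Remark~\ref{rem:n-dom} already uniformizes the three types through the symbol $\abs{\,\cdot\,}$.
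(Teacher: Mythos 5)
Your proposal is correct and follows essentially the same route as the paper's proof: both invoke the fact that $\Wn$ acts by signed permutations fixing the coordinates beyond $n$ (so $\Supp(\lamn)\subsetneqq[n]$ and $\#\Supp(\lamn)=p$), apply Remark~\ref{rem:n-dom} to get the weakly increasing chain, rule out $\co{\lamn}{0}\ne 0$ by the same contradiction with the strict inclusion, and conclude that the $p$ positive entries occupy the tail $[n-p+1,\,n]$. No gaps.
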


\begin{proof}
Since $\Supp(\lambda) \subsetneqq [n]$ by assumption, and 
$\lamn \in \Wn\lambda$ by definition, it follows by using 
\eqref{eq:binf-r}--\eqref{eq:dinf-r} that 
$\Supp(\lamn) \subsetneqq [n]$. 
Also, we see from Remark~\ref{rem:n-dom} that
\begin{equation*}
0 \le \abs{\co{\lamn}{0}} \le \co{\lamn}{1} \le \cdots \le 
\co{\lamn}{n-1} \le \co{\lamn}{n}. 
\end{equation*}
If $\co{\lamn}{0} \ne 0$, then we have $\Supp (\lamn)=[n]$, 
which is contradiction. Therefore, we get $\co{\lamn}{0}=0$, and hence 
$0 \le \co{\lamn}{1} \le \cdots \le 
\co{\lamn}{n-1} \le \co{\lamn}{n}$. 
Furthermore, since $\#\Supp(\lamn)=\#\Supp(\lambda)=p$, 
we deduce that $\Supp (\lamn)=[n-p+1,\,n]$. 
This proves the lemma. 
\end{proof}

%
\section{Path model for the crystal basis of an extremal weight module.}
\label{sec:crystal}
%
%
\subsection{Extremal elements.}
\label{subsec:extremal}

Let $\CB$ be a $U_{q}(\Fg)$-crystal 
(resp., $U_{q}(\Fg_{J})$-crystal for a 
finite interval $J$ in $I=\BZ_{\ge 0}$), 
equipped with the maps 
$e_{i},\,f_{i}:\CB \rightarrow \CB \cup \{\bzero\}$ 
for $i \in I$ (resp., $i \in J$), which we call 
the Kashiwara operators, and the maps $\wt:\CB \rightarrow P$, 
$\ve_{i},\,\vp_{i}:\CB \rightarrow \BZ \cup \{-\infty\}$ 
for $i \in I$ (resp., $i \in J$). 
Here, $\bzero$ is a formal element 
not contained in $\CB$. 
For $\nu \in P$, we denote by $\CB_{\nu}$ the subset of 
$\CB$ consisting of all elements of weight $\nu$. 

\begin{dfn}[{cf. \cite[p.\,389]{Kas94}}]
(1) A $U_{q}(\Fg)$-crystal $\CB$ is said to be normal 
if $\CB$, regarded as a $U_{q}(\Fg_{K})$-crystal 
by restriction, is isomorphic to a direct sum of 
the crystal bases of finite-dimensional 
irreducible $U_{q}(\Fg_{K})$-modules for every
finite interval $K$ in $I=\BZ_{\ge 0}$. 

(2) Let $J$ be a finite interval in $I=\BZ_{\ge 0}$. 
A $U_{q}(\Fg_{J})$-crystal $\CB$ is said to be normal 
if $\CB$, regarded as a $U_{q}(\Fg_{K})$-crystal 
by restriction, is isomorphic to a direct sum of 
the crystal bases of finite-dimensional 
irreducible $U_{q}(\Fg_{K})$-modules for every
finite interval $K$ in $I=\BZ_{\ge 0}$ contained in $J$. 
\end{dfn}

If $\CB$ is a normal $U_{q}(\Fg)$-crystal, 
then $\CB$, regarded as a $U_{q}(\Fg_{J})$-crystal 
by restriction, is a normal $U_{q}(\Fg_{J})$-crystal 
for every finite interval $J$ in $I=\BZ_{\ge 0}$. 
Also, if $\CB$ is a normal $U_{q}(\Fg)$-crystal 
(resp., normal $U_{q}(\Fg_{J})$-crystal 
for a finite interval $J$ in $I=\BZ_{\ge 0}$), 
then we have 
\begin{equation*}
\ve_{i}(b)=
 \max \bigl\{k \in \BZ_{\ge 0} \mid e_{i}^{k}b \ne \bzero\bigr\}
\quad \text{and} \quad
\vp_{i}(b)=
 \max \bigl\{k \in \BZ_{\ge 0} \mid f_{i}^{k}b \ne \bzero\bigr\}
\end{equation*}
for $b \in \CB$ and $i \in I$ (resp., $i \in J$); 
in this case, we set 
\begin{equation*}
e_{i}^{\max}b:=e_{i}^{\ve_{i}(b)}b
\quad \text{and} \quad
f_{i}^{\max}b:=f_{i}^{\vp_{i}(b)}b
\end{equation*}
for $b \in \CB$ and $i \in I$ (resp., $i \in J$). 
Also, we can define an action of the Weyl group $W$ 
(resp., $W_{J}$) on $\CB$ as follows. 
For each $i \in I$ (resp., $i \in J$), define 
$S_{i}:\CB \rightarrow \CB$ by:
%
%
\begin{equation} \label{eq:si}
S_{i}b=\begin{cases}
f_{i}^{k}b 
 & \text{if \ } k:=\pair{\wt b}{h_{i}} \ge 0, \\[1.5mm]
e_{i}^{-k}b 
 & \text{if \ } k:=\pair{\wt b}{h_{i}} < 0,
\end{cases} \qquad \text{for $b \in \CB$}. 
\end{equation}
Then, these operators $S_{i}$, $i \in I$, give rise to 
a unique (well-defined) action 
$S:W \rightarrow \Bij(\CB)$ 
(resp., $S:\Wn \rightarrow \Bij(\CB)$), 
$w \mapsto S_{w}$, of the Weyl group $W$ (resp., $W_{J}$)
on the set $\CB$ such that 
$S_{r_{i}}=S_{i}$ for all $i \in I$ (resp., $i \in J$). 
Here, for a set $X$, $\Bij(X)$ denotes the group of 
all bijections from the set $X$ to itself. 
%
%
\begin{dfn} \label{dfn:extremal}
Suppose that $\CB$ is a normal $U_{q}(\Fg)$-crystal 
(resp., $U_{q}(\Fg_{J})$-crystal for a finite interval 
$J$ in $I=\BZ_{\ge 0}$).

(1) An element $b \in \CB$ is said to be extremal (resp., $J$-extremal) 
if for every $w \in W$ and $i \in I$ (resp., $w \in W_{J}$ and $i \in J$),
%
%
\begin{equation} \label{eq:extremal}
\begin{cases}
e_{i}S_{w}b=\bzero 
 & \text{if $\pair{w(\wt b)}{h_{i}} \ge 0$}, \\[1.5mm]
f_{i}S_{w}b=\bzero 
 & \text{if $\pair{w(\wt b)}{h_{i}} \le 0$}.
\end{cases}
\end{equation}

(2) An element $b \in \CB$ is said to be maximal
(resp., $J$-maximal) if $e_{i}b=\bzero$ for all $i \in I$ 
(resp., $i \in J$). 

(3) An element $b \in \CB$ is said to be minimal (resp., $J$-minimal)
if $f_{i}b=\bzero$ for all $i \in I$ (resp., $i \in J$). 
\end{dfn}
%
%
\begin{rem} \label{rem:ext}
Suppose that $\CB$ is a normal $U_{q}(\Fg)$-crystal.

(1) An element $b \in \CB$ is extremal 
if and only if there exists $m \in \BZ_{\ge 0}$ such that 
$b$ is $[n]$-extremal for all $n \in \BZ_{\ge m}$. 

(2) Let $n \in \BZ_{\ge 0}$. 
Since the (unique) $[n]$-maximal element of the crystal basis of 
the finite-dimensional irreducible $U_{q}(\Fgn)$-module is 
$[n]$-extremal, it follows that an $[n]$-maximal element of 
$\CB$ is $[n]$-extremal. Consequently, by part (1), 
a maximal element of $\CB$ is extremal.

(3) Let $J$ be a finite interval of $I=\BZ_{\ge 0}$. 
By the same reasoning as in part (2), we see that 
a $J$-minimal element of $\CB$ is $J$-extremal. 
\end{rem}

%
\subsection{Crystal bases of extremal weight modules.}
\label{subsec:cb-ext}

In this subsection, we study some basic properties of 
crystal bases of extremal weight modules; 
note that all results in \cite{Kas91}--\cite{Kas96} about 
extremal weight modules and their crystal bases that 
we use in this paper remain valid in the case 
of infinite rank affine Lie algebras. 
%
%
\begin{dfn}[{see \cite[Definition~8.1]{Kas94}}] 
\label{dfn:extvec}
Let $\lambda \in P$ be an integral weight. 

(1) Let $M$ be an integrable $U_{q}(\Fg)$-module. 
A weight vector $v \in M$ of weight $\lambda$
is said to be extremal if there exists a family 
$\{v_{w}\}_{w \in W}$ of weight vectors in $M$
satisfying the following conditions: 

(i) If $w$ is the identity element $e$ of $W$, 
then $v_{w}=v_{e}=v$; 

(ii) for $w \in W$ and $i \in I$ such that 
$k:=\pair{w\lambda}{h_{i}} \ge 0$, we have
$x_{i}v_{w}=0$ and $y_{i}^{(k)}v_{w}=v_{r_{i}w}$; 

(iii) for $w \in W$ and $i \in I$ such that 
$k:=\pair{w\lambda}{h_{i}} \le 0$, we have 
$y_{i}v_{w}=0$ and $x_{i}^{(-k)}v_{w}=v_{r_{i}w}$. 

\noindent
Here, for $i \in I$ and $k \in \BZ_{\ge 0}$, 
$x_{i}^{(k)}$ and $y_{i}^{(k)}$ denote 
the $k$-th $q$-divided powers of $x_{i}$ and $y_{i}$, 
respectively. 

(2) Let $J$ be a finite interval in $I=\BZ_{\ge 0}$, and 
let $M$ be an integrable $U_{q}(\Fg_{J})$-module. 
A weight vector $v \in M$ of weight $\lambda$  
is said to be $J$-extremal if there exists a family 
$\{v_{w}\}_{w \in W_{J}}$ of weight vectors in $M$
satisfying the same conditions as (i), (ii), (iii) above, 
with $W$ replaced by $W_{J}$, and $I$ by $J$.  
\end{dfn}

The extremal weight $U_{q}(\Fg)$-module $V(\lambda)$ of 
extremal weight $\lambda$ is, by definition, 
the integrable $U_{q}(\Fg)$-module generated 
by a single element $v_{\lambda}$ subject to 
the defining relation that the $v_{\lambda}$ is 
an extremal vector of weight $\lambda$ 
(see \cite[Proposition~8.2.2]{Kas94} and also \cite[\S3.1]{Kas02}). 
We know from \cite[Proposition~8.2.2]{Kas94} that 
$V(\lambda)$ admits the crystal basis 
$(\CL(\lambda),\CB(\lambda))$, and that 
$\CB(\lambda)$ is a normal $U_{q}(\Fg)$-crystal. 
If we denote by $u_{\lambda} \in \CB(\lambda)$ the element 
corresponding to the extremal vector 
$v_{\lambda} \in V(\lambda)$ of weight $\lambda$, then 
the element $u_{\lambda} \in \CB(\lambda)$ is extremal. 
%
%
\begin{rem}[{see \cite[\S\S8.2 and 8.3]{Kas94}}] \label{rem:extmod}
(1) If $\lambda \in P_{+}$, 
then the extremal weight module $V(\lambda)$ is 
isomorphic to the irreducible highest weight $U_{q}(\Fg)$-module of 
highest weight $\lambda$. 
Therefore, the crystal basis $\CB(\lambda)$ is 
isomorphic, as a crystal, to 
the crystal basis of the irreducible highest 
weight $U_{q}(\Fg)$-module of highest weight $\lambda$. 

(2) For each $w \in W$, there exists an 
isomorphism $V(\lambda) \cong V(w\lambda)$ 
of $U_{q}(\Fg)$-modules 
between $V(\lambda)$ and $V(w\lambda)$. 
Also, for each $w \in W$, there exists an isomorphism 
$\CB(\lambda) \cong \CB(w\lambda)$ 
of $U_{q}(\Fg)$-crystals 
between $\CB(\lambda)$ and $\CB(w\lambda)$. 
\end{rem}

Let $J$ be a finite interval in $I=\BZ_{\ge 0}$. 
As in the case of $V(\lambda)$, 
the extremal weight $U_{q}(\Fg_{J})$-module 
$V_{J}(\lambda)$ of extremal weight $\lambda$ 
is, by definition, the integrable 
$U_{q}(\Fg_{J})$-module generated by a single element 
$v_{\lambda}$ subject to the defining relation that 
the $v_{\lambda}$ is a $J$-extremal vector 
of weight $\lambda$. 
We denote by $(\CL_{J}(\lambda),\CB_{J}(\lambda))$ 
the crystal basis of $V_{J}(\lambda)$. 
%
%
\begin{rem} \label{rem:extmod-n}
Let $J$ be a finite interval in $I=\BZ_{\ge 0}$. 
Then, results entirely similar to 
those in Remark~\ref{rem:extmod} hold for 
extremal weight $U_{q}(\Fg_{J})$-modules 
and their crystal bases. 
If $\lambda$ is $J$-dominant (resp., $J$-antidominant), 
then $V_{J}(\lambda)$ is the  finite-dimensional 
irreducible $U_{q}(\Fg_{J})$-module of highest weight $\lambda$ 
(resp., lowest weight $\lambda$). 
Also, for $\lambda \in P$, we have
$\CB_{J}(\lambda) \cong \CB_{J}(\lambda_{J})$ 
as $U_{q}(\Fg_{J})$-crystals since $\lambda_{J} \in W_{J}\lambda$. 
Because $\CB_{J}(\lambda_{J})$ is isomorphic, 
as a $U_{q}(\Fg_{J})$-crystal, to the crystal basis of 
the finite-dimensional irreducible $U_{q}(\Fg_{J})$-module of 
highest weight $\lambda_{J}$, it follows that 
the crystal graph of $\CB_{J}(\lambda)$ is connected, 
and $\#(\CB_{J}(\lambda))_{\nu}=1$ 
for all $\nu \in W_{J}\lambda$. 
\end{rem}

To prove Proposition~\ref{prop:cb-conn} below, 
we need to recall the description of the crystal basis $\CB(\lambda)$ 
for $\lambda \in P$ from \cite{Kas94}. 
Let $(\CL(\pm\infty),\CB(\pm\infty))$ denote 
the crystal basis of $U_{q}^{\mp}(\Fg)$. 
Recall from \cite[Proposition~5.2.4]{Kas91} that 
the crystal lattice $\CL(\pm\infty)$ of $U_{q}^{\mp}(\Fg)$ 
is stable under the $\BC(q)$-algebra anti-automorphism 
$\ast:U_{q}(\Fg) \rightarrow U_{q}(\Fg)$ given by \eqref{eq:ast}. 
Furthermore, we know from \cite[Theorem~2.1.1]{Kas93} that 
the $\BC$-linear automorphism (also denoted by $\ast$)
on $\CL(\pm\infty)/q\CL(\pm\infty)$ induced by 
$\ast:\CL(\pm\infty) \rightarrow \CL(\pm\infty)$ stabilizes 
the crystal basis $\CB(\pm \infty)$, and gives rise to 
a map $\ast:\CB(\pm \infty) \rightarrow \CB(\pm \infty)$. 
Now, we set
\begin{equation*}
\CB^{\lambda}:=\CB(\infty) \otimes \CT_{\lambda} \otimes \CB(-\infty)
\quad \text{for $\lambda \in P$},
\end{equation*}
where $\CT_{\lambda}:=\{t_{\lambda}\}$ is 
a $U_{q}(\Fg)$-crystal consisting of a single element 
$t_{\lambda}$ of weight $\lambda \in P$
(see \cite[Example~1.5.3, part~2]{Kas94}), and then set 
$\ti{\CB}:=\bigoplus_{\lambda \in P} \CB^{\lambda}$. 
Also, we define a map $\ast:\ti{\CB} \rightarrow \ti{\CB}$ by:
\begin{equation*}
(b_{1} \otimes t_{\lambda} \otimes b_{2})^{\ast}=
b_{1}^{\ast} \otimes t_{-(\wt b_{1}+\lambda+\wt b_{2})} \otimes b_{2}^{\ast}
\end{equation*}
for $b_{1} \in \CB(\infty)$, $\lambda \in P$, and 
$b_{2} \in \CB(-\infty)$ (cf. \cite[Corollary~4.3.3]{Kas94}). 
It follows from \cite[Theorem~3.1.1 and 
Theorem~2.1.2\,(v)]{Kas94} that 
$\CB^{\lambda}$ is normal for all $\lambda \in P$, 
and hence so is $\ti{\CB}=\bigoplus_{\lambda \in P} \CB^{\lambda}$. 
Moreover, by \cite[Proposition~8.2.2, 
Theorem~3.1.1, and Corollary~4.3.3]{Kas94}, 
the subset 
%
%
\begin{equation} \label{eq:sc-ext}
\bigl\{b \in \CB^{\lambda} \mid 
\text{$b^{\ast}$ is extremal}
\bigr\} \subset \CB^{\lambda}
\end{equation}
is a $U_{q}(\Fg)$-subcrystal of $\CB^{\lambda}$, 
and it is isomorphic, as a $U_{q}(\Fg)$-crystal, 
to the crystal basis $\CB(\lambda)$; under this isomorphism, 
the extremal element $u_{\lambda} \in \CB(\lambda)$ 
corresponds to the element $u_{\infty} \otimes t_{\lambda} 
\otimes u_{-\infty} \in \CB^{\lambda}$, where 
$u_{\pm\infty}$ is the element of $\CB(\pm\infty)$ 
corresponding to the identity element 
$1 \in U_{q}^{\mp}(\Fg)$. Thus, we can identify $\CB(\lambda)$ with 
the $U_{q}(\Fg)$-subcrystal given by \eqref{eq:sc-ext}, 
and identify $u_{\lambda} \in \CB(\lambda)$ with 
$u_{\infty} \otimes t_{\lambda} \otimes u_{-\infty} 
\in \CB^{\lambda}$. 

We have the following proposition 
(see also \cite[Proposition~3.1]{Kw-Adv} in type $A_{+\infty}$ and 
\cite[Proposition~4.1]{Kw-Ep} in type $A_{\infty}$). 
%
%
\begin{prop} \label{prop:cb-conn}
Let $\lambda \in P$ be an integral weight. Then, 
the crystal graph of the crystal basis $\CB(\lambda)$ is connected. 
\end{prop}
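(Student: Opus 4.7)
The plan is to show that every $b \in \CB(\lambda)$ is connected to $u_\lambda$ via a finite sequence of Kashiwara operators. I will exploit the realization $\CB(\lambda) \cong \{b \in \CB^\lambda : b^\ast \text{ is extremal}\}$ given in \eqref{eq:sc-ext}, together with the normality of $\CB(\lambda)$.

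First, I would show that any $b \in \CB(\lambda)$ can be connected to some extremal element of $\CB(\lambda)$. For each $n \in \BZ_{\ge 0}$, the restriction of $\CB(\lambda)$ to the finite-rank subalgebra $U_{q}(\Fgm{n})$ is a normal $U_{q}(\Fgm{n})$-crystal, so it decomposes as a direct sum of connected components isomorphic to $\CB_{J}(\mu)$ (with $J = [n]$) for various $\mu$, each connected by Remark~\ref{rem:extmod-n}. Applying $e_{i}^{\max}$ for $i \in [n]$ in a suitable order moves $b$ to an $[n]$-maximal (hence $[n]$-extremal, by Remark~\ref{rem:ext}(2)) element in the same $U_{q}(\Fgm{n})$-component. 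Since $\wt(b) - \lambda$ has finite support in the root lattice, and writing $b = b_{1} \otimes t_{\lambda} \otimes b_{2}$ the elements $b_{1}, b_{2}$ involve only finitely many indices in $\CB(\pm\infty)$, iterating this $[n]$-maximalization as $n$ grows (carefully arranged so as to preserve $[m]$-maximality for $m \le n$) stabilizes at an element $b^{\infty}$ which is $[n]$-extremal for every sufficiently large $n$, and hence extremal by Remark~\ref{rem:ext}(1).

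Next, I would argue that every extremal element of $\CB(\lambda)$ is a Weyl-translate $S_{w} u_{\lambda}$ for some $w \in W$: identifying the extremal element with an extremal vector in $V(\lambda)$ via the theory of \cite{Kas94}, its uniqueness in each weight of the orbit $W\lambda$ (from Definition~\ref{dfn:extvec} and \cite[Proposition~8.2.2]{Kas94}) forces it to correspond to $S_{w} u_{\lambda}$ in $\CB(\lambda)$. Since the action $S$ defined by \eqref{eq:si} is realized by compositions of $e_{i}^{\max}$ and $f_{i}^{\max}$, each $S_{w} u_{\lambda}$ lies in the connected component of $u_{\lambda}$, which combined with the previous step yields the proposition.

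The principal obstacle will be the first step: verifying that the iterative $[n]$-maximalization process really does stabilize at a truly extremal element. This requires carefully controlling how $[m]$-maximality persists when further operators with index $n > m$ are applied, and exploiting that the tensor factors $b_{1}, b_{2}$ in the $\CB^\lambda$-realization have support confined to a finite subset of indices; the second step is comparatively routine given the identification \eqref{eq:sc-ext} and the standard uniqueness of extremal vectors.
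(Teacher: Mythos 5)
Your architecture (connect $b$ to an extremal element, then identify every extremal element with some $S_{w}u_{\lambda}$) contains two genuine gaps, and both trace back to the same missing ingredient. The first is in step 1: the iterated $[n]$-maximalization does not stabilize in general. If $\lambda$ is not dominant --- say $\lambda=-\Lambda_{0}$ --- then $\CB(\lambda)$ has no maximal element at all; the $[n]$-maximal element reached from $u_{\lambda}$ has weight $\lamn$ (the $[n]$-dominant representative of $\Wn\lambda$), and these weights are pairwise distinct as $n$ grows, so the sequence of $[n]$-maximal elements is never stationary and produces no limit $b^{\infty}$. Moreover $[n]$-maximality is not preserved when you pass to $[n+1]$ and apply further operators, so no amount of ``careful arranging'' rescues the stabilization. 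The second gap is in step 2: the assertion that every extremal element of $\CB(\lambda)$ equals $S_{w}u_{\lambda}$ is not available from \cite{Kas94} alone --- nothing in Definition~\ref{dfn:extvec} or \cite[Proposition~8.2.2]{Kas94} guarantees that an extremal element of $\CB(\lambda)$ has weight in $W\lambda$, nor that $\CB(\lambda)_{w\lambda}$ is a singleton (for genuine affine algebras at level zero both statements fail). In this paper those facts are Proposition~\ref{prop:wtmult} and the argument of Proposition~\ref{prop:non-isom}, and they are \emph{deduced from} the key Claim inside the proof of the present proposition; invoking them here is circular.

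The missing idea is precisely that Claim: one must show that $\CB(\lambda)\cap\CBn^{\lambda}$, where $\CBn^{\lambda}$ consists of the $b_{1}\otimes t_{\lambda}\otimes b_{2}$ with $b_{1}\in\CBn(\infty)$ and $b_{2}\in\CBn(-\infty)$, coincides with the crystal basis $\CBn(\lambda)$ of the extremal weight $U_{q}(\Fgn)$-module, hence with the connected crystal basis of the finite-dimensional irreducible $U_{q}(\Fgn)$-module of highest weight $\lamn$. This is proved by observing that the anti-automorphism $\star$ of $U_{q}(\Fgn)$ is the restriction of $\ast$, which yields the inclusion $\CB(\lambda)\cap\CBn^{\lambda}\subset\bigl\{b\in\CBn^{\lambda}\mid b^{\ast}\ \text{is $[n]$-extremal}\bigr\}=\CBn(\lambda)$, and then using the connectedness of $\CBn(\lambda)$ together with the stability of $\CB(\lambda)\cap\CBn^{\lambda}$ under $e_{i},f_{i}$, $i\in[n]$, to force equality. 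Once this is in hand, any two elements of $\CB(\lambda)$ lie in a common $\CB(\lambda)\cap\CBn^{\lambda}$, which is connected as a $U_{q}(\Fgn)$-crystal, and the proposition follows at once --- no detour through extremal elements is needed.
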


\begin{proof}
While the argument in the proof of 
\cite[Proposition~3.1]{Kw-Adv} in type $A_{+\infty}$ 
(or, of \cite[Proposition~4.1]{Kw-Ep} in type $A_{\infty}$) 
still works in the case of type $B_{\infty}$, $C_{\infty}$, or $D_{\infty}$, 
we prefer to give a different proof.

For each $n \in \BZ_{\ge 0}$, let 
$(\CL_{[n]}(\pm\infty),\CBn(\pm\infty))$ denote
the crystal basis of $U_{q}^{\mp}(\Fgn) \subset U_{q}^{\mp}(\Fg)$. 
We deduce from the definitions
that $U_{q}^{\mp}(\Fgn)$ is stable under 
the Kashiwara operators $e_{i}$ and $f_{i}$, $i \in [n]$, 
on $U_{q}^{\mp}(\Fg)$, and that their restrictions to 
$U_{q}^{\mp}(\Fgn)$ are exactly the Kashiwara operators 
$e_{i}$ and $f_{i}$, $i \in [n]$, on $U_{q}^{\mp}(\Fgn)$, 
respectively. Therefore, the crystal lattice 
$\CL_{[n]}(\pm\infty)$ of $U_{q}^{\mp}(\Fgn)$ 
is identical to the $A$-submodule of the crystal lattice $\CL(\pm\infty)$ of 
$U_{q}^{\mp}(\Fg)$ generated by those elements of the form: 
$X \cdot 1 \in U_{q}^{\mp}(\Fgn) \subset U_{q}(\Fgn)$ for some 
monomial $X$ in the Kashiwara operators $e_{i}$ and $f_{i}$ for
$i \in [n]$, where $A:=\bigl\{f(q) \in \BC(q) \mid 
\text{$f(q)$ is regular at $q=0$}\bigr\}$. 
Consequently, the crystal basis $\CBn(\pm\infty)$ of $U_{q}^{\mp}(\Fgn)$ 
is identical to the subset of $\CB(\pm\infty)$ consisting of 
all elements $b$ of the form: $b=Xu_{\pm\infty}$ for some 
monomial $X$ in the Kashiwara operators 
$e_{i}$ and $f_{i}$ for $i \in [n]$. 

Now, we define a subset $\CBn^{\lambda}$ of $\CB^{\lambda}$ by: 
\begin{equation*}
\CBn^{\lambda}=
\bigl\{b_{1} \otimes t_{\lambda} \otimes b_{2} \in \CB^{\lambda}
 \mid b_{1} \in \CBn(\infty),\ b_{2} \in \CBn(-\infty)
\bigr\}.
\end{equation*}
Then, it is obvious from the tensor product rule for crystals that 
$\CBn^{\lambda}$ is a $U_{q}(\Fgn)$-crystal isomorphic 
to the tensor product 
$\CBn(\infty) \otimes \CT_{\lambda} \otimes 
\CBn(-\infty)$ of $U_{q}(\Fgn)$-crystals; namely, 
as $U_{q}(\Fgn)$-crystals, 
%
%
\begin{equation} \label{eq:CBn}
\CBn^{\lambda} \stackrel{\sim}{\rightarrow} 
\CBn(\infty) \otimes \CT_{\lambda} \otimes \CBn(-\infty),
\qquad b_{1} \otimes t_{\lambda} \otimes b_{2} \mapsto 
b_{1} \otimes t_{\lambda} \otimes b_{2}.
\end{equation}
Hence, by \cite[Theorem~3.1.1 and Theorem~2.1.2\,(v)]{Kas94}, 
$\CBn^{\lambda}$ is a normal $U_{q}(\Fgn)$-crystal. 
We also remark that 
%
%
\begin{equation} \label{eq:limCBn}
\CB(\lambda) \cap \CBn^{\lambda} \subset 
\CB(\lambda) \cap \CBo^{\lambda} \quad 
\text{for all $n \in \BZ_{\ge 0}$, and} \quad
\CB(\lambda)=\bigcup_{n \ge 0} 
 \bigl(\CB(\lambda) \cap \CBn^{\lambda}\bigr).
\end{equation}

\begin{claim*}
The crystal basis $\CBn(\lambda)$ of the extremal weight 
$U_{q}(\Fgn)$-module of extremal weight $\lambda$ is 
isomorphic, as a $U_{q}(\Fgn)$-crystal, 
to $\CB(\lambda) \cap \CBn^{\lambda}$. 
\end{claim*}

\noindent 
{\it Proof of Claim.}
We have a $\BC(q)$-algebra anti-automorphism 
$\star:U_{q}(\Fgn) \rightarrow U_{q}(\Fgn)$ defined by:
%
%
\begin{equation} \label{eq:star}
\begin{cases}
(q^{h})^{\star}=q^{-h} 
  & \text{for $h \in P^{\vee}$}, \\[1.5mm]
x_{i}^{\star}=x_{i},\ 
y_{i}^{\star}=y_{i}
  & \text{for $i \in [n]$}.
\end{cases}
\end{equation}
As in the case of $\CB(\pm\infty)$, 
this $\BC(q)$-algebra anti-automorphism 
$\star:U_{q}(\Fgn) \rightarrow U_{q}(\Fgn)$
induces a map $\star:\CBn(\pm\infty) \rightarrow 
\CBn(\pm\infty)$. We set $\ti{\CB}_{[n]}:=
\bigoplus_{\lambda \in P} \CBn^{\lambda} \subset \ti{\CB}$, 
which is a normal $U_{q}(\Fgn)$-crystal, and then 
define $\star:\ti{\CB}_{[n]} \rightarrow \ti{\CB}_{[n]}$ by:
\begin{equation*}
(b_{1} \otimes t_{\lambda} \otimes b_{2})^{\star}=
b_{1}^{\star} \otimes t_{-(\wt b_{1}+\lambda+\wt b_{2})} \otimes b_{2}^{\star}
\end{equation*}
for $b_{1} \in \CBn(\infty)$, $\lambda \in P$, and 
$b_{2} \in \CBn(-\infty)$. Then, we see from 
\cite[Proposition~8.2.2, Theorem~3.1.1, and Corollary~4.3.3]{Kas94}, 
together with \eqref{eq:CBn}, that 
%
%
\begin{equation} \label{eq:sc-ext-n}
\bigl\{
 b \in \CBn^{\lambda} \mid 
 \text{$b^{\star}$ is $[n]$-extremal}
\bigr\} 
\subset \CBn^{\lambda} \quad (\subset \CB^{\lambda})
\end{equation}
is a $U_{q}(\Fgn)$-subcrystal of $\CBn^{\lambda}$ 
isomorphic to the crystal basis 
$\CBn(\lambda)$. We identify $\CBn(\lambda)$ 
with the $U_{q}(\Fgn)$-subcrystal of $\CBn^{\lambda}$ 
given by \eqref{eq:sc-ext-n}. 

Also, observe that the $\BC(q)$-algebra anti-automorphism 
$\star:U_{q}(\Fgn) \rightarrow U_{q}(\Fgn)$ given by 
\eqref{eq:star} is identical to the restriction to $U_{q}(\Fgn)$ 
of the $\BC(q)$-algebra anti-automorphism 
$\ast:U_{q}(\Fg) \rightarrow U_{q}(\Fg)$ given by \eqref{eq:ast}.
Consequently, the map 
$\star:\CBn(\pm\infty) \rightarrow \CBn(\pm\infty)$ is 
identical to the restriction to $\CBn(\pm\infty)$ of the map 
$\ast:\CB(\pm\infty) \rightarrow \CB(\pm\infty)$, and hence 
the map $\star:\ti{\CB}_{[n]} \rightarrow \ti{\CB}_{[n]}$ is 
identical to the restriction to $\ti{\CB}_{[n]}$ of the map 
$\ast:\ti{\CB} \rightarrow \ti{\CB}$. Therefore, we have
%
%
\begin{equation} \label{eq:ext-inc}
\CBn(\lambda)
=
\bigl\{
 b \in \CBn^{\lambda} \mid 
 \text{$b^{\ast}$ is $[n]$-extremal}
\bigr\} \supset \CB(\lambda) \cap \CBn^{\lambda}.
\end{equation}
Because we know from Remark~\ref{rem:extmod-n} with $J=[n]$ that 
the crystal graph of the $U_{q}(\Fgn)$-crystal $\CBn(\lambda)$ 
is connected, we conclude from \eqref{eq:ext-inc} 
that $\CBn(\lambda)=\CB(\lambda) \cap \CBn^{\lambda}$. 
This proves the claim. \nolinebreak \bqed

\vsp

We now complete the  proof of 
Proposition~\ref{prop:cb-conn}. 
Take $b_{1},\,b_{2} \in \CB(\lambda)$ arbitrarily. 
We show that there exists a monomial $X$ in the 
Kashiwara operators $e_{i}$ and $f_{i}$ for $i \in I$ 
such that $Xb_{1}=b_{2}$. 
It follows from \eqref{eq:limCBn} that 
there exists $n \in \BZ_{\ge 0}$ such that 
$b_{1}$ and $b_{2}$ are both contained in 
$\CB(\lambda) \cap \CBn^{\lambda}$. 
Since $\CB(\lambda) \cap 
\CBn^{\lambda} \cong \CBn(\lambda)$ 
as $U_{q}(\Fgn)$-crystals by the claim above, 
we deduce from Remark~\ref{rem:extmod-n} with $J=[n]$ that 
there exists a monomial $X$ in the 
Kashiwara operators $e_{i}$ and $f_{i}$ for 
$i \in [n]$ such that $Xb_{1}=b_{2}$. 
This finishes the proof of Proposition~\ref{prop:cb-conn}.
\end{proof}
%
%
\begin{prop} \label{prop:wtmult}
Let $\lambda \in P$ be an integral weight. 
For each $w \in W$, we have 
$\CB(\lambda)_{w\lambda}=\bigl\{S_{w}u_{\lambda}\bigr\}$. 
\end{prop}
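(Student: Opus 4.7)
The plan is to split the statement into two parts: first, that $S_{w}u_{\lambda}$ belongs to $\CB(\lambda)_{w\lambda}$, and second, that this is the only such element. The first part is essentially by construction, while the second will be reduced to the finite rank statement in Remark~\ref{rem:extmod-n} via the same finite rank approximation used in the proof of Proposition~\ref{prop:cb-conn}.

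For the first part, the element $u_{\lambda} \in \CB(\lambda)$ is extremal of weight $\lambda$, and in any normal $U_{q}(\Fg)$-crystal the Weyl group action $S:W \to \Bij(\CB(\lambda))$ satisfies $\wt(S_{w}b) = w(\wt b)$ by \eqref{eq:si}. Hence $S_{w}u_{\lambda} \in \CB(\lambda)_{w\lambda}$.

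For the second part, fix $b \in \CB(\lambda)_{w\lambda}$. Since $W$ is generated by the $r_{i}$ for $i \in I = \BZ_{\ge 0}$, we may choose $n \in \BZ_{\ge 0}$ large enough that $w \in \Wn$; by \eqref{eq:limCBn}, enlarging $n$ further if necessary, we may assume in addition that $b \in \CB(\lambda) \cap \CBn^{\lambda}$. Now $u_{\lambda}$ is identified with $u_{\infty} \otimes t_{\lambda} \otimes u_{-\infty}$, and since $u_{\pm\infty}$ lies in $\CBn(\pm\infty)$, we have $u_{\lambda} \in \CBn^{\lambda}$; because $\CBn^{\lambda}$ is stable under the Kashiwara operators $e_{i},\,f_{i}$ for $i \in [n]$, the element $S_{w}u_{\lambda}$ (computed from $u_{\lambda}$ via these operators) lies in $\CB(\lambda) \cap \CBn^{\lambda}$ as well.

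By the Claim in the proof of Proposition~\ref{prop:cb-conn}, we have the isomorphism $\CB(\lambda) \cap \CBn^{\lambda} \cong \CBn(\lambda)$ of $U_{q}(\Fgn)$-crystals. Since $w \in \Wn$, the weight $w\lambda$ lies in $\Wn\lambda$, and Remark~\ref{rem:extmod-n} (applied with $J = [n]$) tells us that $\#(\CBn(\lambda))_{w\lambda}=1$. Both $b$ and $S_{w}u_{\lambda}$ are elements of $\CB(\lambda) \cap \CBn^{\lambda}$ of weight $w\lambda$, so they must coincide. No step here is really an obstacle once the finite rank machinery of the preceding proposition has been installed; the only point requiring mild care is verifying that $S_{w}u_{\lambda}$ remains inside $\CBn^{\lambda}$, which is what allows the comparison with $b$ to take place inside the single $U_{q}(\Fgn)$-crystal $\CBn(\lambda)$.
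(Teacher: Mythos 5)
Your proof is correct and follows essentially the same route as the paper: both rest on the identification $\CB(\lambda) \cap \CBn^{\lambda} \cong \CBn(\lambda)$ from the Claim in the proof of Proposition~\ref{prop:cb-conn} together with the fact from Remark~\ref{rem:extmod-n} that weights in $\Wn\lambda$ have multiplicity one there. The only (cosmetic) difference is that the paper first reduces to $w=e$ via the $W$-action on $\CB(\lambda)$, whereas you handle general $w$ directly by enlarging $n$ so that $w \in \Wn$ and checking that $S_{w}u_{\lambda}$ stays in $\CB(\lambda) \cap \CBn^{\lambda}$.
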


\begin{proof}
By using the action of the Weyl group $W$ on $\CB(\lambda)$, 
we are reduced to the case $w=e$. 
Now, we show that $\CB(\lambda)_{\lambda}=\bigl\{u_{\lambda}\bigr\}$. 
Let $b \in \CB(\lambda)_{\lambda}$. Then, 
there exists $n \in \BZ_{\ge 0}$ such that 
$b \in \CB(\lambda) \cap \CBn^{\lambda}$ by \eqref{eq:limCBn}; 
note that $u_{\lambda} \in \CB(\lambda) \cap \CBn^{\lambda}$. 
Since $\CB(\lambda) \cap \CBn^{\lambda} \cong \CBn(\lambda)$ as 
$U_{q}(\Fgn)$-crystals by the Claim in the proof of 
Proposition~\ref{prop:cb-conn}, it follows from 
Remark~\ref{rem:extmod-n} with $J=[n]$ that 
$\#(\CB(\lambda) \cap \CBn^{\lambda})_{\nu}=1$ 
for every $\nu \in \Wn\lambda$. 
In particular, we have 
$\#(\CB(\lambda) \cap \CBn^{\lambda})_{\lambda}=1$, 
which implies that $b=u_{\lambda}$, as desired. 
\end{proof}
%
%
\begin{rem} \label{rem:nf}
For a general $\xi \in P$, 
the set $\CB(\lambda)_{\xi}$ is not of finite cardinality. 
\end{rem}
%
%
\begin{prop} \label{prop:non-isom}
Let $\lambda,\,\mu \in P$ be integral weights 
(not necessarily dominant). 
Then, $\CB(\lambda) \cong \CB(\mu)$ as $U_{q}(\Fg)$-crystals 
if and only if $\lambda \in W\mu$.
\end{prop}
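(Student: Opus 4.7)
The ``if'' direction is immediate from Remark~\ref{rem:extmod}\,(2), which asserts that $\CB(\lambda) \cong \CB(w\lambda)$ as $U_{q}(\Fg)$-crystals for every $w \in W$; hence if $\lambda \in W\mu$, then $\CB(\lambda) \cong \CB(\mu)$.

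For the ``only if'' direction, the plan is to transfer the question to the finite-rank setting and exploit the fact that extremal elements of an irreducible finite-dimensional crystal have weights in the Weyl orbit of the highest weight. Given an isomorphism $\Phi : \CB(\lambda) \xrightarrow{\sim} \CB(\mu)$ of $U_{q}(\Fg)$-crystals, I would set $b := \Phi(u_{\lambda}) \in \CB(\mu)$; since $u_{\lambda}$ is extremal of weight $\lambda$, the image $b$ is an extremal element of $\CB(\mu)$ of weight $\lambda$, and the problem reduces to showing $\wt(b) \in W\mu$. By Remark~\ref{rem:ext}\,(1) together with \eqref{eq:limCBn}, I can fix $n \in \BZ_{\ge 0}$ large enough that $b$ is simultaneously $[n]$-extremal and contained in $\CB(\mu) \cap \CBn^{\mu}$. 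The Claim established within the proof of Proposition~\ref{prop:cb-conn} then identifies this intersection, as a $U_{q}(\Fgn)$-crystal, with $\CBn(\mu)$, which by Remark~\ref{rem:extmod-n} is in turn isomorphic to $\CBn(\mu_{[n]})$, the crystal basis of the finite-dimensional irreducible $U_{q}(\Fgn)$-module of highest weight $\mu_{[n]}$.

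The remaining step is to show that every $[n]$-extremal element of this finite irreducible crystal has weight in $\Wn \mu_{[n]} = \Wn \mu$. I would pick $w \in \Wn$ so that $w \cdot \wt(b)$ is $[n]$-dominant; then $S_{w}b$ is again $[n]$-extremal, and applying the defining condition \eqref{eq:extremal} with the identity element $e \in \Wn$ in the role of $w$ there yields $e_{i}(S_{w}b) = \bzero$ for every $i \in [n]$, i.e., $S_{w}b$ is $[n]$-maximal. Since the only $[n]$-maximal element of $\CBn(\mu_{[n]})$ is the highest weight element $u_{\mu_{[n]}}$ (weight multiplicity one at the highest weight), we obtain $S_{w}b = u_{\mu_{[n]}}$, and hence $\lambda = \wt(b) = w^{-1}\mu_{[n]} \in \Wn\mu \subset W\mu$, as required. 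I do not expect any substantial obstacle; the only point requiring a bit of care is to verify that the $U_{q}(\Fgn)$-crystal structure on $\CB(\mu) \cap \CBn^{\mu}$ obtained by restriction from $\CB(\mu)$ agrees with the one on $\CBn(\mu)$ under the identification in the Claim, so that the notion of $[n]$-extremality is the same on both sides.
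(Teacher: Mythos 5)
Your proposal is correct and follows essentially the same route as the paper's proof: transfer to the finite-rank subcrystal $\CB(\mu)\cap\CBn^{\mu}\cong\CBn(\mun)$ via the Claim in the proof of Proposition~\ref{prop:cb-conn}, apply $S_{w}$ to move the extremal element to an $[n]$-dominant weight, and identify it with the unique $[n]$-maximal element. The only cosmetic difference is that you send $u_{\lambda}$ into $\CB(\mu)$ whereas the paper sends $u_{\mu}$ into $\CB(\lambda)$; the argument is otherwise identical.
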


\begin{proof}
We know from Remark~\ref{rem:extmod}\,(2) that 
if $\lambda \in W\mu$, then $\CB(\lambda) \cong \CB(\mu)$ 
as $U_{q}(\Fg)$-crystals. 
Conversely, suppose that 
$\CB(\lambda) \cong \CB(\mu)$ as $U_{q}(\Fg)$-crystals. 
Let $b \in \CB(\lambda)$ be the element corresponding 
to $u_{\mu} \in \CB(\mu)$ under the isomorphism 
$\CB(\lambda) \cong \CB(\mu)$ of  $U_{q}(\Fg)$-crystals; 
note that $b \in \CB(\lambda)$ is an extremal element of 
weight $\mu$ since so is $u_{\mu}$. 
Take $n \in \BZ_{\ge 0}$ such that 
$b \in \CB(\lambda) \cap \CBn^{\lambda}$, and let 
$w \in \Wn$ be such that $w\mu=\mun$ 
(recall that $\mun$ is the unique element of 
$\Wn\mu$ that is $[n]$-dominant). 
Then, since $S_{w}$ is defined by using only 
the Kashiwara operators $e_{i}$ and $f_{i}$ for $i \in [n]$, 
we see that $S_{w}b$ is contained in $\CB(\lambda) \cap \CBn^{\lambda}$. 
Furthermore, since $b \in \CB(\lambda)$ is an extremal element of 
weight $\mu$, and since $\wt(S_{w}b)=w\mu=\mun$ is $[n]$-dominant, 
we deduce from Definition~\ref{dfn:extremal}\,(1) that $S_{w}b \in 
\CB(\lambda) \cap \CBn^{\lambda}$ is an $[n]$-maximal element. 
Now we recall that, by the Claim in the proof of 
Proposition~\ref{prop:cb-conn} and 
by Remark~\ref{rem:extmod-n} with $J=[n]$, 
the $U_{q}(\Fgn)$-crystal $\CB(\lambda) \cap \CBn^{\lambda}$ is 
isomorphic to the crystal basis of 
the finite-dimensional irreducible $U_{q}(\Fgn)$-module of 
highest weight $\lamn$. 
Therefore, we deduce that the element $S_{w}b$ is the (unique) 
$[n]$-maximal element of $\CB(\lambda) \cap \CBn^{\lambda}$ 
of weight $\lamn$, and hence that $\mun=w\mu=\wt (S_{w}b)=
\lamn \in \Wn\lambda$, which implies that 
$\lambda \in W\mu$. Thus we have proved the proposition. 
\end{proof}

%
\subsection{Lakshmibai-Seshadri paths and crystal structure on them.}
\label{subsec:path}
In this subsection, following \cite{Lit-Inv} and \cite{Lit-Ann}, 
we review basic facts about Lakshmibai-Seshadri paths and 
crystal structure on them; note that all results in \cite{Lit-Inv} and 
\cite{Lit-Ann} that we use in this paper remain valid in 
the case of infinite rank affine Lie algebras. 
We take and fix an arbitrary (not necessarily dominant) 
integral weight $\lambda \in P$. 
%
%
\begin{dfn} \label{dfn:bruhat}
Let $\mu,\,\nu$ be elements of $W\lambda$. 
We write $\mu > \nu$ if there exist a sequence 
$\mu=\xi_{0},\,\xi_{1},\,\dots,\,\xi_{l}=\nu$ 
of elements of $W\lambda$ and 
a sequence $\beta_{1},\,\dots,\,\beta_{l} \in \Delta^{+}$ 
of positive roots for $\Fg$, with $l \ge 1$, such that 
$\xi_{m}=r_{\beta_{m}}(\xi_{m-1})$ and 
$\pair{\xi_{m-1}}{\beta_{m}^{\vee}} \in \BZ_{< 0}$ 
for all $1 \le m \le l$, where 
$r_{\beta} \in W$ denotes the reflection with respect 
to a root $\beta \in \Delta$. 
In this case, the sequence 
$\xi_{0},\,\xi_{1},\,\dots,\,\xi_{l}$ above is 
called a chain for $(\mu,\nu)$ in $W\lambda$. 
If $\mu > \nu$, then 
we define $\dist(\mu,\nu)$ to 
be the maximum length $l$ of 
all possible chains for $(\mu,\nu)$ in $W\lambda$ . 
\end{dfn}
%
%
\begin{rem} \label{rem:Q+}
Let $\mu,\,\nu$ be elements of $W\lambda$ such that $\mu > \nu$. 
If $\mu=\xi_{0},\,\xi_{1},\,\dots,\,\xi_{l}=\nu$ is 
a chain for $(\mu,\nu)$ in $W\lambda$, 
with corresponding positive roots 
$\beta_{1},\,\dots,\,\beta_{l} \in \Delta^{+}$, 
then we have
%
%
\begin{equation} \label{eq:Q+}
\nu-\mu \in \sum_{m=1}^{l}\BZ_{> 0} \beta_{m} \subset 
\sum_{i \in I} \BZ_{\ge 0}\alpha_{i} \setminus \{0\}.
\end{equation}
\end{rem}
%
%
\begin{dfn} \label{dfn:achain}
Let $\mu,\,\nu$ be elements of $W\lambda$ 
such that $\mu > \nu$, 
and let $0 < a < 1$ be a rational number. 
An $a$-chain for $(\mu,\nu)$ in $W\lambda$ is, by definition, 
a sequence $\mu=\zeta_{0} > \zeta_{1} > 
\dots > \zeta_{l}=\nu$ of elements of $W\lambda$, 
with $l \ge 1$, such that 
$\dist(\zeta_{m-1},\zeta_{m})=1$ and 
$\pair{\zeta_{m-1}}{\beta_{m}^{\vee}} \in a^{-1}\BZ_{< 0}$ 
for all $1 \le m \le l$, 
where $\beta_{m} \in \Delta^{+}$ is 
the positive root for $\Fg$ corresponding to a chain for 
$(\zeta_{m-1},\zeta_{m})$ in $W\lambda$. 
\end{dfn}
%
%
\begin{dfn}[{\cite[\S4]{Lit-Ann}}] \label{dfn:LS}
Let $\lambda \in P$, and let $(\ud{\nu}\,;\,\ud{a})$ be a pair of 
a sequence $\ud{\nu} : \nu_{1} > \nu_{2} > \dots > \nu_{s}$ of 
elements of $W\lambda$ and a sequence $\ud{a} : 0=a_{0}
< a_{1} < \dots < a_{s}=1$ of rational numbers, with $s \ge 1$. 
The pair $(\ud{\nu}\,;\,\ud{a})$ is called 
a Lakshmibai-Seshadri path (LS path for short) of 
shape $\lambda$ (for $\Fg$), 
if for every $u=1,\,2,\,\dots,\,s-1$, there exists 
an $a_{u}$-chain for $(\nu_{u},\nu_{u+1})$ in $W\lambda$. 
We denote by $\BB(\lambda)$ 
the set of all LS paths of shape $\lambda$. 
\end{dfn}
%
%
\begin{rem} \label{rem:LS}
(1) It is easily seen from the definition that 
$\pi_{\nu}:=(\nu\,;\,0,\,1) \in \BB(\lambda)$ 
for all $\nu \in W\lambda$. 

(2) It is obvious from the definitions that 
$\BB(w\lambda)=\BB(\lambda)$ for all $w \in W$. 
\end{rem}

Let $J$ be a finite interval in $I=\BZ_{\ge 0}$. 
We define an LS path of shape $\lambda$ for 
$\Fg_{J}$ as follows. 
First we define a partial order $>_{J}$ on $W_{J}\lambda$, 
which is entirely similar to the one 
in Definition~\ref{dfn:bruhat}, 
with $W$ replaced by $W_{J}$, 
and $\Delta^{+}$ by $\Delta_{J}^{+}=
\Delta^{+} \cap \bigoplus_{i \in J}\BZ\alpha_{i}$.
If $\mu,\,\nu \in W_{J}\lambda$ are such that 
$\mu >_{J} \nu$, then we denote by 
$\dist_{J}(\cdot\,,\,\cdot)$ the maximal length 
of all possible chains for $(\mu,\,\nu)$ in $W_{J}\lambda$. 
Next, for a rational number $0 < a < 1$, 
we define $a$-chains in $W_{J}\lambda$ 
as in Definition~\ref{dfn:achain}, with 
$W$ replaced by $W_{J}$, $\dist(\cdot\,,\,\cdot)$ 
by $\dist_{J}(\cdot\,,\,\cdot)$, and 
$\Delta^{+}$ by $\Delta_{J}^{+}$. 
Finally, we define an LS path of shape $\lambda$ for $\Fg_{J}$ 
in the same way as in Definition~\ref{dfn:LS}, 
with $W$ replaced by $W_{J}$. 
We denote by $\BB_{J}(\lambda)$ 
the set of all LS paths of shape $\lambda$ for $\Fg_{J}$. 
%
%
\begin{lem} \label{lem:BBn}
Let $J$ be a finite interval of $I=\BZ_{\ge 0}$. 
Then, the set $\BB_{J}(\lambda)$ is identical to 
the subset of $\BB(\lambda)$ consisting of all 
elements $\pi=(\nu_{1},\,\nu_{2},\,\dots,\,\nu_{s}\,;\,
a_{0},\,a_{1},\,\dots,\,a_{s})$, with $s \ge 1$, satisfying 
the condition that $\nu_{u} \in W_{J}\lambda$ for all $1 \le u \le s$. 
\end{lem}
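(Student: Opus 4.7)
The plan is to reduce both inclusions to a single central claim: \emph{if $\mu, \nu \in W_J\lambda$ and $\mu = \xi_0, \xi_1, \ldots, \xi_l = \nu$ is a chain for $(\mu,\nu)$ in $W\lambda$ with corresponding positive roots $\beta_1, \ldots, \beta_l \in \Delta^+$, then every $\beta_m$ in fact lies in $\Delta_J^+$ and every $\xi_m$ in fact lies in $W_J\lambda$.} The forward inclusion $\BB_{J}(\lambda) \subseteq \{\pi \in \BB(\lambda) \mid \nu_u \in W_J\lambda \text{ for every } u\}$ follows from $W_J \subseteq W$ and $\Delta_J^+ \subseteq \Delta^+$ once one knows that the two distance functions agree, and the reverse inclusion is where the claim is indispensable.

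To prove the claim I would invoke Remark~\ref{rem:Q+}, which yields $\nu - \mu = \sum_{m=1}^{l} k_m \beta_m$ for some $k_m \in \BZ_{>0}$. On the other hand, since $W_J$ is generated by the simple reflections $r_i$ with $i \in J$ and each such reflection shifts weights by an integer multiple of $\alpha_i$, every element of $W_J\lambda$ lies in $\lambda + \bigoplus_{i \in J}\BZ\alpha_i$, so $\nu - \mu \in \bigoplus_{i \in J}\BZ\alpha_i$. Writing each positive root as $\beta_m = \sum_{i} c_{m,i}\alpha_i$ with $c_{m,i} \in \BZ_{\ge 0}$ and comparing the coefficient of $\alpha_i$ for $i \notin J$ on both sides of $\sum_m k_m \beta_m = \nu - \mu$ gives $\sum_m k_m c_{m,i} = 0$. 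Since each $k_m$ is strictly positive and each $c_{m,i}$ is nonnegative, this forces $c_{m,i} = 0$ for every $m$ and every $i \notin J$, whence $\beta_m \in \Delta_J^+$. Then $r_{\beta_m} \in W_J$, and induction on $m$ starting from $\xi_0 = \mu \in W_J\lambda$ gives $\xi_m = r_{\beta_m}(\xi_{m-1}) \in W_J\lambda$ for every $m$.

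Once the claim is established, the equality $\dist(\mu,\nu) = \dist_{J}(\mu,\nu)$ for $\mu,\nu \in W_J\lambda$ is immediate, since every $W\lambda$-chain between such points is actually a $W_J\lambda$-chain of the same length; likewise, $\mu > \nu$ in $W\lambda$ is equivalent to $\mu >_{J} \nu$ in $W_J\lambda$. For a path $(\nu_1, \ldots, \nu_s; a_0, \ldots, a_s) \in \BB_{J}(\lambda)$, each constituent $a_u$-chain in $W_J\lambda$ is then automatically an $a_u$-chain in $W\lambda$, proving the forward inclusion. Conversely, given $\pi = (\nu_1, \ldots, \nu_s; a_0, \ldots, a_s) \in \BB(\lambda)$ with all $\nu_u \in W_J\lambda$, I would apply the claim to the concatenation of the length-one chains witnessing $\dist(\zeta_{m-1},\zeta_m) = 1$ inside each $a_u$-chain $\nu_u = \zeta_0 > \cdots > \zeta_l = \nu_{u+1}$ in $W\lambda$ to conclude that every intermediate $\zeta_m$ lies in $W_J\lambda$; using $\dist = \dist_{J}$ on $W_J\lambda$, the $a_u$-chain is then an $a_u$-chain in $W_J\lambda$, so $\pi \in \BB_{J}(\lambda)$.

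The only substantive step is the positivity argument in the proof of the claim, where the strictness $k_m > 0$ is essential; everything else is bookkeeping with the definitions.
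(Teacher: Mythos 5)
Your proposal is correct and follows essentially the same route as the paper: both reduce everything to the observation that for $\mu,\nu \in W_{J}\lambda$, Remark~\ref{rem:Q+} places $\nu-\mu$ in $\sum_{m}\BZ_{>0}\beta_{m}$ while membership of $\mu,\nu$ in $W_{J}\lambda$ places it in $\bigoplus_{i \in J}\BZ\alpha_{i}$, forcing each $\beta_{m}$ into $\Delta_{J}^{+}$ and each intermediate $\xi_{m}$ into $W_{J}\lambda$, whence $>$ and $>_{J}$, $\dist$ and $\dist_{J}$, and the two notions of $a$-chain all coincide on $W_{J}\lambda$. Your explicit coefficient comparison for $i \notin J$ just spells out the step the paper states more tersely; there is no substantive difference.
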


\begin{proof}
First we show that the partial order 
$>_{J}$ on $W_{J}\lambda$ coincides with 
the restriction to $W_{J}\lambda$ 
of the partial order $>$ on $W\lambda$. 
Let $\mu,\,\nu \in W_{J}\lambda$. 
It is obvious from the definitions that 
if $\mu >_{J} \nu$, then $\mu > \nu$. 
Conversely, suppose that $\mu > \nu$, and let 
$\mu=\xi_{0},\,\xi_{1},\,\dots,\,\xi_{l}=\nu$ be 
a chain for $(\mu,\nu)$ in $W\lambda$, 
with corresponding positive roots 
$\beta_{1},\,\dots,\,\beta_{l} \in \Delta^{+}$. 
Then, by \eqref{eq:Q+}, we have 
$\nu-\mu \in \sum_{m=1}^{l}\BZ_{> 0} \beta_{m}$.
Also, since $\mu,\,\nu \in W_{J}\lambda$, 
we deduce that $\nu-\mu \in \bigoplus_{i \in J}\BZ\alpha_{i}$. 
Therefore, if follows that 
$\beta_{m} \in \Delta^{+} \cap 
\bigoplus_{i \in J}\BZ\alpha_{i}=\Delta_{J}^{+}$ 
for all $1 \le m \le l$, and hence 
$\xi_{m} \in W_{J}\lambda$ for all $0 \le m \le l$.
This implies that $\mu >_{J} \nu$, as desired. 
Furthermore, we see from the argument above that 
if $\mu,\,\nu \in W_{J}\lambda$ are such that 
$\mu > \nu$ (or equivalently, $\mu >_{J} \nu$), 
then $\dist(\mu,\,\nu)=\dist_{J}(\mu,\,\nu)$. 

Now, let $\mu,\,\nu \in W_{J}\lambda$ be such that $\mu > \nu$, 
and let $0 \le a \le 1$ be a rational number. 
Then we deduce from what is shown above, along with 
the definitions of $a$-chains
in $W\lambda$ and of those in $W_{J}\lambda$, that 
there exists an $a$-chain for $(\mu,\,\nu)$ in $W\lambda$ 
if and only if there exists an $a$-chain for $(\mu,\,\nu)$ 
in $W_{J}\lambda$. 
Consequently, it follows immediately from the definitions of 
an LS path of shape $\lambda$ for $\Fg$ and 
of the one for $\Fg_{J}$ that
\begin{equation*}
\BB_{J}(\lambda)= \bigl\{
 (\nu_{1},\,\nu_{2},\,\dots,\,\nu_{s}\,;\,
 a_{0},\,a_{1},\,\dots,\,a_{s}) \in \BB(\lambda) \mid 
 \text{$s \ge 1$, and 
 $\nu_{u} \in W_{J}\lambda,\,1 \le u \le s$}
  \bigr\}.
\end{equation*}
This proves the lemma. 
\end{proof}

It follows from Lemma~\ref{lem:BBn} that 
%
%
\begin{equation} \label{eq:limBBn}
\BBn(\lambda) \subset \BBo(\lambda) \quad 
\text{for all $n \in \BZ_{\ge 0}$, and} \quad
\BB(\lambda)=\bigcup_{n \ge 0} \BBn(\lambda).
\end{equation}

Now, for real numbers $a,\,b \in \BR$ with $a \le b$, 
we set $[a,\,b]_{\BR}:=\bigl\{t \in \BR \mid a \le t \le b\bigr\}$. 
A path is, by definition, 
a piecewise linear, continuous map 
$\pi:[0,\,1]_{\BR} \rightarrow \BR \otimes_{\BZ} P$ 
from $[0,\,1]_{\BR}$ to $\BR \otimes_{\BZ} P$ such that $\pi(0)=0$.
Let $\pi=(\ud{\nu}\,;\,\ud{a})$ be a pair of a sequence 
$\ud{\nu}:\nu_{1},\,\nu_{2},\,\dots,\,\nu_{s}$ of 
integral weights in $P$ and a sequence 
$\ud{a}:0=a_{0} < a_{1} < \cdots < a_{s}=1$ of 
rational numbers. 
We associate to the pair $\pi=(\ud{\nu}\,;\,\ud{a})$ 
the following path $\pi:[0,\,1]_{\BR} \rightarrow \BR \otimes_{\BZ} P$: 
%
%
\begin{equation} \label{eq:path}
\pi(t)=\sum_{v=1}^{u-1} (a_{v}-a_{v-1})\nu_{v}+
(t-a_{u-1})\nu_{u} 
\quad
\text{for \, } 
a_{u-1} \le t \le a_{u}, \ 1 \le u \le s. 
\end{equation}
It is easily seen that 
for pairs $\pi=(\ud{\nu}\,;\,\ud{a}) \in \BB(\lambda)$ and 
$\pi'=(\ud{\nu}'\,;\,\ud{a}') \in \BB(\lambda)$, 
$\pi$ is identical to $\pi'$ 
(i.e., $\ud{\nu}=\ud{\nu}'$ and $\ud{a}=\ud{a}'$) 
if and only if $\pi(t)=\pi'(t)$ for all $t \in [0,\,1]_{\BR}$. 
Hence we can identify the set $\BB(\lambda)$ of all LS paths of shape $\lambda$ 
with the set of corresponding paths via \eqref{eq:path}. Note that 
the element $\pi_{\nu}=(\nu\,;\,0,\,1) \in \BB(\lambda)$ for $\nu \in W\lambda$ 
corresponds to the straight line path connecting $0 \in \BR \otimes_{\BZ} P$ 
to $\nu \in P \subset \BR \otimes_{\BZ} P$. 

%
\begin{rem} \label{rem:lcm}
Let $\pi=(\nu_{1},\,\nu_{2},\,\dots,\,\nu_{s}\,;\,
 a_{0},\,a_{1},\,\dots,\,a_{s})$ be as above. 
If $a_{u}=u/s$ for all $0 \le u \le s$, then 
we write the corresponding path simply as: 
$\pi=(\nu_{1},\,\nu_{2},\,\dots,\,\nu_{s})$.
Now, noting that the set $\bigl\{\pair{\lambda}{\beta^{\vee}} \mid 
\beta \in \Delta\bigr\} \subset \BZ$ is a finite set 
(see \eqref{eq:binf-rs}--\eqref{eq:dinf-rs}), 
we define $N=N_{\lambda} \in \BZ_{\ge 1}$ to 
be the least common multiple of all nonzero integers 
in $\bigl\{\pair{\lambda}{\beta^{\vee}} \mid 
\beta \in \Delta\bigr\} \cup \bigl\{1\bigr\}$. 
It follows from the definition of $a$-chains that 
if $\pi=(\nu_{1},\,\nu_{2},\,\dots,\,\nu_{s}\,;\,
 a_{0},\,a_{1},\,\dots,\,a_{s}) \in \BB(\lambda)$, then 
$Na_{u} \in \BZ$ for all $0 \le u \le s$. 
Consequently, the path corresponding to $\pi$ is 
identical to the path
\begin{equation*}
\bigl(
 \underbrace{\nu_{1},\,\dots,\,\nu_{1}}_{%
 \text{$b_{1}$ times}},\,
 \underbrace{\nu_{2},\,\dots,\,\nu_{2}}_{%
 \text{$b_{2}$ times}},\,\dots,\,
 \underbrace{\nu_{s},\,\dots,\,\nu_{s}}_{%
 \text{$b_{s}$ times}}
\bigr),
\end{equation*}
where $b_{u}:=N(a_{u}-a_{u-1})$ for $1 \le u \le s$. 
\end{rem}

We define a $U_{q}(\Fg)$-crystal structure 
on the set $\BB(\lambda)$ of all LS paths 
of shape $\lambda$ as follows. 
First, recalling from 
\cite[Lemma 4.5 a)]{Lit-Ann} that $\pi(1) \in P$ 
for all $\pi \in \BB(\lambda)$, we define 
$\wt:\BB(\lambda) \rightarrow P$ by: $\wt \pi=\pi(1)$.
%
%
\begin{rem} \label{rem:wt}
For every $\pi \in \BB(\lambda)$, the level of $\wt \pi=\pi(1)$ is 
equal to the level $L_{\lambda}$ of $\lambda$. 
Indeed, it follows from Remark~\ref{rem:lcm} that 
$\pi \in \BB(\lambda)$ can be written as: 
$\pi=(\nu_{1},\,\nu_{2},\,\dots,\,\nu_{N})$ for some 
$\nu_{1},\,\nu_{2},\,\dots,\,\nu_{N} \in W\lambda$. 
Therefore, by the definition of $\wt:\BB(\lambda) \rightarrow P$, 
we have
%
%
\begin{equation} \label{eq:wtN}
\nu:=\wt \pi=\pi(1)=\frac{1}{N} \sum_{M=1}^{N} \nu_{M}.
\end{equation}
If we take $n \in \BZ_{\ge 3}$ such that 
$\nu_{M} \in \Wn\lambda$ for all $1 \le M \le N$, and 
such that $\col{m}=L_{\lambda}$ for all $m \ge n+1$, 
then we deduce by using \eqref{eq:binf-r}--\eqref{eq:dinf-r} 
that for all $m \ge n+1$ and $1 \le M \le N$, 
$\co{\nu_{M}}{m}=\col{m}=L_{\lambda}$, and hence 
\begin{equation*}
\con{m}=\frac{1}{N} \sum_{M=1}^{N} \co{\nu_{M}}{m}=
\frac{1}{N} \sum_{M=1}^{N} L_{\lambda}=L_{\lambda}. 
\end{equation*}
This implies that $L_{\nu}=L_{\lambda}$. 
In particular, if $\lambda \in E$, 
then $\wt \pi \in E$ for all $\pi \in \BB(\lambda)$; 
recall that $E \subset P$ is identical to the set of 
integral weights of level zero.
\end{rem}

Next, for $\pi \in \BB(\lambda)$ and $i \in I$, 
we define $e_{i}\pi$ as follows. Set 
\begin{equation*}
H^{\pi}_{i}(t):=\pair{\pi(t)}{h_{i}}
  \quad \text{for $t \in [0,\,1]_{\BR}$}, \quad \text{and} \quad
m^{\pi}_{i}
  :=\min \bigl\{H^{\pi}_{i}(t) \mid t \in [0,\,1]_{\BR}\bigr\}. 
\end{equation*}
We know from \cite[Lemma 4.5 d)]{Lit-Ann} 
that $m^{\pi}_{i} \in \BZ_{\le 0}$. 
If $m^{\pi}_{i}=0$, then we set $e_{i}\pi:=\bzero$. 
If $m^{\pi}_{i} \le -1$, then we set 
\begin{equation*}
(e_{i}\pi)(t):=
\begin{cases}
\pi(t) & \text{if $0 \le t \le t_{0}$}, \\[2mm]
\pi(t_{0})+r_{i}(\pi(t)-\pi(t_{0}))
       & \text{if $t_{0} \le t \le t_{1}$}, \\[2mm]
\pi(t)+\alpha_{i} & \text{if $t_{1} \le t \le 1$},
\end{cases}
\end{equation*}
where 
\begin{equation*}
\begin{array}{l}
t_{1}:=\min\bigl\{t \in [0,\,1]_{\BR} \mid 
       H^{\pi}_{i}(t)=m^{\pi}_{i} \bigr\}, \\[3mm]
t_{0}:=\max\bigl\{t \in [0,\,t_{1}]_{\BR} \mid
       H^{\pi}_{i}(t)=m^{\pi}_{i}+1\bigr\}. 
\end{array}
\end{equation*}
It follows from \cite[Corollary~2 in \S4]{Lit-Ann} 
that $e_{i}\pi \in \BB(\lambda) \cup \{\bzero\}$. Thus, 
we obtain a map $e_{i}:\BB(\lambda) \rightarrow 
\BB(\lambda) \cup \{\bzero\}$. 
Similarly, for $\pi \in \BB(\lambda)$ and $i \in I$, 
we define $f_{i}\pi$ as follows. 
Note that $H^{\pi}_{i}(1)-m^{\pi}_{i} \in \BZ_{\ge 0}$.
If $H^{\pi}_{i}(1)-m^{\pi}_{i}=0$, 
then we set $f_{i}\pi:=\bzero$. 
If $H^{\pi}_{i}(1)-m^{\pi}_{i} \ge 1$, then we set
\begin{equation*}
(f_{i}\pi)(t):=
\begin{cases}
\pi(t) & \text{if $0 \le t \le t_{0}$}, \\[2mm]
\pi(t_{0})+r_{i}(\pi(t)-\pi(t_{0}))
       & \text{if $t_{0} \le t \le t_{1}$}, \\[2mm]
\pi(t)-\alpha_{i} & \text{if $t_{1} \le t \le 1$},
\end{cases}
\end{equation*}
where 
\begin{equation*}
\begin{array}{l}
t_{0}:=\max\bigl\{t \in [0,\,1]_{\BR} \mid 
       H^{\pi}_{i}(t)=m^{\pi}_{i}\bigr\}, \\[3mm]
t_{1}:=\min\bigl\{t \in [t_{0},\,1]_{\BR} \mid
       H^{\pi}_{i}(t) = m^{\pi}_{i}+1 \bigr\}.
\end{array}
\end{equation*}
It follows from \cite[Corollary~2 in \S4]{Lit-Ann} 
that $f_{i}\pi \in \BB(\lambda) \cup \{\bzero\}$. 
Thus, we obtain a map $f_{i}:\BB(\lambda) \rightarrow 
\BB(\lambda) \cup \{\bzero\}$. Finally, 
for $\pi \in \BB(\lambda)$ and $i \in I$, we set
\begin{equation*}
\ve_{i}(\pi):=
 \max\bigl\{k \ge 0 \mid e_{i}^{k}\pi \ne \bzero\bigr\}, \qquad
\vp_{i}(\pi):=
 \max\bigl\{k \ge 0 \mid f_{i}^{k}\pi \ne \bzero\bigr\}.
\end{equation*}

\begin{thm}[{\cite[\S2 and \S4]{Lit-Ann}}] \label{thm:path}
The set $\BB(\lambda)$, together with the maps 
$\wt:\BB(\lambda) \rightarrow P$, 
$e_{i},\,f_{i}:\BB(\lambda) \rightarrow 
\BB(\lambda) \cup \{\bzero\}$ for $i \in I$, and 
$\ve_{i},\,\vp_{i}:\BB(\lambda) \rightarrow \BZ_{\ge 0}$
for $i \in I$, forms a $U_{q}(\Fg)$-crystal. 
\end{thm}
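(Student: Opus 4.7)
The plan is to deduce the theorem from Littelmann's crystal structure theorem in the finite-rank (symmetrizable Kac-Moody) case, via the exhaustion $\BB(\lambda)=\bigcup_{n\ge 0}\BBn(\lambda)$ from \eqref{eq:limBBn}. By Remark~\ref{rem:Levi}, each $\Fgn$ is a finite-dimensional reductive Lie algebra, so Littelmann's original theorem gives that $\BBn(\lambda)$, equipped with its own maps $\wt$ and $e_i,\,f_i,\,\ve_i,\,\vp_i$ for $i \in [n]$, is a normal $U_q(\Fgn)$-crystal.

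The next step is a compatibility check: for each $i \in I$ and each $n \in \BZ_{\ge 0}$ with $i \in [n]$, the Kashiwara operators $e_i,\,f_i$ defined on $\BB(\lambda)$ preserve the subset $\BBn(\lambda)$ and restrict there to the Kashiwara operators coming from the $U_q(\Fgn)$-crystal structure on $\BBn(\lambda)$. Both versions of $e_i,\,f_i$ are defined purely in terms of the function $H^{\pi}_i(t)=\pair{\pi(t)}{h_i}$, the minimum $m^{\pi}_i$, and a shift by $\alpha_i$; inspection of the formulas shows that $e_i\pi$ and $f_i\pi$ differ from $\pi$ only by replacing certain directions $\nu_u$ with $r_i\nu_u$ on designated subintervals. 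Since $\Wn\lambda$ is stable under $r_i$ for $i \in [n]$, the characterization of $\BBn(\lambda)$ given in Lemma~\ref{lem:BBn} forces $e_i\pi,\,f_i\pi \in \BBn(\lambda) \cup \{\bzero\}$ whenever $\pi \in \BBn(\lambda)$, and the agreement with the finite-rank operators is immediate because the recipe defining them is identical.

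With this compatibility in hand, each crystal axiom---finiteness of $\ve_i(\pi)$ and $\vp_i(\pi)$, the identity $\vp_i(\pi)-\ve_i(\pi)=\pair{\wt\pi}{h_i}$, the weight shifts $\wt(e_i\pi)=\wt\pi+\alpha_i$ and $\wt(f_i\pi)=\wt\pi-\alpha_i$, and the inverse relation $e_i\pi=\pi' \Longleftrightarrow f_i\pi'=\pi$---can be verified by fixing $i$, choosing $n$ large enough (via \eqref{eq:limBBn}) that $i \in [n]$ and all the elements involved lie in $\BBn(\lambda)$, and then invoking the same axiom in the normal $U_q(\Fgn)$-crystal $\BBn(\lambda)$. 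The main obstacle is therefore not in the finite-to-infinite rank passage, but in Littelmann's theorem itself: one needs the nontrivial facts that $e_i\pi,\,f_i\pi$ remain LS paths of shape $\lambda$ and that $\ve_i,\,\vp_i$ are finite, given by the predicted formulas in terms of $H^{\pi}_i$ and $m^{\pi}_i$. These inputs are exactly \cite[Lemma~4.5]{Lit-Ann} and \cite[Corollary~2 in \S4]{Lit-Ann} already cited in the excerpt, whose proofs depend only on the symmetrizable Kac-Moody setup and thus apply to $\Fg$ of types $B_\infty,\,C_\infty,\,D_\infty$ without change.
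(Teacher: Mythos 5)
Your proposal is correct, but it takes a different route from the paper, which offers no proof at all for this statement: the theorem is simply attributed to \cite[\S2 and \S4]{Lit-Ann}, backed by the blanket remark at the start of \S\ref{subsec:path} that all results of \cite{Lit-Inv}, \cite{Lit-Ann} used here ``remain valid in the case of infinite rank affine Lie algebras'' --- i.e., Littelmann's arguments are asserted to go through verbatim for any symmetrizable Kac-Moody algebra. Your finite-rank reduction via the exhaustion $\BB(\lambda)=\bigcup_{n\ge 0}\BBn(\lambda)$ of \eqref{eq:limBBn} is a legitimate alternative justification, and it is very much in the spirit of the paper's own technique elsewhere (the proofs of Proposition~\ref{prop:cb-conn} and Theorem~\ref{thm:isom} use exactly this kind of limit argument); its advantage is that it only invokes Littelmann's theorem for the finite-dimensional reductive algebras $\Fgn$, where it is classical, rather than requiring the reader to re-examine his proofs in infinite rank. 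One small logical point in your compatibility step: to conclude $e_{i}\pi\in\BBn(\lambda)\cup\{\bzero\}$ you should argue in the direction ``the formula for $e_{i}$ applied to $\pi\in\BBn(\lambda)$ coincides with the finite-rank operator, which lands in $\BBn(\lambda)\cup\{\bzero\}$ by Littelmann's finite-rank theorem, and $\BBn(\lambda)\subset\BB(\lambda)$ by Lemma~\ref{lem:BBn}'' --- invoking Lemma~\ref{lem:BBn} to ``force'' $e_{i}\pi$ into $\BBn(\lambda)$ presupposes $e_{i}\pi\in\BB(\lambda)$, which is part of what is being proved. With the argument run in that order, and with each crystal axiom checked for a fixed $i$ inside a sufficiently large $\BBn(\lambda)$ as you describe, the reduction is sound.
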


Let $J$ be a finite interval of $I=\BZ_{\ge 0}$; 
recall from Lemma~\ref{lem:BBn} that 
$\BB_{J}(\lambda)$ is a subset of $\BB(\lambda)$. 
We see from \cite[Corollary~2 in \S4]{Lit-Ann} that
if $\pi \in \BB_{J}(\lambda) \subset \BB(\lambda)$, 
then $e_{i}\pi,\,f_{i}\pi \in \BB_{J}(\lambda) \cup \{\bzero\}$ 
for all $i \in J$. Consequently, the set $\BB_{J}(\lambda)$, 
together with the restrictions to $\BB_{J}(\lambda)$ 
of the maps $\wt:\BB(\lambda) \rightarrow P$, 
$e_{i},\,f_{i}:\BB(\lambda) \rightarrow 
\BB(\lambda) \cup \{\bzero\}$ for $i \in J$, and 
$\ve_{i},\,\vp_{i}:\BB(\lambda) \rightarrow \BZ_{\ge 0}$
for $i \in J$, forms a $U_{q}(\Fg_{J})$-crystal. 
%
%
\begin{rem} \label{rem:LS-n}
It follows from Remark~\ref{rem:LS}\,(2) and Lemma~\ref{lem:BBn} that 
$\BB_{J}(\lambda)=\BB_{J}(w\lambda)$ for all $w \in W_{J}$. 
Hence we have $\BB_{J}(\lambda)=\BB_{J}(\lambda_{J})$. 
Also, we know from \cite[Corollary~6.4.27]{J} or 
\cite[Theorem~4.1]{Kas96} that $\BB_{J}(\lambda_{J})$ is 
isomorphic, as a $U_{q}(\Fg_{J})$-crystal, to 
the crystal basis of the finite-dimensional irreducible 
$U_{q}(\Fg_{J})$-module of highest weight $\lambda_{J}$. 
Combining this fact and Remark~\ref{rem:extmod-n} yields 
the following isomorphism of $U_{q}(\Fg_{J})$-crystals: 
%
%
\begin{equation} \label{eq:isom-n}
\CB_{J}(\lambda) \cong \CB_{J}(\lambda_{J}) \cong 
\BB_{J}(\lambda_{J})=\BB_{J}(\lambda).
\end{equation}
In addition, the element $\pi_{\lambda_{J}}$ is contained in 
$\BB_{J}(\lambda)=\BB_{J}(\lambda_{J})$ by Remark~\ref{rem:LS}\,(1) and 
Lemma~\ref{lem:BBn}, and it is the (unique) maximal element of 
weight $\lambda_{J}$ in $\BB_{J}(\lambda)=\BB_{J}(\lambda_{J})$. 
\end{rem}

%
\subsection{Isomorphism theorem.}
\label{subsec:isom}

The following theorem gives a combinatorial realization of the 
crystal basis $\CB(\lambda)$ of the extremal weight module $V(\lambda)$ of 
extremal weight $\lambda$ in a unified way that is independent 
of the level $L_{\lambda}$ of $\lambda \in P$; compare with 
the related results: 
\cite[Theorem~3.5]{Kw-Adv} (in type $A_{+\infty}$), 
\cite[Corollary~4.7 along with Remark~4.8]{Kw-Ep} (in type $A_{\infty}$), 
and also \cite[\S5]{Lec-TAMS} (in all types 
$A_{+\infty}$, $B_{\infty}$, $C_{\infty}$, $D_{\infty}$, 
but for $\lambda \in E$). 
%
%
\begin{thm} \label{thm:isom}
Let $\lambda \in P$ be a (not necessarily dominant)
integral weight. Then, the crystal basis 
$\CB(\lambda)$ of the extremal weight $U_{q}(\Fg)$-module $V(\lambda)$
of extremal weight $\lambda$ is isomorphic, 
as a $U_{q}(\Fg)$-crystal, to the crystal $\BB(\lambda)$ 
consisting of all LS paths of shape $\lambda$. 
\end{thm}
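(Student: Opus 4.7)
The strategy is to approximate both $\CB(\lambda)$ and $\BB(\lambda)$ by their finite-rank pieces and to glue the resulting isomorphisms. On the path side, \eqref{eq:limBBn} gives the exhaustion $\BB(\lambda)=\bigcup_{n\ge 0}\BBn(\lambda)$; on the module side, the Claim in the proof of Proposition~\ref{prop:cb-conn}, together with \eqref{eq:limCBn}, yields $\CB(\lambda)=\bigcup_{n\ge 0}\bigl(\CB(\lambda)\cap\CBn^{\lambda}\bigr)$ and $\CB(\lambda)\cap\CBn^{\lambda}\cong\CBn(\lambda)$ as $U_{q}(\Fgn)$-crystals. Combining this with the $U_{q}(\Fgn)$-crystal isomorphism $\CBn(\lambda)\cong\BBn(\lambda)$ of \eqref{eq:isom-n} produces, for each $n$, a $U_{q}(\Fgn)$-crystal isomorphism between $\BBn(\lambda)$ and $\CB(\lambda)\cap\CBn^{\lambda}$. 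The task is then to pin down these isomorphisms so that they fit together as $n$ grows, and to pass to the direct limit.

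To single out a canonical $\phi_{n}\colon\BBn(\lambda)\xrightarrow{\sim}\CB(\lambda)\cap\CBn^{\lambda}$, I would use a distinguished weight-$\lambda$ element on each side. The straight-line path $\pi_{\lambda}=(\lambda\,;\,0,\,1)$ lies in $\BBn(\lambda)$ for every $n$ (by Remark~\ref{rem:LS}\,(1) and Lemma~\ref{lem:BBn}); since $\lambda\in\Wn\lamn$ has multiplicity one as a weight in the finite-dimensional irreducible $U_{q}(\Fgn)$-module of highest weight $\lamn$, Remark~\ref{rem:LS-n} forces $\pi_{\lambda}$ to be the unique weight-$\lambda$ element of the connected $U_{q}(\Fgn)$-crystal $\BBn(\lambda)$. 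On the module side, $u_{\lambda}$ lies in $\CB(\lambda)\cap\CBn^{\lambda}$ (because $u_{\pm\infty}\in\CBn(\pm\infty)$), and by Proposition~\ref{prop:wtmult} it is the unique weight-$\lambda$ element of $\CB(\lambda)$. Because $\BBn(\lambda)$ is connected, any isomorphism obtained from the composition above must send $\pi_{\lambda}$ to $u_{\lambda}$, and conversely there is a unique $U_{q}(\Fgn)$-crystal isomorphism $\phi_{n}$ with $\phi_{n}(\pi_{\lambda})=u_{\lambda}$.

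The compatibility $\phi_{n+1}|_{\BBn(\lambda)}=\phi_{n}$ follows from the same uniqueness principle, once I show that $\phi_{n+1}\bigl(\BBn(\lambda)\bigr)\subseteq\CB(\lambda)\cap\CBn^{\lambda}$. For this, one checks that the Kashiwara operators $e_{i},\,f_{i}$ for $i\in[n]$ preserve $\CBn(\pm\infty)$ inside $\CB(\pm\infty)$ (as already observed in the proof of Proposition~\ref{prop:cb-conn}) and hence $\CBn^{\lambda}$ inside $\CBo^{\lambda}$, so that $\CB(\lambda)\cap\CBn^{\lambda}$ is exactly the $U_{q}(\Fgn)$-connected component of $u_{\lambda}$ in $\CB(\lambda)\cap\CBo^{\lambda}$; analogously, $\BBn(\lambda)$ is the $U_{q}(\Fgn)$-connected component of $\pi_{\lambda}$ in $\BBo(\lambda)$. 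Granted the compatibility, the $\phi_{n}$ assemble into a well-defined bijection $\phi\colon\BB(\lambda)\to\CB(\lambda)$, which intertwines $e_{i},\,f_{i}$ for every $i\in I$ because any such $i$ lies in $[n]$ for sufficiently large $n$, at which point the relevant operators on both sides coincide with the $U_{q}(\Fgn)$-crystal operators on the finite-rank pieces. The main obstacle is precisely the identification of these $U_{q}(\Fgn)$-connected components under the inclusions $[n]\subset[n+1]$; once that is established, the remainder is a routine direct-limit argument.
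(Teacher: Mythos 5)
Your proposal is correct and follows essentially the same route as the paper: both glue the finite-rank isomorphisms $\CB(\lambda)\cap\CBn^{\lambda}\cong\CBn(\lambda)\cong\BBn(\lambda)$ along the exhaustions \eqref{eq:limCBn} and \eqref{eq:limBBn}, using the connectedness of the finite-rank pieces and the fact that $u_{\lambda}$ and $\pi_{\lambda}$ are the unique weight-$\lambda$ elements to force compatibility. The paper verifies the compatibility by writing $b=Xu_{\lambda}$ and computing $\Phi_{n}(b)=X\pi_{\lambda}=\Phi_{n+1}(b)$ directly, whereas you invoke uniqueness of the normalized isomorphism; these are interchangeable forms of the same argument.
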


\begin{proof}
We use the notation in the proof of 
Proposition~\ref{prop:cb-conn}. It follows from 
the Claim in the proof of Proposition~\ref{prop:cb-conn} and 
\eqref{eq:isom-n} with $J=[n]$ that for each $n \in \BZ_{\ge 0}$, 
there exists an isomorphism
\begin{equation*}
\Phi_{n}:\CB(\lambda) \cap \CBn^{\lambda} 
\stackrel{\sim}{\rightarrow} \BBn(\lambda)
\end{equation*}
of $U_{q}(\Fgn)$-crystals. 
We show that the following diagram is 
commutative for all $n \in \BZ_{\ge 0}$ 
(see also \eqref{eq:limCBn} and \eqref{eq:limBBn}):
%
%
\begin{equation} \label{eq:CD-Bn}
\begin{CD}
\CB(\lambda) \cap \CBn^{\lambda} @>{\subset}>> 
\CB(\lambda) \cap \CBo^{\lambda} \\[3mm]
@V{\Phi_{n}}VV @VV{\Phi_{n+1}}V \\
\BBn(\lambda) @>{\subset}>> \BBo(\lambda);
\end{CD}
\end{equation}
this amounts to showing the equality 
$\Phi_{n+1}(b)=\Phi_{n}(b)$ for all 
$b \in \CB(\lambda) \cap \CBn^{\lambda}$. 
Now, observe that by the definition of 
$\CB(\lambda) \cap \CBn^{\lambda}$, 
the extremal element $u_{\lambda}=u_{\infty} \otimes 
t_{\lambda} \otimes u_{-\infty} \in \CB(\lambda)$ is 
contained in $\CB(\lambda) \cap \CBn^{\lambda}$ 
(and hence in $\CB(\lambda) \cap \CBo^{\lambda}$). 
Therefore, for each $b \in \CB(\lambda) \cap \CBn^{\lambda}$, 
there exists a monomial $X$ in the Kashiwara operators 
$e_{i}$ and $f_{i}$ for $i \in [n]$ such that $b=Xu_{\lambda}$, 
since the crystal graph of the $U_{q}(\Fgn)$-crystal 
$\CB(\lambda) \cap \CBn^{\lambda} \ (\cong \CBn(\lamn))$ 
is connected. 
Also, we see from Lemma~\ref{lem:BBn} that 
$\pi_{\lambda}$ is contained in $\BBn(\lambda)$ 
(and hence in $\BBo(\lambda)$), and 
from Remark~\ref{rem:LS-n} that 
$\BBn(\lambda)_{\lambda}=\BBo(\lambda)_{\lambda}=
\{\pi_{\lambda}\}$, which implies that 
$\Phi_{n}(u_{\lambda})=
\Phi_{n+1}(u_{\lambda})=\pi_{\lambda}$. 
Combining these, we have
\begin{equation*}
\Phi_{n}(b)=\Phi_{n}(Xu_{\lambda})=X\Phi_{n}(u_{\lambda})=
X\pi_{\lambda}=X\Phi_{n+1}(u_{\lambda})=
\Phi_{n+1}(Xu_{\lambda})=\Phi_{n+1}(b),
\end{equation*}
as desired. 

The commutative diagram \eqref{eq:CD-Bn} allows us to 
define a map $\Phi:\CB(\lambda) \rightarrow 
\BB(\lambda)$ as follows: for $b \in \CB(\lambda)$, 
\begin{equation*}
\Phi(b):=\Phi_{n}(b) \quad 
 \text{if $b \in \CB(\lambda) \cap \CBn^{\lambda}$ 
 for some $n \in \BZ_{\ge 0}$ (see \eqref{eq:limCBn})}. 
\end{equation*}
By using the definition, 
we can easily verify that the map 
$\Phi:\CB(\lambda) \rightarrow \BB(\lambda)$ is indeed 
an isomorphism of $U_{q}(\Fg)$-crystals. 
Thus we have proved the theorem. 
\end{proof}

The next corollary follows immediately from 
Theorem~\ref{thm:isom}, and 
Propositions~\ref{prop:cb-conn}, \ref{prop:wtmult}, 
\ref{prop:non-isom}. 
%
%
\begin{cor} \label{cor:isom}
{\rm (1)} For each $\lambda \in P$, 
the crystal $\BB(\lambda)$ consisting of 
all LS paths of shape $\lambda$ is a normal 
$U_{q}(\Fg)$-crystal whose crystal graph is connected. 

{\rm (2)} For each $\lambda \in P$ and $w \in W$, 
we have $\BB(\lambda)_{w\lambda}=
\bigl\{S_{w}\pi_{\lambda}\bigr\}=\bigl\{\pi_{w\lambda}\bigr\}$. 

{\rm (3)} Let $\lambda,\,\mu \in P$. Then, 
$\BB(\lambda) \cong \BB(\mu)$ as $U_{q}(\Fg)$-crystals 
if and only if $\lambda \in W\mu$.
\end{cor}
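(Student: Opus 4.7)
The plan is to deduce all three parts directly from the isomorphism $\Phi:\CB(\lambda) \stackrel{\sim}{\rightarrow} \BB(\lambda)$ of $U_{q}(\Fg)$-crystals constructed in Theorem~\ref{thm:isom}, transporting the known statements about $\CB(\lambda)$ through $\Phi$.

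For part (1), normality is a property preserved under isomorphism of $U_{q}(\Fg)$-crystals, and $\CB(\lambda)$ is normal by \cite[Proposition~8.2.2]{Kas94} (as recalled in \S\ref{subsec:cb-ext}). Connectedness of the crystal graph of $\BB(\lambda)$ follows likewise from Proposition~\ref{prop:cb-conn} via $\Phi$.

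For part (2), Proposition~\ref{prop:wtmult} gives $\CB(\lambda)_{w\lambda}=\{S_{w}u_{\lambda}\}$. Since $\Phi$ is an isomorphism of $U_{q}(\Fg)$-crystals, it commutes with the maps $S_{w}$ (which are defined purely in terms of the Kashiwara operators), and by the construction of $\Phi$ in the proof of Theorem~\ref{thm:isom} one has $\Phi(u_{\lambda})=\pi_{\lambda}$. Hence $\BB(\lambda)_{w\lambda}=\Phi(\CB(\lambda)_{w\lambda})=\{S_{w}\pi_{\lambda}\}$. To identify $S_{w}\pi_{\lambda}$ with $\pi_{w\lambda}$, I would either invoke Remark~\ref{rem:LS}\,(1), which says $\pi_{w\lambda} \in \BB(\lambda)$, together with the fact that $\pi_{w\lambda}$ has weight $w\lambda$ and the weight space $\BB(\lambda)_{w\lambda}$ is a singleton (already established), or give a direct verification using the definition \eqref{eq:si} of $S_{i}$ applied to a straight-line path.

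For part (3), Proposition~\ref{prop:non-isom} asserts that $\CB(\lambda) \cong \CB(\mu)$ as $U_{q}(\Fg)$-crystals if and only if $\lambda \in W\mu$. Since Theorem~\ref{thm:isom} furnishes $\CB(\lambda) \cong \BB(\lambda)$ and $\CB(\mu) \cong \BB(\mu)$, the equivalence transfers immediately to $\BB(\lambda) \cong \BB(\mu)$ iff $\lambda \in W\mu$. None of the three parts presents a real obstacle; the only mildly subtle point is the identification $S_{w}\pi_{\lambda}=\pi_{w\lambda}$ in part (2), which is where I would be most careful.
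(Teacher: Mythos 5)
Your proposal is correct and is essentially the paper's own argument: the paper derives the corollary by transporting Propositions~\ref{prop:cb-conn}, \ref{prop:wtmult}, and \ref{prop:non-isom} through the isomorphism of Theorem~\ref{thm:isom}, exactly as you do. Your handling of the identification $S_{w}\pi_{\lambda}=\pi_{w\lambda}$ (via $\pi_{w\lambda}\in\BB(\lambda)_{w\lambda}$ and the singleton weight space) is the right way to close the one point the paper leaves implicit.
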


Note that the tensor product of normal $U_{q}(\Fg)$-crystals is 
also a normal $U_{q}(\Fg)$-crystal. Hence, 
by Corollary~\ref{cor:isom}\,(1), 
the tensor product $\BB(\lambda) \otimes \BB(\mu)$ 
for $\lambda,\,\mu \in P$ is a normal $U_{q}(\Fg)$-crystal. 
The following proposition plays a key role 
in the next section. 
%
%
\begin{prop} \label{prop:isom}
Let $\lambda,\,\mu \in P$, and suppose that 
$\pi \otimes \eta \in \BB(\lambda) \otimes \BB(\mu)$ is 
an extremal element of weight $\nu \in P$.
If we denote by $\BB(\pi \otimes \eta)$ 
the connected component of 
$\BB(\lambda) \otimes \BB(\mu)$ 
containing the extremal element $\pi \otimes \eta$, then 
$\BB(\pi \otimes \eta)$ is isomorphic, as a $U_{q}(\Fg)$-crystal, 
to $\BB(\nu)$ (and hence to $\CB(\nu)$). 
\end{prop}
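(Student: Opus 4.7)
The plan is to realize $\BB(\pi \otimes \eta)$ as an inductive limit of its restrictions to the finite-rank subalgebras $\Fgn$, and to identify each restriction with the corresponding restriction of $\BB(\nu)$.

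First, using \eqref{eq:limBBn}, I would pick $n_0 \in \BZ_{\ge 3}$ large enough so that $\pi \in \BBn(\lambda)$ and $\eta \in \BBn(\mu)$ for every $n \ge n_0$; then $\BBn(\lambda) \otimes \BBn(\mu)$ is a normal $U_q(\Fgn)$-subcrystal of $\BB(\lambda) \otimes \BB(\mu)$. Let $C_n$ denote the $U_q(\Fgn)$-connected component of $\pi \otimes \eta$ in $\BBn(\lambda) \otimes \BBn(\mu)$. By Remark~\ref{rem:ext}\,(1) the element $\pi \otimes \eta$ is $[n]$-extremal for every $n \ge n_0$. Picking $w_n \in \Wn$ with $w_n \cdot \nu = \nun$, the element $S_{w_n}(\pi \otimes \eta) \in C_n$ is $[n]$-extremal of $[n]$-dominant weight $\nun$, hence $[n]$-maximal by Definition~\ref{dfn:extremal}\,(1). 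Since $C_n$ is a connected normal $U_q(\Fgn)$-crystal sitting inside the tensor product of two finite crystals, it is isomorphic to the crystal basis of the finite-dimensional irreducible $U_q(\Fgn)$-module of highest weight $\nun$; combining this with Remark~\ref{rem:LS-n} (in particular \eqref{eq:isom-n}) yields a unique isomorphism
\[
\Psi_n : C_n \xrightarrow{\sim} \BBn(\nu) = \BBn(\nun)
\]
of $U_q(\Fgn)$-crystals sending $S_{w_n}(\pi \otimes \eta) \mapsto \pi_{\nun}$. Applying $S_{w_n}^{-1}$ to both sides shows equivalently that $\Psi_n$ is the unique isomorphism sending $\pi \otimes \eta \mapsto \pi_\nu$.

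Next, I would establish compatibility across $n$. Since $\BBn(\lambda) \otimes \BBn(\mu) \subset \BBo(\lambda) \otimes \BBo(\mu)$ and the Kashiwara operators $e_i,\,f_i$ for $i \in [n]$ are restrictions of those for $i \in [n+1]$, every element reachable from $\pi \otimes \eta$ inside $C_n$ also lies in $C_{n+1}$, so $C_n \subset C_{n+1}$. Both $\Psi_{n+1}|_{C_n}$ and $\Psi_n$ are $U_q(\Fgn)$-crystal morphisms from $C_n$ to $\BBn(\nu) \subset \BBo(\nu)$ sending $\pi \otimes \eta$ to $\pi_\nu$; the $U_q(\Fgn)$-connectedness of $C_n$ then forces $\Psi_{n+1}|_{C_n} = \Psi_n$. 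Because every monomial in the $e_i,\,f_i$ for $i \in I$ involves only finitely many indices, $\BB(\pi \otimes \eta) = \bigcup_{n \ge n_0} C_n$, while $\BB(\nu) = \bigcup_{n \ge n_0} \BBn(\nu)$ by \eqref{eq:limBBn}. The compatible family $\{\Psi_n\}$ therefore glues to a bijection $\Psi : \BB(\pi \otimes \eta) \to \BB(\nu)$ intertwining $\wt$, $e_i$, $f_i$, $\ve_i$, $\vp_i$ for all $i \in I$; combined with Theorem~\ref{thm:isom}, this yields $\BB(\pi \otimes \eta) \cong \BB(\nu) \cong \CB(\nu)$.

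The main obstacle is the compatibility step: the $[n]$-maximal element of $C_n$ has weight $\nun$, which differs from the $[n+1]$-maximal weight $\nuo$ of $C_{n+1}$, so any normalization tethered to highest-weight vectors would fail to respect the inclusions $C_n \subset C_{n+1}$. The key idea that rescues the argument is to normalize $\Psi_n$ instead by its value on the single element $\pi \otimes \eta$ (sent to $\pi_\nu$), which is stable and lies in every $C_n$; uniqueness of crystal morphisms on a connected crystal, once one vertex is pinned down, then delivers the compatibility automatically.
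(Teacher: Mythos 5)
Your proposal is correct and follows essentially the same route as the paper: restrict to the finite-rank subcrystals, use $[n]$-extremality plus $[n]$-dominance of $\nun$ to produce an $[n]$-maximal element identifying the $U_q(\Fgn)$-component with $\BBn(\nu)$, and glue the resulting isomorphisms over $n$. Your normalization of $\Psi_n$ by $\pi\otimes\eta\mapsto\pi_{\nu}$ is exactly the mechanism the paper invokes when it says the compatibility is proved ``in exactly the same way as in the proof of Theorem~\ref{thm:isom}.''
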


\begin{proof}
For $n \in \BZ_{\ge 0}$, 
let $\BBn(\pi \otimes \eta)$ denote the subset of $\BB(\pi \otimes \eta)$ 
consisting of all elements of the form: $X(\pi \otimes \eta)$ for some 
monomial $X$ in the Kashiwara operators $e_{i}$ and $f_{i}$ 
for $i \in [n]$; the crystal graph of the $U_{q}(\Fgn)$-crystal 
$\BBn(\pi \otimes \eta)$ is clearly connected. 
We claim that there exists an isomorphism 
$\Psi_{n}:\BBn(\pi \otimes \eta) \stackrel{\sim}{\rightarrow} 
\BBn(\nu)$ of $U_{q}(\Fgn)$-crystals. 
Indeed, let $w \in \Wn$ be such that $w\nu=\nun$. 
We deduce from the definitions of 
$S_{w}$ and $\BBn(\pi \otimes \eta)$ that 
$S_{w}(\pi \otimes \eta)$ 
is contained in $\BBn(\pi \otimes \eta)$. 
In addition, since $\pi \otimes \eta$ is extremal by assumption, 
it is $[n]$-extremal. Consequently, the element 
$S_{w}(\pi \otimes \eta) \in \BBn(\pi \otimes \eta)$ 
is an $[n]$-extremal element of weight $w\nu=\nun$. 
Furthermore, since $\nun$ is $[n]$-dominant by definition, 
we see from Definition~\ref{dfn:extremal}\,(1) that 
$S_{w}(\pi \otimes \eta)$ is an $[n]$-maximal element 
of $\BBn(\pi \otimes \eta)$.
Also, because $\BB(\lambda) \otimes \BB(\mu)$ 
is a normal $U_{q}(\Fg)$-crystal, and because the crystal graph of 
the $U_{q}(\Fgn)$-crystal $\BBn(\pi \otimes \eta)$ is connected, 
we find that $\BBn(\pi \otimes \eta)$ is isomorphic, as a $U_{q}(\Fgn)$-crystal, 
to the crystal basis of a finite-dimensional irreducible 
$U_{q}(\Fgn)$-module. 
From these facts, along with Remark~\ref{rem:LS-n}, 
we conclude that $\BBn(\pi \otimes \eta) \cong 
\BBn(\nun)=\BBn(\nu)$ as $U_{q}(\Fgn)$-crystals.

Now, it is obvious from the definitions that 
%
%
\begin{equation} \label{eq:limBn}
\BBn(\pi \otimes \eta) \subset \BBo(\pi \otimes \eta) \quad 
\text{for all $n \in \BZ_{\ge 0}$, and} \quad
\BB(\pi \otimes \eta)=
\bigcup_{n \ge 0} \BBn(\pi \otimes \eta).
\end{equation}
Furthermore, in exactly the same way as 
in the proof of Theorem~\ref{thm:isom}, 
we obtain the following commutative diagram: 
\begin{equation*}
\begin{CD}
\BBn(\pi \otimes \eta) @>{\subset}>> 
\BBo(\pi \otimes \eta) \\[3mm]
@V{\Psi_{n}}VV @VV{\Psi_{n+1}}V \\
\BBn(\nu) @>{\subset}>> \BBo(\nu).
\end{CD}
\end{equation*}
This commutative diagram allows us to define a map 
$\Psi:\BB(\pi \otimes \eta) \rightarrow \BB(\nu)$ as 
follows: for $b \in \BB(\pi \otimes \eta)$, 
\begin{equation*}
\Psi(b):=\Psi_{n}(b) \quad 
 \text{if $b \in \BBn(\pi \otimes \eta)$
 for some $n \in \BZ_{\ge 0}$}. 
\end{equation*}
By using the definition, 
we can easily verify that the map $\Psi$ 
is an isomorphism of $U_{q}(\Fg)$-crystals. 
Thus we have proved the proposition. 
\end{proof}

%
\section{Decomposition of tensor products into connected components.}
\label{sec:decomp}

In this section, we consider the decomposition 
(into connected components) of the tensor product 
$\BB(\lambda) \otimes \BB(\mu)$ for $\lambda,\,\mu \in P$
with $L_{\lambda},\,L_{\mu} \ge 0$; in fact, it turns out 
that each connected component is isomorphic to $\BB(\nu)$ 
for some $\nu \in P$. Our main aim is to give an explicit 
description of the multiplicity $m_{\lambda,\,\mu}^{\nu}$ 
of a connected component $\BB(\nu)$ for $\nu \in P$ 
in this decomposition. It should be mentioned that 
our results in this section can be regarded as extensions 
of the corresponding results in \cite{Kw-Adv} (in type $A_{+\infty}$)
and \cite{Kw-Ep} (in type $A_{\infty}$) to the cases of type 
$B_{\infty}$, $C_{\infty}$, $D_{\infty}$; see also \cite{Lec-TAMS}
for the case $\lambda,\,\mu \in E$ (in all types $A_{+\infty}$, 
$B_{\infty}$, $C_{\infty}$, $D_{\infty}$). 

%
\subsection{The case $\lambda,\,\mu \in E$.}
\label{subsec:EE}
A partition is, by definition, a weakly increasing sequence
$\rho=(\co{\rho}{0} \ge \co{\rho}{1} \ge \co{\rho}{2} \ge \cdots)$
of nonnegative integers such that 
$\co{\rho}{j}=0$ for all but finitely many $j \in \BZ_{\ge 0}$; 
we call $\co{\rho}{j}$ the $j$-th part of $\rho$ for $j \in \BZ_{\ge 0}$. 
Let $\Par$ denote the set of all partitions. 
For $\rho=(\co{\rho}{0} \ge \co{\rho}{1} \ge \co{\rho}{2} \ge \cdots) \in \Par$, 
we define the length of $\rho$, denoted by $\ell(\rho)$, 
to be the number of nonzero parts of $\rho$, and 
define the size of $\rho$, denoted by $|\rho|$, 
to be the sum of all parts of $\rho$, 
i.e., $|\rho|:=\sum_{j \in \BZ_{\ge 0}}\co{\rho}{j}$. 
Also, for $\rho,\,\kappa,\,\omega \in \Par$, 
let $\LR_{\rho,\,\kappa}^{\omega}$ denote 
the Littlewood-Richardson coefficient for $\rho$, $\kappa$, and $\omega$ 
(see, for example, \cite[Chapter~5]{Fulton} or \cite[\S2]{Ko98}); 
it is well-known that $\LR_{\rho,\,\kappa}^{\omega} \ne 0$ only if 
$|\omega|=|\rho|+|\kappa|$. 

Now, let $\EW$ denote the subset of $E$ consisting of all elements $\nu$ 
such that the sequence $(\con{0},\,\con{1},\,\con{2},\,\dots)$ is a partition; 
observe that if $\nu \in \EW$, then 
$\pair{\nu}{h_{i}} \le 0$ for all $i \in \BZ_{\ge 1}$ 
(see \eqref{eq:simple}), and hence $\nu$ is 
$[1,\,n]$-antidominant for all $n \in \BZ_{\ge 1}$.
We identify the set $\EW$ with the set $\Par$ of all partitions 
via the correspondence: $\nu \mapsto (\con{0},\,\con{1},\,\con{2},\,\dots)$; 
under this correspondence, we have 
$|\nu|=\sum_{j \in \BZ_{\ge 0}} |\con{j}|$, and 
$\ell(\nu)=\#\Supp (\nu)=\max \Supp(\nu)+1$ for $\nu \in \EW$. 
Furthermore, using \eqref{eq:binf-r}--\eqref{eq:dinf-r}, 
we can easily verify that for each $\lambda \in E$, 
there exists a unique element $\lambda_{\dagger} \in W\lambda$ 
such that $\lambda_{\dagger} \in \EW \ (\cong \Par)$. 
In fact, the corresponding partition 
$(\co{\lambda_{\dagger}}{0},\,\co{\lambda_{\dagger}}{1},\,
  \co{\lambda_{\dagger}}{2},\,\dots) \in \Par$ 
is obtained by arranging the sequence 
$(|\col{0}|,\,|\col{1}|,\,|\col{2}|,\,\dots)$ of 
nonnegative integers in weakly increasing order. 
Thus, $\EW \ (\cong \Par)$ is a complete set of 
representatives for $W$-orbits in $E$. 
It follows from Corollary~\ref{cor:isom}\,(3) that
if $\nu \ne \nu'$ for $\nu,\,\nu' \in \EW$, 
then $\BB(\nu) \not\cong \BB(\nu')$ 
as $U_{q}(\Fg)$-crystals. 

The following is the main result of this subsection 
(cf. \cite[Corollary~3.2.5 and Remark following Theorem~4.1.6]{Lec-TAMS}, 
and also \cite[Theorem~4.10]{Kw-Adv} in type $A_{+\infty}$, 
\cite[Proposition~4.9]{Kw-Ep} in type $A_{\infty}$). 
%
%
\begin{thm} \label{thm:EE}
Let $\lambda,\,\mu \in E$. Then, we have the following 
decomposition into connected components\,{\rm:}
%
%
\begin{equation} \label{eq:EE}
\BB(\lambda) \otimes \BB(\mu) = 
 \bigoplus_{\nu \in \EW} 
 \BB(\nu)^{\oplus m_{\lambda,\,\mu}^{\nu}}
\quad \text{\rm as $U_{q}(\Fg)$-crystals},
\end{equation}
where for each $\nu \in \EW \ (\cong \Par)$, 
the multiplicity $m_{\lambda,\,\mu}^{\nu}$ is equal to 
the Littlewood-Richardson coefficient 
$\LR_{\lambda_{\dagger},\,\mu_{\dagger}}^{\nu}$ 
for the partitions $\lambda_{\dagger}$, 
$\mu_{\dagger}$, and $\nu$.
\end{thm}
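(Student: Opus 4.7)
The plan is to identify the multiplicity $m_{\lambda,\,\mu}^{\nu}$ in \eqref{eq:EE} with the number of extremal elements of weight $\nu$ in $\BB(\lambda) \otimes \BB(\mu)$, and then compute this count by reducing to the finite-rank subalgebra $\Fgn$ for $n$ sufficiently large, where Koike's stable tensor product multiplicity formulas (\cite{Ko97}, \cite{Ko98}) yield Littlewood-Richardson coefficients.

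First, I would establish the general form of the decomposition. Since $\lambda, \mu \in E$, Remark~\ref{rem:wt} gives that every element of $\BB(\lambda) \otimes \BB(\mu)$ has weight in $E$, and by \eqref{eq:limBBn} each element lies in $\BBn(\lambda) \otimes \BBn(\mu)$ for some $n$, which is a normal $U_{q}(\Fgn)$-crystal (via Remark~\ref{rem:LS-n} and the tensor product property). Hence its $U_{q}(\Fgn)$-connected component contains an $[n]$-maximal element, which is $[n]$-extremal by Remark~\ref{rem:ext}\,(2). For $n$ in Koike's stable range for $(\lambda_{\dagger}, \mu_{\dagger})$, this $[n]$-maximal element remains $[m]$-maximal for every $m \geq n$ by stability of Koike's decomposition, and is therefore extremal by Remark~\ref{rem:ext}\,(1). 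Consequently every connected component of $\BB(\lambda) \otimes \BB(\mu)$ contains an extremal element, and by Proposition~\ref{prop:isom} it is isomorphic to $\BB(\nu)$ for its weight $\nu \in E$; applying the Weyl group action, we may assume $\nu \in \EW$, which the paper shows is a fundamental domain for $W$ on $E$, and Corollary~\ref{cor:isom}\,(3) separates distinct $\nu \in \EW$. This yields the decomposition \eqref{eq:EE}.

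Next, I would reduce counting the multiplicity $m_{\lambda,\,\mu}^{\nu}$ to counting extremal elements. For $\xi \in \EW$, Corollary~\ref{cor:isom}\,(2) shows that a weight-$\nu$ element of $\BB(\xi)$ exists iff $\nu \in W\xi$, in which case it is unique and extremal (namely $\pi_{\nu} = S_{w}\pi_{\xi}$ for any $w$ with $w\xi = \nu$). Since $\EW$ is a fundamental domain, $\nu \in W\xi$ forces $\xi = \nu$. Hence the multiplicity $m_{\lambda,\,\mu}^{\nu}$ equals the total number of extremal elements of weight $\nu$ in $\BB(\lambda) \otimes \BB(\mu)$.

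Finally, I would compute this count in finite rank. Fix $n \in \BZ_{\geq 3}$ large enough that $\Supp(\lambda_{\dagger}) \cup \Supp(\mu_{\dagger}) \cup \Supp(\nu) \subsetneqq [n]$ and $n$ lies in Koike's stable range. Any extremal element of weight $\nu$ in $\BB(\lambda) \otimes \BB(\mu)$ lies in $\BBn(\lambda) \otimes \BBn(\mu)$ by \eqref{eq:limBBn}; applying $S_{w}$ with $w \in \Wn$ and $w\nu = \nu_{[n]}$ (Lemma~\ref{lem:lamn}) yields an $[n]$-extremal element of $[n]$-dominant weight $\nu_{[n]}$, hence an $[n]$-maximal element by Definition~\ref{dfn:extremal}\,(1). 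Conversely, in the stable range every such $[n]$-maximal element is $[m]$-maximal for all $m \geq n$, hence extremal (Remark~\ref{rem:ext}\,(1)). Via the isomorphism $\BBn(\lambda) \otimes \BBn(\mu) \cong \CBn(\lamn) \otimes \CBn(\mun)$ of Remark~\ref{rem:LS-n}, this count of $[n]$-maximal elements of weight $\nu_{[n]}$ equals the multiplicity of the finite-dimensional irreducible $U_{q}(\Fgn)$-module of highest weight $\nu_{[n]}$ in $V_{[n]}(\lamn) \otimes V_{[n]}(\mun)$, which by Koike's stable tensor product formulas for types $B_{n+1}$, $C_{n+1}$, $D_{n+1}$ equals $\LR_{\lambda_{\dagger},\,\mu_{\dagger}}^{\nu}$. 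The main obstacle will be rigorously controlling Koike's stable range and matching $[n]$-maximal elements of the finite-rank tensor product bijectively with extremal elements of the infinite crystal, so that the stabilized multiplicity is precisely the Littlewood-Richardson coefficient.
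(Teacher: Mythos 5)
Your reduction of the multiplicity to the number of extremal elements of weight $\nu$ is sound and matches the paper, but the finite-rank computation you propose --- counting $[n]$-maximal elements for the type $B/C/D$ subalgebra $\Fgn$ and invoking Koike's stable formulas --- breaks at the stability step. An $[n]$-maximal element of a normal crystal has $[n]$-dominant weight, and for $\nu\in\EW$, $\nu\neq 0$, the weight $\nun$ has its last nonzero (positive) coordinate at position $n$ (Lemma~\ref{lem:lamn}), so $\pair{\nun}{h_{n+1}}<0$ and the element cannot be $[n+1]$-maximal; a fortiori it is not ``$[m]$-maximal for every $m\ge n$'', which would make it a maximal element of $\BB(\lambda)\otimes\BB(\mu)$ --- and no such element exists in a level-zero component $\BB(\nu)$ with $\nu\neq 0$ (the crystal graph of $\BB(\eps_{0})$ is a chain unbounded in both directions). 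The converse you also need, ``$[n]$-maximal $\Rightarrow$ extremal'', is false as well: in type $C_{\infty}$ with $\lambda=\mu=\eps_{0}$, the crystal $\BBn(\eps_{0})\otimes\BBn(\eps_{0})$ has three $[n]$-maximal elements, of weights $2\eps_{0}$, $\eps_{0}+\eps_{1}$ and $0$ (matching $V\otimes V=S^{2}V\oplus\wedge^{2}V$ for $Sp_{2n+2}$), while $\BB(\eps_{0})\otimes\BB(\eps_{0})$ has only two connected components; the weight-$0$ maximal element is $[n]$-extremal but not $[n+1]$-extremal. This is exactly the phenomenon that makes the stable $B/C/D$ multiplicities equal to Newell--Littlewood numbers rather than plain Littlewood--Richardson coefficients: constituents with $|\nu|<|\lambda|+|\mu|$ persist in every finite rank but contribute no component of the infinite-rank crystal. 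Your proposal neither rules these out nor supplies the bijection between extremal elements and finite-rank maximal elements that your count presupposes.

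The paper avoids both difficulties. Existence of an extremal element in each component (Proposition~\ref{prop:EE-ext}) is proved by maximizing the $W$-invariant norm $(\wt b,\wt b)$, which is bounded by Cauchy--Schwarz, and then arguing as in Kashiwara. The count is then performed not for $\Fgn$ but for the type-$A$ subalgebra $\Fgm{1,\,m}$: since $\nu\in\EW$ is $[1,\,m]$-antidominant for every $m$ simultaneously, Proposition~\ref{prop:EE-key}\,(2),(3) identifies the extremal elements of weight $\nu$ with the $[1,\,m]$-minimal elements of $\BBm{1,\,m}(\lambda)\otimes\BBm{1,\,m}(\mu)$ once $m\ge|\lambda|+|\mu|$ (these \emph{are} stable in $m$, unlike $[n]$-maximal elements), and a separate induction on $|\mu|$ in Proposition~\ref{prop:EE-key}\,(1) shows that only $|\nu|=|\lambda|+|\mu|$ occurs. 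The type-$A$ tensor product multiplicity is then literally $\LR_{\lambda_{\dagger},\,\mu_{\dagger}}^{\nu}$, with no stable-range analysis of $B/C/D$ branching. Salvaging your route would require proving an analogue of Proposition~\ref{prop:DE-bij} (that the relevant maximal-element counts are preserved from rank $n$ to $n+1$), which is the substantially harder argument the paper reserves for the case $\lambda\in P_{+}$, $\mu\in E$.
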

%
%
\begin{rem} \label{rem:EE}
Because 
$\LR_{\lambda_{\dagger},\,\mu_{\dagger}}^{\nu} \ne 0$ 
only if $|\nu|=|\lambda_{\dagger}|+|\mu_{\dagger}|$ as noted above, 
it follows that the total number of connected components of 
$\BB(\lambda) \otimes \BB(\mu)$ is finite 
for $\lambda,\,\mu \in E$. 
\end{rem}

Before proving Theorem~\ref{thm:EE}, 
we need the following proposition. 


\begin{prop} \label{prop:EE-ext}
Keep the setting of Theorem~\ref{thm:EE}. 
Each connected component of 
$\BB(\lambda) \otimes \BB(\mu)$ 
contains an extremal element.
\end{prop}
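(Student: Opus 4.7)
The plan is to exploit the fact that, restricted to each $U_{q}(\Fgn)$, the tensor product $\BB(\lambda)\otimes\BB(\mu)$ decomposes into finite-dimensional irreducible pieces, and to extract from this an element of $\CC$ that is $[n]$-extremal for all sufficiently large $n$; such an element is extremal by Remark~\ref{rem:ext}\,(1).

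First, I would fix an arbitrary $b_{0} \in \CC$ and choose $n_{0} \in \BZ_{\ge 0}$ large enough that $b_{0} \in \BBm{n_{0}}(\lambda) \otimes \BBm{n_{0}}(\mu)$ and $\Supp(\lambda),\,\Supp(\mu) \subseteq [0,\,n_{0}]$; this is possible by \eqref{eq:limBBn} together with $\lambda,\,\mu \in E$. For each $n \ge n_{0}$, I would let $\CC_{n}$ denote the $U_{q}(\Fgn)$-connected component of $b_{0}$ in $\BBn(\lambda) \otimes \BBn(\mu)$. Since $\BBn(\lambda)$ and $\BBn(\mu)$ are finite (Remark~\ref{rem:LS-n}) and their tensor product is a normal $U_{q}(\Fgn)$-crystal, $\CC_{n}$ is isomorphic to the crystal basis of a finite-dimensional irreducible $U_{q}(\Fgn)$-module. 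Its unique $[n]$-maximal element $b_{n} \in \CC_{n}$ has $[n]$-dominant weight $\nu_{n} \in E$, is $[n]$-extremal by Remark~\ref{rem:ext}\,(2), and lies in $\CC_{n} \subseteq \CC$.

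Next, I would analyze the sequence $(\nu_{n})_{n \ge n_{0}}$. By Remark~\ref{rem:n-dom} combined with the level-zero condition, $\nu_{n}$ has support contained in $[n-p_{n}+1,\,n]$ with $p_{n} = \#\Supp(\nu_{n})$, and its nonzero coordinates form a weakly increasing sequence (as in Lemma~\ref{lem:lamn}). A direct estimate using $\wt\pi = (1/N)\sum_{M} \nu_{M}$ from Remark~\ref{rem:wt}, together with the fact that the $W$-action preserves $\sum_{j}|\co{\lambda}{j}|$ on $W\lambda$, yields $\sum_{j}|\co{\nu_{n}}{j}| \le |\lambda|+|\mu|$. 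Hence the partition $\rho_{n}$ obtained by arranging the nonzero $|\co{\nu_{n}}{j}|$ in weakly decreasing order takes values in the finite set of partitions of size at most $|\lambda|+|\mu|$. Since $\CC_{n} \subseteq \CC_{n+1}$ forces $(\nu_{n})_{[n+1]} \le \nu_{n+1}$ in $[n+1]$-dominance (with $\rho((\nu_{n})_{[n+1]}) = \rho_{n}$ by the argument of Lemma~\ref{lem:lamn}), I expect the sequence $\rho_{n}$ to be weakly increasing in the dominance order on partitions, hence to stabilize at some partition $\rho^{\ast}$ for $n \ge N$.

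Finally, I would verify that $b_{N}$ is the sought extremal element. For every $m \ge N$, the weights $\nu_{N}$ and $\nu_{m}$ share the underlying partition $\rho^{\ast}$ but have their nonzero support-blocks located at $[N-p^{\ast}+1,\,N]$ and $[m-p^{\ast}+1,\,m]$ respectively; an explicit product $w_{N,m} \in W_{[m]}$ of simple reflections effecting this shift satisfies $w_{N,m}\,\nu_{m} = \nu_{N}$. Consequently $\nu_{N}$ is an extremal weight of the finite-dimensional irreducible $U_{q}(\Fgm{m})$-module realized by $\CC_{m}$, the weight space $(\CC_{m})_{\nu_{N}}$ is one-dimensional, and the only element therein, namely $b_{N}$, is $[m]$-extremal. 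Applying Remark~\ref{rem:ext}\,(1) then yields that $b_{N}$ is extremal, completing the proof. The main obstacle I anticipate is the stabilization step: establishing the monotonicity $\rho_{n} \le \rho_{n+1}$ requires translating the weight-lattice inequality $(\nu_{n})_{[n+1]} \le \nu_{n+1}$ into a partition inequality, which is delicate for types $B_{\infty}$, $C_{\infty}$, $D_{\infty}$ (particularly for type $D_{\infty}$, where $\co{\nu}{0}$ enters the dominance chain via its absolute value).
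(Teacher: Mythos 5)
Your argument is correct in outline, but it takes a genuinely different route from the paper's. The paper's proof is a maximization argument: it shows $(\wt\pi,\wt\pi)\le(\lambda,\lambda)$ for every $\pi\in\BB(\lambda)$ by Cauchy--Schwarz and $W$-invariance of $(\cdot\,,\,\cdot)$, deduces that $(\wt b,\wt b)$ is bounded on $\BB(\lambda)\otimes\BB(\mu)$, picks $b_{0}$ maximizing it on the given component, and invokes the argument of \cite[\S9.3]{Kas94} to conclude that such a $b_{0}$ is extremal. You instead approximate by the finite-rank subalgebras $\Fgn$, take the $[n]$-maximal element $b_{n}$ of the $U_{q}(\Fgn)$-component of a fixed $b_{0}$, and stabilize the associated partitions $\rho_{n}$. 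Your one flagged worry --- upgrading $(\nu_{n})_{[n+1]}\le\nu_{n+1}$ to $\rho_{n}\preceq\rho_{n+1}$ --- does go through: Lemma~\ref{lem:lamn} forces the $[n+1]$-dominant representative to have vanishing $0$-th coordinate and right-justified increasing support (which defuses the type $D_{\infty}$ issue with $\abs{\con{0}}$), and then the condition $\nu_{n+1}-(\nu_{n})_{[n+1]}\in\sum_{i\in[n+1]}\BZ_{\ge 0}\alpha_{i}$ translates, via the partial sums $\sum_{j\ge k}$ and \eqref{eq:simple}, exactly into dominance of the partitions (with $|\rho_{n}|$ weakly increasing and bounded by $|\lambda|+|\mu|$, so the sequence is eventually constant in a finite poset). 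Two small points to make explicit in a write-up: you need $m$ large enough that $\nu_{N}$ has a zero coordinate inside $[m]$ so that the $W$-conjugacy $\nu_{N}\in W\nu_{m}$ can be realized inside $W_{[m]}$ in type $D_{\infty}$ (automatic here since $\Supp(\nu_{N})\subset[N]\subsetneq[m]$), and you should say why $b_{N}$ is the element of the singleton $(\CC_{m})_{\nu_{N}}$ (it lies in $\CC_{N}\subset\CC_{m}$ and has that weight). What each approach buys: the paper's is shorter and type-independent but outsources the key step to Kashiwara's norm-maximization argument; yours is self-contained relative to the paper's lemmas, produces the extremal element and its weight explicitly, and in effect anticipates part of Proposition~\ref{prop:EE-key}, at the cost of the dominance-order bookkeeping.
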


\begin{proof}
Recall from Remark~\ref{rem:wt} that 
if $\pi \in \BB(\lambda)$, then $\wt \pi=\pi(1) \in E$. 
First we claim that 
\begin{equation*}
(\wt \pi,\,\wt \pi) \le (\lambda,\,\lambda)
\quad \text{for all $\pi \in \BB(\lambda)$}.
\end{equation*}
Indeed, define $N=N_{\lambda}$ as in Remark~\ref{rem:lcm}, and 
write $\pi \in \BB(\lambda)$ as: 
$\pi=(\nu_{1},\,\nu_{2},\,\dots,\,\nu_{N})$ for some 
$\nu_{1},\,\nu_{2},\,\dots,\,\nu_{N} \in W\lambda \subset E$;
note that $\wt \pi=(1/N)\sum_{M=1}^{N}\nu_{M}$ by \eqref{eq:wtN}.
Since $(\cdot\,,\,\cdot)$ is positive definite on 
$\bigoplus_{j \in \BZ_{\ge 0}} \BR\eps_{j}$, 
it follows from the Cauchy-Schwarz inequality that 
\begin{equation*}
(\wt \pi,\,\wt \pi) = \frac{1}{N^{2}}
\left(
 \sum_{M=1}^{N} \nu_{M},\ 
 \sum_{M=1}^{N} \nu_{M}
\right)
\le \frac{1}{N^{2}}
\left(
 \sum_{M=1}^{N} (\nu_{M},\,\nu_{M})^{\frac{1}{2}}
\right)^{2}.
\end{equation*}
In addition, 
since $(\cdot\,,\,\cdot)$ is $W$-invariant, 
we have 
\begin{equation*}
\frac{1}{N^{2}}
\left(
 \sum_{M=1}^{N} (\nu_{M},\,\nu_{M})^{\frac{1}{2}}
\right)^{2}=
\frac{1}{N^{2}}
\left(
 \sum_{M=1}^{N} (\lambda,\,\lambda)^{\frac{1}{2}}
\right)^{2}=(\lambda,\,\lambda). 
\end{equation*}
Combining these, we obtain 
$(\wt \pi,\,\wt \pi) \le (\lambda,\,\lambda)$, as desired. 
Similarly, we obtain 
\begin{equation*}
(\wt \eta,\,\wt \eta) \le (\mu,\,\mu)
\quad \text{for all $\eta \in \BB(\mu)$}.
\end{equation*}
From these inequalities, we see, 
again by the Cauchy-Schwarz inequality, 
that for every $\pi \otimes \eta \in 
\BB(\lambda) \otimes \BB(\mu)$, 
\begin{align} 
\bigl(\wt (\pi \otimes \eta),\,\wt (\pi \otimes \eta)\bigr)
& = \bigl(\wt \pi + \wt \eta,\,\wt \pi + \wt \eta\bigr) \nonumber \\
& \le \bigl\{(\wt \pi,\,\wt \pi)^{\frac{1}{2}}+
             (\wt \eta,\,\wt \eta)^{\frac{1}{2}}
      \bigr\}^{2} \nonumber \\
& \le \bigl\{
      (\lambda,\,\lambda)^{\frac{1}{2}} + 
      (\mu,\,\mu)^{\frac{1}{2}}
      \bigr\}^{2}. \label{eq:EE-ext}
\end{align}

Now, let $\BB$ be an arbitrary connected component of 
$\BB(\lambda) \otimes \BB(\mu)$. 
We deduce from \eqref{eq:EE-ext} that the subset 
$\bigl\{\bigl(\wt b,\,\wt b\bigr) 
\mid b \in \BB\bigr\}$ of $\BZ_{\ge 0}$ is bounded above, 
and hence that there exists $b_{0} \in \BB$ for which 
the equality
\begin{equation*}
(\wt b_{0},\,\wt b_{0})=
\max \bigl\{\bigl(\wt b,\,\wt b\bigr) 
\mid b \in \BB\bigr\}
\end{equation*}
holds. From this equality, by arguing as in \cite[\S9.3]{Kas94}, 
we find that the element $b_{0} \in \BB$ is extremal.
Thus we have proved the proposition.
\end{proof}

Combining Propositions~\ref{prop:isom} and \ref{prop:EE-ext}, 
we conclude that each connected component 
of $\BB(\lambda) \otimes \BB(\mu)$ is isomorphic, 
as a $U_{q}(\Fg)$-crystal, to $\BB(\xi)$ for some $\xi \in E$, 
and hence to $\BB(\nu)$ for some $\nu \in \EW$ 
by Remark~\ref{rem:LS}\,(2); recall that 
$\EW$ is a complete set of representatives for $W$-orbits in $E$.
We will prove that for each $\nu \in \EW$, 
the multiplicity $m_{\lambda,\,\mu}^{\nu}$ 
in the decomposition \eqref{eq:EE}
is equal to the Littlewood-Richardson coefficient 
$\LR_{\lambda_{\dagger},\,\mu_{\dagger}}^{\nu}$.
Because $\BB(\lambda) \otimes \BB(\mu) = 
\BB(\lambda_{\dagger}) \otimes \BB(\mu_{\dagger})$ 
by Remark~\ref{rem:LS}\,(2), we may and do assume that 
$\lambda=\lambda_{\dagger} \in \EW$ and 
$\mu=\mu_{\dagger} \in \EW$ for the rest of this subsection. 
%
%
\begin{prop} \label{prop:EE-key}
Keep the setting above. 

{\rm (1)} Let $\nu \in \EW$. 
If $\BB(\nu)$ is isomorphic, as a $U_{q}(\Fg)$-crystal, to
a connected component of $\BB(\lambda) \otimes \BB(\mu)$, 
then we have $|\nu|=|\lambda|+|\mu|$. 

{\rm (2)} Let $m \in \BZ_{\ge 1}$ be such that $m \ge |\lambda|+|\mu|$, 
and let $\nu \in \EW$. 
If $\pi \otimes \eta \in \BB(\lambda) \otimes \BB(\mu)$ 
is an extremal element of weight $\nu$, then 
$\pi \otimes \eta$ is a $[1,\,m]$-minimal element contained in 
$\BBm{1,\,m}(\lambda) \otimes \BBm{1,\,m}(\mu)$.

{\rm (3)} Let $m \in \BZ_{\ge 1}$ be such that $m \ge |\lambda|+|\mu|$. 
Every $[1,\,m]$-minimal element of $\BBm{1,\,m}(\lambda) \otimes 
\BBm{1,\,m}(\mu)$ is extremal. 
\end{prop}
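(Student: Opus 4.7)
My plan is to prove (2) first, deduce (1) as a quick consequence, and handle (3) last. For (2), the $[1,m]$-minimality is immediate: applying the extremality condition \eqref{eq:extremal} with $w=e$ and using that $\nu \in \EW$ is $[1,\infty)$-antidominant (so $\pair{\nu}{h_i} \le 0$ for every $i \ge 1$) forces $f_i(\pi \otimes \eta) = \bzero$ for all $i \ge 1$, in particular for $i \in [1,m]$. For the containment $\pi \otimes \eta \in \BBm{1,m}(\lambda) \otimes \BBm{1,m}(\mu)$, I would first argue via the tensor product rule for crystals that $\varphi_i(\eta) = 0$ for every $i \ge 1$, so $\eta$ is itself $[1,\infty)$-minimal in $\BB(\mu)$; consequently $\wt \eta$ is $[1,\infty)$-antidominant, hence $\wt \eta \in \EW$. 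A Cauchy--Schwarz norm argument modeled on the proof of Proposition~\ref{prop:EE-ext}, combined with the rigidity $W\mu \cap \EW = \{\mu\}$ (signed permutations of a partition meet $\EW$ only at the partition itself), would then identify $\eta$ with the straight-line path $\pi_\mu$, which lies in $\BBm{1,m}(\mu)$ by Lemma~\ref{lem:BBn} since $\ell(\mu) \le |\mu| \le m$. A symmetric argument, obtained by applying the extremality condition at $w = w_0^{[1,m]}$ (the longest element of $\Wm{1,m}$) and exploiting $[1,m]$-maximality of the image, would place $\pi \in \BBm{1,m}(\lambda)$.

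Part (1) is then a quick consequence: any connected component of $\BB(\lambda) \otimes \BB(\mu)$ isomorphic to $\BB(\nu)$ contains, by Proposition~\ref{prop:isom}, an extremal $\pi \otimes \eta$ of weight $\nu$; by (2) this element lies in the finite-dimensional type-$A_m$ crystal $\BBm{1,m}(\lambda) \otimes \BBm{1,m}(\mu)$, every weight of which is supported in $[0,m]$ with nonnegative $\eps_j$-coefficients summing to $|\lambda| + |\mu|$. Since $\nu \in \EW$ has nonnegative coefficients, $|\nu| = \sum_j \co{\nu}{j} = |\lambda| + |\mu|$.

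For (3), a $[1,m]$-minimal $\pi \otimes \eta$ of $\BBm{1,m}(\lambda) \otimes \BBm{1,m}(\mu)$ has weight $\nu$ that is $[1,m]$-antidominant with nonneg coefficients supported in $[0,m]$ summing to $|\lambda| + |\mu|$, forcing $\nu \in \EW$. Remark~\ref{rem:ext}(3) gives $[1,m]$-extremality; to extend this to full extremality I would invoke two structural facts: (a) since $\pair{\eps_j}{h_0} \ge 0$ for every $j$ in each of the three types, $H^\pi_0$ is non-decreasing on any $\pi \in \BBm{1,m}(\lambda)$ (whose direction vectors have nonnegative coefficients), so $e_0\pi = \bzero$ and similarly for $\eta$; (b) for $i > m$ every direction vector of $\pi \in \BBm{1,m}(\lambda)$ has zero $\eps_{i-1}$- and $\eps_i$-coefficients, trivializing $e_i$ and $f_i$ there. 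A case analysis over $w \in W$ and $i \in I$, combining (a), (b) and the $[1,m]$-extremality, then verifies \eqref{eq:extremal} in full.

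The hard part will be the containment step in (2): upgrading ``$\eta$ is $[1,\infty)$-minimal with $\wt \eta \in \EW$'' to the stronger conclusion $\eta = \pi_\mu$. This requires careful execution of the Cauchy--Schwarz equality case for $(\wt \eta, \wt \eta) \le (\mu, \mu)$ in concert with the orbit rigidity $W\mu \cap \EW = \{\mu\}$, and is the technical heart of the proof.
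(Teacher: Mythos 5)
Your proposal inverts the logical order of the paper — you want to prove (2) first and deduce (1) from it — and this is where a genuine gap opens. The paper proves (1) by induction on $|\mu|$ (with an explicit analysis of the crystal graph of $\BB(\eps_{0})$ as base case and the embedding $\BB(\mu) \hookrightarrow \BB(\eps_{0}) \otimes \BB(\mu-\eps_{p})$ for the inductive step), and then uses the identity $|\nu|=|\lambda|+|\mu|$ as the engine of part (2): it forces equality in the estimate $\sum_{j}\co{\zeta_{M}}{j} \le |\lambda|$ for every direction vector $\zeta_{M}$ of $\pi$ (and likewise for $\eta$), whence all coefficients are nonnegative and vanish beyond position $m$, which is what places $\pi$ and $\eta$ in $\BBm{1,\,m}(\lambda)$ and $\BBm{1,\,m}(\mu)$. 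Your route to the containment — show $\eta$ is $[1,\infty)$-minimal with $\wt\eta \in \EW$, then invoke "the Cauchy--Schwarz equality case" and $W\mu \cap \EW=\{\mu\}$ to get $\eta=\pi_{\mu}$ — is exactly where this input is missing. The Cauchy--Schwarz argument of Proposition~\ref{prop:EE-ext} only gives the inequality $(\wt\eta,\wt\eta) \le (\mu,\mu)$; you never establish equality, and without something equivalent to part (1) it is not available. Concretely, $[1,\infty)$-minimality together with $\wt\eta \in \EW$ does \emph{not} force $\eta=\pi_{\mu}$: in type $B_{\infty}$ with $\mu=\eps_{0}$, the path $\pi_{0}=(-\eps_{0},\,\eps_{0}\,;\,0,\,1/2,\,1)$ satisfies $f_{i}\pi_{0}=\bzero$ for all $i \ge 1$ and $\wt\pi_{0}=0 \in \EW$, yet $\pi_{0} \ne \pi_{\eps_{0}}$ and $\pi_{0} \notin \BBm{1,\,m}(\eps_{0})$. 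Ruling out such $\eta$ is precisely what the counting identity of part (1) accomplishes, so as written your argument is circular-or-incomplete: (2) needs (1), and your (1) needs (2).

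The pieces that do work are the ones that coincide with the paper: the derivation of $f_{i}(\pi \otimes \eta)=\bzero$ for $i \ge 1$ from extremality and $[1,\infty)$-antidominance of $\nu$, the deduction of (1) from (2) \emph{once (2) is fully proved}, and the broad architecture of (3) (minimality $\Rightarrow$ $[1,n]$-extremality by Remark~\ref{rem:ext}\,(3), then pass to a $[1,n]$-maximal element via $S_{w}$ for $w$ the longest element of $\Wm{1,\,n}$ and kill $e_{0}$ by the nonnegativity of $\pair{\eps_{j}}{h_{0}}$). But even in (3) your closing "case analysis over $w \in W$ and $i \in I$" is not an argument; the paper's device of reducing to $[n]$-maximality of $S_{w_{0}^{[1,n]}}(\pi \otimes \eta)$ for each $n \ge m$ is the step you would need to carry out. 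To repair the proposal you should either supply an independent proof of (1) — e.g.\ the paper's induction on $|\mu|$, for which your sketch has no substitute — or find a genuinely different argument that extremality of $\pi \otimes \eta$ forces every direction vector of $\pi$ and $\eta$ to lie in $\Wm{1,\,m}\lambda$ and $\Wm{1,\,m}\mu$ respectively.
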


\begin{proof}
(1) We prove the assertion by induction on 
$|\mu|=\sum_{j \in \BZ_{\ge 0}}|\com{j}|$. 
If $|\mu|=0$ (and hence $\mu=0$), then the assertion is obvious. 
Assume now that $|\mu|=1$, and hence $\mu=\eps_{0}$. 
By direct computation, we can check that if $\Fg$ is of type $B_{\infty}$
(resp., $C_{\infty}$, $D_{\infty}$), then 
the crystal graph of $\BB(\eps_{0})$ is given by 
\eqref{eq:binf-cg} (resp., \eqref{eq:cinf-cg}, 
\eqref{eq:dinf-cg}; cf. \cite[\S3.2]{Lec-TAMS}): 

\begin{equation} \label{eq:binf-cg}
\begin{array}{l}
\unitlength 0.1in
\begin{picture}( 48.7500,  3.0000)(  5.2500, -9.1500)
%
\special{pn 8}%
\special{ar 1200 800 50 50  0.0000000 6.2831853}%
%
\special{pn 8}%
\special{ar 1800 800 50 50  0.0000000 6.2831853}%
%
\special{pn 8}%
\special{ar 2400 800 50 50  0.0000000 6.2831853}%
%
\special{pn 8}%
\special{ar 3000 800 50 50  0.0000000 6.2831853}%
%
\special{pn 8}%
\special{ar 3600 800 50 50  0.0000000 6.2831853}%
%
\special{pn 8}%
\special{ar 4200 800 50 50  0.0000000 6.2831853}%
%
\special{pn 8}%
\special{ar 4800 800 50 50  0.0000000 6.2831853}%
%
\special{pn 8}%
\special{pa 1200 800}%
\special{pa 900 800}%
\special{fp}%
%
\special{pn 8}%
\special{pa 900 800}%
\special{pa 600 800}%
\special{dt 0.045}%
%
\special{pn 8}%
\special{pa 5400 800}%
\special{pa 5100 800}%
\special{dt 0.045}%
\put(30.0000,-10.0000){\makebox(0,0){$\pi_0$}}%
\put(24.0000,-10.0000){\makebox(0,0){$\pi_{-\eps_0}$}}%
\put(18.0000,-10.0000){\makebox(0,0){$\pi_{-\eps_1}$}}%
\put(12.0000,-10.0000){\makebox(0,0){$\pi_{-\eps_2}$}}%
\put(36.0000,-10.0000){\makebox(0,0){$\pi_{\eps_0}$}}%
\put(42.0000,-10.0000){\makebox(0,0){$\pi_{\eps_1}$}}%
\put(48.0000,-10.0000){\makebox(0,0){$\pi_{\eps_2}$}}%
%
\special{pn 8}%
\special{pa 5100 800}%
\special{pa 4800 800}%
\special{fp}%
\special{sh 1}%
\special{pa 4800 800}%
\special{pa 4868 820}%
\special{pa 4854 800}%
\special{pa 4868 780}%
\special{pa 4800 800}%
\special{fp}%
%
\special{pn 8}%
\special{pa 4800 800}%
\special{pa 4200 800}%
\special{fp}%
\special{sh 1}%
\special{pa 4200 800}%
\special{pa 4268 820}%
\special{pa 4254 800}%
\special{pa 4268 780}%
\special{pa 4200 800}%
\special{fp}%
%
\special{pn 8}%
\special{pa 4200 800}%
\special{pa 3600 800}%
\special{fp}%
\special{sh 1}%
\special{pa 3600 800}%
\special{pa 3668 820}%
\special{pa 3654 800}%
\special{pa 3668 780}%
\special{pa 3600 800}%
\special{fp}%
%
\special{pn 8}%
\special{pa 3600 800}%
\special{pa 3000 800}%
\special{fp}%
\special{sh 1}%
\special{pa 3000 800}%
\special{pa 3068 820}%
\special{pa 3054 800}%
\special{pa 3068 780}%
\special{pa 3000 800}%
\special{fp}%
%
\special{pn 8}%
\special{pa 3000 800}%
\special{pa 2400 800}%
\special{fp}%
\special{sh 1}%
\special{pa 2400 800}%
\special{pa 2468 820}%
\special{pa 2454 800}%
\special{pa 2468 780}%
\special{pa 2400 800}%
\special{fp}%
%
\special{pn 8}%
\special{pa 2400 800}%
\special{pa 1800 800}%
\special{fp}%
\special{sh 1}%
\special{pa 1800 800}%
\special{pa 1868 820}%
\special{pa 1854 800}%
\special{pa 1868 780}%
\special{pa 1800 800}%
\special{fp}%
%
\special{pn 8}%
\special{pa 1800 800}%
\special{pa 1200 800}%
\special{fp}%
\special{sh 1}%
\special{pa 1200 800}%
\special{pa 1268 820}%
\special{pa 1254 800}%
\special{pa 1268 780}%
\special{pa 1200 800}%
\special{fp}%
\put(51.0000,-7.0000){\makebox(0,0){$3$}}%
\put(45.0000,-7.0000){\makebox(0,0){$2$}}%
\put(39.0000,-7.0000){\makebox(0,0){$1$}}%
\put(33.0000,-7.0000){\makebox(0,0){$0$}}%
\put(27.0000,-7.0000){\makebox(0,0){$0$}}%
\put(21.0000,-7.0000){\makebox(0,0){$1$}}%
\put(15.0000,-7.0000){\makebox(0,0){$2$}}%
\put(9.0000,-7.0000){\makebox(0,0){$3$}}%
\end{picture}
\end{array}
\end{equation}
Here, $\pi_{0}=(-\eps_{0},\,\eps_{0}\,;\,0,\,1/2,\,1)$. 
\begin{equation} \label{eq:cinf-cg}
\begin{array}{l}
\unitlength 0.1in
\begin{picture}( 42.7500,  3.0000)(  5.2500, -9.1500)
%
\special{pn 8}%
\special{ar 1200 800 50 50  0.0000000 6.2831853}%
%
\special{pn 8}%
\special{ar 1800 800 50 50  0.0000000 6.2831853}%
%
\special{pn 8}%
\special{ar 2400 800 50 50  0.0000000 6.2831853}%
%
\special{pn 8}%
\special{ar 3000 800 50 50  0.0000000 6.2831853}%
%
\special{pn 8}%
\special{ar 3600 800 50 50  0.0000000 6.2831853}%
%
\special{pn 8}%
\special{ar 4200 800 50 50  0.0000000 6.2831853}%
%
\special{pn 8}%
\special{pa 1200 800}%
\special{pa 900 800}%
\special{fp}%
%
\special{pn 8}%
\special{pa 900 800}%
\special{pa 600 800}%
\special{dt 0.045}%
%
\special{pn 8}%
\special{pa 4800 800}%
\special{pa 4500 800}%
\special{dt 0.045}%
\put(24.0000,-10.0000){\makebox(0,0){$\pi_{-\eps_0}$}}%
\put(18.0000,-10.0000){\makebox(0,0){$\pi_{-\eps_1}$}}%
\put(12.0000,-10.0000){\makebox(0,0){$\pi_{-\eps_2}$}}%
\put(30.0000,-10.0000){\makebox(0,0){$\pi_{\eps_0}$}}%
\put(36.0000,-10.0000){\makebox(0,0){$\pi_{\eps_1}$}}%
\put(42.0000,-10.0000){\makebox(0,0){$\pi_{\eps_2}$}}%
%
\special{pn 8}%
\special{pa 4500 800}%
\special{pa 4200 800}%
\special{fp}%
\special{sh 1}%
\special{pa 4200 800}%
\special{pa 4268 820}%
\special{pa 4254 800}%
\special{pa 4268 780}%
\special{pa 4200 800}%
\special{fp}%
%
\special{pn 8}%
\special{pa 4200 800}%
\special{pa 3600 800}%
\special{fp}%
\special{sh 1}%
\special{pa 3600 800}%
\special{pa 3668 820}%
\special{pa 3654 800}%
\special{pa 3668 780}%
\special{pa 3600 800}%
\special{fp}%
%
\special{pn 8}%
\special{pa 3600 800}%
\special{pa 3000 800}%
\special{fp}%
\special{sh 1}%
\special{pa 3000 800}%
\special{pa 3068 820}%
\special{pa 3054 800}%
\special{pa 3068 780}%
\special{pa 3000 800}%
\special{fp}%
%
\special{pn 8}%
\special{pa 3000 800}%
\special{pa 2400 800}%
\special{fp}%
\special{sh 1}%
\special{pa 2400 800}%
\special{pa 2468 820}%
\special{pa 2454 800}%
\special{pa 2468 780}%
\special{pa 2400 800}%
\special{fp}%
%
\special{pn 8}%
\special{pa 2400 800}%
\special{pa 1800 800}%
\special{fp}%
\special{sh 1}%
\special{pa 1800 800}%
\special{pa 1868 820}%
\special{pa 1854 800}%
\special{pa 1868 780}%
\special{pa 1800 800}%
\special{fp}%
%
\special{pn 8}%
\special{pa 1800 800}%
\special{pa 1200 800}%
\special{fp}%
\special{sh 1}%
\special{pa 1200 800}%
\special{pa 1268 820}%
\special{pa 1254 800}%
\special{pa 1268 780}%
\special{pa 1200 800}%
\special{fp}%
\put(45.0000,-7.0000){\makebox(0,0){$3$}}%
\put(39.0000,-7.0000){\makebox(0,0){$2$}}%
\put(33.0000,-7.0000){\makebox(0,0){$1$}}%
\put(27.0000,-7.0000){\makebox(0,0){$0$}}%
\put(21.0000,-7.0000){\makebox(0,0){$1$}}%
\put(15.0000,-7.0000){\makebox(0,0){$2$}}%
\put(9.0000,-7.0000){\makebox(0,0){$3$}}%
\end{picture}
\end{array}
\end{equation}
\begin{equation} \label{eq:dinf-cg}
\begin{array}{l}
\unitlength 0.1in
\begin{picture}( 42.7500, 12.0000)(  5.2500,-13.1500)
%
\special{pn 8}%
\special{ar 1200 800 50 50  0.0000000 6.2831853}%
%
\special{pn 8}%
\special{ar 1800 800 50 50  0.0000000 6.2831853}%
%
\special{pn 8}%
\special{ar 2400 1200 50 50  0.0000000 6.2831853}%
%
\special{pn 8}%
\special{ar 3600 800 50 50  0.0000000 6.2831853}%
%
\special{pn 8}%
\special{ar 4200 800 50 50  0.0000000 6.2831853}%
%
\special{pn 8}%
\special{pa 1200 800}%
\special{pa 900 800}%
\special{fp}%
%
\special{pn 8}%
\special{pa 900 800}%
\special{pa 600 800}%
\special{dt 0.045}%
%
\special{pn 8}%
\special{pa 4800 800}%
\special{pa 4500 800}%
\special{dt 0.045}%
\put(18.0000,-10.0000){\makebox(0,0){$\pi_{-\eps_2}$}}%
\put(12.0000,-10.0000){\makebox(0,0){$\pi_{-\eps_3}$}}%
\put(36.0000,-10.0000){\makebox(0,0){$\pi_{\eps_2}$}}%
\put(42.0000,-10.0000){\makebox(0,0){$\pi_{\eps_3}$}}%
%
\special{pn 8}%
\special{pa 4500 800}%
\special{pa 4200 800}%
\special{fp}%
\special{sh 1}%
\special{pa 4200 800}%
\special{pa 4268 820}%
\special{pa 4254 800}%
\special{pa 4268 780}%
\special{pa 4200 800}%
\special{fp}%
%
\special{pn 8}%
\special{pa 4200 800}%
\special{pa 3600 800}%
\special{fp}%
\special{sh 1}%
\special{pa 3600 800}%
\special{pa 3668 820}%
\special{pa 3654 800}%
\special{pa 3668 780}%
\special{pa 3600 800}%
\special{fp}%
%
\special{pn 8}%
\special{pa 1800 800}%
\special{pa 1200 800}%
\special{fp}%
\special{sh 1}%
\special{pa 1200 800}%
\special{pa 1268 820}%
\special{pa 1254 800}%
\special{pa 1268 780}%
\special{pa 1200 800}%
\special{fp}%
\put(45.0000,-7.0000){\makebox(0,0){$4$}}%
\put(39.0000,-7.0000){\makebox(0,0){$3$}}%
\put(15.0000,-7.0000){\makebox(0,0){$3$}}%
\put(9.0000,-7.0000){\makebox(0,0){$4$}}%
%
\special{pn 8}%
\special{ar 2400 400 50 50  0.0000000 6.2831853}%
%
\special{pn 8}%
\special{ar 3000 1200 50 50  0.0000000 6.2831853}%
%
\special{pn 8}%
\special{ar 3000 400 50 50  0.0000000 6.2831853}%
\put(24.0000,-2.0000){\makebox(0,0){$\pi_{-\eps_0}$}}%
\put(24.0000,-14.0000){\makebox(0,0){$\pi_{-\eps_1}$}}%
\put(30.0000,-2.0000){\makebox(0,0){$\pi_{\eps_1}$}}%
\put(30.0000,-14.0000){\makebox(0,0){$\pi_{\eps_0}$}}%
%
\special{pn 8}%
\special{pa 3600 800}%
\special{pa 3000 400}%
\special{fp}%
\special{sh 1}%
\special{pa 3000 400}%
\special{pa 3044 454}%
\special{pa 3044 430}%
\special{pa 3068 420}%
\special{pa 3000 400}%
\special{fp}%
%
\special{pn 8}%
\special{pa 3000 400}%
\special{pa 3000 1200}%
\special{fp}%
\special{sh 1}%
\special{pa 3000 1200}%
\special{pa 3020 1134}%
\special{pa 3000 1148}%
\special{pa 2980 1134}%
\special{pa 3000 1200}%
\special{fp}%
%
\special{pn 8}%
\special{pa 3000 1200}%
\special{pa 2400 1200}%
\special{fp}%
\special{sh 1}%
\special{pa 2400 1200}%
\special{pa 2468 1220}%
\special{pa 2454 1200}%
\special{pa 2468 1180}%
\special{pa 2400 1200}%
\special{fp}%
%
\special{pn 8}%
\special{pa 3000 400}%
\special{pa 2400 400}%
\special{fp}%
\special{sh 1}%
\special{pa 2400 400}%
\special{pa 2468 420}%
\special{pa 2454 400}%
\special{pa 2468 380}%
\special{pa 2400 400}%
\special{fp}%
%
\special{pn 8}%
\special{pa 2400 1200}%
\special{pa 1800 800}%
\special{fp}%
\special{sh 1}%
\special{pa 1800 800}%
\special{pa 1844 854}%
\special{pa 1844 830}%
\special{pa 1868 820}%
\special{pa 1800 800}%
\special{fp}%
\put(21.5000,-9.0000){\makebox(0,0){$2$}}%
\put(33.5000,-5.0000){\makebox(0,0){$2$}}%
\put(27.0000,-3.0000){\makebox(0,0){$0$}}%
\put(27.0000,-11.0000){\makebox(0,0){$0$}}%
\put(31.0000,-8.0000){\makebox(0,0){$1$}}%
%
\special{pn 8}%
\special{pa 2400 400}%
\special{pa 2400 1200}%
\special{fp}%
\special{sh 1}%
\special{pa 2400 1200}%
\special{pa 2420 1134}%
\special{pa 2400 1148}%
\special{pa 2380 1134}%
\special{pa 2400 1200}%
\special{fp}%
\put(25.0000,-8.0000){\makebox(0,0){$1$}}%
\end{picture}
\end{array}
\end{equation}
Thus, we have 
\begin{align*}
\BB(\eps_{0}) & =
 \bigl\{\pi_{\pm \eps_{j}} \mid j \in \BZ_{\ge 0}\bigr\} \cup 
 \bigl\{\pi_{0}=(-\eps_{0},\,\eps_{0}\,;\,0,\,1/2,\,1)\bigr\} \qquad
 \text{if $\Fg$ is of type $B_{\infty}$}, \\
\BB(\eps_{0}) & =
 \bigl\{\pi_{\pm \eps_{j}} \mid j \in \BZ_{\ge 0}\bigr\} \qquad 
 \text{if $\Fg$ is of type $C_{\infty}$ or $D_{\infty}$}.
\end{align*}
By our assumption on $\nu\in \EW$, there exists an extremal element 
$\pi \otimes \eta \in \BB(\lambda) \otimes \BB(\eps_{0})$ of 
weight $\nu$. Take $n \in \BZ_{\ge 0}$ such that 
$n > \#\Supp(\lambda)$ 
($=\ell(\lambda)$ since $\lambda \in \EW$) 
and such that $\pi \in \BBn(\lambda)$ (see \eqref{eq:limBBn}). 
We deduce from Remark~\ref{rem:LS-n} that 
there exists a monomial $X$ in $e_{i}^{\max}$, $i \in [n]$, 
such that $X\pi=\pi_{\lamn}$. 
It follows from the tensor product rule for 
crystals that $X(\pi \otimes \eta)=(X\pi) \otimes \eta'=
\pi_{\lamn} \otimes \eta'$ for some $\eta' \in \BB(\eps_{0})$. 
Here we remark that the element $\pi_{\lamn} \otimes \eta'$ is 
extremal, since so is $\pi \otimes \eta$, 
and $X$ is a monomial in $e_{i}^{\max}$, 
$i \in [n]$; also, if we set $\xi:=\wt(\pi_{\lamn} \otimes \eta')$, 
then $\xi$ is contained in $\Wn\nu \subset W\nu$. 

Suppose, by contradiction, that $\Fg$ is of type $B_{\infty}$
and $\eta'=\pi_{0}=(-\eps_{0},\,\eps_{0}\,;\,0,\,1/2,\,1)$. 
Since $0 \notin \Supp(\lamn)$ by Lemma~\ref{lem:lamn} 
and the choice of $n$, 
we have $\pair{\lamn}{h_{0}}=0$, which implies that 
$e_{0}\pi_{\lamn}=f_{0}\pi_{\lamn}=\bzero$. 
Also, we see from the crystal graph \eqref{eq:binf-cg} that 
neither $e_{0}\eta'$ nor $f_{0}\eta'$ is equal to $\bzero$. 
Therefore, by the tensor product rule for crystals, 
we deduce that 
neither $e_{j}(\pi_{\lamn} \otimes \eta')$ nor
$f_{j}(\pi_{\lamn} \otimes \eta')$ 
is equal to $\bzero$, which contradicts the fact that 
$\pi_{\lamn} \otimes \eta'$ is extremal.
Also, suppose, by contradiction, that $\eta'=\pi_{-\eps_{k}}$ 
for some $k \in \Supp(\lamn)$.
We set $w:=r_{k+2}r_{k+3} \cdots r_{n}r_{n+1} \in W$ 
(if $k=n$, then we set $w:=e$, the identity element of $W$). 
Since $\pair{-\eps_{k}}{h_{j}}=0$ 
for all $k+2 \le j \le n+1$, 
we deduce by the tensor product rule for crystals 
along with Corollary~\ref{cor:isom}\,(2) that 
\begin{equation*}
S_{w}(\pi_{\lamn} \otimes \eta')
 =S_{w}(\pi_{\lamn} \otimes \pi_{-\eps_{k}})
 =(S_{w}\pi_{\lamn}) \otimes \pi_{-\eps_{k}}
 =\pi_{w\lamn} \otimes \pi_{-\eps_{k}}.
\end{equation*}
It is easily seen from 
Lemma~\ref{lem:lamn}, by using \eqref{eq:simple} and 
\eqref{eq:binf-r}--\eqref{eq:dinf-r}, that 
$\pair{w\lamn}{h_{k+1}} < 0$,
which implies that $e_{k+1}\pi_{w\lamn} \ne \bzero$. 
In addition, since $\pair{-\eps_{k}}{h_{k+1}}=1 > 0$, we have 
$f_{k+1}\pi_{-\eps_{k}} \ne \bzero$. 
Consequently, by the tensor product rule for crystals, 
we deduce that neither $e_{k+1}(\pi_{w\lamn} \otimes \pi_{-\eps_{k}})$
nor $f_{k+1}(\pi_{w\lamn} \otimes \pi_{-\eps_{k}})$ is 
equal to $\bzero$, which contradicts the fact that 
$\pi_{w\lamn} \otimes \pi_{-\eps_{k}}=
 S_{w}(\pi_{\lamn} \otimes \eta')$ is extremal. 
Thus, we conclude that $\eta'=\pi_{\eps_{k}}$ 
for some $k \in \BZ_{\ge 0}$, or 
$\eta'=\pi_{-\eps_{k}}$ for some $k \in \BZ_{\ge 0} 
\setminus \Supp(\lamn)$. 

Now, we set $|\xi|:=\sum_{j \in \BZ_{\ge 0}} |\cox{j}|$ 
(recall that $\xi=\wt(\pi_{\lamn} \otimes \eta')$). 
If $\eta'=\pi_{\eps_{k}}$ for some $k \in \BZ_{\ge 0}$,
then since $\co{\lamn}{j} \ge 0$, $j \in \BZ_{\ge 0}$  
(see Lemma~\ref{lem:lamn}), we find that 
\begin{equation*}
|\xi|
 =\sum_{j \in \BZ_{\ge 0}} |\co{\lamn}{j}+\delta_{jk}|
 =\left(\sum_{j \in \BZ_{\ge 0}}|\co{\lamn}{j}|\right) + 1. 
\end{equation*} 
Also, if $\eta'=\pi_{-\eps_{k}}$ for some 
$k \in \BZ_{\ge 0} \setminus \Supp(\lamn)$, 
then we find that 
\begin{equation*}
|\xi|
 =\sum_{j \in \Supp(\lamn)} |\co{\lamn}{j}|+
  \sum_{j \in \BZ_{\ge 0} \setminus \Supp(\lamn)} |\delta_{jk}|
 =\left(\sum_{j \in \BZ_{\ge 0}}|\co{\lamn}{j}|\right) + 1. 
\end{equation*} 
Here it is easily seen by using \eqref{eq:binf-r}--\eqref{eq:dinf-r} 
that $\sum_{j \in \BZ_{\ge 0}}|\co{\lamn}{j}|=|\lambda|$, 
since $\lamn \in W\lambda$. Similarly, we see that 
$|\xi|=|\nu|$ since  $\xi \in W\nu$. Combining these equalities, we obtain 
$|\nu|=|\xi|=|\lambda|+1=|\lambda|+|\mu|$, as desired. 

Assume, therefore, that $|\mu| > 1$. 
We set $p:=\max \Supp (\mu) \in \BZ_{\ge 0}$, and $\mu':=\mu-\eps_{p}$; 
note that $\mu' \in \EW$ and $|\mu'|=|\mu|-1$. 

\begin{claim*}
The element $\pi_{\eps_{p}} \otimes \pi_{\mu'} 
\in \BB(\eps_{0}) \otimes \BB(\mu')$ is 
an extremal element of weight $\mu$. 
Therefore, the connected component 
$\BB(\pi_{\eps_{p}} \otimes \pi_{\mu'})$ 
of $\BB(\eps_{0}) \otimes \BB(\mu')$ containing 
$\pi_{\eps_{p}} \otimes \pi_{\mu'}$ is isomorphic, 
as a $U_{q}(\Fg)$-crystal, to $\BB(\mu)$. 
\end{claim*}

\noindent
{\it Proof of Claim.} 
First, we show that $f_{i}(\pi_{\eps_{p}} \otimes \pi_{\mu'})=\bzero$ 
for all $i \in \BZ_{\ge 1}$. 
Since $\mu' \in \EW$, we have 
$\pair{\mu'}{h_{i}} \le 0$ for all $i \in \BZ_{\ge 1}$, 
which implies that $f_{i}\pi_{\mu'}=\bzero$ 
for all $i \in \BZ_{\ge 1}$. 
If $p=0$, then 
we see from \eqref{eq:binf}--\eqref{eq:dinf} that 
$\pair{\eps_{0}}{h_{i}} \le 0$ for all $i \in \BZ_{\ge 1}$, 
and hence $f_{i}\pi_{\eps_{0}}=\bzero$ 
for all $i \in \BZ_{\ge 1}$. 
In this case, by the tensor product rule for crystals, 
we obtain $f_{i}(\pi_{\eps_{0}} \otimes \pi_{\mu'})=\bzero$ 
for all $i \in \BZ_{\ge 1}$. 
So, suppose that $p > 0$. 
We see from \eqref{eq:binf}--\eqref{eq:dinf} that 
$\pair{\eps_{p}}{h_{i}} \le 0$ for all $i \in \BZ_{\ge 1}$ 
with $i \ne p$. Hence, by the same reasoning as above, 
we obtain $f_{i}(\pi_{\eps_{p}} \otimes \pi_{\mu'})=\bzero$ 
for all $i \in \BZ_{\ge 1}$ with $i \ne p$. 
Now, since $\mu'=\mu-\eps_{p}$ and 
$\pair{\mu}{h_{p}} \le 0$, with $\pair{\eps_{p}}{h_{p}}=1$ 
by \eqref{eq:binf}--\eqref{eq:dinf}, it follows that 
$\pair{\mu'}{h_{p}}=\pair{\mu}{h_{p}}-1 \le -1$, which implies that 
$\ve_{p}(\pi_{\mu'})=
\max \bigl\{k \in \BZ_{\ge 0} \mid 
 e_{p}^{k}\pi_{\mu'}=\bzero
\bigr\} \ge 1$.
Also, since $\pair{\eps_{p}}{h_{p}}=1$, 
we deduce that 
$\vp_{p}(\pi_{\eps_{p}})=
\max \bigl\{k \in \BZ_{\ge 0} \mid 
 f_{p}^{k}\pi_{\eps_{p}}=\bzero
\bigr\}=1$.
Consequently, by the tensor product rule for crystals, 
we obtain $f_{p}(\pi_{\eps_{p}} \otimes \pi_{\mu'})=
\pi_{\eps_{p}} \otimes f_{p}\pi_{\mu'}=\bzero$. 
Thus, we have shown that 
$f_{i}(\pi_{\eps_{p}} \otimes \pi_{\mu'})=\bzero$ 
for all $i \in \BZ_{\ge 1}$. 

To prove that $\pi_{\eps_{p}} \otimes \pi_{\mu'}$ is extremal, 
we show that $\pi_{\eps_{p}} \otimes \pi_{\mu'}$ is 
$[n]$-extremal for all $n \in \BZ_{\ge 1}$ (see Remark~\ref{rem:ext}\,(1)). 
Fix $n \in \BZ_{\ge 1}$. Since $\pi_{\eps_{p}} \otimes \pi_{\mu'}$ is 
$[1,\,n]$-minimal as shown above, it is $[1,\,n]$-extremal 
(see Remark~\ref{rem:ext}\,(3)). 
Let $w=w^{[1,\,n]}_{0}$ be 
the longest element of $\Wm{1,\,n}$, 
and set $\pi \otimes \eta:=
S_{w}(\pi_{\eps_{p}} \otimes \pi_{\mu'})
 \in \BB(\eps_{0}) \otimes \BB(\mu')$; 
note that $\pi \otimes \eta$ is $[1,\,n]$-extremal.
Since the weight of $\pi \otimes \eta$ is 
equal to $w\mu$, which is $[1,\,n]$-dominant, 
and since $\pi \otimes \eta$ is 
$[1,\,n]$-extremal, it follows immediately 
from \eqref{eq:extremal} that 
$\pi \otimes \eta$ is a $[1,\,n]$-maximal element. 
Now, we show that $e_{0}(\pi \otimes \eta)=\bzero$. 
Because the operator $S_{w}$ is defined by using 
only the Kashiwara operators $e_{i}$ and $f_{i}$ 
for $i \in [1,\,n]$, the element $\pi \in \BB(\eps_{0})$ 
must be of the form: 
$\pi=X\pi_{\eps_{p}}$ for some monomial $X$ in 
the Kashiwara operators $e_{i}$ and $f_{i}$ 
for $i \in [1,\,n]$. 
From this fact, we can easily verify by using 
\eqref{eq:binf-cg}--\eqref{eq:dinf-cg} that 
$\pi=\pi_{\eps_{q}}$ for some $q \in \BZ_{\ge 0}$. 
Hence, noting that 
$\pair{\eps_{q}}{h_{0}} \ge 0$ (see \eqref{eq:simple}), 
we get $e_{0}\pi=e_{0}\pi_{\eps_{q}}=\bzero$. 
Similarly, the element $\eta \in \BB(\mu')$ must be of the form:
$\eta=Y\pi_{\mu'}$ for some monomial $Y$ in 
the Kashiwara operators $e_{i}$ and $f_{i}$ 
for $i \in [1,\,n]$. 
Define $N=N_{\mu'}$ as in Remark~\ref{rem:lcm}, and 
write $\eta=Y\pi_{\mu'} \in \BB(\mu')$ as: 
$\eta=(\nu_{1},\,\nu_{2},\,\dots,\,\nu_{N})$
for some $\nu_{1},\,\nu_{2},\,\dots,\,\nu_{N} \in W\mu'$. 
Then, we deduce from the definition of 
the Kashiwara operators $e_{i}$ and $f_{i}$ 
for $i \in [1,\,n]$ that $\nu_{M} \in \Wm{1,\,n}\mu'$ 
for all $1 \le M \le N$. Since $\mu' \in \EW$, 
it is easily seen by using \eqref{eq:binf-r}--\eqref{eq:dinf-r} 
that $\co{\nu_{M}}{j} \ge 0$ for all $j \in \BZ_{\ge 0}$ and 
$1 \le M \le N$. 
Hence we see from \eqref{eq:simple} that 
$\pair{\nu_{M}}{h_{0}} \ge 0$ for all $1 \le M \le N$, 
which implies that $e_{0}\eta=\bzero$. 
From the fact $e_{0}\pi=e_{0}\eta=\bzero$ just observed, 
and the tensor product rule for crystals, 
we obtain $e_{0}(\pi \otimes \eta)=\bzero$, 
as desired. 
Thus, we have shown that 
$\pi \otimes \eta$ is $[n]$-maximal, and hence 
it is $[n]$-extremal by Remark~\ref{rem:ext}\,(2). 
Since $\pi \otimes \eta=
S_{w}(\pi_{\eps_{p}} \otimes \pi_{\mu'})$ and 
$w \in \Wm{1,\,n} \subset \Wn$, 
we conclude that 
$\pi_{\eps_{p}} \otimes \pi_{\mu'}$ is $[n]$-extremal. 
This proves the claim. \bqed

\vsp

By the claim above, 
we obtain the following embedding of $U_{q}(\Fg)$-crystals: 
\begin{equation*}
\BB(\lambda) \otimes \BB(\mu) \hookrightarrow 
\BB(\lambda) \otimes \BB(\eps_{0}) \otimes \BB(\mu'). 
\end{equation*}
By our assertion for the case $\mu=\eps_{0}$ 
(which is already proved), 
each connected component of $\BB(\lambda) \otimes \BB(\eps_{0})$
is isomorphic, as a $U_{q}(\Fg)$-crystal, to $\BB(\xi)$ 
for some $\xi \in \EW$ such that $|\xi|=|\lambda|+1$.
Furthermore, it follows from our induction hypothesis that 
for $\xi \in \EW$, each connected component of 
$\BB(\xi) \otimes \BB(\mu')$ is isomorphic, 
as a $U_{q}(\Fg)$-crystal, to $\BB(\nu)$ 
for some $\nu \in \EW$ such that $|\nu|=|\xi|+|\mu'|$; 
here, recall that $|\mu'|=|\mu|-1$. 
Therefore, we conclude that 
each connected component of $\BB(\lambda) \otimes \BB(\mu)$
is isomorphic, as a $U_{q}(\Fg)$-crystal, to $\BB(\nu)$ 
for some $\nu \in \EW$ such that $|\nu|=|\lambda|+|\mu|$. 
This completes the proof of part (1). 

(2) Since $\pair{\nu}{h_{i}} \le 0$ for all $i \in \BZ_{\ge 1}$, 
and since $\pi \otimes \eta$ is an extremal element of weight $\nu$ 
by assumption, it follows immediately from \eqref{eq:extremal} that 
$f_{i}(\pi \otimes \eta)=\bzero$ for all $i \in \BZ_{\ge 1}$. 
In particular, $\pi \otimes \eta$ is $[1,\,m]$-minimal. 
It therefore remains to prove that $\pi \in \BBm{1,\,m}(\lambda)$ and 
$\eta \in \BBm{1,\,m}(\mu)$. 
Define $N_{\lambda}$ and $N_{\mu}$ 
as in Remark~\ref{rem:lcm}, and write $\pi \in \BB(\lambda)$ 
and $\eta \in \BB(\mu)$ as:
\begin{equation*}
\pi=(\zeta_{1},\,\zeta_{2},\,\dots,\,\zeta_{N_{\lambda}}), \qquad
\eta=(\xi_{1},\,\xi_{2},\,\dots,\,\xi_{N_{\mu}})
\end{equation*}
for some $\zeta_{1},\,\zeta_{2},\,\dots,\,\zeta_{N_{\lambda}} \in W\lambda$ and 
$\xi_{1},\,\xi_{2},\,\dots,\,\xi_{N_{\mu}} \in W\mu$, respectively. 
Then we have (see \eqref{eq:wtN})
%
%
\begin{equation} \label{eq:wtNN}
\nu=\frac{1}{N_{\lambda}}\sum_{M=1}^{N_{\lambda}}\zeta_{M}+
\frac{1}{N_{\mu}}\sum_{L=1}^{N_{\mu}}\xi_{L}.
\end{equation}
Hence we obtain
\begin{align*}
|\nu| & =\sum_{j \in \BZ_{\ge 0}}\con{j}=
\sum_{j \in \BZ_{\ge 0}}
\left\{
\frac{1}{N_{\lambda}}\sum_{M=1}^{N_{\lambda}}\co{\zeta_{M}}{j}+
\frac{1}{N_{\mu}}\sum_{L=1}^{N_{\mu}}\co{\xi_{L}}{j}
\right\} \\[3mm]
& = 
\frac{1}{N_{\lambda}}\sum_{M=1}^{N_{\lambda}}
\left\{\sum_{j \in \BZ_{\ge 0}}\co{\zeta_{M}}{j}\right\}+
\frac{1}{N_{\mu}}\sum_{L=1}^{N_{\mu}}
\left\{\sum_{j \in \BZ_{\ge 0}}\co{\xi_{L}}{j}\right\}.
\end{align*}
Since $\zeta_{M} \in W\lambda$ for all $1 \le M \le N_{\lambda}$, 
and $\lambda \in \EW$, 
we deduce that 
\begin{equation*}
\sum_{j \in \BZ_{\ge 0}}\co{\zeta_{M}}{j} \le 
|\lambda|=\sum_{j \in \BZ_{\ge 0}}|\col{j}|
\end{equation*}
for all $1 \le M \le N_{\lambda}$; note that 
%
%
\begin{equation} \label{eq:eekey01}
\sum_{j \in \BZ_{\ge 0}}\co{\zeta_{M}}{j}=|\lambda| 
\quad \text{if and only if} \quad
\text{$\co{\zeta_{M}}{j} \ge 0$ for all $j \in \BZ_{\ge 0}$}.
\end{equation} 
Similarly, since $\xi_{L} \in W\mu$ 
for all $1 \le L \le N_{\mu}$, and $\mu \in \EW$, 
we have 
\begin{equation*}
\sum_{j \in \BZ_{\ge 0}}\co{\xi_{L}}{j} \le 
|\mu|=\sum_{j \in \BZ_{\ge 0}} |\com{j}|
\end{equation*}
for all $1 \le L \le N_{\mu}$; note that 
%
%
\begin{equation} \label{eq:eekey02}
\sum_{j \in \BZ_{\ge 0}}\co{\xi_{L}}{j}=|\mu|
\quad \text{if and only if} \quad
\text{$\co{\xi_{L}}{j} \ge 0$ for all $j \in \BZ_{\ge 0}$}.
\end{equation}
Combining these, we infer that 
%
%
\begin{equation} \label{eq:eekey1}
|\nu|=
\frac{1}{N_{\lambda}}\sum_{M=1}^{N_{\lambda}}
\underbrace{\left\{ \sum_{j \in \BZ_{\ge 0}}\co{\zeta_{M}}{j} \right\}}_{\le |\lambda|}+
\frac{1}{N_{\mu}}\sum_{L=1}^{N_{\mu}}
\underbrace{\left\{ \sum_{j \in \BZ_{\ge 0}}\co{\xi_{L}}{j} \right\}}_{\le |\mu|}
\le |\lambda|+|\mu|.
\end{equation}
Note that by Proposition~\ref{prop:isom}, 
the connected component of $\BB(\lambda) \otimes \BB(\mu)$ 
containing $\pi \otimes \eta$ is isomorphic, as $U_{q}(\Fg)$-crystal, to 
$\BB(\nu)$. Therefore, by part (1), we obtain $|\nu|=|\lambda|+|\mu|$. 
From this fact and \eqref{eq:eekey1}, we deduce that 
$\sum_{j \in \BZ_{\ge 0}}\co{\zeta_{M}}{j}=|\lambda|$ 
for all $1 \le M \le N_{\lambda}$, and 
$\sum_{j \in \BZ_{\ge 0}}\co{\xi_{L}}{j}=|\mu|$ 
for all $1 \le L \le N_{\mu}$, and hence that 
by \eqref{eq:eekey01} and \eqref{eq:eekey02}, 
%
%
\begin{equation} \label{eq:eekey2}
\begin{cases}
\co{\zeta_{M}}{j} \ge 0 & 
\text{for all $j \in \BZ_{\ge 0}$ and $1 \le M \le N_{\lambda}$}, \\[1.5mm]
\co{\xi_{L}}{j} \ge 0 & 
\text{for all $j \in \BZ_{\ge 0}$ and $1 \le L \le N_{\mu}$}.
\end{cases}
\end{equation}
Since $\ell(\nu) \le |\nu|=|\lambda|+|\mu|$, and 
$m \ge |\lambda|+|\mu|$ by assumption, 
we have $\con{j}=0$ for all $j \ge m+1$. 
Consequently, by \eqref{eq:eekey2} and \eqref{eq:wtNN}, 
%
%
\begin{equation} \label{eq:eekey3}
\begin{cases}
\co{\zeta_{M}}{j}=0 & 
\text{for all $j \ge m+1$ and $1 \le M \le N_{\lambda}$}, \\[1.5mm]
\co{\xi_{L}}{j}=0 & 
\text{for all $j \ge m+1$ and $1 \le L \le N_{\mu}$}.
\end{cases}
\end{equation}
Now, taking \eqref{eq:eekey2} and \eqref{eq:eekey3} into consideration, 
we see through use of \eqref{eq:binf-r}--\eqref{eq:dinf-r} that 
for each $1 \le M \le N_{\lambda}$, 
there exists $w_{M} \in \Wm{1,\,m}$ such that 
$w_{M}\zeta_{M} \in \EW$. However, since 
$\zeta_{M} \in W\lambda$ for $1 \le M \le N_{\lambda}$ 
and $\lambda=\lambda_{\dagger} \in \EW$, it follows from 
the uniqueness of $\lambda_{\dagger} \in W\lambda$ that 
$w_{M}\zeta_{M}=\lambda_{\dagger}=\lambda$ for all $1 \le M \le N_{\lambda}$. 
Similarly, since $\mu=\mu_{\dagger} \in \EW$,
we see from \eqref{eq:eekey2} and \eqref{eq:eekey3} that 
for each $1 \le L \le N_{\mu}$, 
there exists $z_{L} \in \Wm{1,\,m}$ such that 
$z_{L}\xi_{L}=\mu$. Therefore, by Lemma~\ref{lem:BBn}, 
we conclude that $\pi \in \BBm{1,\,m}(\lambda)$ and 
$\eta \in \BBm{1,\,m}(\mu)$. This completes the proof of part (2). 

(3) Let $\pi \otimes \eta \in \BBm{1,\,m}(\lambda) \otimes 
\BBm{1,\,m}(\mu)$ be an arbitrary $[1,\,m]$-minimal element. 
First, we show that $f_{i}(\pi \otimes \eta)=\bzero$ 
for all $i \ge m+1$, and hence for all $i \in \BZ_{\ge 1}$. 
Define $N=N_{\lambda}$ as in Remark~\ref{rem:lcm}, and write 
$\pi \in \BBm{1,\,m}(\lambda) \subset \BB(\lambda)$ as:
\begin{equation*}
\pi=(\zeta_{1},\,\zeta_{2},\,\dots,\,\zeta_{N})
\end{equation*}
for some $\zeta_{1},\,\zeta_{2},\,\dots,\,\zeta_{N} \in W\lambda$.
Since $\pi \in \BBm{1,\,m}(\lambda)$, 
it follows from Lemma~\ref{lem:BBn} that 
$\zeta_{M} \in \Wm{1,\,m}\lambda$ 
for all $1 \le M \le N$. 
Because $\lambda \in \EW$ and $\Supp(\lambda)=[\ell(\lambda)-1] 
\subset [m]$, we deduce through use of 
\eqref{eq:binf-r}--\eqref{eq:dinf-r} that 
for each $1 \le M \le N$, 
$\co{\zeta_{M}}{j} \ge 0$ for all $j \in \BZ_{\ge 0}$, 
and $\co{\zeta_{M}}{j}=0$ for all $j \ge m+1$. 
Therefore, by \eqref{eq:simple}, 
we obtain $\pair{\zeta_{M}}{h_{i}} \le 0$ 
for all $1 \le M \le N$ and $i \ge m+1$, 
from which it easily follows that $f_{i}\pi=\bzero$ for all $i \ge m+1$. 
An entirely similar argument shows that 
$f_{i}\eta=\bzero$ for all $i \ge m+1$. 
Consequently, by the tensor product rule for crystals, 
we obtain $f_{i}(\pi \otimes \eta)=\bzero$ for all $i \ge m+1$. 
Furthermore, in the course of the argument above, 
we also find (see \eqref{eq:wtNN}) that 
if we set $\nu:=\wt(\pi \otimes \eta)$, then 
$\con{j} \ge 0$ for all $j \in \BZ_{\ge 0}$. 
In addition, since $\BB(\lambda) \otimes \BB(\mu)$ 
is a normal $U_{q}(\Fg)$-crystal, 
and $f_{i}(\pi \otimes \eta)=\bzero$ for all $i \in \BZ_{\ge 1}$,  
it follows (for example, from the representation theory of $\Fsl_{2}(\BC)$)
that $\pair{\nu}{h_{i}} \le 0$ for all $i \in \BZ_{\ge 1}$. 
Combining these inequalities, we conclude that $\nu \in \EW$. 

To prove that $\pi \otimes \eta$ is extremal, 
we show that $\pi \otimes \eta$ is 
$[n]$-extremal for all $n \in \BZ_{\ge m}$ 
(see Remark~\ref{rem:ext}\,(1)). Fix $n \in \BZ_{\ge m}$. 
Let $w=w^{[1,\,n]}_{0}$ be 
the longest element of $\Wm{1,\,n}$, 
and set $\pi' \otimes \eta':=S_{w}(\pi \otimes \eta) 
\in \BBm{1,\,n}(\lambda) \otimes \BBm{1,\,n}(\mu)$. 
Since $\pi \otimes \eta$ is $[1,\,n]$-minimal from what is shown above, 
we see from Remark~\ref{rem:ext}\,(3) that 
$\pi \otimes \eta$ is $[1,\,n]$-extremal, and hence 
so is $\pi' \otimes \eta'=S_{w}(\pi \otimes \eta)$. 
Because the weight of $\pi' \otimes \eta'$ is equal to $w\nu$, 
which is $[1,\,n]$-dominant, and because $\pi' \otimes \eta'$ is 
$[1,\,n]$-extremal, it follows immediately from \eqref{eq:extremal} that 
$\pi' \otimes \eta'$ is $[1,\,n]$-maximal.
Furthermore, by the argument used 
to show the equality ``$e_{0}\eta=\bzero$'' 
in the proof of the Claim (for part (1)), 
we can show that $e_{0}\pi'=e_{0}\eta'=\bzero$, and hence 
$e_{0}(\pi' \otimes \eta')=\bzero$. 
Thus, we have shown that 
$\pi' \otimes \eta'$ is $[n]$-maximal, and hence 
it is $[n]$-extremal by Remark~\ref{rem:ext}\,(2). 
This implies that 
$\pi \otimes \eta$ is $[n]$-extremal 
since $\pi' \otimes \eta'=
S_{w}(\pi \otimes \eta)$. 
This completes the proof of part (3). 
\end{proof}

\begin{proof}[Proof of Theorem~\ref{thm:EE}]
Let $\nu \in \EW$. If $|\nu| \ne |\lambda|+|\mu|$, then 
it follows from Proposition~\ref{prop:EE-key}\,(1)
that there exists no connected component of 
$\BB(\lambda) \otimes \BB(\mu)$ isomorphic to $\BB(\nu)$, 
and hence that $m_{\lambda,\,\mu}^{\nu}=0$ 
in the decomposition \eqref{eq:EE}. 
Also, recall that $\LR_{\lambda,\,\mu}^{\nu}=0$ 
if $|\nu| \ne |\lambda|+|\mu|$. Hence we have 
$m_{\lambda,\,\mu}^{\nu}=0=\LR_{\lambda,\,\mu}^{\nu}$ in this case.

Assume, therefore, that $|\nu|=|\lambda|+|\mu|$. 
We see from Corollary~\ref{cor:isom}\,(3) and 
Proposition~\ref{prop:isom} that 
there exists a one-to-one correspondence between 
the set of extremal elements of weight $\nu$ 
in $\BB(\lambda) \otimes \BB(\mu)$ and 
the set of connected components of 
$\BB(\lambda) \otimes \BB(\mu)$ that is isomorphic to $\BB(\nu)$.
From this fact, we deduce that 
the multiplicity $m_{\lambda,\,\mu}^{\nu}$ 
in the decomposition \eqref{eq:EE} is 
equal to the number of extremal elements of weight $\nu$ in 
$\BB(\lambda) \otimes \BB(\mu)$; namely, we obtain 
\begin{equation*}
m_{\lambda,\,\mu}^{\nu}=
 \#\bigl\{\pi \otimes \eta \in \BB(\lambda) \otimes \BB(\mu) \mid 
 \text{$\pi \otimes \eta$ is an extremal element of weight $\nu$}
 \bigr\}.
\end{equation*}
Fix $m \in \BZ_{\ge 1}$ such that $m \ge |\lambda|+|\mu|$. 
Then we deduce from Proposition~\ref{prop:EE-key}\,(2), (3) that 
the set of extremal elements of weight $\nu$ in 
$\BB(\lambda) \otimes \BB(\mu)$ is identical to the set of 
$[1,\,m]$-minimal elements of weight $\nu$ in 
$\BBm{1,\,m}(\lambda) \otimes \BBm{1,\,m}(\mu)$. Thus,
\begin{equation*}
 \begin{split}
 m_{\lambda,\,\mu}^{\nu}=
 \#\bigl\{\pi \otimes \eta \in 
   & \BBm{1,\,m}(\lambda) \otimes \BBm{1,m}(\mu) \mid \\
 & \text{$\pi \otimes \eta$ is a $[1,\,m]$-minimal element of weight $\nu$}
   \bigr\}.
 \end{split}
\end{equation*}
Recall that $\Fgm{1,\,m}$ is a ``reductive'' 
Lie algebra of type $A_{m}$ (see Remark~\ref{rem:Levi}), and that
$\lambda$, $\mu$, and $\nu$ are $[1,\,m]$-antidominant since 
they are contained in $\EW$. 
Therefore, by Remark~\ref{rem:LS-n}, 
$\BBm{1,\,m}(\lambda)$ (resp., $\BBm{1,\,m}(\mu)$) 
is isomorphic, as a $U_{q}(\Fgm{1,\,m})$-crystal, to 
the crystal basis of the finite-dimensional irreducible 
$U_{q}(\Fgm{1,\,m})$-module $\Vm{1,\,m}(\lambda)$ 
(resp., $\Vm{1,\,m}(\mu)$) of lowest weight $\lambda$ 
(resp., $\mu$). Consequently, 
the number of $[1,\,m]$-minimal elements of weight $\nu$ 
in the tensor product 
$\BBm{1,\,m}(\lambda) \otimes \BBm{1,m}(\mu)$ is equal to 
the multiplicity $[\Vm{1,\,m}(\lambda) \otimes \Vm{1,\,m}(\mu):
\Vm{1,\,m}(\nu)]$ of the finite-dimensional irreducible 
$U_{q}(\Fgm{1,\,m})$-module $\Vm{1,\,m}(\nu)$
of lowest weight $\nu$ in the tensor product $U_{q}(\Fgm{1,\,m})$-module 
$\Vm{1,\,m}(\lambda) \otimes \Vm{1,\,m}(\mu)$. 
Summarizing, we have 
\begin{equation*}
m_{\lambda,\,\mu}^{\nu}=
[\Vm{1,\,m}(\lambda) \otimes \Vm{1,\,m}(\mu):\Vm{1,\,m}(\nu)].
\end{equation*}
Here, noting that 
\begin{equation*}
\begin{cases}
\ell(\lambda) \le |\lambda| \le |\lambda|+|\mu| \le m, \\[1.5mm]
\ell(\mu) \le |\mu| \le |\lambda|+|\mu| \le m, \\[1.5mm]
\ell(\nu) \le |\nu|=|\lambda|+|\mu| \le m, 
\end{cases}
\end{equation*}
we can easily check that 
the Young diagram corresponding canonically to the highest weight of 
the finite-dimensional irreducible $U_{q}(\Fgm{m})$-module 
$\Vm{1,\,m}(\lambda)$ (resp., $\Vm{1,\,m}(\mu)$, $\Vm{1,\,m}(\nu)$) 
is identical to the Young diagram of the partition $\lambda$ 
(resp., $\mu$, $\nu$) $\in \EW \cong \Par$. Therefore, 
it follows immediately (see, for example, 
\cite[Chapter~8, Section~3, Corollary~2]{Fulton}) that 
\begin{equation*}
m_{\lambda,\,\mu}^{\nu}=
[\Vm{1,\,m}(\lambda) \otimes \Vm{1,\,m}(\mu):\Vm{1,\,m}(\nu)]=
\LR_{\lambda,\,\mu}^{\nu}.
\end{equation*}
Thus, we have proved Theorem~\ref{thm:EE}.
\end{proof}

%
\subsection{The case $\lambda \in E$, $\mu \in P_{+}$.}
\label{subsec:ED}
First we prove the following proposition 
(cf. \cite[the proof of Corollary~4.11]{Kw-Adv} in type $A_{+\infty}$ and 
\cite[Proposition~3.11]{Kw-Ep} in type $A_{\infty}$). 
%
%
\begin{prop} \label{prop:ED-conn}
Let $\lambda \in E$, and $\mu \in P_{+}$. 
The crystal graph of the tensor product 
$\BB(\lambda) \otimes \BB(\mu)$ is connected.
\end{prop}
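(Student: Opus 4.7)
The plan is to prove connectedness by showing that every element $\pi\otimes\eta\in\BB(\lambda)\otimes\BB(\mu)$ is linked, via a sequence of Kashiwara operators, to the distinguished reference element $\pi_\lambda\otimes\pi_\mu$.

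Since $\mu\in P_+$, Remark~\ref{rem:extmod}(1) combined with Theorem~\ref{thm:isom} identifies $\BB(\mu)$ with the crystal basis $\CB(\mu)$ of the irreducible highest weight $U_q(\Fg)$-module of highest weight $\mu$, whose unique highest weight element is $\pi_\mu$; in particular $\ve_i(\pi_\mu)=0$ for every $i\in I$. By the tensor product rule for crystals, it follows that both $e_i$ and $f_i$ applied to any element of the form $\pi'\otimes\pi_\mu$ act solely through the first tensor factor, i.e., $e_i(\pi'\otimes\pi_\mu)=(e_i\pi')\otimes\pi_\mu$ and $f_i(\pi'\otimes\pi_\mu)=(f_i\pi')\otimes\pi_\mu$. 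Combined with the connectedness of $\BB(\lambda)$ (Corollary~\ref{cor:isom}(1)), every element of the form $\pi'\otimes\pi_\mu$ is connected to $\pi_\lambda\otimes\pi_\mu$ in $\BB(\lambda)\otimes\BB(\mu)$. Hence it suffices to show that each $\pi\otimes\eta$ is connected to some element of the form $\pi'\otimes\pi_\mu$.

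To do this, I would pick $N\in\BZ_{\ge 0}$ large enough that $\pi\in\BBm{N}(\lambda)$ and $\eta\in\BBm{N}(\mu)$ (possible by \eqref{eq:limBBn}). Since $\BBm{N}(\lambda)\otimes\BBm{N}(\mu)$ is a normal $U_q(\Fgm{N})$-crystal and $\pi_{\lambda_{[N]}}$ is the unique $[N]$-maximal element of $\BBm{N}(\lambda)$ (Remark~\ref{rem:LS-n}), applying a suitable monomial in the $e_i$'s ($i\in[N]$) raises $\pi\otimes\eta$ to an $[N]$-maximal element of the form $\pi_{\lambda_{[N]}}\otimes\tilde\eta$ for some $\tilde\eta\in\BBm{N}(\mu)$. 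Next, choosing $n'>N+\#\Supp(\lambda)$, Lemma~\ref{lem:lamn} together with \eqref{eq:simple} gives $\vp_i(\pi_{\lambda_{[n']}})=0$ for all $i\in[N]$; so if the first factor of $\pi_{\lambda_{[N]}}\otimes\tilde\eta$ can be transported to $\pi_{\lambda_{[n']}}$ by Kashiwara operators acting solely on the first factor, then the Kashiwara operators $e_i$ ($i\in[N]$) applied to the resulting element $\pi_{\lambda_{[n']}}\otimes\tilde\eta$ act exclusively on the second factor (by the tensor product rule), and can be orchestrated—mirroring any expression of $\tilde\eta$ as an $f_i$-monomial applied to $\pi_\mu$ inside $\BBm{N}(\mu)$—to raise $\tilde\eta$ to $\pi_\mu$, yielding an element of the desired form $\pi_{\lambda_{[n']}}\otimes\pi_\mu$.

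The main obstacle is the intermediate transport: moving $\pi_{\lambda_{[N]}}\otimes\tilde\eta$ to $\pi_{\lambda_{[n']}}\otimes\tilde\eta$ via Kashiwara operators whose tensor-product action is routed entirely to the first factor. Although $\pi_{\lambda_{[N]}}$ and $\pi_{\lambda_{[n']}}$ are connected in $\BB(\lambda)$ by Corollary~\ref{cor:isom}(1), a specific sequence of operators must be chosen so that at each step the strict inequality $\vp_i(\text{first factor})>\ve_i(\text{second factor})$ (respectively the non-strict version for $e_i$) holds, ensuring that the tensor product rule places the action on the first factor. I expect this can be arranged by choosing operators supported on indices sufficiently far from the support of $\tilde\eta$, exploiting the finiteness of $\#\Supp(\lambda)$ and the choice $n'\gg N$ to provide enough ``room''; formalizing this bookkeeping is the principal technical challenge of the proof.
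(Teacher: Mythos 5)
Your opening reduction is sound and coincides with the base case of the paper's proof: since $\ve_i(\pi_\mu)=0$ for all $i$, any monomial carrying $\pi'$ to $\pi_\lambda$ inside $\BB(\lambda)$ carries $\pi'\otimes\pi_\mu$ to $\pi_\lambda\otimes\pi_\mu$, so all elements with second factor $\pi_\mu$ lie in a single component. The gap is exactly where you place it, and it is genuine rather than mere bookkeeping. Write $p=\#\Supp(\lambda)$, so that $\Supp(\lambda_{[N]})=[N-p+1,\,N]$ and $\Supp(\lambda_{[n']})=[n'-p+1,\,n']$ by Lemma~\ref{lem:lamn}. Then $\lambda_{[n']}-\lambda_{[N]}$ has a strictly positive coefficient on $\alpha_{N-p+2}$, so for $p\ge 2$ \emph{every} monomial in Kashiwara operators carrying $\pi_{\lambda_{[N]}}$ to $\pi_{\lambda_{[n']}}$ must apply some $e_i$ with $i\le N$; enlarging $n'$ does not help, because the left end of the support block does not move. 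At such a step the first factor is a straight-line path $\pi_\zeta$ with $\pair{\zeta}{h_i}\le 0$, hence $\vp_i(\pi_\zeta)=0$, and the tensor product rule then routes $e_i$ to the second factor as soon as $\ve_i(\tilde\eta)>0$ --- which is precisely what you cannot exclude for $i\in[N]$, the only indices where $\ve_i(\tilde\eta)$ can be nonzero. So the transport with $\tilde\eta$ frozen is not available in general, and "choosing indices far from the support of $\tilde\eta$" cannot be arranged.

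The paper closes this by not insisting on freezing the second factor: it inducts on the depth $\Dep(\eta)$ of $\eta$ below $\pi_\mu$. After normalizing the first factor to $\pi_{\lamn}$ by a monomial in the $e_i^{\max}$ (any change of the second factor along the way already lowers its depth, so induction applies), the key Claim $\pair{w_i\lamn}{h_i}\le 0$ with $w_i=r_{i+1}\cdots r_{n+1}$ shows that applying $e_{i+1}^{\max}\cdots e_{n+1}^{\max}$ yields either a second factor of smaller depth, or the element $\pi_{w_i\lamn}\otimes\eta$, on which $e_i$ (with $i$ chosen so that $e_i\eta\ne\bzero$) is forced onto the second factor and again lowers the depth. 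Your argument needs this dichotomy, or an equivalent inductive mechanism, to close the gap; as written, your step~2 does not go through.
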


\begin{proof}
Since $\mu \in P_{+}$, 
it follows from Theorem~\ref{thm:isom} and 
Remark~\ref{rem:extmod}\,(1) that 
$\BB(\mu)$ is isomorphic, as a $U_{q}(\Fg)$-crystal, 
to the crystal basis of the irreducible highest weight 
$U_{q}(\Fg)$-module of highest weight $\mu$. Therefore, 
the weight $\wt \eta$ of an element $\eta \in \BB(\mu)$ 
is of the form:
$\wt (\eta)= \mu- 
 \sum_{i \in I} m_{i}\alpha_{i}$
for some $m_{i} \in \BZ_{\ge 0}$, $i \in I$; 
in this case, we set 
\begin{equation*}
\Dep(\eta):=\sum_{i \in I} m_{i} \in \BZ_{\ge 0}.
\end{equation*}
Let $\BB(\pi_{\lambda} \otimes \pi_{\mu})$ denote 
the connected component of $\BB(\lambda) \otimes \BB(\mu)$ 
containing $\pi_{\lambda} \otimes \pi_{\mu}$. 
We will show by induction on $\Dep(\eta)$ that 
every element $\pi \otimes \eta \in \BB(\lambda) \otimes \BB(\mu)$ 
is contained in this connected component 
$\BB(\pi_{\lambda} \otimes \pi_{\mu})$. 

First, suppose that $\Dep(\eta)=0$. 
Then we have $\eta=\pi_{\mu}$.
Since the crystal graph of $\BB(\lambda)$ is connected by 
Corollary~\ref{cor:isom}\,(1), 
there exists a monomial $X_{1}$ in the Kashiwara operators 
$e_{i}$ and $f_{i}$ for $i \in I$ 
such that $X_{1}\pi=\pi_{\lambda}$. 
Note that if $f_{i}\pi' \ne \bzero$ 
(resp., $e_{i}\pi' \ne \bzero$)
for some $\pi' \in \BB(\lambda)$ and $i \in I$, 
then $f_{i}(\pi' \otimes \pi_{\mu})=
 (f_{i}\pi') \otimes \pi_{\mu}$ 
(resp., $e_{i}(\pi' \otimes \pi_{\mu})=
 (e_{i}\pi') \otimes \pi_{\mu}$) 
by the tensor product rule for crystals. 
From this, it follows that 
\begin{equation*}
X_{1}(\pi \otimes \pi_{\mu})=
(X_{1}\pi) \otimes \pi_{\mu}=
\pi_{\lambda} \otimes \pi_{\mu}, 
\end{equation*}
as desired. 

Next, suppose that $\Dep(\eta) > 0$; 
note that $e_{i}\eta \ne \bzero$ for some $i \in I$ 
since $\eta$ is not the (unique) maximal element 
$\pi_{\mu}$ of $\BB(\mu)$. Now we take 
$n \in \BZ_{\ge 0}$ such that 

(i) $\Supp(\lambda) \subsetneqq 
[n]=\bigl\{0,\,1,\,\dots,\,n\bigr\}$;

(ii) $\pi \in \BBn(\lambda)$ (see \eqref{eq:limBBn});

(iii) $e_{i}\eta \ne \bzero$ for some $i \in [n]$.

\noindent
It follows from condition (ii) and Remark~\ref{rem:LS-n} that 
there exists a monomial $X_{2}$ in $e_{i}^{\max}$, $i \in [n]$, 
such that $X_{2}\pi=\pi_{\lamn}$; 
by the tensor product rule for crystals, we have
\begin{equation*}
X_{2}(\pi \otimes \eta)=
(X_{2}\pi) \otimes \eta'=
\pi_{\lamn} \otimes \eta'
\end{equation*}
for some $\eta' \in \BB(\mu)$. 
If $\eta' \ne \eta$, then we have $\Dep(\eta') < \Dep(\eta)$ 
since $X_{2}$ is a monomial in $e_{i}^{\max}$, $i \in [n]$. 
Therefore, by our induction hypothesis, 
$\pi_{\lamn} \otimes \eta'$ is contained in 
the connected component $\BB(\pi_{\lambda} \otimes \pi_{\mu})$, 
and hence so is $\pi \otimes \eta$. Thus, we may assume that 
$\eta'=\eta$, i.e., 
%
%
\begin{equation} \label{eq:A1}
X_{2}(\pi \otimes \eta)=
\pi_{\lamn} \otimes \eta.
\end{equation}

We set $w_{i}:=r_{i+1}r_{i+2} \cdots r_{n}r_{n+1} \in W$ 
for $0 \le i \le n$, and $w_{n+1}:=e$, 
the identity element of $W$.
\begin{claim*}
We have
\begin{equation*}
\pair{w_{i}\lamn}{h_{i}} \le 0 \quad
\text{\rm for all $0 \le i \le n+1$}.
\end{equation*}
\end{claim*}

\noindent
{\it Proof of Claim.}
Set $p:=\#\Supp (\lambda)$. It follows 
from Lemma~\ref{lem:lamn} and condition (i) that 
\begin{equation*}
\Supp (\lamn)=[n-p+1,\,n], \quad 
\text{\rm and} \quad
0 < \co{\lamn}{n-p+1} \le \cdots \le 
\co{\lamn}{n-1} \le \co{\lamn}{n}.
\end{equation*}
Let $0 \le i \le n+1$. By using 
\eqref{eq:binf-r}--\eqref{eq:dinf-r}, 
we deduce from \eqref{eq:binf}--\eqref{eq:dinf} that 
\begin{equation*}
w_{i}^{-1}h_{i}=
\begin{cases}
w_{i}^{-1}(-\eps'_{i-1}+\eps'_{i})=-\eps'_{i-1}+\eps'_{n+1}, 
  & \text{if $i \ge 1$}, \\[1.5mm]
w_{0}^{-1}(2\eps'_{0})=2\eps'_{n+1}, 
  & \text{if $i=0$, and $\Fg$ is of type $B_{\infty}$}, \\[1.5mm]
w_{0}^{-1}(\eps'_{0})=\eps'_{n+1}, 
  & \text{if $i=0$, and $\Fg$ is of type $C_{\infty}$}, \\[1.5mm]
w_{0}^{-1}(\eps'_{0}+\eps'_{1})=\eps'_{0}+\eps'_{n+1}, 
  & \text{if $i=0$, and $\Fg$ is of type $D_{\infty}$}.
\end{cases}
\end{equation*}
Since $n-p+1 > 0$ by condition (i), we have
$\pair{w_{i}\lamn}{h_{i}}=
 \pair{\lamn}{w_{i}^{-1}h_{i}} \le 0$ 
in all the cases above. This proves the claim. 
\bqed

\vsp

Let $0 \le i \le n$. By the tensor product rule 
for crystals, we have 
\begin{align*}
& 
e_{i+1}^{\max}e_{i+2}^{\max} \cdots 
e_{n}^{\max}e_{n+1}^{\max}X_{2}(\pi \otimes \eta) \\
& \hspace*{10mm} =
e_{i+1}^{\max}e_{i+2}^{\max} \cdots 
e_{n}^{\max}e_{n+1}^{\max}(\pi_{\lamn} \otimes \eta) 
\qquad \text{by \eqref{eq:A1}} \\
& \hspace*{10mm} =
(e_{i+1}^{\max}e_{i+2}^{\max} \cdots 
 e_{n}^{\max}e_{n+1}^{\max}\pi_{\lamn}) \otimes \eta_{i}
\end{align*}
for some $\eta_{i} \in \BB(\mu)$. 
Using the claim above successively, we find that 
\begin{equation*}
e_{i+1}^{\max}e_{i+2}^{\max} \cdots 
e_{n}^{\max}e_{n+1}^{\max}\pi_{\lamn} 
 = \pi_{r_{i+1}r_{i+2} \cdots r_{n}r_{n+1}\lamn}
 = \pi_{w_{i}\lamn},
\end{equation*}
and hence
\begin{equation*}
e_{i+1}^{\max}e_{i+2}^{\max} \cdots 
e_{n}^{\max}e_{n+1}^{\max}X_{2}(\pi \otimes \eta)=
\pi_{w_{i}\lamn} \otimes \eta_{i}. 
\end{equation*}
Let us take $i \in [n]$ such that $e_{i}\eta \ne \bzero$ 
(recall condition (iii)). If $\eta_{i} \ne \eta$, 
then we have $\Dep(\eta_{i}) < \Dep(\eta)$, and hence 
by our induction hypothesis, 
$\pi_{w_{i}\lamn} \otimes \eta_{i}$ is contained in 
$\BB(\pi_{\lambda} \otimes \pi_{\mu})$, which implies that 
$\pi \otimes \eta \in \BB(\pi_{\lambda} \otimes \pi_{\mu})$. 
Suppose, therefore, that $\eta_{i}=\eta$. 
Because $\pair{w_{i}\lamn}{h_{i}} \le 0$ 
by the claim above, and because 
$e_{i}\eta \ne \bzero$ by the choice of $i \in [n]$, 
it follows from the tensor product rule for crystals that
\begin{equation*}
e_{i}e_{i+1}^{\max}e_{i+2}^{\max} \cdots 
e_{n}^{\max}e_{n+1}^{\max}X_{2}(\pi \otimes \eta)=
e_{i}(\pi_{w_{i}\lamn} \otimes \eta)=
\pi_{w_{i}\lamn} \otimes (e_{i}\eta) \quad (\ne \bzero).
\end{equation*}
Since $\Dep(e_{i}\eta) < \Dep(\eta)$, the element 
$\pi_{w_{i}\lamn} \otimes (e_{i}\eta)$ is 
contained in $\BB(\pi_{\lambda} \otimes \pi_{\mu})$ 
by our induction hypothesis. This implies that 
$\pi \otimes \eta \in \BB(\pi_{\lambda} \otimes \pi_{\mu})$, 
thereby completing the proof of Proposition~\ref{prop:ED-conn}.  
\end{proof}

The following theorem extends 
\cite[Corollary~4.11]{Kw-Adv} in type $A_{+\infty}$ and 
\cite[Theorem~4.6]{Kw-Ep} in type $A_{\infty}$.
%
%
\begin{thm} \label{thm:ED}
Let $\lambda \in E$, and $\mu \in P_{+}$.
Then, 
\begin{equation*}
\BB(\lambda) \otimes \BB(\mu) \cong 
\BB(\xi+\mu)
\quad \text{\rm as $U_{q}(\Fg)$-crystals}
\end{equation*}
for some $\xi \in W\lambda$.
\end{thm}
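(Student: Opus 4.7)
The plan is to combine Proposition~\ref{prop:ED-conn} with Proposition~\ref{prop:isom}. By Proposition~\ref{prop:ED-conn}, the crystal $\BB(\lambda) \otimes \BB(\mu)$ is connected, so it coincides with any of its connected components. Consequently, by Proposition~\ref{prop:isom}, the theorem reduces to exhibiting a single extremal element of $\BB(\lambda) \otimes \BB(\mu)$ whose weight is of the form $\xi + \mu$ for some $\xi \in W\lambda$; this weight will then determine the whole crystal up to isomorphism.

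My proposed extremal element is $\pi_\xi \otimes \pi_\mu$, of weight $\xi + \mu$, for a carefully chosen $\xi \in W\lambda$. Set $p := \#\Supp(\lambda)$ and, applying Remark~\ref{rem:dom} to $\mu \in P_+$, take $n_0 \in \BZ_{\ge 0}$ with $\com{n} = L_\mu$ for all $n \ge n_0$. Choose $\xi \in W\lambda$ with $\Supp(\xi) \subseteq [n_0+1,\, n_0+p]$; such a $\xi$ exists because the Weyl group $W$ acts on $E$ by signed permutations of the $\eps_j$ (with type-dependent sign restrictions visible in \eqref{eq:binf-r}--\eqref{eq:dinf-r}), so the finite support of $\lambda$ can be relocated into any prescribed block of $p$ consecutive positions. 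The point of this placement is that $\Supp(\xi)$ now lies entirely in the stable region of $\mu$, which guarantees that $\pair{\xi}{h_i}$ and $\pair{\mu}{h_i}$ are never simultaneously nonzero.

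To verify extremality of $\pi_\xi \otimes \pi_\mu$, I invoke Remark~\ref{rem:ext}(1): it suffices to establish $[n]$-extremality for all $n \ge n_0+p$. For such $n$, the element lies in the normal $U_q(\Fgn)$-crystal $\BBn(\lambda) \otimes \BBn(\mu)$ by Lemma~\ref{lem:BBn}. The key simplification is that $\pi_\mu$ is $[n]$-maximal, so $\ve_i(\pi_\mu) = 0$ for all $i \in [n]$, and hence the tensor product rule forces every $e_i$ to act on the first tensor factor; an analogous statement holds for $f_i$, controlled by the sign of $\pair{\xi}{h_i}$. Using the support-separation property above, one then checks inductively that $S_w(\pi_\xi \otimes \pi_\mu) = \pi_{w\xi} \otimes \pi_{w\mu}$ for every $w \in W_{[n]}$, so $[n]$-extremality reduces to the (known) individual extremality of $\pi_{w\xi}$ in $\BBn(\lambda)$ and of $\pi_{w\mu}$ in $\BBn(\mu)$ (Corollary~\ref{cor:isom}(2)). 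The main obstacle is precisely this inductive computation of $S_w$ on the tensor: tracking which factor each Kashiwara operator comprising $S_w$ acts on requires careful case analysis, and the support-separation condition on $\xi$ is what decouples the two factors and makes the analysis tractable. Without it (for instance, with the naive choice $\xi = \lambda$), the element $\pi_\xi \otimes \pi_\mu$ typically fails to be extremal, so the technical heart of the argument is pinning down the correct $\xi \in W\lambda$.
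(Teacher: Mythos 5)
Your overall architecture coincides with the paper's: use Proposition~\ref{prop:ED-conn} to reduce the theorem to exhibiting a single extremal element, take that element to be $\pi_{\xi}\otimes\pi_{\mu}$ with $\Supp(\xi)$ pushed into the region where the coordinates of $\mu$ have stabilized at $L_{\mu}$, and conclude with Proposition~\ref{prop:isom}. The gap is in the extremality verification. First, your $\xi$ is underdetermined: a block of $p$ consecutive positions supports many elements of $W\lambda$ (differing by permutations and sign changes of the entries), and only the representative with all entries positive can work. The paper takes the \emph{unique} $\xi\in W\lambda$ with $\Supp(\xi)=[q,\,q+p-1]$ and $0<\co{\xi}{q}\le\cdots\le\co{\xi}{q+p-1}$, and this normalization is not cosmetic. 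Concretely, in type $B_{\infty}$ with $\lambda=\eps_{0}$ and $\mu=\Lambda_{1}$ your recipe allows $\xi=-\eps_{2}$; then $S_{2}(\pi_{\xi}\otimes\pi_{\mu})=\pi_{-\eps_{1}}\otimes\pi_{\mu}$, whose weight pairs to $0$ against $h_{1}$, yet $e_{1}(\pi_{-\eps_{1}}\otimes\pi_{\mu})=\pi_{-\eps_{0}}\otimes\pi_{\mu}\ne\bzero$, so \eqref{eq:extremal} fails and $\pi_{\xi}\otimes\pi_{\mu}$ is not extremal.

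Second, and more fundamentally, the property you isolate --- that $\pair{\xi}{h_{i}}$ and $\pair{\mu}{h_{i}}$ are never simultaneously nonzero --- concerns only the simple coroots, i.e.\ only the base case $w=e$ of your induction. Establishing $S_{w}(\pi_{\xi}\otimes\pi_{\mu})=\pi_{w\xi}\otimes\pi_{w\mu}$ for all $w$ requires controlling the relative signs of $\pair{w\xi}{h_{i}}$ and $\pair{w\mu}{h_{i}}$, that is, of $\pair{\xi}{\beta^{\vee}}$ and $\pair{\mu}{\beta^{\vee}}$ for \emph{every} root $\beta\in\Delta$; and your ``key simplification'' (that the second factor is $[n]$-maximal, forcing all $e_{i}$ onto the first factor) is destroyed after the first application of an $S_{i}$, exactly as the example above shows. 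What is actually needed is the sign-compatibility statement for all roots --- the Claim in the paper's proof: $\pair{\xi}{\beta^{\vee}}>0$ implies $\pair{\mu}{\beta^{\vee}}\ge 0$, and $\pair{\mu}{\beta^{\vee}}>0$ implies $\pair{\xi}{\beta^{\vee}}\ge 0$ --- whose proof uses both the placement of $\Supp(\xi)$ in the stable region and the positivity of the entries of $\xi$; one then runs the induction on the length of $w$ as in \cite[Lemma~1.6\,(1)]{AK}. So the skeleton of your argument is the paper's, but the technical heart, which you correctly identify as the hard point, is not supplied, and the condition you propose in its place is too weak to carry the induction.
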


\begin{proof}
We set $q:=\min\bigl\{j \in \BZ_{\ge 0} \mid \com{j}=L_{\mu}\bigr\}$; 
we see from Remark~\ref{rem:dom} that 
\begin{equation*}
0 \le \underbrace{\abs{\com{0}} \le \com{1} \le 
\cdots \le \com{q-1} <}_{\text{If $q=0$, then this part is omitted.}} 
L_{\mu}=\com{q}=\com{q+1}=\cdots.
\end{equation*}
Also, we set $p:=\#\Supp (\lambda)$. 
Then we deduce by using \eqref{eq:binf-r}--\eqref{eq:dinf-r} that 
there exists a unique element $\xi \in W\lambda$ satisfying 
the conditions that $\Supp(\xi)=[q,\,q+p-1]$, and that 
%
%
\begin{equation} \label{eq:xi}
0 < \co{\xi}{q} \le \co{\xi}{q+1} \le \cdots \le \co{\xi}{q+p-1}; 
\end{equation}
note that $\pi_{\xi} \in \BB(\lambda)$ 
(see Remark~\ref{rem:LS}\,(1), (2)). 
We will prove that 
$\pi_{\xi} \otimes \pi_{\mu} \in 
\BB(\lambda) \otimes \BB(\mu)$ is an extremal element. 
If we prove this assertion, 
then it follows immediately from 
Propositions~\ref{prop:isom} and \ref{prop:ED-conn} 
that $\BB(\lambda) \otimes \BB(\mu) \cong 
\BB(\xi+\mu)$ as $U_{q}(\Fg)$-crystals, 
which is what we want to prove. 

First we show the following claim.
\begin{claim*}
Let $\beta \in \Delta$ be a root for $\Fg$. 

{\rm (1)} 
If $\pair{\xi}{\beta^{\vee}} > 0$, 
then $\pair{\mu}{\beta^{\vee}} \ge 0$. 

{\rm (2)} 
If $\pair{\mu}{\beta^{\vee}} > 0$, then 
$\pair{\xi}{\beta^{\vee}} \ge 0$.
\end{claim*}

\noindent
{\it Proof of Claim.}
The assertions are obvious in the case 
$\xi=0$ (or equivalently, $\lambda=0$). 
We may therefore assume that 
$\xi \ne 0$ (or equivalently, $\lambda \ne 0$); 
note that $p=\Supp (\lambda) > 0$. 

(1) If $\beta$ is a positive root, then 
we have $\pair{\mu}{\beta^{\vee}} \ge 0$ 
since $\mu \in P_{+}$. Therefore, we may assume that 
$\beta$ is a negative root; note that 
the set $\Delta^{-}$ of negative roots is as follows 
(see \eqref{eq:binf-rs}--\eqref{eq:dinf-rs}): 
%
%
\begin{equation} \label{eq:nr}
\Delta^{-}=
\begin{cases}
\bigl\{-\eps_{j} \mid 1 \le j\bigr\} \cup 
\bigl\{\eps_{j}-\eps_{i},\ -\eps_{j}-\eps_{i} \mid 0 \le j < i\bigr\}
& \text{if $\Fg$ is of type $B_{\infty}$}, \\[1.5mm]
\bigl\{-2\eps_{j} \mid 1 \le j\bigr\} \cup 
\bigl\{\eps_{j}-\eps_{i},\ -\eps_{j}-\eps_{i} \mid 0 \le j < i\bigr\}
& \text{if $\Fg$ is of type $C_{\infty}$}, \\[1.5mm]
\bigl\{\eps_{j}-\eps_{i},\ -\eps_{j}-\eps_{i} \mid 0 \le j < i\bigr\}
& \text{if $\Fg$ is of type $D_{\infty}$}.
\end{cases}
\end{equation}
The assumption $\pair{\xi}{\beta^{\vee}} > 0$ 
now implies that $\beta$ is of the form: 
$\eps_{j}-\eps_{i}$ for some 
$q \le j \le q+p-1$ and $i \ge q+p$. 
Since $\com{j}=L_{\mu}$ for all $j \ge q$ by the definition of $q$, 
it follows that $\pair{\mu}{\beta^{\vee}}=0$. 
This proves part (1) of the claim. 

(2) The assumption $\pair{\mu}{\beta^{\vee}} > 0$ 
implies that $\beta$ is a positive root. 
If $\beta$ is a sum of positive integer multiples of 
$\eps_{j}$'s, $j \in \BZ_{\ge 0}$, then it is obvious that 
$\pair{\xi}{\beta^{\vee}} \ge 0$. Therefore, we may assume that 
$\beta$ is of the form: $-\eps_{j}+\eps_{i}$ 
for some $0 \le j < i$. 
Since $\com{j}=L_{\mu}$ for all $j \ge q$  by the definition of $q$, and 
since $\pair{\mu}{\beta^{\vee}} > 0$ by assumption,  
it follows that $j \le q-1$. Because 
$\Supp(\xi)=\bigl\{q,\,q+1,\,\dots,\,q+p-1\bigr\}$ and 
$0 < \co{\xi}{q} \le \co{\xi}{q+1} \le \cdots \le \co{\xi}{q+p-1}$, 
we obtain $\pair{\xi}{\beta^{\vee}} \ge 0$, as desired. 
This proves part (2) of the claim. \nolinebreak \bqed

\vsp

By an argument entirely similar to the one for 
\cite[Lemma~1.6\,(1)]{AK}, 
we deduce, using the claim above, that 
$S_{w}(\pi_{\xi} \otimes \pi_{\mu})=
 \pi_{w\xi} \otimes \pi_{w\mu}$ for all $w \in W$ 
(the proof proceeds by induction on the length of $w \in W$ 
with the help of Corollary~\ref{cor:isom}\,(2)). 
From this, it easily follows that 
$\pi_{\xi} \otimes \pi_{\mu}$ is extremal. 
This completes the proof of Theorem~\ref{thm:ED}.
\end{proof}
%
%
\begin{rem} \label{rem:ED}
Let $\lambda \in P$ be an integral weight such that 
$L_{\lambda} \ge 0$. We set 
\begin{equation*}
q:=\#\bigl\{j \in \BZ_{\ge 0} \mid |\col{j}| < L_{\lambda} \bigr\}
\quad \text{and} \quad
p:=\#\bigl\{j \in \BZ_{\ge 0} \mid |\col{j}| > L_{\lambda}\bigr\}.
\end{equation*}
Then, we deduce by using \eqref{eq:binf-r}--\eqref{eq:dinf-r} that 
there exists a unique element $\nu \in W\lambda$ 
satisfying the conditions that 
\begin{equation*}
\begin{cases}
0 \le 
\underbrace{\abs{\con{0}} \le \con{1} \le \cdots \le \con{q-1} <}_{%
\text{If $q=0$, then this part is omitted.}} L_{\lambda} 
\underbrace{< \con{q} \le \con{q+1} \le \cdots \le \con{q+p-1}}_{%
\text{If $p=0$, then this part is omitted.}}, \\[8mm]
\con{j}=L_{\lambda} \quad \text{for all $j \ge p+q$}.
\end{cases}
\end{equation*}
We set 
%
%
\begin{equation} \label{eq:lam-p0}
\lambda^{+}:=\sum_{j=0}^{q-1} \con{j}\eps_{j}+
L_{\lambda}(\eps_{q}+\eps_{q+1}+\cdots), \qquad
\lambda^{0}:=
  \sum_{j=q}^{q+p-1}(\con{j}-L_{\lambda})\eps_{j};
\end{equation}
note that $\lambda^{+} \in P_{+}$, $\lambda^{0} \in E$, 
and $\nu=\lambda^{+}+\lambda^{0}$. 
Theorem~\ref{thm:ED}, along with its proof, yields 
an isomorphism of $U_{q}(\Fg)$-crystals: 
\begin{equation*}
\BB(\lambda)=\BB(\nu) \cong 
 \BB(\lambda^{0}) \otimes \BB(\lambda^{+}).
\end{equation*}
\end{rem}

The following proposition will be used 
in the proof of Theorem~\ref{thm:general} below 
(cf. \cite[Lemma~5.1]{Kw-Adv} in type $A_{+\infty}$ and 
\cite[Proposition~3.12]{Kw-Ep} in type $A_{\infty}$). 
%
%
\begin{prop} \label{prop:ED-isom}
Let $\lambda_{1},\,\lambda_{2} \in E$, and 
$\mu_{1},\,\mu_{2} \in P_{+}$. Then, 
\begin{equation*}
\BB(\lambda_{1}) \otimes \BB(\mu_{1}) \cong 
\BB(\lambda_{2}) \otimes \BB(\mu_{2}) \qquad 
\text{\rm as $U_{q}(\Fg)$-crystals}
\end{equation*}
if and only if $\lambda_{1} \in W\lambda_{2}$ and 
$\mu_{1}=\mu_{2}$. 
\end{prop}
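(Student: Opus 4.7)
The plan is as follows. The \emph{if} direction is immediate from Corollary~\ref{cor:isom}\,(3): $\lambda_1 \in W\lambda_2$ gives $\BB(\lambda_1) \cong \BB(\lambda_2)$, which combined with $\mu_1 = \mu_2$ yields the tensor isomorphism. For the \emph{only if} direction, the main idea is to use Theorem~\ref{thm:ED} to reduce the tensor isomorphism to a single-component crystal isomorphism, and then to invoke Corollary~\ref{cor:isom}\,(3) together with the uniqueness statement in Remark~\ref{rem:ED} to force both the dominant piece and the level-zero piece to match.

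More precisely, by Theorem~\ref{thm:ED} there exist $\xi_i \in W\lambda_i$ ($i=1,2$) with $\BB(\lambda_i) \otimes \BB(\mu_i) \cong \BB(\nu_i)$, where $\nu_i := \xi_i + \mu_i$. Inspecting the proof of that theorem, I may choose $\xi_i$ so that $\Supp(\xi_i) = [q_i,\,q_i+p_i-1]$ with $0 < \xi_i^{(q_i)} \le \cdots \le \xi_i^{(q_i+p_i-1)}$, where $q_i := \min\{j \in \BZ_{\ge 0} \mid \mu_i^{(j)} = L_{\mu_i}\}$ and $p_i := \#\Supp(\lambda_i)$. The hypothesis combined with Corollary~\ref{cor:isom}\,(3) then gives $\nu_1 \in W\nu_2$. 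The key observation I plan to exploit is that each $\nu_i$ is, by this explicit construction, already the canonical $W$-orbit representative singled out in Remark~\ref{rem:ED}: using Remark~\ref{rem:dom} for $\mu_i$ and the shape of $\xi_i$ above, one finds
\begin{equation*}
\nu_i^{(j)} =
\begin{cases}
\mu_i^{(j)} & \text{if } j < q_i, \\
L_{\mu_i} + \xi_i^{(j)} & \text{if } q_i \le j \le q_i + p_i - 1, \\
L_{\mu_i} & \text{if } j \ge q_i + p_i,
\end{cases}
\end{equation*}
and this sequence satisfies the canonical-form chain $0 \le \abs{\nu_i^{(0)}} \le \cdots \le \nu_i^{(q_i-1)} < L_{\mu_i} < \nu_i^{(q_i)} \le \cdots \le \nu_i^{(q_i+p_i-1)}$ together with stable tail $L_{\mu_i}$. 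The uniqueness in Remark~\ref{rem:ED} therefore forces $\nu_1 = \nu_2$.

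From $\nu_1 = \nu_2$, I would read off $L_{\mu_1} = L_{\mu_2}$ from the stable tail, then $q_1 = q_2$ and $p_1 = p_2$ from the shape of the sequence, and finally recover $\mu_i$ and $\xi_i$ componentwise from the three ranges above. This yields $\mu_1 = \mu_2$ and $\xi_1 = \xi_2$, whence $\lambda_1 \in W\xi_1 = W\xi_2 \ni \lambda_2$, as desired. The only real obstacle in the plan is the identification of $\nu_i$ with the canonical representative of Remark~\ref{rem:ED}; this requires a careful look at the construction in the proof of Theorem~\ref{thm:ED}, but is otherwise just bookkeeping. Everything else is a direct application of previously established uniqueness statements.
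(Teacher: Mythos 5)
Your proposal is correct and follows essentially the same route as the paper's own proof: both reduce via Theorem~\ref{thm:ED} to $\BB(\nu_{1}) \cong \BB(\nu_{2})$ with $\nu_{m}=\xi_{m}+\mu_{m}$, then use the fact that $\nu_{m}$ is the unique element of its $W$-orbit in the canonical form of \eqref{eq:num} (equivalently, Remark~\ref{rem:ED}) to force $\nu_{1}=\nu_{2}$ and recover $\mu_{1}=\mu_{2}$ and $\xi_{1}=\xi_{2}$. No gaps.
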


\begin{proof}
The ``if'' part follows immediately from 
Corollary~\ref{cor:isom}\,(3). We show the ``only if'' part. 
As in the proof of Theorem~\ref{thm:ED}, we set 
$q_{m}:=\min \bigl\{j \in \BZ_{\ge 0} \mid 
 \co{\mu_{m}}{j}=L_{\mu_{m}}\bigr\}$ 
and $p_{m}:=\#\Supp(\lambda_{m})$ for $m=1,\,2$. 
Also, for $m=1,\,2$, let $\xi_{m} \in W\lambda_{m}$ 
be the unique element 
of $W\lambda_{m}$ such that 
\begin{equation*}
0 < \co{\xi_{m}}{q_{m}} \le \co{\xi_{m}}{q_{m}+1} \le \cdots \le 
    \co{\xi_{m}}{q_{m}+p_{m}-1}.
\end{equation*}
Then, the proof of Theorem~\ref{thm:ED} shows that 
\begin{equation*}
\BB(\lambda_{m}) \otimes \BB(\mu_{m}) \cong \BB(\xi_{m}+\mu_{m})
\quad \text{as $U_{q}(\Fg)$-crystals}
\end{equation*}
for $m=1,\,2$; observe that if we set $\nu_{m}:=\xi_{m}+\mu_{m}$ 
for $m=1,\,2$, then we have 
%
%
\begin{equation} \label{eq:num}
\begin{cases}
0 \le 
\underbrace{\abs{\co{\nu_{m}}{0}} \le 
 \co{\nu_{m}}{1} \le \cdots \le \co{\nu_{m}}{q_{m}-1} <}_{%
\text{If $q_{m}=0$, then this part is omitted.}} L_{\mu_{m}} 
\underbrace{< \co{\nu_{m}}{q_{m}} \le 
 \co{\nu_{m}}{q_{m}+1} \le \cdots \le \co{\nu_{m}}{q_{m}+p_{m}-1}}_{%
\text{If $p_{m}=0$, then this part is omitted.}}, \\[8mm]
\co{\nu_{m}}{j}=L_{\mu_{m}} \quad \text{for all $j \ge p_{m}+q_{m}$}.
\end{cases}
\end{equation}
Because
\begin{equation*}
\BB(\nu_{1}) \cong \BB(\lambda_{1}) \otimes \BB(\mu_{1}) \cong 
\BB(\lambda_{2}) \otimes \BB(\mu_{2}) \cong \BB(\nu_{2})
\quad \text{as $U_{q}(\Fg)$-crystals}
\end{equation*}
by assumption, we infer by Corollary~\ref{cor:isom}\,(3) that 
$\nu_{1} \in W\nu_{2}$. However, by using 
\eqref{eq:binf-r}--\eqref{eq:dinf-r}, we see that 
$\nu_{2}$ is a unique element of $W\nu_{2}$ satisfying 
condition \eqref{eq:num}, and hence that $\nu_{1}=\nu_{2}$. 
From this equality, it is easily seen that $L_{\mu_{1}}=L_{\mu_{2}}$, 
and $q_{1}=q_{2}$, $p_{1}=p_{2}$. Therefore, it follows from the 
definitions of $\nu_{1}$ and $\nu_{2}$ that 
$\mu_{1}=\mu_{2}$, and hence $\xi_{1}=\xi_{2}$. 
Thus we have proved the proposition. 
\end{proof}

%
\subsection{The case $\lambda \in P_{+}$, $\mu \in E$.}
\label{subsec:DE}
Throughout this subsection, we fix $\lambda \in P_{+}$ and $\mu \in E$. 
We set $p:=\min\bigl\{j \in \BZ_{\ge 0} \mid \col{j}=L_{\lambda}\bigr\} 
\in \BZ_{\ge 0}$; it follows from Remark~\ref{rem:dom} that 
%
%
\begin{equation} \label{eq:col}
0 \le \underbrace{\abs{\col{0}} \le \col{1} \le 
\cdots \le \col{p-1} <}_{\text{If $p=0$, then this part is omitted.}} 
L_{\lambda}=\col{p}=\col{p+1}=\cdots.
\end{equation}
Also, we set 
\begin{equation*}
q:=\min\bigl\{q \in \BZ_{\ge 1} \mid 
\text{$\Supp (\mu) \subset [q-1]$ and $q \ge |\mu|$}\bigr\},
\end{equation*}
where $|\mu|:=\sum_{j \in \BZ_{\ge 0}} |\com{j}| \in \BZ_{\ge 0}$; 
note that $q \ge \#\Supp (\mu)$. 
Recall from Remark~\ref{rem:lcm} 
that if $N=N_{\mu} \in \BZ_{\ge 1}$ denotes 
the least common multiple of nonzero integers in 
$\bigl\{\pair{\mu}{\beta^{\vee}} \mid 
\beta \in \Delta\bigr\} \cup \bigl\{1\bigr\}$, 
then each LS path $\eta \in \BB(\mu)$ of shape $\mu$ 
can be written as: 
\begin{equation*}
\eta=(\nu_{1},\,\nu_{2},\,\dots,\,\nu_{N})
\end{equation*}
for some $\nu_{1},\,\nu_{2},\,\dots,\,\nu_{N} \in W\mu$, 
with $\nu_{1} \ge \nu_{2} \ge \cdots \ge \nu_{N}$. 

Let $n \in \BZ_{\ge 0}$ be such that $n > p+(N+1)q$; 
note that $n \ge 3$. 
Let $\Pn$ denote the subset of $P$ consisting of 
all $[n]$-dominant integral weights $\nu \in P$ 
satisfying the conditions that 
\begin{equation*}
\begin{cases}
L_{\nu}=L_{\lambda}, \\[1mm]
\con{j}=L_{\nu} \, (=L_{\lambda}) 
\quad \text{for all $j \ge n+1$, and} \\[1mm]
\#\bigl\{
  0 \le j \le n \mid 
  \con{j}=L_{\nu} \, (=L_{\lambda})
\bigr\} > n-p-Nq;
\end{cases}
\end{equation*}
note that $n-p-Nq \ge q \ge |\mu|$. 

%
\begin{lem} \label{lem:DE-highest}
Let $n \in \BZ_{\ge 0}$ be such that $n > p+(N+1)q$. 
If $\pi \otimes \eta \in 
\BBn(\lambda) \otimes \BBn(\mu)$ is an $[n]$-maximal element, 
then $\pi=\pi_{\lambda}$. Moreover, 
the weight $\wt (\pi \otimes \eta)$ of 
$\pi \otimes \eta$ is contained in $\Pn$. 
\end{lem}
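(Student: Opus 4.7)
The plan is to handle the two claims in turn. For the first, suppose $\pi \otimes \eta$ is $[n]$-maximal. If for some $i \in [n]$ we had $\vp_i(\pi) < \ve_i(\eta)$, the tensor product rule would give $e_i(\pi \otimes \eta) = \pi \otimes e_i\eta$, whose vanishing forces $\ve_i(\eta) = 0$, contradicting $\vp_i(\pi) < \ve_i(\eta) = 0$. Hence $\vp_i(\pi) \ge \ve_i(\eta)$ for every $i \in [n]$, so $e_i(\pi \otimes \eta) = (e_i\pi) \otimes \eta = \bzero$ forces $\ve_i(\pi) = 0$. Thus $\pi \in \BBn(\lambda)$ is $[n]$-maximal. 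Since $\lambda$ is $[n]$-dominant, $\lamn = \lambda$, and Remark~\ref{rem:LS-n} with $J = [n]$ identifies $\BBn(\lambda)$ with the crystal basis of the finite-dimensional irreducible $U_{q}(\Fgn)$-module of highest weight $\lambda$, whose unique $[n]$-maximal element is $\pi_\lambda$; therefore $\pi = \pi_\lambda$.

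For the second claim, set $\nu := \wt(\pi \otimes \eta) = \lambda + \wt\eta$. Because $\BBn(\lambda) \otimes \BBn(\mu)$ is a normal $U_q(\Fgn)$-crystal (as a tensor product of normal crystals), the $[n]$-maximal element $\pi \otimes \eta$ sits atop a finite-dimensional irreducible summand, and its weight $\nu$ is automatically $[n]$-dominant. By Remark~\ref{rem:wt}, $\wt\eta$ lies in $E$ since $\mu \in E$, and hence $L_\nu = L_\lambda$.

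For the remaining two conditions in the definition of $\Pn$, the plan is to decompose $\eta = (\nu_1, \ldots, \nu_N)$ with $\nu_M \in \Wn\mu$ via Remark~\ref{rem:lcm} and Lemma~\ref{lem:BBn}. Inspection of \eqref{eq:binf-r}--\eqref{eq:dinf-r} shows that $\Wn$ fixes $\eps_j$ for $j \ge n+1$ and sends each $\eps_j$ to some $\pm\eps_k$, so combined with $\Supp(\mu) \subset [q-1] \subset [n]$ (valid because $n > p + (N+1)q \ge q$), one gets $\Supp(\nu_M) \subset [n]$ and $|\Supp(\nu_M)| = |\Supp(\mu)| \le q$ for each $M$. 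Using $\wt\eta = (1/N)\sum_M \nu_M$, the first conclusion yields $(\wt\eta)^{(j)} = 0$ for $j \ge n+1$, whence $\con{j} = \col{j} = L_\lambda$ for such $j$. The second conclusion gives $|\Supp(\wt\eta)| \le Nq$, so among the $n - p + 1$ indices $j \in [p,\,n]$ at most $Nq$ can have $(\wt\eta)^{(j)} \ne 0$; for all other such $j$, $\con{j} = \col{j} + (\wt\eta)^{(j)} = L_\lambda$, yielding
\[
\#\bigl\{j \in [n] \mid \con{j} = L_\nu\bigr\} \ge (n - p + 1) - Nq > n - p - Nq,
\]
which is the last required bound.

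The argument is essentially bookkeeping; the main subtlety will be tracking the support of $\wt\eta$ through the LS-path decomposition to obtain the uniform estimate $|\Supp(\wt\eta)| \le Nq$, but the hypothesis $n > p + (N+1)q$ is comfortably strong enough to make the final inequality strict by exactly one.
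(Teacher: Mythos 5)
Your proof is correct and follows essentially the same route as the paper's: the tensor product rule forces $e_i\pi=\bzero$ for all $i\in[n]$ (hence $\pi=\pi_{\lamn}=\pi_{\lambda}$), and the membership $\wt(\pi\otimes\eta)\in\Pn$ is obtained exactly as in the paper by writing $\eta=(\nu_{1},\dots,\nu_{N})$ with $\nu_{M}\in\Wn\mu$, using $\Supp(\nu_{M})\subset[n]$ and $\#\Supp(\wt\eta)\le Nq$ to get the bound $(n-p+1)-Nq>n-p-Nq$. The only cosmetic difference is that you organize the first step as a case split on $\vp_{i}(\pi)$ versus $\ve_{i}(\eta)$ rather than the paper's direct contrapositive, which changes nothing of substance.
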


\begin{proof}
For the first assertion, observe that 
if $e_{i}\pi \ne \bzero$ for some $i \in [n]$, then 
$e_{i}(\pi \otimes \eta) \ne \bzero$ 
by the tensor product rule for crystals. 
Now, since $\pi \otimes \eta$ is $[n]$-maximal by assumption, 
we have $e_{i}\pi=\bzero$ for all $i \in [n]$, 
which implies that $\pi=\pi_{\lamn}=\pi_{\lambda}$ 
(see Remark~\ref{rem:LS-n}), as desired.

We show that 
$\nu:=\wt (\pi \otimes \eta)=
 \wt (\pi_{\lambda} \otimes \eta)=\lambda+\wt \eta$ is 
contained in the set $\Pn$. 
Since $\pi \otimes \eta$ is $[n]$-maximal and 
$\BB(\lambda) \otimes \BB(\mu)$ is a normal $U_{q}(\Fg)$-crystal, 
it is clear that $\nu=\wt (\pi \otimes \eta)$ is $[n]$-dominant. 
Also, we see from Remark~\ref{rem:wt} that the level $L_{\nu}$ of 
$\nu=\lambda+\wt \eta$ is equal to 
$L_{\lambda}+0=L_{\lambda}$. 
We write $\eta \in \BBn(\mu)$ as: 
$\eta=(\nu_{1},\,\nu_{2},\,\dots,\,\nu_{N})$ 
for some $\nu_{1},\,\nu_{2},\,\dots,\,\nu_{N} \in W\mu$; 
note that $\nu_{M} \in \Wn\mu$ for all $1 \le M \le N$ 
by Lemma~\ref{lem:BBn}. 
Since $\Supp (\mu) \subset [q-1] \subset [n]$ 
by our assumptions, we have 
$\Supp (\nu_{M}) \subset [n]$ 
for all $1 \le M \le N$. 
Therefore, it follows from the equation 
\begin{equation*}
\nu=\lambda+\wt \eta=
\lambda+\frac{1}{N}\sum_{M=1}^{N} \nu_{M}
\end{equation*}
that $\con{j}=L_{\lambda}$ for all $j \ge n+1$ 
(see Remark~\ref{rem:wt}). Furthermore, since $\#\Supp (\nu_{M})=
\#\Supp (\mu) \le q$ for all $1 \le M \le N$, and 
since 
\begin{equation*}
\Supp (\wt \eta) \subset 
\bigcup_{1 \le M \le N} \Supp (\nu_{M}),
\end{equation*}
we have
\begin{align*}
& 
\#\bigl\{0 \le j \le n \mid 
 \nu^{(j)}=L_{\nu} \, (=L_{\lambda})\bigr\} 
\ge 
\#\bigl\{p \le j \le n \mid 
 \nu^{(j)}=L_{\lambda}\bigr\} \\
& \qquad 
\ge (n-p+1)-\#\Supp (\wt \eta)
\ge (n-p+1)-\sum_{M=1}^{N} \#\Supp(\nu_{M}) \\[1mm]
& \qquad 
\ge (n-p+1)-Nq > n-p-Nq.
\end{align*}
Thus, we have shown that $\nu=\wt (\pi \otimes \eta) \in \Pn$, 
thereby proving the lemma.
\end{proof}

Let $n \in \BZ_{\ge 0}$ be such that $n > p+(N+1)q$.
We deduce from the definition of $\Pn$ that 
an element $\nu \in \Pn$ satisfies the conditions
%
%
\begin{equation} \label{eq:nun}
\begin{cases}
0 \le \abs{\con{0}} \le \con{1} \le \cdots \le \con{u_{0}-1} < L_{\lambda}, \\[1.5mm]
\con{u_{0}}= \con{u_{0}+1}= \cdots = \con{u_{1}-1}=L_{\lambda}, \\[1.5mm]
L_{\lambda} < \con{u_{1}} \le \con{u_{1}+1} \le \cdots \le \con{n}, \\[1.5mm]
\con{j}=L_{\lambda} \quad \text{for all $j \ge n+1$}
\end{cases}
\end{equation}
for some $0 \le u_{0} < u_{1} \le n+1$; it follows from the 
definition of $\Pn$ that $u_{1}-u_{0} > n-p-Nq > q > 0$. 
Here, if $u_{0}=0$ (resp., $u_{1}=n+1$), then 
the fist condition (resp., the third condition) is dropped. 
For the element $\nu \in \Pn$ above, we define $\xi \in P$ by: 
%
%
\begin{equation} \label{eq:nuo}
\begin{cases}
\cox{j}=\con{j} \quad 
\text{\rm for $0 \le j \le u_{0}-1$}, \\[1.5mm]
\cox{j}=\con{j}=L_{\lambda} \quad 
\text{\rm for $u_{0} \le j \le u_{1}-1$}, \\[1.5mm]
\cox{u_{1}}=L_{\lambda}, \\[1.5mm]
\cox{j}=\con{j-1} \quad 
\text{\rm for $u_{1}+1 \le j \le n+1$}, \\[1.5mm]
\cox{j}=\con{j}=L_{\lambda} \quad 
\text{\rm for $j \ge n+2$},
\end{cases}
\end{equation}
where $u_{0}$ and $u_{1}$ are as in \eqref{eq:nun}. 
It is easily verified that $\xi \in \Po$. 
%
%
\begin{lem} \label{lem:Pn}
Keep the setting above. We have $\xi=\nuo$. 
Moreover, if we define a map $\Theta_{n}:\Pn \rightarrow \Po$ by 
$\Theta_{n}(\nu):=\nuo$ for $\nu \in \Pn$, 
then the map $\Theta_{n}$ is bijective. 
\end{lem}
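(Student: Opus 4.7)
My plan is to prove both claims by direct manipulation of coordinate sequences, using the explicit $W$-action formulas \eqref{eq:binf-r}--\eqref{eq:dinf-r}. I will handle the two assertions in turn.

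For $\xi = \nuo$, I will verify (a) that $\xi$ is $[n+1]$-dominant, and (b) that $\xi$ lies in the $W_{[n+1]}$-orbit of $\nu$. For (a), reading $(\cox{0},\,\cox{1},\,\ldots,\,\cox{n+1})$ off \eqref{eq:nuo} shows it is the less-than-$L_{\lambda}$ prefix of $\nu$, followed by an $L_\lambda$-block of length $u_1 - u_0 + 1$ (one longer than the block in $\nu$), followed by the greater-than-$L_\lambda$ tail of $\nu$, with $\cox{j} = L_\lambda$ for $j \ge n+2$; the chain inequalities come from \eqref{eq:nun}, and $\abs{\cox{0}} = \abs{\con{0}}$ preserves the absolute-value convention at index $0$. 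For (b), I will take $w := r_{u_1+1}\,r_{u_1+2} \cdots r_{n+1} \in W_{[n+1]}$ (the identity when $u_1 = n+1$) and verify $w\nu = \xi$ by a short induction on the number of simple reflections applied: $r_{n+1}$ first swaps $\con{n}$ and $\con{n+1} = L_\lambda$ (using the last line of \eqref{eq:nun}), then $r_n$ pushes that $L_\lambda$ down to position $n-1$, and so on, until an extra $L_\lambda$ lands at position $u_1$ while $\con{u_1},\,\con{u_1+1},\,\ldots,\,\con{n}$ shift one step to the right; the outcome coincides with $\xi$. Uniqueness of the $[n+1]$-dominant element in $W_{[n+1]}\nu$ then forces $\xi = \nuo$.

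For the bijectivity of $\Theta_n$, I will write down an explicit inverse. Each $\xi \in \Po$ has the form
\begin{equation*}
0 \le \abs{\cox{0}} \le \cox{1} \le \cdots \le \cox{v_0 - 1} < L_\lambda = \cox{v_0} = \cdots = \cox{v_1 - 1} < \cox{v_1} \le \cdots \le \cox{n+1},
\end{equation*}
with $\cox{j} = L_\lambda$ for $j \ge n+2$ and $v_1 - v_0 > (n+1) - p - Nq$ by the third condition defining $\Po$; since $n > p + (N+1)q$, this forces the $L_\lambda$-block length $v_1 - v_0$ to be at least $2$. I will then set $\ti{\nu} \in P$ by
\begin{equation*}
\co{\ti{\nu}}{j} := \cox{j} \ \text{for } 0 \le j \le v_1 - 2, \qquad \co{\ti{\nu}}{j} := \cox{j+1} \ \text{for } v_1 - 1 \le j \le n, \qquad \co{\ti{\nu}}{j} := L_\lambda \ \text{for } j \ge n+1,
\end{equation*}
i.e., remove one $L_\lambda$ from the middle block and reindex. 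A short check gives $\ti{\nu} \in \Pn$: the $[n]$-dominance inequalities (required only up to $i = n$) are preserved, $L_{\ti{\nu}} = L_\lambda$, and the $L_\lambda$-count in positions $[0,n]$ equals $v_1 - v_0 - 1 > n - p - Nq$. Comparing with \eqref{eq:nuo} under the identification $u_0 = v_0$, $u_1 = v_1 - 1$ reads off $\Theta_n(\ti{\nu}) = \xi$, giving surjectivity. Injectivity is automatic, since the pair $u_0,\,u_1$ is determined intrinsically from $\nu$ via the block structure in \eqref{eq:nun}, hence from $\xi = \Theta_n(\nu)$.

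The only point needing care, more than any serious difficulty, is that everything works uniformly across types $B_\infty$, $C_\infty$, $D_\infty$; this is guaranteed because $r_0$ never appears in $w$ and $\cox{0} = \con{0}$ keeps the type-dependent handling of $\eps_0$ out of the verification. Otherwise the whole argument is bookkeeping on weakly increasing sequences.
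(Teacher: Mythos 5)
Your proposal is correct and follows essentially the same route as the paper: the first assertion is proved with exactly the same Weyl group element $w=r_{u_{1}+1}r_{u_{1}+2}\cdots r_{n}r_{n+1}$ and the same dominance argument, and the bijectivity is obtained by the block-comparison of \eqref{eq:nun} and \eqref{eq:nuo} that the paper summarizes as ``easily follows.'' The only difference is that you write out the explicit inverse (removing one $L_{\lambda}$ from the middle block) and check the counting condition $v_{1}-v_{0}-1>n-p-Nq$, which the paper leaves implicit; this is a correct and welcome elaboration, not a different method.
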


\begin{proof}
We set $w:=r_{u_{1}+1}r_{u_{1}+2} \cdots r_{n}r_{n+1} \in \Wo$, 
where $u_{1}$ is as in \eqref{eq:nun}. 
Then, by using \eqref{eq:binf-r}--\eqref{eq:dinf-r}, we find that 
the $\xi$ given by \eqref{eq:nuo} is identical to the element 
$w\nu \in \Wo\nu$. Since $\xi$ is $[n+1]$-dominant 
by the definition \eqref{eq:nuo} of $\xi$, 
we conclude that $\xi=\nuo$. Thus we have shown that 
if $\nu \in \Pn$, then $\Theta_{n}(\nu)=\nuo \in \Po$. 
Now, the bijectivity of the map $\Theta_{n}$ easily follows 
by examining \eqref{eq:nun} and \eqref{eq:nuo}. 
This proves the lemma.
\end{proof}

Let $n \in \BZ_{\ge 0}$ be such that $n > p+(N+1)q$. 
Let $\pi \otimes \eta$ be an $[n+1]$-maximal element of 
$\BBo(\lambda) \otimes \BBo(\mu)$. 
Then, we have $\wt (\pi \otimes \eta) \in \Po$ 
by Lemma~\ref{lem:DE-highest}, and hence 
$\wt(\pi \otimes \eta)=\nuo$ for some $\nu \in \Pn$ 
by Lemma~\ref{lem:Pn}. 
%
%
\begin{prop} \label{prop:DE-highest}
Keep the setting above. Let $x \in \Wo$ be such that $x\nuo=\nu$. 
Then, $S_{x}(\pi \otimes \eta)$ is an $[n]$-maximal element 
of weight $\nu$ contained in $\BBn(\lambda) \otimes \BBn(\mu)$. 
\end{prop}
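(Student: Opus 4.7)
The plan is to prove the proposition in four steps, parallel to the preceding analysis of $[n+1]$-maximal elements. First I would fix $x$ concretely: the proof of Lemma~\ref{lem:Pn} exhibits $w:=r_{u_{1}+1}r_{u_{1}+2}\cdots r_{n+1}\in \Wo$ with $w\nu=\nuo$, so I take $x=w^{-1}=r_{n+1}r_{n}\cdots r_{u_{1}+1}$. By Lemma~\ref{lem:DE-highest} we already have $\pi=\pi_{\lambda}$.

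The key factorization is $S_{x}(\pi_{\lambda}\otimes\eta)=\pi_{\lambda}\otimes S_{x}\eta$. Every simple reflection $r_{i}$ composing $x$ has $i\in [u_{1}+1,n+1]$, and since $u_{1}>p$, \eqref{eq:col} yields $\col{i-1}=\col{i}=L_{\lambda}$, hence $\pair{\lambda}{h_{i}}=0$. As $\pi_{\lambda}$ is the maximal element of $\BBo(\lambda)$, this forces $\vp_{i}(\pi_{\lambda})=\ve_{i}(\pi_{\lambda})=0$, so the tensor product rule gives $e_{i}(\pi_{\lambda}\otimes\xi)=\pi_{\lambda}\otimes e_{i}\xi$ and $f_{i}(\pi_{\lambda}\otimes\xi)=\pi_{\lambda}\otimes f_{i}\xi$ for every $\xi\in\BBo(\mu)$; hence $S_{i}(\pi_{\lambda}\otimes\xi)=\pi_{\lambda}\otimes S_{i}\xi$, and iterating gives the claim. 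In particular $\wt S_{x}(\pi\otimes\eta)=x\nuo=\nu$. For $[n]$-maximality, $\pi_{\lambda}\otimes\eta$ is $[n+1]$-maximal hence $[n+1]$-extremal by Remark~\ref{rem:ext}\,(2); applying Definition~\ref{dfn:extremal}\,(1) with $w=x$ and $i\in [n]\subset [n+1]$ gives $e_{i}S_{x}(\pi\otimes\eta)=\bzero$, because $\pair{\nu}{h_{i}}\ge 0$ by the $[n]$-dominance of $\nu$.

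It remains to verify $\pi_{\lambda}\otimes S_{x}\eta\in\BBn(\lambda)\otimes\BBn(\mu)$; the factor $\pi_{\lambda}$ obviously lies in $\BBn(\lambda)$, so the substantive content is $S_{x}\eta\in\BBn(\mu)$. From the tensor product rule combined with $\vp_{i}(\pi_{\lambda})=0$, the $[n+1]$-maximality of $\pi_{\lambda}\otimes\eta$ forces $\ve_{i}(\eta)=0$ for $i\in [u_{1}+1,n+1]$, so $\eta$ is $[u_{1}+1,n+1]$-maximal in $\BBo(\mu)$. Consequently the $\Fg_{[u_{1}+1,\,n+1]}$-connected component of $\eta$ is an irreducible normal $\Fg_{[u_{1}+1,\,n+1]}$-crystal of type $A_{n+1-u_{1}}$, with $\eta$ as a highest-weight vector of weight $\wt\eta=\nuo-\lambda$, and $S_{x}\eta$ lies in this component with weight $x\wt\eta=\nu-\lambda$ (note $x\lambda=\lambda$ since each $r_{i}$ in $x$ fixes $\lambda$). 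A direct computation using $x(\eps_{u_{1}})=\eps_{n+1}$, $x(\eps_{u_{1}+k})=\eps_{u_{1}+k-1}$ for $1\le k\le n+1-u_{1}$, and $x(\eps_{j})=\eps_{j}$ otherwise, shows that the coordinate of $\wt(S_{x}\eta)$ at position $n+1$ vanishes.

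The main obstacle is to convert this vanishing into the statement that every segment $\nu'_{M}$ in the representation of $S_{x}\eta$ from Lemma~\ref{lem:BBn} satisfies $\Supp(\nu'_{M})\subset [n]$; once this holds, $\nu'_{M}\in\Wn\mu$ follows (as $\nu'_{M}\in\Wo\mu$ without $n+1$-support), and Lemma~\ref{lem:BBn} then yields $S_{x}\eta\in\BBn(\mu)$. I would tackle this by exploiting the type $A_{n+1-u_{1}}$ structure: under the identification of the sub-crystal with the crystal basis of a finite-dimensional irreducible $U_{q}(\Fg_{[u_{1}+1,\,n+1]})$-module afforded by Remark~\ref{rem:LS-n}, absence of weight at coordinate $n+1$ corresponds to non-appearance of the letter $n+1$ in the associated semistandard tableau, which in the Littelmann path realization of that crystal basis forces every segment to avoid position $n+1$. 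The detailed combinatorial apparatus needed here is closely allied with that underlying Proposition~\ref{prop:DE-bij}, whose proof is deferred to \S\ref{sec:DE-bij}.
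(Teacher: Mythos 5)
There is a genuine gap at the crucial step, namely the membership $S_{x}\eta \in \BBn(\mu)$. Your route is: $\wt(S_{x}\eta)$ has vanishing coordinate at position $n+1$, hence (via a tableau model for the type-$A$ subcrystal) every segment of the LS path $S_{x}\eta$ avoids position $n+1$. This inference is not valid. The weight of an LS path is the \emph{average} of its segments (see \eqref{eq:wtN}), so a vanishing coordinate can arise from cancellation among segments of opposite sign: already for $\Fsl_{2}$ the path $(\beta,-\beta\,;\,0,1/2,1)$ has weight $0$ while both segments have full support. Your tableau justification does not repair this: since $\mu\in E$ is arbitrary, the segments in $W\mu$ can have negative coordinates (the relevant representation is not polynomial, so ``weight coordinate zero'' does not mean ``letter absent''), and, more fundamentally, the abstract isomorphism of the $\Fg_{[u_{1}+1,\,n+1]}$-component of $\eta$ with a type-$A$ crystal does not see the segment decomposition of $S_{x}\eta$ \emph{as an LS path of shape $\mu$}, which is what Lemma~\ref{lem:BBn} requires. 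The paper's proof gets around exactly this: it first proves $\co{\xi_{1}}{n+1}\ge 0$ for the new first segment $\xi_{1}=z\nu_{1}$ (a subword of the chosen reflection word applied to $\nu_{1}$), propagates $\co{\xi_{M}}{n+1}\ge\co{\xi_{1}}{n+1}\ge 0$ to all segments using the ordering $\xi_{1}\ge\xi_{M}$ and Remark~\ref{rem:Q+}, and only \emph{then} invokes the weight vanishing to force every $\co{\xi_{M}}{n+1}$ to be $0$.

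There is a second, independent problem with your choice $x=r_{n+1}r_{n}\cdots r_{u_{1}+1}$: the factorization $S_{x}(\pi_{\lambda}\otimes\eta)=\pi_{\lambda}\otimes S_{x}\eta$ needs $\pair{\lambda}{h_{i}}=0$ for all $i\in[u_{1}+1,\,n+1]$, i.e.\ $u_{1}\ge p$, and your assertion ``$u_{1}>p$'' is unproven. The definition of $\Pn$ only gives $u_{1}-u_{0}>n-p-Nq>q$, and $q$ (which depends on $\mu$) has no relation to $p$ (which depends on $\lambda$); when $p$ is large compared with $q$ one can have $u_{1}<p$, and then $r_{u_{1}+1}$ moves $\lambda$. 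This is why the paper does \emph{not} start the reflection word at $u_{1}+1$: it starts at $s+1$ where $s$ is chosen in the ``zero zone'' $(s_{0}+1,\,s_{1}]$ of the first segment $\nu_{1}$ (so automatically $s+1>p+1$) and with $s\notin\Supp(\wt\eta)$ — the existence of such an $s$ is precisely where the quantitative hypothesis $n>p+(N+1)q$ enters, and it is also what makes the $(n+1)$-coordinate estimate for $\xi_{1}$ work. Finally, a minor point: the proposition concerns an \emph{arbitrary} $x\in\Wo$ with $x\nuo=\nu$; after treating one such element one must still observe that $S_{x}(\pi\otimes\eta)$ is independent of the choice (the paper does this using the $[n+1]$-dominance of $\nuo$ and \eqref{eq:si}).
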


\begin{proof}
We want to prove that 
there exists $w \in \Wo$ such that 
$S_{w}(\pi \otimes \eta)$ is an $[n]$-maximal element 
contained in $\BBn(\lambda) \otimes \BBn(\mu)$. 
First of all, we see by 
Lemma~\ref{lem:DE-highest} that $\pi=\pi_{\lambda}$. 
We write $\eta \in \BBo(\mu)$ as: 
$\eta=(\nu_{1},\,\nu_{2},\,\dots,\,\nu_{N})$ 
for some $\nu_{1},\,\nu_{2},\,\dots,\,\nu_{N} \in W\mu$, 
with $\nu_{1} \ge \nu_{2} \ge \cdots \ge \nu_{N}$. 
By the same reasoning as in the proof of Lemma~\ref{lem:DE-highest}, 
we see that $\nu_{M} \in \Wo\mu$ and 
$\Supp (\nu_{M}) \subset [n+1]$ for all $1 \le M \le N$.
Furthermore, 
since $\pair{\lambda}{h_{i}}=0$ for all $i \ge p+1$ by \eqref{eq:col}, 
and since $\pi \otimes \eta=\pi_{\lambda} \otimes \eta$ is 
$[n+1]$-maximal by assumption, it follows from 
the tensor product rule for crystals that 
$e_{i}\eta=\bzero$ for all $p+1 \le i \le n+1$. 
Consequently, by the definition of 
the Kashiwara operators $e_{i}$, $p+1 \le i \le n+1$, 
we must have $m^{\eta}_{i}=\min \bigl\{\pair{\eta(t)}{h_{i}} \mid 
t \in [0,\,1]_{\BR}\bigr\}=0$ for all $p+1 \le i \le n+1$. 
Therefore, it follows that $\pair{\nu_{1}}{h_{i}} \ge 0$ 
for all $p+1 \le i \le n+1$.  
From this fact, using \eqref{eq:simple}, 
we deduce that 
%
%
\begin{equation} \label{eq:con}
\begin{cases}
\co{\nu_{1}}{p} \le \co{\nu_{1}}{p+1} \le \cdots \le \co{\nu_{1}}{s_{0}} < 0, \\[1.5mm]
\co{\nu_{1}}{s_{0}+1}= \co{\nu_{1}}{s_{0}+2}= \cdots = \co{\nu_{1}}{s_{1}}=0, \\[1.5mm]
0 < \co{\nu_{1}}{s_{1}+1} \le \co{\nu_{1}}{s_{1}+2} \le \cdots \le \co{\nu_{1}}{n+1}
\end{cases}
\end{equation}
for some $p-1 \le s_{0} \le s_{1} \le n+1$. 
Here, since $\#\Supp(\nu_{1})=\#\Supp(\mu) \le q$, 
the number $(s_{0}-p+1)+(n-s_{1}+1)$ of nonzero elements 
in \eqref{eq:con} is less that or equal to $q$, and hence 
we have
%
%
\begin{align}
s_{1}-s_{0} & =(n-p+2)-\bigl\{(s_{0}-p+1)+(n-s_{1}+1)\bigr\} \nonumber \\ 
& > (n-p+2)-q \ge p+(N+1)q-p+2-q = Nq+2. \label{eq:u1u0}
\end{align}
Also, by using \eqref{eq:wtN}, we obtain 
%
%
\begin{equation} \label{eq:supp}
\Supp(\wt \eta) 
 \subset \bigcup_{1 \le M \le N} \Supp(\nu_{M}) 
 \subset [n+1], 
\end{equation}
and hence
\begin{equation*}
\#\Supp(\wt \eta) \le \sum_{M=1}^{N} \#\Supp(\nu_{M}) =
\sum_{M=1}^{N} \#\Supp(\mu) \le \sum_{M=1}^{N} q = Nq. 
\end{equation*}
From this inequality and \eqref{eq:u1u0}, 
we conclude that there exists $s_{0}+1 < s \le s_{1}$ 
such that $s \notin \Supp(\wt \eta)$. We set 
\begin{equation*}
w_{1}:=r_{n+1}r_{n} \cdots r_{s+2}r_{s+1} \in \Wo,
\end{equation*}
and $\eta_{1}:=S_{w_{1}}\eta \in \BBo(\mu)$; 
note that $s+1 > s_{0}+2 \ge p+1$. 
Since $\pair{\lambda}{h_{i}}=0$ for all $i \ge p+1$, 
by the tensor product rule for crystals, there follows 
\begin{equation*}
S_{w_{1}}(\pi \otimes \eta)=
S_{w_{1}}(\pi_{\lambda} \otimes \eta)=
 \pi_{\lambda} \otimes (S_{w_{1}}\eta)=
 \pi_{\lambda} \otimes \eta_{1}.
\end{equation*}

We claim that $\eta_{1} \in \BBn(\mu)$. Write it as: 
$\eta_{1}=(\xi_{1},\,\xi_{2},\,\dots,\,\xi_{N})$ 
for some $\xi_{1},\,\xi_{2},\,\dots,\,\xi_{N} \in W\mu$, 
with $\xi_{1} \ge \xi_{2} \ge \cdots \ge \xi_{N}$. 
Then we have 
$\co{\xi_{M}}{n+1} \ge \co{\xi_{1}}{n+1} \ge 0$ 
for all $1 \le M \le N$. Indeed, 
it follows from the definitions of 
the Kashiwara operators $e_{i}$ and $f_{i}$ for $i \in [n+1]$ 
(see also \cite[Proposition~4.7]{Lit-Ann}) that 
$\xi_{1}=z\nu_{1}$ for some subword $z$ of 
$w_{1}=r_{n+1}r_{n} \cdots r_{s+2}r_{s+1}$. 
Since $s_{0}+1 < s \le s_{1}$, we infer from \eqref{eq:con}, 
by using \eqref{eq:binf-r}--\eqref{eq:dinf-r}, that 
$\co{\xi_{1}}{n+1} \ge 0$. 
Now, by the same reasoning as in the proof of 
Lemma~\ref{lem:DE-highest}, we see that 
$\Supp (\xi_{M}) \subset [n+1]$ for all $1 \le M \le N$, 
and hence $\Supp(\xi_{M}-\xi_{1}) \subset [n+1]$. 
Also, since $\xi_{1} \ge \xi_{M}$, we have 
$\xi_{M}-\xi_{1} \in \sum_{i \in I}\BZ_{\ge 0}\alpha_{i} \subset E$ 
by Remark~\ref{rem:Q+}. 
Combining these facts, we deduce that 
$\xi_{M}-\xi_{1} \in \sum_{i=0}^{n+1}\BZ_{\ge 0}\alpha_{i}$, 
since $\alpha_{i}=-\eps_{i-1}+\eps_{i}$ for $i \ge 1$ 
(see \eqref{eq:simple});  observe that by \eqref{eq:simple}, 
$\xi \in \sum_{i=0}^{n+1}\BZ_{\ge 0}\alpha_{i}$ implies that 
$\cox{n+1} \ge 0$, since $\co{\alpha_{n+1}}{n+1}=1 > 0$ and 
$\co{\alpha_{i}}{n+1}=0$ for all $0 \le i \le n$. 
Consequently, we conclude that 
$\co{(\xi_{M}-\xi_{1})}{n+1} \ge 0$, and hence 
$\co{\xi_{M}}{n+1} \ge \co{\xi_{1}}{n+1} \ge 0$, as desired. 
Furthermore, since $s \notin \Supp(\wt \eta)$, 
and $\Supp(\wt \eta) \subset [n+1]$ by \eqref{eq:supp}, 
we can easily verify by using \eqref{eq:binf-r}--\eqref{eq:dinf-r} 
that 
\begin{equation*}
\Supp(\wt \eta_{1})=
\Supp(w_{1}(\wt \eta)) \subset [n].
\end{equation*}
Therefore, by taking \eqref{eq:wtN} into consideration, 
we obtain $\co{\xi_{M}}{n+1}=0$ for all $1 \le M \le N$, 
and hence $\Supp (\xi_{M}) \subset [n]$ for all $1 \le M \le N$. 
In addition, since $\xi_{M} \in W\mu$ for $1 \le M \le N$, 
we see through use of \eqref{eq:binf-r}--\eqref{eq:dinf-r} that 
$\#\Supp (\xi_{M})=\#\Supp(\mu) \le q < n$, and hence 
$\Supp (\xi_{M}) \subsetneqq [n]$ for all $1 \le M \le N$; 
recall that $\Supp (\mu) \subset [q-1] \subsetneqq [n]$ 
by the definition of $q$. Hence we can apply 
Lemma~\ref{lem:lamn} to $\xi_{M}$ for $1 \le M \le N$, 
and also to $\mu$. Then, noting that 
$\xi_{M} \in W\mu$ for $1 \le M \le N$, 
we find through use of \eqref{eq:binf-r}--\eqref{eq:dinf-r}
that the unique $[n]$-dominant element of $\Wn\xi_{M}$ is identical to 
the unique $[n]$-dominant element of $\Wn\mu$ for all $1 \le M \le N$. 
As a consequence, 
we conclude that $\xi_{M} \in \Wn\mu$ for all $1 \le M \le N$, 
which implies that $\eta_{1} \in \BBn(\mu)$ by Lemma~\ref{lem:BBn}. 
Thus we have shown that $S_{w_{1}}(\pi \otimes \eta)=
\pi_{\lambda} \otimes \eta_{1} \in 
\BBn(\lambda) \otimes \BBn(\mu)$. 

Because $\BB(\lambda) \otimes \BB(\mu)$ is 
a normal $U_{q}(\Fg)$-crystal, and 
because $\pi \otimes \eta=\pi_{\lambda} \otimes \eta$ 
is $[n+1]$-maximal by assumption, 
it follows from Remark~\ref{rem:ext}\,(2) that 
$\pi \otimes \eta=\pi_{\lambda} \otimes \eta$ is $[n+1]$-extremal, 
and hence so is $\pi_{\lambda} \otimes \eta_{1}=
 S_{w_{1}}(\pi \otimes \eta)$
(recall that $w_{1} \in \Wo$). If we take $w_{2} \in \Wn$ such that 
$w_{2}(\wt(\pi_{\lambda} \otimes \eta_{1}))$ is 
$[n]$-dominant, then we see from \eqref{eq:extremal} that 
the element $S_{w_{2}}(\pi_{\lambda} \otimes \eta_{1}) \in 
\BBn(\lambda) \otimes \BBn(\mu)$ is 
$[n]$-maximal since it is $[n]$-extremal. 
Now we set $w=w_{2}w_{1} \in \Wo$. Then the element 
$S_{w}(\pi \otimes \eta)=
 S_{w_{2}}(\pi_{\lambda} \otimes \eta_{1})$ is 
an $[n]$-maximal element contained in 
$\BBn(\lambda) \otimes \BBn(\mu)$. 
This proves the assertion at the very beginning of our proof. 

It remains to show that $S_{w}(\pi \otimes \eta)$ is 
identical to $S_{x}(\pi \otimes \eta)$. 
If we set $\xi:=\wt(S_{w}(\pi \otimes \eta))$, 
then $\xi \in \Pn$ by Lemma~\ref{lem:DE-highest}. 
Since $\nuo=\wt(\pi \otimes \eta)=
w^{-1}\xi \in \Wo\xi$, we have $\xio=\nuo$. 
Therefore, by the bijectivity of the map 
$\Theta_{n}:\Pn \rightarrow \Po$ (see Lemma~\ref{lem:Pn}), 
we obtain $\xi=\nu$, and hence $w\nuo=\nu$.
Since $\nuo$ is $[n+1]$-dominant, and 
since $x,\,w$ are elements of $\Wo$ such that $w\nuo=\nu=x\nuo$, 
there exists $u \in \Wo$ such that $u\nuo=\nuo$ and $w=xu$; 
note that $u$ is equal to a product of the $r_{i}$'s 
for $i \in [n+1]$ such that $\pair{\nuo}{h_{i}}=0$. 
Consequently, by using \eqref{eq:si}, we obtain 
\begin{equation*}
S_{w}(\pi \otimes \eta)=
S_{xu}(\pi \otimes \eta)=
S_{x}S_{u}(\pi \otimes \eta)=
S_{x}(\pi \otimes \eta).
\end{equation*}
This completes the proof of the proposition. 
\end{proof}

Let $n \in \BZ_{\ge 0}$ be such that $n > p+(N+1)q$. 
Let $\BBhi{n}$ denote the subset of 
$\BBn(\lambda) \otimes \BBn(\mu)$ consisting of 
all $[n]$-maximal elements, and set 
$\BBhiw{n}{\nu}:=\BBhi{n} \cap 
\bigl(\BBn(\lambda) \otimes \BBn(\mu)\bigr)_{\nu}$ 
for $\nu \in \Pn$. 
Then, by Lemma~\ref{lem:DE-highest}, 
\begin{equation*}
\BBhi{n}=
\bigsqcup_{\nu \in \Pn} \BBhiw{n}{\nu}. 
\end{equation*}
Also, we see by Lemmas~\ref{lem:DE-highest} 
and \ref{lem:Pn} that 
\begin{equation*}
\BBhi{n+1}=
\bigsqcup_{\xi \in \Po} \BBhiw{n+1}{\xi}=
\bigsqcup_{\nu \in \Pn} \BBhiw{n+1}{\nuo}. 
\end{equation*}
Let $\nu \in \Pn$, and let $x \in \Wo$ be 
such that $x\nuo=\nu$. By Proposition~\ref{prop:DE-highest}, 
we obtain an injective map $S_{x}$ from $\BBhiw{n+1}{\nuo}$ 
into $\BBhiw{n}{\nu}$;
\begin{equation*}
S_{x}:\BBhiw{n+1}{\nuo} \hookrightarrow \BBhiw{n}{\nu}.
\end{equation*}
%
%
\begin{prop} \label{prop:DE-bij}
Keep the setting above. 
The map $S_{x}:\BBhiw{n+1}{\nuo} \rightarrow \BBhiw{n}{\nu}$ 
is bijective. 
\end{prop}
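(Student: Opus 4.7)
The plan is to reduce bijectivity to an equality of cardinalities and then to invoke the Koike--Terada tensor product multiplicity formulas. First, since $\BBo(\lambda) \otimes \BBo(\mu)$ is a normal $U_{q}(\Fgo)$-crystal, the map $S : \Wo \rightarrow \Bij(\BBo(\lambda) \otimes \BBo(\mu))$ is a group homomorphism, so $S_{x^{-1}}$ is a set-theoretic inverse of $S_{x}$; this immediately gives injectivity of $S_{x} : \BBhiw{n+1}{\nuo} \rightarrow \BBhiw{n}{\nu}$. Moreover, both $\BBn(\lambda) \otimes \BBn(\mu)$ and $\BBo(\lambda) \otimes \BBo(\mu)$ are finite sets, since they are tensor products of crystal bases of finite-dimensional $U_{q}(\Fgn)$- (resp.\ $U_{q}(\Fgo)$-)modules by Remarks~\ref{rem:extmod-n} and \ref{rem:LS-n}. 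Hence surjectivity is equivalent to the numerical equality $\#\BBhiw{n+1}{\nuo} = \#\BBhiw{n}{\nu}$.

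I would next identify both cardinalities with branching multiplicities. Since $\lambda \in P_{+}$ is in particular $[n]$- and $[n+1]$-dominant, Remarks~\ref{rem:extmod-n} and \ref{rem:LS-n} identify $\BBn(\lambda)$ with the crystal basis of the finite-dimensional irreducible $U_{q}(\Fgn)$-module $\Vn(\lambda)$ of highest weight $\lambda$, and similarly $\BBn(\mu) = \BBn(\mun)$ is the crystal basis of $\Vn(\mun)$. By Lemma~\ref{lem:DE-highest}, an element of $\BBhiw{n}{\nu}$ is automatically of the form $\pi_{\lambda} \otimes \eta$ with $\nu \in \Pn$, and being $[n]$-maximal of weight $\nu$ in $\BBn(\lambda) \otimes \BBn(\mu)$ is the crystal-theoretic counterpart of being a highest weight vector of weight $\nu$ in $\Vn(\lambda) \otimes \Vn(\mun)$. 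Therefore $\#\BBhiw{n}{\nu} = [\Vn(\lambda) \otimes \Vn(\mun) : \Vn(\nu)]$, and analogously $\#\BBhiw{n+1}{\nuo} = [\Vo(\lambda) \otimes \Vo(\muo) : \Vo(\nuo)]$.

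The third step is to apply the Koike--Terada tensor product multiplicity formulas for types $B_{n+1}$, $C_{n+1}$, $D_{n+1}$ (to be reviewed in \S\ref{sec:DE-bij}) to both multiplicities. The growth condition $n > p + (N+1)q$ is tailored so that the partition data attached to the highest weights $(\lambda, \mun, \nu)$ for $\Fgn$ and to $(\lambda, \muo, \nuo)$ for $\Fgo$ lie in a common stable range; in this range, both multiplicities are expressed by the same Littlewood--Richardson sum over an auxiliary family of partitions, independent of whether one works with $\Fgn$ or $\Fgo$. The description of $\nuo$ in terms of $\nu$ furnished by \eqref{eq:nuo} (equivalently, the reflection move $x = r_{u_{1}+1} r_{u_{1}+2} \cdots r_{n+1}$ of Lemma~\ref{lem:Pn}), together with the analogous passage $\mun \rightsquigarrow \muo$, is precisely what is needed to match the partition data across the two formulas.

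The main obstacle is exactly this last matching: unpacking the Koike--Terada formulas, one must check that after the passage $(\mun,\,\nu) \rightsquigarrow (\muo,\,\nuo)$, the two Littlewood--Richardson sums coincide term by term. This is a careful combinatorial verification that hinges on both the size threshold $n > p + (N+1)q$ (which places the highest-weight data in the stable range for both $\Fgn$ and $\Fgo$ simultaneously) and on the very specific form of the weight shift captured by Lemma~\ref{lem:Pn}; this is the content of \S\ref{sec:DE-bij}, which the paper cites as completing the proof of the proposition.
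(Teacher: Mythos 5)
Your overall strategy coincides with the paper's: reduce bijectivity to the numerical equality $\#\BBhiw{n+1}{\nuo}=\#\BBhiw{n}{\nu}$ (injectivity being already supplied by Proposition~\ref{prop:DE-highest} together with the fact that $S_{x}$ is a bijection of the ambient normal crystal), identify both cardinalities with tensor product multiplicities for $U_{q}(\Fgn)$ and $U_{q}(\Fgo)$ as in Remark~\ref{rem:DE}, and then compare the two multiplicities via the Koike formulas in a stable range. As a roadmap this is accurate, and it is the route the paper takes.

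The difficulty is that your write-up stops exactly where the proof begins. The entire mathematical content of the argument is the stability statement (the paper's Proposition~\ref{prop:stab}): if $\kappa_{1}=\iota_{L}(\rho_{1})$ and $\kappa=\iota_{L}(\rho)$, and if $\rho_{1}$ and $\rho$ each have more than $|\rho_{2}|$ rows of length exactly $L$, then $\XLRm{\rho_{1}}{\rho_{2}}{\rho}{n}=\XLRm{\kappa_{1}}{\rho_{2}}{\kappa}{n+1}$ for $X=B,\,C,\,D$. Establishing this requires showing that only subsets $J\subset[1,\,L]$ contribute to the sums (Claim~1 of that proof), that $\iota_{L}$ is a bijection from $\CR(\rho_{1},\,\omega_{2})$ onto $\CR(\kappa_{1},\,\omega_{2})$, that $\iota_{L}(\Jn{\rho})=\Jo{\kappa}$ with $\Jn{\sgn}(\rho)=\Jo{\sgn}(\kappa)$, and that the individual Littlewood--Richardson coefficients are preserved under the simultaneous row insertion, i.e.\ $\LR^{\iota_{L}(\rho_{1})}_{\iota_{L}(\omega_{1}),\,\omega_{2}}=\LR^{\rho_{1}}_{\omega_{1},\,\omega_{2}}$ and $\LR^{\iota_{L}(\Jn{\rho})}_{\omega_{3},\,\iota_{L}(\omega_{1})}=\LR^{\Jn{\rho}}_{\omega_{3},\,\omega_{1}}$. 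None of this appears in your proposal; ``the two sums coincide term by term'' is asserted rather than argued, and it is not a formal consequence of being ``in the stable range''---the hypotheses on the number of rows equal to $L$ (guaranteed by $n>p+(N+1)q$, via $n-p+1>|\mu_{\dagger}|$ and $u_{1}-u_{0}>|\mu_{\dagger}|$) are used in an essential way at each of these steps. Moreover, the reduction to a single quantity $\XLRm{\cdot}{\cdot}{\cdot}{\ell}$ is only valid in some cases: for half-integral level $L_{\lambda}$ in types $B_{\infty}$ and $D_{\infty}$, and for $\col{0}\ne 0$ in type $D_{\infty}$, the multiplicity is given by the signed combinations \eqref{eq:KT02b}, \eqref{eq:KT-D3}--\eqref{eq:KT-D6} involving $\ch^{(\pm)}$ and shifts by $\Lambda_{0}$ or $(1^{\ell+1})$, and one must additionally check $c(\nuo)=c(\nu)$ and $(-1)^{|\rho_{1}|+|\rho|}=(-1)^{|\kappa_{1}|+|\kappa|}$ to conclude. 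So the plan is the right one, but the proof itself is missing.
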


The proof of this proposition is given in \S\ref{sec:DE-bij}. 
In the rest of this subsection, we take and fix an arbitrary 
$m \in \BZ_{\ge 0}$ such that $m > p+(N+1)q$. 
%
%
\begin{cor} \label{cor:DE-bij}
Every element $\pi \otimes \eta$ of $\BBhi{m}$ is extremal.
\end{cor}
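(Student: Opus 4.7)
The plan is to show that $\pi \otimes \eta$ is $[n]$-extremal for every $n \ge m$; by Remark~\ref{rem:ext}\,(1), this yields that $\pi \otimes \eta$ is extremal.

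First, by Lemma~\ref{lem:DE-highest}, the weight $\nu := \wt(\pi \otimes \eta)$ lies in $\Pm{m}$, and $\pi \otimes \eta \in \BBhiw{m}{\nu}$. Fix $n \ge m$. The key input is the surjectivity of the bijection of Proposition~\ref{prop:DE-bij}: for each $k \ge m$ with $k > p+(N+1)q$ and each $\mu' \in \Pm{k}$, choosing $x \in \Wm{k+1}$ with $x\,\Theta_{k}(\mu')=\mu'$, the map $S_{x}:\BBhiw{k+1}{\Theta_{k}(\mu')} \to \BBhiw{k}{\mu'}$ is bijective. Starting from $b_{m} := \pi \otimes \eta$ and iterating this surjectivity upward, I construct a chain of $[k]$-maximal elements $b_{k} \in \BBhiw{k}{\nu_{k}}$ for $m \le k \le n$, where $\nu_{m} := \nu$ and $\nu_{k+1} := \Theta_{k}(\nu_{k})$, together with elements $x_{k+1} \in \Wm{k+1}$ satisfying $S_{x_{k+1}}(b_{k+1}) = b_{k}$.

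Because $S:W \to \Bij(\BB(\lambda) \otimes \BB(\mu))$ is a group action on the normal $U_{q}(\Fg)$-crystal $\BB(\lambda) \otimes \BB(\mu)$, successive substitution yields
\begin{equation*}
\pi \otimes \eta \;=\; b_{m} \;=\; S_{w_{n}}(b_{n}),
\qquad
w_{n} := x_{m+1}\,x_{m+2}\,\cdots\,x_{n},
\end{equation*}
where each factor $x_{k+1}$ lies in $\Wm{k+1} \subset \Wm{n}$, so $w_{n} \in \Wm{n}$. Now $b_{n}$ is $[n]$-maximal in $\BBhiw{n}{\nu_{n}}$, hence $[n]$-extremal by Remark~\ref{rem:ext}\,(2). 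A direct reading of Definition~\ref{dfn:extremal}\,(1), with $W$ replaced by $\Wm{n}$ and $I$ by $[n]$, shows that $[n]$-extremality is preserved under the $\Wm{n}$-action via $S$. Therefore $\pi \otimes \eta = S_{w_{n}}(b_{n})$ is $[n]$-extremal, as required.

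I do not anticipate a serious obstacle: Proposition~\ref{prop:DE-bij} supplies exactly the inductive step needed to lift $[m]$-maximality to $[n]$-extremality for arbitrary $n \ge m$, and the compatibility of extremality with the Weyl group action is immediate from the definitions. The only technical point is to keep track of the fact that each $x_{k+1}$ produced by Proposition~\ref{prop:DE-bij} lies in $\Wm{k+1}$, so that the accumulated product $w_{n}$ remains in $\Wm{n}$; this is built into the statement of that proposition.
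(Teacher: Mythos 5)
Your proof is correct and follows essentially the same route as the paper: both reduce to showing $[n]$-extremality for all $n \ge m$, use Lemma~\ref{lem:Pn} to propagate the weight through the sets $\Pm{k}$, invoke the bijectivity in Proposition~\ref{prop:DE-bij} to transport $\pi \otimes \eta$ to an $[n]$-maximal element via a product of the $S_{x_{k}}$ lying in $\Wn$, and conclude via Remark~\ref{rem:ext}\,(2) and the invariance of $[n]$-extremality under the $\Wn$-action. The only cosmetic difference is that the paper applies $S_{y_{n}^{-1}}$ directly to $\pi \otimes \eta$ rather than constructing the preimage chain upward by surjectivity.
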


\begin{proof}
We show that $\pi \otimes \eta$ is $[n]$-extremal 
for all $n \ge m$ (see Remark~\ref{rem:ext}\,(1)). 
We set $\nu:=\wt (\pi \otimes \eta) \in \Pm{m}$. 
Since $\num{m}=\nu$ and 
$\nuo=(\nun)_{[n+1]}$ for all $n \ge m$ by the definitions, 
we can easily show by induction on $n$, 
using Lemma~\ref{lem:Pn}, that $\nun \in \Pn$ for all $n \ge m$. 
For each $n \ge m$, we take $x_{n} \in \Wo$ such that 
$x_{n}(\nuo)=x_{n}((\nun)_{[n+1]})=\nun$. 
Since $\nun \in \Pn$ as seen above, 
Proposition~\ref{prop:DE-bij} asserts that the map
\begin{equation*}
S_{x_{n}}: 
\BBhiw{n+1}{\nuo}=
\BBhiw{n+1}{(\nun)_{[n+1]}} \rightarrow \BBhiw{n}{\nun}
\end{equation*} 
is bijective. 

Now, fix $n \in \BZ_{\ge 0}$ such that $n \ge m$, 
and set $y_{n}:=x_{m}x_{m+1} \cdots x_{n-2}x_{n-1}$; 
note that $y_{n} \in \Wn$. Then, the argument above shows that 
the composite $S_{y_{n}}=S_{x_{m}}S_{x_{m+1}} \cdots 
S_{x_{n-2}}S_{x_{n-1}}$ yields a bijective map 
from $\BBhiw{n}{\nun}$ onto $\BBhiw{m}{\nu}$ as follows: 
\begin{equation*}
\BBhiw{n}{\nun} 
\stackrel{S_{x_{n-1}}}{\longrightarrow}
\BBhiw{n-1}{\num{n-1}}
\stackrel{S_{x_{n-2}}}{\longrightarrow}
\cdots 
\stackrel{S_{x_{m+1}}}{\longrightarrow} 
\BBhiw{m+1}{\num{m+1}}
\stackrel{S_{x_{m}}}{\longrightarrow} 
\BBhiw{m}{\num{m}}=\BBhiw{m}{\nu}.
\end{equation*}
Consequently, the element 
$S_{y_{n}^{-1}}(\pi \otimes \eta)$ is 
contained in $\BBhiw{n}{\nun}$, and hence 
is an $[n]$-maximal element. 
Because $\BB(\lambda) \otimes \BB(\mu)$ 
is a normal $U_{q}(\Fg)$-crystal, 
it follows from Remark~\ref{rem:ext}\,(2) that 
$S_{y_{n}^{-1}}(\pi \otimes \eta)$ 
is $[n]$-extremal, and hence so is $\pi \otimes \eta$. 
This proves the corollary. 
\end{proof}
%
%
\begin{prop} \label{prop:DE-conn}
Each connected component of $\BB(\lambda) \otimes \BB(\mu)$ 
contains a unique element of $\BBhi{m}$. 
\end{prop}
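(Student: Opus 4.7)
The plan is to treat existence and uniqueness separately, both relying on the bijective chain of $S$-operator maps assembled in the proof of Corollary~\ref{cor:DE-bij} from the individual bijections supplied by Proposition~\ref{prop:DE-bij}.

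For existence, I would pick any $b_{0} \in \BB$. By \eqref{eq:limBBn}, some $n \ge m$ satisfies $b_{0} \in \BBn(\lambda) \otimes \BBn(\mu)$. Iteratively applying $e_{i}^{\max}$ for $i \in [n]$ yields an $[n]$-maximal element $b_{n} \in \BBhi{n}$; since Kashiwara operators preserve connected components, and $\BBn(\lambda) \otimes \BBn(\mu)$ is stable under $e_{i}$ and $f_{i}$ for $i \in [n]$ (via the tensor product rule and the stability of each factor), we obtain $b_{n} \in \BBhi{n} \cap \BB$. Lemma~\ref{lem:DE-highest} gives $\wt(b_{n}) \in \Pn$, and the bijectivity of each $\Theta_{k}$ from Lemma~\ref{lem:Pn}, applied iteratively, furnishes a unique $\nu \in \Pm{m}$ whose image under $\Theta_{n-1} \circ \cdots \circ \Theta_{m}$ equals $\wt(b_{n}) = \nun$. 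Applying the composite bijection $S_{y_{n}} = S_{x_{m}} S_{x_{m+1}} \cdots S_{x_{n-1}}: \BBhiw{n}{\nun} \to \BBhiw{m}{\nu}$ constructed in the proof of Corollary~\ref{cor:DE-bij} produces $b_{m} := S_{y_{n}}(b_{n}) \in \BBhi{m}$; because $S_{y_{n}}$ is a product of Kashiwara operators, $b_{m}$ still lies in $\BB$.

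For uniqueness, suppose $b, b' \in \BBhi{m} \cap \BB$. Corollary~\ref{cor:DE-bij} asserts that both are extremal, so Proposition~\ref{prop:isom} yields $U_{q}(\Fg)$-crystal isomorphisms $\BB \cong \BB(\wt b)$ and $\BB \cong \BB(\wt b')$; combining them with Corollary~\ref{cor:isom}\,(3) forces $\wt b \in W \wt b'$. Since both weights belong to $\Pm{m}$ by Lemma~\ref{lem:DE-highest}, I would then verify that each $W$-orbit meets $\Pm{m}$ in at most one point: the multiset of absolute values of the coordinates is a $W$-orbit invariant, the constant value $L_{\lambda}$ at positions $\ge m+1$ is fixed, and the shape condition \eqref{eq:nun} imposes a unique sorted arrangement on the coordinates indexed by $0, 1, \dots, m$. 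Hence $\wt b = \wt b'$, and then Corollary~\ref{cor:isom}\,(2), transported through the isomorphism $\BB \cong \BB(\wt b)$, gives $\#\BB_{\wt b}=1$, so $b = b'$.

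The main obstacle is the $W$-orbit uniqueness inside $\Pm{m}$, particularly in type $D_{\infty}$: for types $B_{\infty}$ and $C_{\infty}$ the condition $\abs{\nu^{(0)}} = \nu^{(0)}$ reduces the problem to sorting a multiset of non-negative integers, but for $D_{\infty}$ the sign of $\nu^{(0)}$ must be shown to be determined by the orbit, using that $W$ permits only even sign changes while $[m]$-dominance together with $0 \le \nu^{(1)} \le \cdots \le \nu^{(m)}$ prevents any compensating sign flip from keeping the weight inside $\Pm{m}$.
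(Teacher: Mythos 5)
Your proposal is correct and follows essentially the same route as the paper's proof: existence by raising an arbitrary element of a component to an $[n]$-maximal one and transporting it down to level $m$ through the chain of bijections $S_{x_{k}}$ assembled in the proof of Corollary~\ref{cor:DE-bij}, and uniqueness via extremality (Corollary~\ref{cor:DE-bij}), Proposition~\ref{prop:isom}, and Corollary~\ref{cor:isom}. The one substep you treat differently---and flag as the main obstacle---is deducing $\wt b=\wt b'$ from $\wt b\in W(\wt b')$ with both weights in $\Pm{m}$: your direct analysis of signed permutations (including the even-sign-change issue in type $D_{\infty}$) does go through, but the paper sidesteps it by choosing $n\ge m$ with $\wt b\in \Wn(\wt b')$, noting that then $(\wt b)_{[n]}=(\wt b')_{[n]}$ by uniqueness of $[n]$-dominant representatives, and concluding from the injectivity of $\Theta_{n-1}\circ\cdots\circ\Theta_{m}$ (Lemma~\ref{lem:Pn}).
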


\begin{proof}
Let $\pi \otimes \eta \in \BB(\lambda) \otimes \BB(\mu)$, 
and take $n \in \BZ_{\ge 0}$, with $n \ge m$, such that 
$\pi \otimes \eta \in \BBn(\lambda) \otimes \BBn(\mu)$ 
(see \eqref{eq:limBBn}). From Remark~\ref{rem:LS-n}, 
we see that each connected component of 
$\BBn(\lambda) \otimes \BBn(\mu)$ is isomorphic, as a $U_{q}(\Fgn)$-crystal, 
to the crystal basis of a finite-dimensional irreducible $U_{q}(\Fgn)$-module. 
Therefore, there exists a monomial $X$ 
in the Kashiwara operators $e_{i}$ for $i \in [n]$ 
such that $X(\pi \otimes \eta) \in \BBhi{n}$. 
Let $\xi \in P$ be the weight of $X(\pi \otimes \eta)$; 
note that $\xi \in \Pn$ by Lemma~\ref{lem:DE-highest}. We set 
\begin{equation*}
\nu:=
\Theta_{m}^{-1}\Theta_{m+1}^{-1} \cdots \Theta_{n-2}^{-1}
\Theta_{n-1}^{-1}(\xi) \in \Pm{m}; 
\end{equation*}
it is clear that $\xi=\nun$ by Lemma~\ref{lem:Pn}.
Then, the argument in the proof of 
Corollary~\ref{cor:DE-bij} shows that there exists $y \in \Wn$ such that 
$S_{y^{-1}}$ yields a bijective map from $\BBhiw{n}{\xi}=\BBhiw{n}{\nun}$ 
onto $\BBhiw{m}{\nu}$. In particular, we have 
$S_{y^{-1}}X(\pi \otimes \eta) \in \BBhiw{m}{\nu} 
\subset \BBhi{m}$. Also, observe that since $y \in \Wn$, the element 
$S_{y^{-1}}X(\pi \otimes \eta)$ lies in the connected component of 
$\BB(\lambda) \otimes \BB(\mu)$ containing $\pi \otimes \eta$. 
Thus, we have proved that each connected 
component of $\BB(\lambda) \otimes \BB(\mu)$ contains 
an element of $\BBhi{m}$. 

It remains to prove the uniqueness assertion. 
Suppose that $b_{1},\,b_{2} \in \BBhi{m}$ are contained 
in the same connected component of 
$\BB(\lambda) \otimes \BB(\mu)$. 
We set $\nu:=\wt b_{1} \in \Pm{m}$ and 
$\xi:=\wt b_{2} \in \Pm{m}$. 
Since $b_{1}$ and $b_{2}$ are both extremal by Corollary~\ref{cor:DE-bij}, 
we deduce from Proposition~\ref{prop:isom} that 
the connected component containing both $b_{1}$ and $b_{2}$ 
is isomorphic to $\BB(\nu)$, and also to $\BB(\xi)$. 
Consequently, by Corollary~\ref{cor:isom}\,(3), 
we obtain $\nu \in W\xi$. 
If we take $n \in \BZ_{\ge 0}$, with $n \ge m$, such that 
$\nu \in \Wn\xi$, then we have $\nun=\xin$, 
which implies that $\nu=\xi$ by Lemma~\ref{lem:Pn}.
Thus, $b_{1}$ and $b_{2}$ are both elements of weight $\nu$ 
in a connected component of $\BB(\lambda) \otimes \BB(\mu)$ 
isomorphic to $\BB(\nu)$. 
Therefore, by Corollary~\ref{cor:isom}\,(2), 
we conclude that $b_{1}=b_{2}$, as desired. 
This proves the proposition. 
\end{proof}

The following is the main result of this subsection 
(cf. \cite[Corollary~7.3]{Kw-Adv} in type $A_{+\infty}$ and 
\cite[Proposition~5.13]{Kw-Ep} in type $A_{\infty}$). 
%
%
\begin{thm} \label{thm:DE}
Let $\lambda \in P_{+}$ and $\mu \in E$. 
We take $m \in \BZ_{\ge 0}$ such that $m > p+(N+1)q$ as above.
Then, we have the following decomposition 
into connected components\,{\rm:}
\begin{equation*}
\BB(\lambda) \otimes \BB(\mu)=
\bigoplus_{\nu \in \Pm{m}} 
\BB(\nu)^{\oplus m_{\lambda,\mu}^{\nu}}, 
\end{equation*}
where for each $\nu \in \Pm{m}$, 
the multiplicity $m_{\lambda,\,\mu}^{\nu}$ is equal to 
$\#\BBhiw{m}{\nu}$. 
\end{thm}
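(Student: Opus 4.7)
The plan is to assemble the theorem from the three key ingredients already established: Proposition~\ref{prop:DE-conn} (bijection between connected components and $\BBhi{m}$), Corollary~\ref{cor:DE-bij} (every element of $\BBhi{m}$ is extremal), and Proposition~\ref{prop:isom} (the connected component through an extremal element of weight $\nu$ is isomorphic to $\BB(\nu)$). No new combinatorial work should be needed at this stage; the delicate content has been packaged into Propositions~\ref{prop:DE-bij}, \ref{prop:DE-conn}, and into the passage from $[n]$-maximal elements at arbitrarily large $n$ back down to level $m$.

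First I would fix an arbitrary connected component $\BB$ of $\BB(\lambda) \otimes \BB(\mu)$. By Proposition~\ref{prop:DE-conn}, there is a unique element $b \in \BB \cap \BBhi{m}$. By Corollary~\ref{cor:DE-bij}, this element $b$ is extremal in $\BB(\lambda) \otimes \BB(\mu)$, and by Lemma~\ref{lem:DE-highest} its weight $\nu := \wt b$ lies in $\Pm{m}$. Applying Proposition~\ref{prop:isom} to the extremal element $b$ of weight $\nu$ then gives $\BB \cong \BB(\nu)$ as $U_{q}(\Fg)$-crystals, with $\nu \in \Pm{m}$.

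Next I would count. Conversely, for any $\nu \in \Pm{m}$ and any $b \in \BBhiw{m}{\nu}$, the unique connected component of $\BB(\lambda) \otimes \BB(\mu)$ containing $b$ is, by the argument above, isomorphic to $\BB(\nu)$. Since the assignment ``connected component $\mapsto$ its unique element of $\BBhi{m}$'' from Proposition~\ref{prop:DE-conn} is a bijection, the number of connected components isomorphic to $\BB(\nu)$ equals the cardinality of the fibre over weight $\nu$, which is precisely $\#\BBhiw{m}{\nu}$. Combining these two steps yields the claimed decomposition
\[
\BB(\lambda) \otimes \BB(\mu) = \bigoplus_{\nu \in \Pm{m}} \BB(\nu)^{\oplus \#\BBhiw{m}{\nu}}.
\]

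The only subtle point to double-check is that distinct components of $\BB(\lambda) \otimes \BB(\mu)$ yielding isomorphic $\BB(\nu)$'s really do correspond to distinct elements of $\BBhiw{m}{\nu}$; this is ensured by the uniqueness clause in Proposition~\ref{prop:DE-conn}. There is no genuine obstacle here: all the hard work has already been done in establishing Proposition~\ref{prop:DE-bij} (and hence Proposition~\ref{prop:DE-conn} and Corollary~\ref{cor:DE-bij}), whose proof is deferred to Section~\ref{sec:DE-bij}. The present proof is therefore short and formal.
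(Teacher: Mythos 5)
Your proposal is correct and follows essentially the same route as the paper: it combines Proposition~\ref{prop:DE-conn}, Corollary~\ref{cor:DE-bij}, and Proposition~\ref{prop:isom} (with Lemma~\ref{lem:DE-highest} guaranteeing that the weight lies in $\Pm{m}$) to set up the same bijection between connected components isomorphic to $\BB(\nu)$ and elements of $\BBhiw{m}{\nu}$. No substantive difference from the paper's argument.
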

%
%
\begin{rem} \label{rem:DE}
Recall from Remark~\ref{rem:Levi}
that if $\Fg$ is of type $B_{\infty}$ 
(resp., $C_{\infty}$, $D_{\infty}$), then 
$\Fgm{m}$ is a ``reductive'' Lie algebra 
of type $B_{m+1}$ (resp., $C_{m+1}$, $D_{m+1}$); 
note that $m > p+(N+1)q \ge 2$. 
Furthermore, we know from Remark~\ref{rem:LS-n} that 
$\BBm{m}(\lambda)$ (resp., $\BBm{m}(\mu)$) is isomorphic, 
as a $U_{q}(\Fgm{m})$-crystal, to 
the crystal basis of the finite-dimensional irreducible 
$U_{q}(\Fgm{m})$-module $\Vm{m}(\lamm{m})=\Vm{m}(\lambda)$ 
(resp., $\Vm{m}(\mum{m})$) of highest weight 
$\lamm{m}=\lambda$ (resp., $\mum{m}$).
Therefore, for each $\nu \in \Pm{m}$, 
the number $\#\BBhiw{m}{\nu}$ is equal to 
the multiplicity $[\Vm{m}(\lambda) \otimes \Vm{m}(\mum{m}):
\Vm{m}(\nu)]$ of the finite-dimensional irreducible 
$U_{q}(\Fgm{m})$-module $\Vm{m}(\nu)$
of highest weight $\nu$ 
in the tensor product $U_{q}(\Fgm{m})$-module 
$\Vm{m}(\lambda) \otimes \Vm{m}(\mum{m})$. Thus, 
\begin{equation*}
m_{\lambda,\,\mu}^{\nu}=\#\BBhiw{m}{\nu}=
[\Vm{m}(\lambda) \otimes \Vm{m}(\mum{m}):
\Vm{m}(\nu)]
\end{equation*}
for each $\nu \in \Pm{m}$. 
In particular, the number of those elements 
$\nu \in \Pm{m}$ for which $m_{\lambda,\,\mu}^{\nu} \ne 0$ is finite, 
and hence the total number of connected components of 
$\BB(\lambda) \otimes \BB(\mu)$ is finite; see \S\ref{subsec:key} 
for an explicit description of the number $\#\BBhiw{m}{\nu}$ 
in terms of Littlewood-Richardson coefficients. 
\end{rem}

\begin{proof}[Proof of Theorem~\ref{thm:DE}]
Let $\BB$ be an arbitrary connected component of 
$\BB(\lambda) \otimes \BB(\mu)$. 
Then, we know from Proposition~\ref{prop:DE-conn} that 
there exists a unique element $\pi \otimes \eta$ of $\BB$ 
that is contained in $\BBhi{m}$; we set 
$\nu:=\wt (\pi \otimes \eta) \in \Pm{m}$. 
Since the element $\pi \otimes \eta \in \BB \cap \BBhi{m}$ 
is extremal by Corollary~\ref{cor:DE-bij}, we see from 
Proposition~\ref{prop:isom} that $\BB$ is isomorphic, 
as a $U_{q}(\Fg)$-crystal, to $\BB(\nu)$. 

Proposition~\ref{prop:DE-conn} and 
Corollary~\ref{cor:DE-bij}, 
together with Proposition~\ref{prop:isom}, 
show that for each $\nu \in \Pm{m}$, 
there exists a one-to-one correspondence between 
the set of connected components of $\BB(\lambda) \otimes \BB(\mu)$ 
isomorphic to $\BB(\nu)$, and 
the subset $\BBhiw{m}{\nu}$ of $\BBhi{m}$ 
consisting of all elements of weight $\nu$. 
This implies immediately that 
$m_{\lambda,\,\mu}^{\nu}=\#\BBhiw{m}{\nu}$. 
Thus we have proved the theorem.
\end{proof}

%
\subsection{The case $\lambda,\,\mu \in P_{+}$.}
\label{subsec:DD}
Let $\lambda,\,\mu \in P_{+}$. 
In this case, we see from 
Theorem~\ref{thm:isom} and Remark~\ref{rem:extmod}\,(1) that
$\BB(\lambda)$ (resp., $\BB(\mu)$) is isomorphic, as a $U_{q}(\Fg)$-crystal, 
to the crystal basis of the irreducible highest weight $U_{q}(\Fg)$-module
of highest weight $\lambda$ (resp., $\mu$). 
For $\eta \in \BB(\mu)$, we say that 
$\lambda+\eta$ is dominant (resp., $[n]$-dominant for $n \in \BZ_{\ge 0}$) 
if $\pair{\lambda+\eta(t)}{h_{i}} \ge 0$ 
for all $t \in [0,\,1]_{\BR}$ and $i \in I$ (resp., $i \in [n]$); 
remark that such an element $\eta \in \BB(\mu)$ is said to be 
``$\lambda$-dominant'' (resp., ``$\lambda$-dominant'' 
with respect to $\Fgn$) in the terminology in \cite{Lit-Inv}. 
It is easy to verify that 
$\lambda+\eta$ is dominant (resp., $[n]$-dominant) if and only if 
$\pi_{\lambda} \otimes \eta \in \BB(\lambda) \otimes \BB(\mu)$ 
is maximal (resp., $[n]$-maximal). 

We have the following (see \cite[Proposition~2.3.2]{Lec-TAMS}; 
 cf. \cite[Proposition~4.10]{Kw-Ep} in type $A_{\infty}$). 
%
%
\begin{thm} \label{thm:DD}
Let $\lambda,\mu \in P_{+}$. 

{\rm (1)} We have the following decomposition 
into connected components\,{\rm:}
%
%
\begin{equation} \label{eq:DD1}
\BB(\lambda) \otimes \BB(\mu) =
 \bigoplus_{
  \begin{subarray}{c}
  \eta \in \BB(\mu), \\[0.5mm]
  \text{\rm $\lambda+\eta$ is dominant}
  \end{subarray}
 } \BB(\lambda+\wt(\eta)).
\end{equation}
In particular, each connected component of $\BB(\lambda) \otimes \BB(\mu)$ 
is isomorphic, as a $U_{q}(\Fg)$-crystal, to $\BB(\nu)$ 
for some $\nu \in P_{+}$.

{\rm (2)} Let $\nu \in P_{+}$. 
If $L_{\nu} \ne L_{\lambda}+L_{\mu}$, then 
the multiplicity $m_{\lambda,\,\mu}^{\nu}$ of 
$\BB(\nu)$ in the decomposition \eqref{eq:DD1} 
is equal to $0$. If $L_{\nu}=L_{\lambda}+L_{\mu}$, 
then the multiplicity $m_{\lambda,\,\mu}^{\nu}$ of 
$\BB(\nu)$ in the decomposition \eqref{eq:DD1}
is equal to the number 
%
%
\begin{equation} \label{eq:DD2}
\#\bigl\{
  \eta \in \BBn(\mu) \mid 
  \text{\rm $\lambda+\eta$ is $[n]$-dominant, and $\wt(\pi_{\lambda} \otimes \eta)=\nu$}
 \bigr\} 
\end{equation}
for an arbitrary $n \in \BZ_{\ge 3}$ such that 
$\col{j}=L_{\lambda}$, $\com{j}=L_{\mu}$, and 
$\con{j}=L_{\nu}=L_{\lambda}+L_{\mu}$ for all $j \ge n$.
\end{thm}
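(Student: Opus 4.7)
The plan is to adapt Littelmann's classical decomposition rule \cite{Lit-Inv} for tensor products of path crystals of highest weight modules to the infinite rank setting, combining it with the results established earlier in the paper.

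For part (1), I would first identify the maximal elements of $\BB(\lambda) \otimes \BB(\mu)$. By the tensor product rule and the fact that $\pi_{\lambda}$ is the unique maximal element of $\BB(\lambda)$ (since $\lambda \in P_{+}$), a maximal element must be of the form $\pi_{\lambda} \otimes \eta$ with $\ve_{i}(\eta) \le \vp_{i}(\pi_{\lambda}) = \pair{\lambda}{h_{i}}$ for all $i \in I$, which translates exactly to the condition that $\lambda + \eta$ is dominant. By Remark~\ref{rem:ext}\,(2), any such $\pi_{\lambda} \otimes \eta$ is extremal, and Proposition~\ref{prop:isom} then identifies the connected component it generates with $\BB(\lambda + \wt(\eta))$; note $\lambda + \wt(\eta) \in P_{+}$ since evaluating the dominance condition at $t=1$ gives $\pair{\lambda + \wt(\eta)}{h_{i}} \ge 0$.

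The existence of such a maximal element in every connected component is the main technical point. Given an arbitrary $\pi \otimes \eta \in \BB(\lambda) \otimes \BB(\mu)$, I would take $n$ large enough so that $\pi \in \BBn(\lambda)$, $\eta \in \BBn(\mu)$, and $\col{j} = L_{\lambda}$, $\com{j} = L_{\mu}$ for all $j \ge n$. Since $\BBn(\lambda) \otimes \BBn(\mu)$ is a normal $U_{q}(\Fgn)$-crystal (Remark~\ref{rem:LS-n} and normality of tensor products), one may apply a monomial in $e_{i}^{\max}$ for $i \in [n]$ to reach an $[n]$-maximal element, which by the first-step analysis must take the form $\pi_{\lambda} \otimes \eta'$ with $\eta' \in \BBn(\mu)$. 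To upgrade $[n]$-maximality to full maximality, I need $e_{i}(\pi_{\lambda} \otimes \eta') = \bzero$ for $i \ge n+1$; since $e_{i}\pi_{\lambda} = \bzero$ and $\pair{\lambda}{h_{i}} = 0$ for such $i$, it suffices to check $e_{i}\eta' = \bzero$, which follows by writing $\eta' = (\xi_{1},\dots,\xi_{N})$ with $\xi_{M} \in \Wn\mu$ and verifying through \eqref{eq:binf-r}--\eqref{eq:dinf-r} that $\pair{\xi_{M}}{h_{i}} \ge 0$ for all $M$ and $i \ge n+1$ (using $\com{j} = L_{\mu}$ for $j \ge n$). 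Uniqueness of the maximal element per connected component is then immediate from Corollary~\ref{cor:isom}\,(2): the maximal element is the unique element of its (dominant) weight in its connected component $\cong \BB(\nu)$.

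For part (2), the level assertion follows from Remark~\ref{rem:wt}: $L_{\wt \eta} = L_{\mu}$, so $L_{\lambda + \wt \eta} = L_{\lambda} + L_{\mu}$, and any $\nu$ with $L_{\nu} \neq L_{\lambda} + L_{\mu}$ cannot appear. For the multiplicity count, part (1) shows that $m_{\lambda,\mu}^{\nu}$ equals the number of maximal elements $\pi_{\lambda} \otimes \eta$ in $\BB(\lambda) \otimes \BB(\mu)$ with $\wt(\pi_{\lambda} \otimes \eta) = \nu$. For $n$ satisfying the stated conditions on levels, the argument in part (1) already shows that each such $\eta$ lies in $\BBn(\mu)$ with $\lambda + \eta$ being $[n]$-dominant; conversely, for $\eta \in \BBn(\mu)$ with $\lambda + \eta$ $[n]$-dominant and $\wt(\pi_{\lambda}\otimes\eta) = \nu$, the same computation with $\pair{\xi_{M}}{h_{i}} \ge 0$ for $i \ge n+1$ (noting that the level condition forces $\pair{\lambda}{h_{i}} = 0$ for $i \ge n+1$) promotes $[n]$-dominance to full dominance of $\lambda + \eta$, giving the desired bijection with the set in \eqref{eq:DD2}.

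The main obstacle will be the promotion from $[n]$-maximality to maximality in step 3 and, correspondingly, the converse direction in the multiplicity count: confirming that the choice of $n$ dictated by the level condition is sufficient to ensure every genuinely maximal $\eta$ already lies in $\BBn(\mu)$. This requires careful bookkeeping with the explicit Weyl group action \eqref{eq:binf-r}--\eqref{eq:dinf-r}, analogous to the arguments used in Propositions~\ref{prop:DE-highest} and \ref{prop:DE-bij}.
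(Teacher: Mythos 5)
Your part~(1) is correct but takes a genuinely more self-contained route than the paper: the paper disposes of \eqref{eq:DD1} in one line by quoting Littelmann's decomposition rule from \cite[\S10]{Lit-Ann} (declared valid for infinite rank at the start of \S\ref{subsec:path}), whereas you re-derive it --- maximal elements are exactly the $\pi_{\lambda}\otimes\eta$ with $\lambda+\eta$ dominant; every component contains one, obtained by raising to an $[n']$-maximal element for $n'$ depending on the given element and then promoting to full maximality via $\pair{\lambda}{h_{i}}=0$ and $\pair{\xi_{M}}{h_{i}}\ge 0$ for $i\ge n'+1$; and it is unique by Corollary~\ref{cor:isom}. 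This extra care is reasonable given that $I$ is infinite. Your converse inclusion in part~(2) (promoting $[n]$-dominance of $\lambda+\eta$ to dominance when $\eta\in\BBn(\mu)$) is also correct and matches the paper's argument.

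The genuine gap is the forward inclusion in part~(2), which you flag as ``the main obstacle'' but do not carry out, and which is the actual content of the paper's proof (equation \eqref{eq:DD3} there). You must show that if $\lambda+\eta$ is dominant and $\wt(\pi_{\lambda}\otimes\eta)=\nu$, then $\eta$ already lies in $\BBn(\mu)$ for the \emph{specific} $n$ fixed by the level conditions; your part~(1) only produces, for each component, some $n'$ depending on that component, and nothing you have written rules out maximal elements of weight $\nu$ requiring arbitrarily large $n'$. The missing idea is an averaging argument: write $\eta=(\xi_{1},\dots,\xi_{N})$ with $\xi_{M}\in W\mu$; for $j\ge n$ the weight condition gives $\frac{1}{N}\sum_{M=1}^{N}\co{\xi_{M}}{j}=\con{j}-\col{j}=L_{\mu}$, while $\co{\xi_{M}}{j}\le L_{\mu}$ for every $M$ by Remark~\ref{rem:dom} together with \eqref{eq:binf-r}--\eqref{eq:dinf-r}, which forces $\co{\xi_{M}}{j}=L_{\mu}$ for all $M$ and all $j\ge n$; one then checks that the unique $[n]$-dominant element of $\Wn\xi_{M}$ has weakly increasing coordinates bounded by $L_{\mu}$ and equal to $L_{\mu}$ from index $n+1$ on, hence is globally dominant and therefore equals $\mu$, so that $\xi_{M}\in\Wn\mu$ and $\eta\in\BBn(\mu)$ by Lemma~\ref{lem:BBn}. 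Without this step the identification of $m_{\lambda,\,\mu}^{\nu}$ with the finite-rank count \eqref{eq:DD2} --- and in particular its finiteness --- is not established.
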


\begin{rem}
Recall from Remark~\ref{rem:Levi} that if $\Fg$ is of type $B_{\infty}$ 
(resp., $C_{\infty}$, $D_{\infty}$), then 
$\Fgn$ is a ``reductive'' Lie algebra of 
type $B_{n+1}$ (resp., $C_{n+1}$, $D_{n+1}$). 
Furthermore, we know from Remark~\ref{rem:LS-n} 
that $\BBn(\lambda)$ (resp., $\BBn(\mu)$) is 
isomorphic, as a $U_{q}(\Fgn)$-crystal, to 
the crystal basis of the finite-dimensional irreducible 
$U_{q}(\Fgn)$-module of highest weight $\lamn=\lambda$ (resp., $\mun=\mu$). 
Therefore, it follows from part (2) of Theorem~\ref{thm:DD}, 
together with the result in \cite[\S10]{Lit-Ann}, that 
for each $\nu=\nun \in P_{+}$ such that $L_{\nu}=L_{\lambda}+L_{\mu}$, 
the multiplicity $m_{\lambda,\,\mu}^{\nu}$ in the decomposition \eqref{eq:DD1} 
is equal to the tensor product multiplicity of the corresponding 
finite-dimensional irreducible $U_{q}(\Fgn)$-modules; 
see \S\ref{subsec:KT} for an explicit description of 
this tensor product multiplicity in terms of 
Littlewood-Richardson coefficients. In particular, 
we have $m_{\lambda,\,\mu}^{\nu} < \infty$ for all $\nu \in P_{+}$. 
However, in the case $\lambda,\,\mu \in P_{+}$, 
the total number of connected components of 
$\BB(\lambda) \otimes \BB(\mu)$ is infinite in general; 
compare with the other cases, in which the total number 
of connected components of $\BB(\lambda) \otimes \BB(\mu)$ is finite 
(see Remarks~\ref{rem:EE} and \ref{rem:DE}, and Theorem~\ref{thm:ED}). 
For example, let 
\begin{equation*}
\lambda=\mu:=\eps_{1}+\eps_{2}+\eps_{3}+\cdots. 
\end{equation*}
Then, for every $p \in \BZ_{\ge 1}$, 
\begin{equation*}
\xi_{p}:=-(\eps_{1}+\eps_{2}+\cdots+\eps_{p-1})+\eps_{p}+\eps_{p+1}+\cdots
\end{equation*}
is an element of $W\mu$, and $\pi_{\xi_{p}}$ is an element of 
$\BB(\mu)$ such that $\lambda+\pi_{\xi_{p}}$ is dominant. 
Consequently, it follows from part (1) of Theorem~\ref{thm:DD} 
that $\BB(\lambda) \otimes \BB(\mu)$ has infinitely many 
connected components. 
\end{rem}

\begin{proof}[Proof of Theorem~\ref{thm:DD}]
The formula \eqref{eq:DD1} is just a restatement of 
the result in \cite[\S10]{Lit-Ann}. 
While part (2) is essentially the same as 
\cite[Proposition~2.3.2]{Lec-TAMS} (see also its proof), 
we prefer to give another proof. Let $\nu \in P_{+}$. 
It is obvious from Remark~\ref{rem:wt} that 
if $L_{\nu} \ne L_{\lambda}+L_{\mu}$, then 
$m_{\lambda,\,\mu}^{\nu}=0$ since the level of the weight of
an element in $\BB(\lambda) \otimes \BB(\mu)$ 
is equal to $L_{\lambda}+L_{\mu}$. Assume, therefore, that 
$L_{\nu}=L_{\lambda}+L_{\mu}$. Fix $n \in \BZ_{\ge 3}$ such that 
$\col{j}=L_{\lambda}$, $\com{j}=L_{\mu}$, and 
$\con{j}=L_{\nu}=L_{\lambda}+L_{\mu}$ for all $j \ge n$.
We claim that
%
%
\begin{equation} \label{eq:DD3}
\begin{split}
\bigl\{
  \eta \in \BB(\mu) & \mid 
  \text{\rm $\lambda+\eta$ is dominant, and 
  $\wt(\pi_{\lambda} \otimes \eta)=\nu$}
 \bigr\} = \\
& 
\bigl\{
  \eta \in \BBn(\mu) \mid 
  \text{\rm $\lambda+\eta$ is $[n]$-dominant, and 
  $\wt(\pi_{\lambda} \otimes \eta)=\nu$}
 \bigr\};
\end{split}
\end{equation}
part (2) follows immediately from this equation and part (1). 
Let $\eta$ be an element in the set on the left-hand side of \eqref{eq:DD3}. 
Since $\lambda+\eta$ is dominant, it is obvious that 
$\lambda+\eta$ is $[n]$-dominant. 
We show that $\eta \in \BBn(\mu)$. 
Define $N=N_{\mu}$ as in Remark~\ref{rem:lcm}, and write 
$\eta \in \BB(\mu)$ as:
$\eta=(\xi_{1},\,\xi_{2},\,\dots,\,\xi_{N})$
for some $\xi_{1},\,\xi_{2},\,\dots,\,\xi_{N} \in W\mu$.
Then, we have by \eqref{eq:wtN}, 
\begin{equation*}
\nu=\wt (\pi_{\lambda} \otimes \eta)=
\lambda+\frac{1}{N}\sum_{M=1}^{N}\xi_{M}. 
\end{equation*}
Let $j \in \BZ_{\ge 0}$ be such that $j \ge n$. 
Then, by the choice of $n$, we have 
$\col{j}=L_{\lambda}$ and 
$\con{j}=L_{\nu}=L_{\lambda}+L_{\mu}$. 
Consequently, we obtain 
\begin{equation*}
L_{\lambda}+L_{\mu}=\con{j}=
\col{j}+\frac{1}{N}\sum_{M=1}^{N}\co{\xi_{M}}{j}=
L_{\lambda}+\frac{1}{N}\sum_{M=1}^{N}\co{\xi_{M}}{j}, 
\end{equation*}
and hence $L_{\mu}=(1/N)\sum_{M=1}^{N}\co{\xi_{M}}{j}$. 
Also, by using \eqref{eq:binf-r}--\eqref{eq:dinf-r}, 
we see from Remark~\ref{rem:dom} that $\co{\xi_{M}}{j} \le L_{\mu}$ 
for all $1 \le M \le N$. Combining these, we find that 
$\co{\xi_{M}}{j}=L_{\mu}$ for all $1 \le M \le N$. 
Now, for each $1 \le M \le N$, let $w_{M} \in \Wn$ be 
such that $\zeta_{M}:=w_{M}\xi_{M}$ is the unique $[n]$-dominant 
element in $\Wn\xi_{M}$. Then we see from Remark~\ref{rem:n-dom} 
that 
\begin{equation*}
0 \le \abs{\co{\zeta_{M}}{0}} \le 
 \co{\zeta_{M}}{1} \le \cdots \le 
 \co{\zeta_{M}}{n-1} \le
 \co{\zeta_{M}}{n}.
\end{equation*}
In addition, by using \eqref{eq:binf-r}--\eqref{eq:dinf-r}, 
we see that $\co{\zeta_{M}}{n} \le L_{\mu}$ 
from Remark~\ref{rem:dom}, and that 
$\co{\zeta_{M}}{j}=L_{\mu}$ for all $j \ge n+1$ 
since $w_{M} \in \Wn$ and $\co{\xi_{M}}{j}=L_{\mu}$ 
for all $j \ge n$ as shown above. 
Therefore, it follows through use of \eqref{eq:simple} that
$\zeta_{M}$ is dominant, and hence that 
$\zeta_{M} \in W\mu$ is identical to $\mu \in P_{+}$. 
Thus, we conclude that $\xi_{M} \in \Wn\mu$ for all $1 \le M \le N$, 
which implies that $\eta \in \BBn(\mu)$ by Lemma~\ref{lem:BBn}. 

Conversely, let $\eta$ be an element 
in the set on the right-hand side of \eqref{eq:DD3}. 
We show that $\lambda+\eta$ is dominant. 
Since $\lambda+\eta$ is $[n]$-dominant, it suffices to 
show that $\pair{\lambda+\eta(t)}{h_{i}} \ge 0$ 
for all $t \in [0,\,1]_{\BR}$ and $i \ge n+1$. 
Define $N=N_{\mu}$ 
as in Remark~\ref{rem:lcm}, and write 
$\eta \in \BB(\mu)$ as:
$\eta=(\xi_{1},\,\xi_{2},\,\dots,\,\xi_{N})$
for some $\xi_{1},\,\xi_{2},\,\dots,\,\xi_{N} \in W\mu$. 
Then, by the same reasoning as above, we obtain 
$\co{\xi_{M}}{j}=L_{\mu}$ for all $j \ge n$ and 
$1 \le M \le N$. Consequently, we derive from \eqref{eq:binf}--\eqref{eq:dinf} 
that $\pair{\xi_{M}}{h_{i}}=0$ for all $1 \le M \le N$ and $i \ge n+1$, 
which implies that $\pair{\eta(t)}{h_{i}}=0$ 
for all $t \in [0,\,1]_{\BR}$ and $i \ge n+1$. 
Hence it follows that $\pair{\lambda+\eta(t)}{h_{i}}=
\pair{\lambda}{h_{i}} \ge 0$ 
for all $t \in [0,\,1]_{\BR}$ and $i \ge n+1$. 
Thus we have proved that $\lambda+\eta$ is dominant. 
This completes the proof of Theorem~\ref{thm:DD}. 
\end{proof}

%
\subsection{The general case.}
\label{subsec:general}
Finally, in this subsection, 
we consider the decomposition (into connected components) 
of the tensor product 
$\BB(\lambda) \otimes \BB(\mu)$ for general 
$\lambda,\mu \in P$ 
such that $L_{\lambda},\,L_{\mu} \ge 0$. 
Define $\lambda^{+},\,\mu^{+} \in P_{+}$ and $\lambda^{0},\,\mu^{0} \in E$ 
as in Remark~\ref{rem:ED}. Then we have
\begin{equation*}
\BB(\lambda) \otimes \BB(\mu) \cong 
\BB(\lambda^{0}) \otimes \BB(\lambda^{+}) \otimes 
\BB(\mu^{0}) \otimes \BB(\mu^{+})
\end{equation*}
as $U_{q}(\Fg)$-crystals. 
Since $\lambda^{+} \in P_{+}$ and $\mu^{0} \in E$, 
it follows from Theorem~\ref{thm:DE} that
\begin{equation}
\BB(\lambda^{0}) \otimes \BB(\lambda^{+}) \otimes 
\BB(\mu^{0}) \otimes \BB(\mu^{+}) 
\cong \bigoplus_{\xi \in P_{+}^{[m]}(\lambda^{+},\,\mu^{0})} 
\bigl(
 \BB(\lambda^{0}) \otimes \BB(\xi) \otimes \BB(\mu^{+})
\bigr)^{\oplus m_{\lambda^{+},\,\mu_{0}}^{\xi}}
\end{equation}
as $U_{q}(\Fg)$-crystals, where we take $m \in \BZ_{\ge 0}$ 
(sufficiently large) as in \S\ref{subsec:DE}, 
with $\lambda$ replaced by $\lambda^{+}$, and $\mu$ by $\mu^{0}$. 
If we define $\xi^{+} \in P_{+}$ and $\xi^{0} \in E$ 
for each $\xi \in P_{+}^{[m]}(\lambda^{+},\,\mu^{0})$ 
as in Remark~\ref{rem:ED}, then we have 
\begin{equation*}
\BB(\lambda^{0}) \otimes \BB(\xi) \otimes \BB(\mu^{+}) \cong 
\BB(\lambda^{0}) \otimes \BB(\xi^{0}) \otimes 
\BB(\xi^{+}) \otimes \BB(\mu^{+})
\end{equation*}
as $U_{q}(\Fg)$-crystals. 
Since $\lambda^{0},\,\nu^{0} \in E$, 
it follows from Theorem~\ref{thm:EE} that
\begin{equation*}
\BB(\lambda^{0}) \otimes \BB(\xi^{0}) \cong 
\bigoplus_{\zeta \in \EW} 
\BB(\zeta)^{\oplus m_{\lambda^{0}_{\dagger},\,\xi^{0}_{\dagger}}^{\zeta}}
\qquad \text{as $U_{q}(\Fg)$-crystals}.
\end{equation*}
Also, since $\xi^{+},\,\mu^{+} \in P_{+}$, 
it follows from Theorem~\ref{thm:DD} that
\begin{equation*}
\BB(\xi^{+}) \otimes \BB(\mu^{+}) \cong 
\bigoplus_{\chi \in P_{+}} 
\BB(\chi)^{\oplus m_{\xi^{+},\,\mu^{+}}^{\chi}}
\qquad \text{as $U_{q}(\Fg)$-crystals}.
\end{equation*}
Combining these, we find that 
%
%
\begin{equation} \label{eq:gen01}
\BB(\lambda) \otimes \BB(\mu) \cong 
\bigoplus_{
 \begin{subarray}{c}
 \xi \in P_{+}^{[m]}(\lambda^{+},\,\mu^{0}), \\[1mm] 
 \zeta \in \EW,\ 
 \chi \in P_{+}
 \end{subarray}
} 
\bigl(\BB(\zeta) \otimes \BB(\chi)\bigr)^{
\oplus m_{\lambda^{+},\,\mu_{0}}^{\xi} \,
m_{\lambda^{0}_{\dagger},\,\xi^{0}_{\dagger}}^{\zeta} \, 
m_{\xi^{+},\,\mu^{+}}^{\chi}}
\end{equation}
as $U_{q}(\Fg)$-crystals. 
Recall that $\EW$ is a complete set of representatives 
for $W$-orbits in $E$. Let $P/W$ denote the set of 
all $W$-orbits in $P$, which we regard as a subset of 
$P$ by taking a complete set of representatives 
for $W$-orbits in $P$; recall Corollary~\ref{cor:isom}\,(3).
Now, using Theorem~\ref{thm:ED} and Remark~\ref{rem:ED}, 
together with Proposition~\ref{prop:ED-isom}, we obtain 
from \eqref{eq:gen01} the following theorem 
(cf. \cite[Corollary~7.4]{Kw-Adv} in type $A_{+\infty}$ and 
 \cite[Theorem~5.14]{Kw-Ep} in type $A_{\infty}$).
%
%
\begin{thm} \label{thm:general}
Let $\lambda,\mu \in P$ be integral weights 
of nonnegative levels\,{\rm;} namely, 
$L_{\lambda},\,L_{\mu} \ge 0$. Then, 
we have the following decomposition 
into connected components\,{\rm:}
%
%
\begin{equation} \label{eq:general}
\BB(\lambda) \otimes \BB(\mu) = 
\bigoplus_{\nu \in P/W}\BB(\nu)^{m_{\lambda,\mu}^{\nu}}, 
\end{equation}
where for each $\nu \in P/W$, the multiplicity 
$m_{\lambda,\mu}^{\nu}$ is given as follows\,{\rm:}
\begin{equation*}
m_{\lambda,\,\mu}^{\nu}=
\sum_{
 \xi \in P_{+}^{[m]}(\lambda^{+},\,\mu^{0})
}
m_{\lambda^{+},\,\mu_{0}}^{\xi}\,
m_{\lambda^{0}_{\dagger},\,\xi^{0}_{\dagger}}^{\nu^{0}_{\dagger}}\,
m_{\xi^{+},\,\mu^{+}}^{\nu^{+}}.
\end{equation*}
\end{thm}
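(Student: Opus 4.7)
The plan is to combine the four case-analyses of \S\ref{subsec:EE}--\S\ref{subsec:DD} via the factorization provided by Remark~\ref{rem:ED}. First I would use Remark~\ref{rem:ED} to write $\BB(\lambda) \cong \BB(\lambda^{0}) \otimes \BB(\lambda^{+})$ and $\BB(\mu) \cong \BB(\mu^{0}) \otimes \BB(\mu^{+})$ as $U_{q}(\Fg)$-crystals, with $\lambda^{+},\,\mu^{+} \in P_{+}$ and $\lambda^{0},\,\mu^{0} \in E$. Tensoring produces a four-fold product whose two middle factors $\BB(\lambda^{+}) \otimes \BB(\mu^{0})$ fall under the situation of \S\ref{subsec:DE}, so Theorem~\ref{thm:DE} decomposes them as $\bigoplus_{\xi \in P_{+}^{[m]}(\lambda^{+},\,\mu^{0})} \BB(\xi)^{\oplus m_{\lambda^{+},\,\mu^{0}}^{\xi}}$, for an appropriate choice of $m$.

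Next, for each such $\xi$ I would apply Remark~\ref{rem:ED} again to obtain $\BB(\xi) \cong \BB(\xi^{0}) \otimes \BB(\xi^{+})$, so that the outer tensor product takes the form $\BB(\lambda^{0}) \otimes \BB(\xi^{0}) \otimes \BB(\xi^{+}) \otimes \BB(\mu^{+})$. The two leftmost factors now lie in $E$ and are handled by Theorem~\ref{thm:EE}, giving $\bigoplus_{\zeta \in \EW} \BB(\zeta)^{\oplus m_{\lambda^{0}_{\dagger},\,\xi^{0}_{\dagger}}^{\zeta}}$, while the two rightmost factors lie in $P_{+}$ and are handled by Theorem~\ref{thm:DD}, giving $\bigoplus_{\chi \in P_{+}} \BB(\chi)^{\oplus m_{\xi^{+},\,\mu^{+}}^{\chi}}$. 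Substituting these decompositions yields exactly the expression~\eqref{eq:gen01}.

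It then remains to group the resulting summands $\BB(\zeta) \otimes \BB(\chi)$ (with $\zeta \in \EW$ and $\chi \in P_{+}$) by the $W$-orbit of the associated extremal weight. By Theorem~\ref{thm:ED}, each such tensor product is isomorphic, as a $U_{q}(\Fg)$-crystal, to a single $\BB(\nu)$, and Remark~\ref{rem:ED} identifies this $\nu \in P/W$ by the rule $\nu^{0}_{\dagger}=\zeta$ and $\nu^{+}=\chi$. Proposition~\ref{prop:ED-isom} ensures that distinct pairs $(\zeta,\chi) \in \EW \times P_{+}$ produce non-isomorphic crystals, so no two summands in~\eqref{eq:gen01} collapse, and the multiplicity of a given $\BB(\nu)$ is obtained simply by fixing $\zeta=\nu^{0}_{\dagger}$ and $\chi=\nu^{+}$ and summing the product of the three multiplicity factors over the intermediate $\xi$.

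The argument is essentially assembly of previously established pieces; the only point requiring care is the last one, namely verifying that the pairs $(\zeta,\chi)$ realize distinct isomorphism classes so that~\eqref{eq:gen01} directly reads off $m_{\lambda,\,\mu}^{\nu}$ without further cancellation. This is precisely the role played by Proposition~\ref{prop:ED-isom}, and once it is invoked the stated formula for $m_{\lambda,\,\mu}^{\nu}$ follows immediately by matching the parameters of Remark~\ref{rem:ED} to $\nu$.
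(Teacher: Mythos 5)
Your proposal is correct and follows essentially the same route as the paper: factor both crystals via Remark~\ref{rem:ED}, decompose the middle pair by Theorem~\ref{thm:DE}, refactor each $\BB(\xi)$, apply Theorems~\ref{thm:EE} and \ref{thm:DD} to the outer pairs to reach \eqref{eq:gen01}, and then collect terms using Theorem~\ref{thm:ED} together with Proposition~\ref{prop:ED-isom}. Your explicit remark that Proposition~\ref{prop:ED-isom} (with $\EW$ a complete set of $W$-orbit representatives in $E$) is what prevents distinct pairs $(\zeta,\chi)$ from collapsing is exactly the point the paper relies on in its final step.
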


%
\section{Proof of Proposition~\ref{prop:DE-bij}.}
\label{sec:DE-bij}

%
\subsection{Basic notation for Young diagrams.}
\label{subsec:young}
Recall that $\Par$ denotes the set of all partitions; 
we usually identify a partition $\rho \in \Par$ 
with the corresponding Young diagram, which is also denoted by $\rho$ 
(see, for example, \cite{Fulton}). 
For $\rho,\,\kappa \in \Par$, 
we write $\rho \subset \kappa$ 
if the Young diagram of $\rho$ is contained 
in the Young diagram of $\kappa$; 
in this case, we denote by $\kappa/\rho$ the skew Young diagram 
obtained from the Young diagram of $\kappa$ by removing 
that of $\rho$, and by $|\kappa/\rho|$ the total number of boxes 
in this skew Young diagram, i.e., $|\kappa/\rho|=|\kappa|-|\rho|$. 
It is well-known that for $\rho,\,\kappa,\,\omega \in \Par$, 
the Littlewood-Richardson coefficient $\LR_{\rho,\,\kappa}^{\omega}$ 
is nonzero only if $\rho \subset \omega$ and $\kappa \subset \omega$. 
Also, the conjugate of the partition $\rho \in \Par$ is 
denoted by ${}^{t}\rho$. In what follows, 
for $L \in \BZ_{\ge 0}$ and $\rho \in \Par$, 
let $\iota_{L}(\rho)$ denote the partition 
whose Young diagram is obtained from the Young diagram 
of $\rho$ by inserting one row with exactly $L$ boxes 
between an appropriate pair of adjacent rows of the Young diagram of 
$\rho$ in such a way that the resulting diagram is also 
a Young diagram. By convention, we set $\iota_{L}(\rho)=\rho$ 
if $L=0$. 

We fix $\ell \in \BZ_{\ge 3}$; 
recall from Remark~\ref{rem:Levi} 
that if $\Fg$ is of type $B_{\infty}$ 
(resp., $C_{\infty}$, $D_{\infty}$), then 
$\Fgm{\ell}$ is a ``reductive'' Lie algebra 
of type $B_{\ell+1}$ (resp., $C_{\ell+1}$, $D_{\ell+1}$). 
Let $\rho=(\co{\rho}{0} \ge \co{\rho}{1} \ge \co{\rho}{2} \ge \cdots ) \in \Par$ 
be a partition whose length $\ell(\rho)$ is less than or equal to $\ell+1$, 
and let $J$ be a finite subset of $\BZ_{\ge 1}$. 
We define $\Jl{\rho} \in \Par$ as follows (cf. \cite[\S8]{Ko98}).
Write the conjugate partition ${}^{t}\rho$ of $\rho$ as: 
${}^{t}\rho=({}^{t}\co{\rho}{0} \ge {}^{t}\co{\rho}{1} \ge {}^{t}\co{\rho}{2} \ge \cdots) \in \Par$; 
note that the first part ${}^{t}\co{\rho}{0}$ is equal to 
the length $\ell(\rho)$ of $\rho$, 
which is less than or equal to $\ell+1$ by assumption. 
Then we set 
%
%
\begin{equation} \label{eq:bb1}
\bb({}^{t}\rho)=(b_{j})_{j \in \BZ_{\ge 1}}:=
\bigl(
 {}^{t}\co{\rho}{0} > {}^{t}\co{\rho}{1}-1 > \cdots > 
 {}^{t}\co{\rho}{j-1}-(j-1) > {}^{t}\co{\rho}{j}-j > \cdots
\bigr), 
\end{equation}
and define $\Jl{\bb({}^{t}\rho)}=
(\Jl{b_{j}})_{j \in \BZ_{\ge 1}}$ by: 
for each $j \in \BZ_{\ge 1}$, 
%
%
\begin{equation} \label{eq:bb2}
\Jl{b_{j}}=
\begin{cases}
b_{j}
 & \text{if $j \notin J$}, \\[1.5mm]
R_{\ell}-b_{j}
 & \text{if $j \in J$},
\end{cases}
\quad \text{where} \quad
R_{\ell}:=
 \begin{cases}
 2\ell+3 & \text{if $\Fg$ is of type $B_{\infty}$}, \\[1.5mm]
 2\ell+4 & \text{if $\Fg$ is of type $C_{\infty}$}, \\[1.5mm]
 2\ell+2 & \text{if $\Fg$ is of type $D_{\infty}$}.
 \end{cases}
\end{equation}
Observe that
\begin{equation*}
\Jl{b_{j}} \ge 
\begin{cases}
\ell+2 & \text{if $\Fg$ is of type $B_{\infty}$}, \\[1.5mm]
\ell+3 & \text{if $\Fg$ is of type $C_{\infty}$}, \\[1.5mm]
\ell+1 & \text{if $\Fg$ is of type $D_{\infty}$}, \\[1.5mm]
\end{cases}
\quad \text{for all $j \in J$},
\end{equation*}
and that $\Jl{b_{j}} \ne \Jl{b_{k}}$ for all 
$j,\,k \in \BZ_{\ge 1}$ such that $j \ne k$. 
Let $\tau$ be the (unique) finite permutation of 
the set $\BZ_{\ge 1}$ such that 
\begin{equation*}
\Jl{b_{\tau(1)}} > \Jl{b_{\tau(2)}} > \cdots > 
\Jl{b_{\tau(j)}} > \Jl{b_{\tau(j+1)}} > \cdots;
\end{equation*}
note that $\tau(j)=j$ for all $j > \max J$. 
Now we define $\Jl{\rho}$ to be the conjugate 
of the partition 
\begin{equation} \label{eq:rhoJ}
(\Jl{b_{\tau(1)}} \ge \Jl{b_{\tau(2)}}+1 \ge \cdots \ge 
 \Jl{b_{\tau(j)}}+(j-1) \ge \Jl{b_{\tau(j+1)}}+j \ge \cdots).
\end{equation}
Also, we define 
\begin{equation*}
\Jl{\sgn}(\rho):=
\begin{cases}
 \sgn(\tau) & 
 \text{if $\Fg$ is of type $B_{\infty}$ or $D_{\infty}$}, \\[1.5mm]
 \sgn(\tau) \times (-1)^{\#J} & 
 \text{if $\Fg$ is of type $C_{\infty}$},
\end{cases}
\end{equation*}
where $\sgn(\tau)$ denotes the sign of the finite 
permutation $\tau$ of the set $\BZ_{\ge 1}$. 
%
%
\begin{rem} \label{rem:LR}
Suppose that $\Fg$ is of type $D_{\infty}$ and $\ell(\rho)=\ell+1$. 
Let $J$ be a finite subset of $\BZ_{\ge 2}$, and set 
$K:=J \cup \{1\}$. Since ${}^{t}\co{\rho}{0}=\ell(\rho)=\ell+1$ in \eqref{eq:bb1}, 
it follows from \eqref{eq:bb2} that $\Jl{b_{j}}=b_{j}^{K,\,\ell}$ 
for all $j \in \BZ_{\ge 1}$. Thus, we obtain 
$\Jl{\rho}=\rho^{K,\,\ell}$ and $\Jn{\sgn}(\rho)=\sgn^{K,\,n}(\rho)$. 
\end{rem}

%
\subsection{Tensor product multiplicity formulas 
 in terms of Littlewood-Richardson coefficients.}
\label{subsec:KT}

In this subsection, we review some tensor product multiplicity 
formulas from \cite{Ko97} and \cite{Ko98}, 
which we use in the proof of Proposition~\ref{prop:DE-bij}. 
Fix $\ell \in \BZ_{\ge 3}$. 
For an $[\ell]$-dominant integral weight $\lambda \in P$, 
we denote $\ch \Vm{\ell}(\lambda)$ by the formal character of 
the finite-dimensional irreducible $U_{q}(\Fgm{\ell})$-module 
$\Vm{\ell}(\lambda)$ of highest weight $\lambda$. 
We set
\begin{equation*}
\Ldom{\ell}:=
\bigl\{\lambda \in P \mid 
 \text{$\lambda$ is $[\ell]$-dominant, and 
 $\col{j}=L_{\lambda}$ for all $j \in \BZ_{\ge \ell+1}$}
\bigr\}.
\end{equation*}
For each $\lambda \in \Ldom{\ell}$, 
we define a sequence $\phi_{\ell}(\lambda)$ by: 
\begin{equation*}
\phi_{\ell}(\lambda)=
(\col{\ell},\,\dots,\,\col{1},\,\abs{\col{0}},\,0,\,0,\,\dots). 
\end{equation*}
Note that by Remark~\ref{rem:n-dom}, we have  
$\col{\ell} \ge \dots \ge \col{1} \ge \abs{\col{0}} \ge 0$, 
and hence that if $L_{\lambda} \in \BZ$, 
then $\phi_{\ell}(\lambda)$ is a partition. 

For the rest of this subsection, 
we fix $\lambda,\,\mu \in \Ldom{\ell}$. 
Because every weight of $\Vm{\ell}(\lambda)$ 
(resp., $\Vm{\ell}(\mu)$) is contained in the set 
$\lambda-\sum_{i \in [\ell]}\BZ_{\ge 0}\alpha_{i}$ 
(resp., $\mu-\sum_{i \in [\ell]}\BZ_{\ge 0}\alpha_{i}$), 
we deduce through use of \eqref{eq:simple} that 
every weight $\nu \in P$ of 
$\Vm{\ell}(\lambda) \otimes \Vm{\ell}(\mu)$ 
satisfies the condition that
\begin{equation*}
\con{j}=\col{j}+\com{j}=L_{\lambda}+L_{\mu}
\qquad \text{for all $j \in \BZ_{\ge \ell+1}$}.
\end{equation*}
In particular, we have $L_{\nu}=L_{\lambda}+L_{\mu}$. 

\paragraph{Case of type $C_{\infty}$.}
%
Note that in this case, 
$\Fgm{\ell}$ is of type $C_{\ell+1}$, and 
$L_{\lambda},\,L_{\mu} \in \BZ$ (see \eqref{eq:cinf-f}), 
and hence $\phi_{\ell}(\lambda),\,\phi_{\ell}(\mu) \in \Par$. 
We have the following formula by \cite[Theorem~6.6\,(2)]{Ko98}: 
%
%
\begin{equation} \label{eq:KT01}
\begin{split}
& \ch \Vm{\ell}(\lambda) \times \ch \Vm{\ell}(\mu) = \\[3mm]
& 
\sum_{
 \begin{subarray}{c}
 \nu \in \Ldom{\ell}, \\[0.5mm]
 L_{\nu}=L_{\lambda}+L_{\mu}
 \end{subarray}
} 
\left\{
\sum_{
 \begin{subarray}{c}
 J \subset \BZ_{\ge 1},\,\#J < \infty, \\[1mm]
 \omega_{1},\,\omega_{2},\,\omega_{3} \in \Par
 \end{subarray}
}
 \LR^{\phi_{\ell}(\lambda)}_{\omega_{1},\,\omega_{2}} \,
 \LR^{\phi_{\ell}(\mu)}_{\omega_{2},\,\omega_{3}} \,
 \LR^{\Jl{\phi_{\ell}(\nu)}}_{\omega_{3},\,\omega_{1}} \, 
 \Jl{\sgn}(\phi_{\ell}(\nu))
\right\}
\ch \Vm{\ell}(\nu). 
\end{split}
\end{equation}
For simplicity of notation, we set
%
%
\begin{equation} \label{eq:CLR}
\CLRm{\rho_{1}}{\rho_{2}}{\rho}{\ell}:=
\sum_{
 \begin{subarray}{c}
 J \subset \BZ_{\ge 1},\,\#J < \infty, \\[1mm]
 \omega_{1},\,\omega_{2},\,\omega_{3} \in \Par
 \end{subarray}
}
 \LR^{\rho_{1}}_{\omega_{1},\,\omega_{2}} \,
 \LR^{\rho_{2}}_{\omega_{2},\,\omega_{3}} \,
 \LR^{\Jl{\rho}}_{\omega_{3},\,\omega_{1}} \, 
 \Jl{\sgn}(\rho)
\end{equation}
for partitions $\rho_{1},\,\rho_{2},\,\rho \in \Par$ 
whose lengths are less than or equal to $\ell+1$. 

\paragraph{Case of type $B_{\infty}$.}
%
Note that in this case, $\Fgm{\ell}$ is of type $B_{\ell+1}$. 
Suppose first that $L_{\lambda},\,L_{\mu} \in \BZ$, 
and hence $\phi_{\ell}(\lambda),\,\phi_{\ell}(\mu) \in \Par$. 
Then we have the following formula by 
\cite[Theorem~6.6\,(1) along with Definition~1.2\,(1)]{Ko98}: 
%
%
\begin{equation} \label{eq:KT02a}
\begin{split}
& \ch \Vm{\ell}(\lambda) \times \ch \Vm{\ell}(\mu) = \\[3mm]
& 
\sum_{
 \begin{subarray}{c}
 \nu \in \Ldom{\ell}, \\[0.5mm]
 L_{\nu}=L_{\lambda}+L_{\mu}
 \end{subarray}
} 
\left\{
\sum_{
 \begin{subarray}{c}
 J \subset \BZ_{\ge 1},\,\#J < \infty, \\[1mm]
 \omega_{1},\,\omega_{2},\,\omega_{3} \in \Par
 \end{subarray}
}
 \LR^{\phi_{\ell}(\lambda)}_{\omega_{1},\,\omega_{2}} \,
 \LR^{\phi_{\ell}(\mu)}_{\omega_{2},\,\omega_{3}} \,
 \LR^{\Jl{\phi_{\ell}(\nu)}}_{\omega_{3},\,\omega_{1}} \, 
 \Jl{\sgn}(\phi_{\ell}(\nu))
\right\}
\ch \Vm{\ell}(\nu). 
\end{split}
\end{equation}
For simplicity of notation, we set
%
%
\begin{equation} \label{eq:BLR}
\BLRm{\rho_{1}}{\rho_{2}}{\rho}{\ell}:=
\sum_{
 \begin{subarray}{c}
 J \subset \BZ_{\ge 1},\,\#J < \infty, \\[1mm]
 \omega_{1},\,\omega_{2},\,\omega_{3} \in \Par
 \end{subarray}
}
 \LR^{\rho_{1}}_{\omega_{1},\,\omega_{2}} \,
 \LR^{\rho_{2}}_{\omega_{2},\,\omega_{3}} \,
 \LR^{\Jl{\rho}}_{\omega_{3},\,\omega_{1}} \, 
 \Jl{\sgn}(\rho)
\end{equation}
for partitions $\rho_{1},\,\rho_{2},\,\rho \in \Par$ 
whose lengths are less than or equal to $\ell+1$. 

Suppose next that $L_{\lambda} \in (1/2)+\BZ$ and $L_{\mu} \in \BZ$, 
and hence $\phi_{\ell}(\mu) \in \Par$, 
but $\phi_{\ell}(\lambda) \notin \Par$ since 
$\col{j} \in (1/2)+\BZ_{\ge 0}$ for all $j \in [\ell]$. 
Note that $\Lambda_{0} \in \Ldom{\ell}$ 
by \eqref{eq:binf-f}, and that if $\xi \in \Ldom{\ell}$ is 
such that $L_{\xi} \in (1/2)+\BZ$, then the level of 
$\delta:=\xi-\Lambda_{0} \in \Ldom{\ell}$ 
is equal to $L_{\xi}-1/2 \in \BZ$; also, observe that 
$\phi_{\ell}(\delta)$ is a partition, and 
\begin{equation*}
\phi_{\ell}(\xi)=
\left(
 \co{\delta}{\ell}+\frac{1}{2},\,\dots,\,
 \co{\delta}{1}+\frac{1}{2},\,
 \co{\delta}{0}+\frac{1}{2},\,0,\,0,\,\dots\right).
\end{equation*}
We have the following formula by \cite[(1.2.1)]{Ko97}: 
%
%
\begin{equation} \label{eq:KT02b}
\ch \Vm{\ell}(\lambda) \times \ch \Vm{\ell}(\mu) = 
\sum_{
 \begin{subarray}{c}
 \nu \in \Ldom{\ell}, \\[0.5mm]
 L_{\nu}=L_{\lambda}+L_{\mu}
 \end{subarray}
} 
\left\{
\sum_{
 \begin{subarray}{c}
 \rho_{2} \in \Par, \ 
 \rho_{2} \subset \phi_{\ell}(\mu), \\[1mm]
 \text{$\phi_{\ell}(\mu)/\rho_{2}$\,:\,vertical strip}
 \end{subarray}
}
\CLRm{\phi_{\ell}(\lambda-\Lambda_{0})}{\rho_{2}}{\phi_{\ell}(\nu-\Lambda_{0})}{\ell}
\right\} \ch \Vm{\ell}(\nu). 
\end{equation}

\paragraph{Case of type $D_{\infty}$.}
%
Note that in this case, 
$\Fgm{\ell}$ is of type $D_{\ell+1}$. 
For $\lambda \in \Ldom{\ell}$, we set 
(cf. \cite[Definition~1.1]{Ko98})
\begin{align*}
& \ch^{(+)} \Vm{\ell}(\lambda):=
  \ch \Vm{\ell}(\lambda)+\ch \Vm{\ell}(\ol{\lambda}), \\
& \ch^{(-)} \Vm{\ell}(\lambda):=
  \ch \Vm{\ell}(\lambda)-\ch \Vm{\ell}(\ol{\lambda}), 
\end{align*}
where $\ol{\lambda}$ is the element of $\Ldom{\ell}$ 
defined by: $\co{\ol{\lambda}}{0}=-\col{0}$ and 
$\co{\ol{\lambda}}{j}=\col{j}$ for all $j \in \BZ_{\ge 1}$. 
In addition, 
for $\lambda \in \Ldom{\ell}$ such that $L_{\lambda} \in \BZ$,
we set (cf. \cite[Definition~1.2\,(2),(3)]{Ko98}) 
\begin{equation*}
\ti{\ch} \Vm{\ell}(\lambda):=
\begin{cases}
 \ch \Vm{\ell}(\lambda) & \text{if $\col{0}=0$}, \\[1.5mm]
 \ch^{(+)} \Vm{\ell}(\lambda) & \text{if $\col{0} \ne 0$}.
\end{cases}
\end{equation*}

Suppose first that $L_{\lambda},\,L_{\mu} \in \BZ$, and hence 
$\phi_{\ell}(\lambda),\,\phi_{\ell}(\mu) \in \Par$. 
Then we have the following formula by 
\cite[Theorem~6.6\,(3) along with Definition~1.2\,(2),\,(3)]{Ko98} and 
Remark~\ref{rem:LR}: 
%
%
\begin{equation} \label{eq:KT03}
\begin{split}
& \ti{\ch} \Vm{\ell}(\lambda) \times \ti{\ch} \Vm{\ell}(\mu) = \\[3mm]
& 
\sum_{
 \begin{subarray}{c}
 \nu \in \Ldom{\ell}, \\[0.5mm]
 L_{\nu}=L_{\lambda}+L_{\mu}, \\[0.5mm]
 \con{0} \ge 0
 \end{subarray}
} 
\left\{c(\nu) 
\sum_{
 \begin{subarray}{c}
 J \subset \BZ_{\ge 1},\,\#J < \infty, \\[1mm]
 \omega_{1},\,\omega_{2},\,\omega_{3} \in \Par
 \end{subarray}
}
 \LR^{\phi_{\ell}(\lambda)}_{\omega_{1},\,\omega_{2}} \,
 \LR^{\phi_{\ell}(\mu)}_{\omega_{2},\,\omega_{3}} \,
 \LR^{\Jl{\phi_{\ell}(\nu)}}_{\omega_{3},\,\omega_{1}} \, 
 \Jl{\sgn}(\phi_{\ell}(\nu))
\right\}
\ti{\ch} \Vm{\ell}(\nu),
\end{split}
\end{equation}
where for $\nu \in \Ldom{\ell}$ with $L_{\nu} \in \BZ$, we set
\begin{equation*}
c(\nu):=
\begin{cases}
\dfrac{1}{2} & \text{if $\ell(\phi_{\ell}(\nu))=\ell+1$}, \\[3mm]
1 & \text{otherwise}.
\end{cases}
\end{equation*}
For simplicity of notation, we set 
%
%
\begin{equation} \label{eq:DLR}
\DLRm{\rho_{1}}{\rho_{2}}{\rho}{\ell}:=
\sum_{
 \begin{subarray}{c}
 J \subset \BZ_{\ge 1},\,\#J < \infty, \\[1mm]
 \omega_{1},\,\omega_{2},\,\omega_{3} \in \Par
 \end{subarray}
}
 \LR^{\rho_{1}}_{\omega_{1},\,\omega_{2}} \,
 \LR^{\rho_{2}}_{\omega_{2},\,\omega_{3}} \,
 \LR^{\Jl{\rho}}_{\omega_{3},\,\omega_{1}} \, 
 \Jl{\sgn}(\rho)
\end{equation}
for partitions $\rho_{1},\,\rho_{2},\,\rho \in \Par$ 
whose lengths are less than or equal to $\ell+1$.
Also, if $\col{0} > 0$, 
then we have the following formula by \cite[(1.2.8)]{Ko97}: 
%
%
\begin{equation} \label{eq:KT04}
\begin{split}
& \ch^{(-)} \Vm{\ell}(\lambda) \times \ti{\ch} \Vm{\ell}(\mu) = \\[3mm]
& 
\sum_{
 \begin{subarray}{c}
 \nu \in \Ldom{\ell}, \\[0.5mm]
 L_{\nu}=L_{\lambda}+L_{\mu}, \\[0.5mm]
 \con{0} > 0
 \end{subarray}
} 
\left\{
\sum_{
 \begin{subarray}{c}
 \rho_{2},\,\omega \in \Par, \ 
 \rho_{2} \subset \omega \subset \phi_{\ell}(\mu), \\[1mm]
 \text{$\phi_{\ell}(\mu)/\omega$\,:\,vertical strip}, \\[1mm]
 \text{$\omega/\rho_{2}$\,:\,vertical strip}
 \end{subarray}
}
(-1)^{|\omega/\rho_{2}|} \, 
\CLRm{\phi_{\ell}(\lambda)-(1^{\ell+1})}{\rho_{2}}{\phi_{\ell}(\nu)-(1^{\ell+1})}{\ell}
\right\} \ch^{(-)} \Vm{\ell}(\nu),
\end{split}
\end{equation}
where for a partition 
$\rho=(\co{\rho}{0} \ge \co{\rho}{1} \ge \cdots) \in \Par$ 
of length $\ell+1$, we set
\begin{equation*}
\rho-(1^{\ell+1}):=
(\co{\rho}{0}-1,\,\co{\rho}{1}-1,\,\dots,\,\co{\rho}{\ell}-1,\,0,\,0,\,\dots) \in \Par. 
\end{equation*}

Suppose next that $L_{\lambda} \in 1/2+\BZ$, $\col{0} > 0$, and 
$L_{\mu} \in \BZ$, and hence $\phi_{\ell}(\mu) \in \Par$, 
but $\phi_{\ell}(\lambda) \notin \Par$ since 
$\col{j} \in (1/2)+\BZ_{\ge 0}$ for all $j \in [\ell]$. 
Note that 
$\Lambda_{0} \in \Ldom{\ell}$ and $\Lambda_{1} \in \Ldom{\ell}$ 
by \eqref{eq:dinf-f}, and that if $\xi \in \Ldom{\ell}$ is 
such that $L_{\xi} \in (1/2)+\BZ$ and $\cox{0} > 0$, 
then the level of $\delta:=\xi-\Lambda_{0} \in \Ldom{\ell}$ 
is equal to $L_{\xi}-1/2 \in \BZ$, and $\co{\delta}{0} \ge 0$; 
also, observe that $\phi_{\ell}(\delta)=
(\co{\delta}{\ell},\,\dots,\,
 \co{\delta}{1},\,\abs{\co{\delta}{0}}=\co{\delta}{0},\,0,\,0,\,\dots)$ 
is a partition, and 
\begin{equation*}
\phi_{\ell}(\xi)=
\left(
 \co{\delta}{\ell}+\frac{1}{2},\,\dots,\,
 \co{\delta}{1}+\frac{1}{2},\,
 \co{\delta}{0}+\frac{1}{2},\,0,\,0,\,\dots\right). 
\end{equation*}
We have the following formulas by \cite[(1.2.3) and (1.2.2)]{Ko97}: 
%
%
\begin{equation} \label{eq:KT05}
\begin{split}
& \ch^{(+)} \Vm{\ell}(\lambda) \times 
  \ti{\ch} \Vm{\ell}(\mu)= \\[3mm]
& 
\sum_{
 \begin{subarray}{c}
 \nu \in \Ldom{\ell}, \\[0.5mm]
 L_{\nu}=L_{\lambda}+L_{\mu}, \\[0.5mm]
 \con{0} > 0
 \end{subarray}
} 
\left\{
\sum_{
 \begin{subarray}{c}
 \rho_{2} \in \Par, \ 
 \rho_{2} \subset \phi_{\ell}(\mu), \\[1mm]
 \text{$\phi_{\ell}(\mu)/\rho_{2}$\,:\,vertical strip}
 \end{subarray}
}
(-1)^{|\phi_{\ell}(\lambda-\Lambda_{0})|+|\rho_{2}|+|\phi_{\ell}(\nu-\Lambda_{0})|}
\BLRm{\phi_{\ell}(\lambda-\Lambda_{0})}{\rho_{2}}{\phi_{\ell}(\nu-\Lambda_{0})}{\ell}
\right\}
\ch^{(+)} \Vm{\ell}(\nu);
\end{split}
\end{equation}
%
%
\begin{equation} \label{eq:KT06}
\begin{split}
& \ch^{(-)} \Vm{\ell}(\lambda) \times 
  \ti{\ch} \Vm{\ell}(\mu)= \\[3mm]
& \qquad
\sum_{
 \begin{subarray}{c}
 \nu \in \Ldom{\ell}, \\[0.5mm]
 L_{\nu}=L_{\lambda}+L_{\mu}, \\[0.5mm]
 \con{0} > 0
 \end{subarray}
} 
\left\{
\sum_{
 \begin{subarray}{c}
 \rho_{2} \in \Par, \ 
 \rho_{2} \subset \phi_{\ell}(\mu), \\[1mm]
 \text{$\phi_{\ell}(\mu)/\rho_{2}$\,:\,vertical strip}
 \end{subarray}
}
(-1)^{|\phi_{\ell}(\mu)/\rho_{2}|}
\BLRm{\phi_{\ell}(\lambda-\Lambda_{0})}{\rho_{2}}{\phi_{\ell}(\nu-\Lambda_{0})}{\ell}
\right\}
\ch^{(-)} \Vm{\ell}(\nu).
\end{split}
\end{equation}
%
%
\subsection{Proof of Proposition~\ref{prop:DE-bij}.}
\label{subsec:key}

The following proposition plays an essential role in the proof of 
Proposition~\ref{prop:DE-bij}.
%
%
\begin{prop} \label{prop:stab}
Let $L \in \BZ_{\ge 0}$, and $n \in \BZ_{\ge 3}$. 
Let $\rho_{1},\,\rho_{2},\,\rho \in \Par$ 
be partitions satisfying the following conditions\,{\rm:}

{\rm (i)} The lengths of these partitions are all 
less than or equal to $n+1$. 

{\rm (ii)} The first part of $\rho_{1}$ is equal to $L$. 

{\rm (iii)} There hold the inequalities 
\begin{align*}
& y_{1}:=
 \#\bigl\{1 \le j \le n+1 \mid 
 \text{\rm the $j$-th part of $\rho_{1}$ is equal to $L$}
 \bigr\} > |\rho_{2}|, \\
& y:=
 \#\bigl\{1 \le j \le n+1 \mid 
 \text{\rm the $j$-th part of $\rho$ is equal to $L$}
 \bigr\} > |\rho_{2}|.
\end{align*}
If we set $\kappa_{1}:=\iota_{L}(\rho_{1})$ and 
$\kappa:=\iota_{L}(\rho)$, then we have 
\begin{equation*}
\XLRm{\rho_{1}}{\rho_{2}}{\rho}{n}=
\XLRm{\kappa_{1}}{\rho_{2}}{\kappa}{n+1}
\qquad \text{\rm for $X=B,\,C,\,D$}.
\end{equation*}
\end{prop}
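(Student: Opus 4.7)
The plan is to establish the identity by constructing an explicit bijection between the tuples $(J,\omega_1,\omega_2,\omega_3)$ indexing the sum defining $\XLRm{\rho_1}{\rho_2}{\rho}{n}$ and the tuples $(J',\omega_1',\omega_2,\omega_3')$ indexing the sum defining $\XLRm{\kappa_1}{\rho_2}{\kappa}{n+1}$, in such a way that the three Littlewood-Richardson factors and the sign are preserved term by term. The starting observation is that the nonvanishing of $\LR^{\rho_2}_{\omega_2,\omega_3}$ forces $|\omega_2|,\,|\omega_3|\le |\rho_2|$, while nonvanishing of $\LR^{\rho_1}_{\omega_1,\omega_2}$ and $\LR^{\Jl{\rho}}_{\omega_3,\omega_1}$ forces $|\rho_1/\omega_1|=|\omega_2|$ and $|\Jl{\rho}/\omega_3|=|\omega_1|$. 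Combined with the hypothesis $y_1,y>|\rho_2|$, this implies that $\omega_1$ must already contain at least one of the ``extra'' top rows of length $L$ in $\rho_1$, and analogously for $\omega_3$ inside $\Jl{\rho}$, so the insertions $\iota_L$ will act only on pieces that are ``frozen'' under LR filling.

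First I would handle the easy factor $\LR^{\rho_1}_{\omega_1,\omega_2}$. Setting $\omega_1':=\iota_L(\omega_1)$ (inserting the row of length $L$ in the unique position that makes $\omega_1'\subset\kappa_1=\iota_L(\rho_1)$), the $|\rho_2|<y_1$ inequality guarantees that the skew shape $\kappa_1/\omega_1'$ agrees with $\rho_1/\omega_1$; hence $\LR^{\rho_1}_{\omega_1,\omega_2}=\LR^{\kappa_1}_{\omega_1',\omega_2}$, and $\omega_1\mapsto\omega_1'$ is a bijection between the relevant $\omega_1$'s. Next I would analyze how $\iota_L$ interacts with the operation $\rho\mapsto\Jl{\rho}$. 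Using the encoding \eqref{eq:bb1} and the fact that passing from $\rho$ to $\kappa=\iota_L(\rho)$ increases the first $L$ entries of ${}^t\rho$ by $1$ (and also that $R_{n+1}=R_n+2$ for all three types), I would show that there is a canonical bijection $J\mapsto J'$ (essentially shifting the indices in $J$ past the newly inserted row) such that $\kappa^{J',n+1}=\iota_{L''}(\Jl{\rho})$ for the appropriate $L''$ determined by $L$ and the type. Setting $\omega_3':=\iota_{L''}(\omega_3)$ then gives $\LR^{\Jl{\rho}}_{\omega_3,\omega_1}=\LR^{\kappa^{J',n+1}}_{\omega_3',\omega_1'}$ by the same ``frozen rows'' reasoning as before, again using $|\omega_3|\le|\rho_2|<y$.

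Finally, I would check that the signs match, i.e., $\Jl{\sgn}(\rho)=\sgn^{J',n+1}(\kappa)$ under the correspondence $J\leftrightarrow J'$. This reduces to comparing the sorting permutations $\tau$ and $\tau'$ that order the sequences $\Jl{\bb({}^t\rho)}$ and $\bb^{J',n+1}({}^t\kappa)$, respectively, together with the extra factor $(-1)^{\#J}$ in type $C_\infty$; I expect that the bijection $J\leftrightarrow J'$ preserves $\#J$ and extends $\tau$ by a single transposition-free insertion of the new entry, so that $\sgn(\tau)=\sgn(\tau')$. Assembling these three matchings yields the term-wise equality of summands and hence the proposition.

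The main obstacle will be the second step: tracking the combinatorics of the $\bb$-sequences under $\iota_L$ and the shift $R_n\rightsquigarrow R_{n+1}=R_n+2$, and identifying the right $J\leftrightarrow J'$ that makes $\kappa^{J',n+1}$ literally a row-insertion of $\Jl{\rho}$. A uniform argument for all three types $B_\infty$, $C_\infty$, $D_\infty$ requires careful bookkeeping of which indices in $\bb({}^t\rho)$ are shifted by the insertion and of the parity contributions to the sign.
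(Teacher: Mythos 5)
Your overall strategy---a term-by-term matching of the summands, with the ``identical skew shape'' argument for the Littlewood--Richardson factors and a comparison of the $\bb$-sequences for the sign---is the same as the paper's. But two of your key steps are wrong as stated, and the second one conceals the actual crux of the argument.

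First, the modification $\omega_{3}\mapsto\omega_{3}':=\iota_{L''}(\omega_{3})$ cannot be right for any $L''>0$. The middle factor in the target sum is $\LR^{\rho_{2}}_{\omega_{2},\,\omega_{3}'}$, and nonvanishing of $\LR^{\rho_{2}}_{\omega_{2},\,\omega_{3}}$ forces $|\rho_{2}|=|\omega_{2}|+|\omega_{3}|$, so $\LR^{\rho_{2}}_{\omega_{2},\,\omega_{3}'}=0$ once $|\omega_{3}'|>|\omega_{3}|$; the size bookkeeping on the third factor fails as well, since $|\Jo{\kappa}|=|\Jn{\rho}|+L=|\omega_{3}|+|\omega_{1}|+L=|\omega_{3}|+|\iota_{L}(\omega_{1})|$ already balances with $\omega_{3}$ \emph{unchanged}. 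In the frozen-rows argument for $\LR^{\Jn{\rho}}_{\omega_{3},\,\omega_{1}}$ the partition $\omega_{3}$ plays the role of the content of the skew tableaux on $\Jn{\rho}/\omega_{1}$, so the only thing to insert a row into is $\omega_{1}$ (giving $\iota_{L}(\omega_{1})$) and the outer shape $\Jn{\rho}$; $\omega_{3}$ must be left alone.

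Second, your ``canonical bijection $J\mapsto J'$ shifting indices past the inserted row'' is not needed and, for general $J$, no such relabeling can produce $\Jo{\kappa}=\iota_{L}(\Jn{\rho})$. The missing ingredient is a vanishing statement: under hypothesis (iii), $\LR^{\Jn{\rho}}_{\omega_{3},\,\omega_{1}}\ne 0$ forces $J\subset[1,\,L]$ (if some $j\in J$ exceeds $L$, the $(L+1)$-st column of $\Jn{\rho}$ becomes too long and $|\Jn{\rho}/\omega_{1}|>|\omega_{3}|$, killing the coefficient), and likewise on the $(n+1)$-side. Once the sum is restricted to $J\subset[1,\,L]$, the identity ${}^{t}\kappa={}^{t}\rho+(1^{L})$ together with $R_{n+1}=R_{n}+2$ gives $\Jo{c_{j}}=\Jn{b_{j}}+1$ for all $j\le L$ (whether or not $j\in J$) and $\Jo{c_{j}}=\Jn{b_{j}}$ for $j>L$, so the \emph{same} set $J$ works on both sides, the sorting permutations coincide, $\#J$ is unchanged, and $\Jo{\kappa}=\iota_{L}(\Jn{\rho})$ with insertion length exactly $L$ (not a type-dependent $L''$). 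Without the restriction $J\subset[1,\,L]$ this computation breaks for $j\in J$ with $j>L$, so the vanishing lemma is not a convenience but the step that makes the whole matching possible. Your first-paragraph observation that $\omega_{1}$ must contain one of the rows of length $L$ is correct and is indeed what justifies the identity of skew shapes in both \eqref{eq:LR1} and \eqref{eq:LR2}; with $\omega_{3}$ left fixed and the $J\subset[1,\,L]$ restriction added, your outline becomes the paper's proof.
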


The proof of this proposition 
will be given in \S\ref{subsec:prf-key}. 

\begin{proof}[Proof of Proposition~\ref{prop:DE-bij}]
We fix $n \in \BZ_{\ge 0}$ such that 
$n > p+(N+1)q$, and $\nu \in \Pn$ 
as in Proposition~\ref{prop:DE-bij}. 
Recall from Remark~\ref{rem:LS-n} that 
$\BBn(\lambda)$ (resp., $\BBn(\mu)=\BBn(\mun)$) 
is isomorphic, as a $U_{q}(\Fgn)$-crystal, to 
the crystal basis of the finite-dimensional irreducible 
$U_{q}(\Fgn)$-module of highest weight $\lambda$ (resp., $\mun$), 
and similar statements hold for 
$\BBo(\lambda)$ and $\BBo(\mu)=\BBo(\muo)$.
Therefore, the sets
$\BBhiw{n}{\nu} \subset \BBn(\lambda) \otimes \BBn(\mu)$ and 
$\BBhiw{n+1}{\nuo} \subset \BBo(\lambda) \otimes \BBo(\mu)$ 
are both finite sets. 
Because it is already shown that the map 
$S_{x}:\BBhiw{n+1}{\nuo} \rightarrow \BBhiw{n}{\nu}$ is injective, 
it suffices to show that $\#\BBhiw{n+1}{\nuo}=\#\BBhiw{n}{\nu}$.
Also, recall from Remark~\ref{rem:DE} that 
the number $\#\BBhiw{n}{\nu}$ (resp., $\#\BBhiw{n+1}{\nuo}$) 
is equal to the multiplicity of 
$\Vn(\nu)$ (resp., $\Vo(\nuo)$) 
in the tensor product $\Vn(\lambda) \otimes \Vn(\mun)$ 
(resp., $\Vo(\lambda) \otimes \Vo(\muo)$), and hence to 
the coefficient of $\ch \Vn(\nu)$ (resp., $\ch \Vo(\nuo)$) 
in the product $\ch \Vn(\lambda) \times \ch \Vn(\mun)$ 
(resp., $\ch \Vo(\lambda) \times \ch \Vo(\muo)$). 
Below we will use an explicit description of the numbers 
$\#\BBhiw{n}{\nu}$ and $\#\BBhiw{n+1}{\nuo}$ in terms of 
Littlewood-Richardson coefficients obtained from 
the formulas in \S\ref{subsec:KT}. 

Since $n > p+(N+1)q > q$ and $\Supp(\mu) \subset [q-1]$, 
it follows from Lemma~\ref{lem:lamn} 
that $\Supp(\mum{\ell}) \subset [\ell]$ and $\co{\mum{\ell}}{0}=0$ 
for $\ell=n,\,n+1$, and hence that 
$\mum{\ell} \in \Ldom{\ell}$ for $\ell=n,\,n+1$. 
In addition, it is easily seen that 
$\phi_{n}(\mun)=\phi_{n+1}(\muo)=\mu_{\dagger}$, 
and that $|\mu|=\sum_{j \in \BZ_{\ge 0}}|\com{j}|$ 
is equal to the sum $|\mu_{\dagger}|$ 
of all parts of $\mu_{\dagger}$; note that 
\begin{equation*}
n-p+1 \ge (N+1)q+1 > |\mu|=|\mu_{\dagger}|.
\end{equation*}
Also, we infer from Remark~\ref{rem:dom} and 
the choice of $n$ that 
%
%
\begin{equation} \label{eq:coll}
\cdots=\col{n+1}=\col{n}=\cdots=\col{p}=L_{\lambda} > 
\col{p-1} \ge \cdots \ge \col{1} \ge \abs{\col{0}} \ge 0,
\end{equation}
and hence that $\lambda$ is contained in both of the sets 
$\Ldom{n}$ and $\Ldom{n+1}$. 
Since $\nu \in \Pn$ ($\subset \Ldom{n}$), 
we see from \eqref{eq:nun} that 
%
%
\begin{equation} \label{eq:con1}
\begin{cases}
0 \le \abs{\co{\nu}{0}} \le 
      \co{\nu}{1} \le \cdots \le \co{\nu}{u_{0}-1} < L_{\lambda}, \\[3mm]
\co{\nu}{u_{0}}= \co{\nu}{u_{0}+1}= \cdots = 
\co{\nu}{u_{1}-1}=L_{\lambda}, \\[3mm]
L_{\lambda} < \co{\nu}{u_{1}} \le 
\co{\nu}{u_{1}+1} \le \cdots \le \co{\nu}{n}, \\[3mm]
\co{\nu}{j}=L_{\lambda} \quad \text{for all $j \ge n+1$}.
\end{cases}
\end{equation}
for some $0 \le u_{0} < u_{1} \le n+1$, with 
\begin{equation*}
u_{1}-u_{0} > n-p-Nq \ge |\mu|=|\mu_{\dagger}|.
\end{equation*}
Furthermore, we deduce from Lemma~\ref{lem:Pn} that 
$\nuo \in \Po$ ($\subset \Ldom{n+1}$) is given by: 
%
%
\begin{equation} \label{eq:con2}
\begin{cases}
\co{\nuo}{j}=\co{\nu}{j} \quad 
\text{\rm for $0 \le j \le u_{0}-1$}, \\[3mm]
\co{\nuo}{j}=\co{\nu}{j}=L_{\lambda} \quad 
\text{\rm for $u_{0} \le j \le u_{1}-1$}, \\[3mm]
\co{\nuo}{u_{1}}=L_{\lambda}, \\[3mm]
\co{\nuo}{j}=\co{\nu}{j-1} \quad 
\text{\rm for $u_{1}+1 \le j \le n+1$}, \\[3mm]
\co{\nuo}{j}=\co{\nu}{j}=L_{\lambda} \quad 
\text{\rm for $j \ge n+2$}.
\end{cases}
\end{equation}

\paragraph{Case of type $C_{\infty}$.}
%
Since $L_{\lambda} \in \BZ_{\ge 0}$, it follows that 
$\phi_{\ell}(\lambda)$, $\ell=n,\,n+1$, and 
$\phi_{n}(\nu)$, $\phi_{n+1}(\nuo)$ are all partitions. 
We infer from \eqref{eq:KT01} that 
%
%
\begin{equation} \label{eq:KT-C1}
\#\BBhiw{n}{\nu}=
\CLRm{\phi_{n}(\lambda)}{\mu_{\dagger}}{\phi_{n}(\nu)}{n}, \qquad
\#\BBhiw{n+1}{\nuo}=
\CLRm{\phi_{n+1}(\lambda)}{\mu_{\dagger}}{\phi_{n+1}(\nuo)}{n+1}.
\end{equation}
Here, we see from \eqref{eq:coll} that 
the first part of $\phi_{n}(\lambda)$ 
is equal to $L:=L_{\lambda}$, and 
\begin{align*}
& \#\bigl\{
  1 \le j \le n+1 \mid 
  \text{the $j$-th part of $\phi_{n}(\lambda)$ is equal to $L$}
  \bigr\}=n-p+1 > |\mu_{\dagger}|, \\
& \#\bigl\{
  1 \le j \le n+1 \mid 
  \text{the $j$-th part of $\phi_{n}(\nu)$ is equal to $L$}
  \bigr\}=u_{1}-u_{0} > |\mu_{\dagger}|.
\end{align*}
Also, we deduce from \eqref{eq:coll}, and \eqref{eq:con1}, \eqref{eq:con2} 
that $\phi_{n+1}(\lambda)=\iota_{L}(\phi_{n}(\lambda))$ and 
$\phi_{n+1}(\nuo)=\iota_{L}(\phi_{n}(\nu))$. 
Therefore, by Proposition~\ref{prop:stab}, we conclude that 
\begin{equation*}
\#\BBhiw{n}{\nu}=
\CLRm{\phi_{n}(\lambda)}{\mu_{\dagger}}{\phi_{n}(\nu)}{n}=
\CLRm{\phi_{n+1}(\lambda)}{\mu_{\dagger}}{\phi_{n+1}(\nuo)}{n+1}=
\#\BBhiw{n+1}{\nuo},
\end{equation*}
as desired. 

\paragraph{Case of type $B_{\infty}$.}
%
If $L_{\lambda} \in \BZ_{\ge 0}$, then we can show 
by the same reasoning as in the case of type $C_{\infty}$, 
this time using \eqref{eq:KT02a}, that 
\begin{equation*}
\#\BBhiw{n}{\nu}=
\BLRm{\phi_{n}(\lambda)}{\mu_{\dagger}}{\phi_{n}(\nu)}{n}=
\BLRm{\phi_{n+1}(\lambda)}{\mu_{\dagger}}{\phi_{n+1}(\nuo)}{n+1}=
\#\BBhiw{n+1}{\nuo}.
\end{equation*}
Therefore, it remains to consider the case in which 
the level $L_{\lambda}$ of $\lambda$ is 
contained in $1/2+\BZ_{\ge 0}$. In this case, 
we infer from \eqref{eq:KT02b} that 
%
%
\begin{equation} \label{eq:KT-B2}
\begin{split}
& 
\#\BBhiw{n}{\nu}=
\sum_{
 \begin{subarray}{c}
 \rho_{2} \in \Par, \ 
 \rho_{2} \subset \mu_{\dagger}, \\[1mm]
 \text{$\mu_{\dagger}/\rho_{2}$\,:\,vertical strip}
 \end{subarray}
}
\CLRm{\phi_{n}(\lambda-\Lambda_{0})}
     {\rho_{2}}{\phi_{n}(\nu-\Lambda_{0})}{n}, \\[3mm]
& 
\#\BBhiw{n+1}{\nuo}=
\sum_{
 \begin{subarray}{c}
 \rho_{2} \in \Par, \ 
 \rho_{2} \subset \mu_{\dagger}, \\[1mm]
 \text{$\mu_{\dagger}/\rho_{2}$\,:\,vertical strip}
 \end{subarray}
}
\CLRm{\phi_{n+1}(\lambda-\Lambda_{0})}
     {\rho_{2}}{\phi_{n+1}(\nuo-\Lambda_{0})}{n+1}.
\end{split}
\end{equation}
Here, observe that the first part of 
$\phi_{n}(\lambda-\Lambda_{0})$ 
is equal to $L:=L_{\lambda}-1/2 \in \BZ_{\ge 0}$, and
\begin{align*}
& \#\bigl\{
  1 \le j \le n+1 \mid 
  \text{the $j$-th part of $\phi_{n}(\lambda-\Lambda_{0})$ is equal to $L$}
  \bigr\} > |\mu_{\dagger}| \ge |\rho_{2}|, \\
& \#\bigl\{
  1 \le j \le n+1 \mid 
  \text{the $j$-th part of $\phi_{n}(\nu-\Lambda_{0})$ is equal to $L$}
  \bigr\} > |\mu_{\dagger}| \ge |\rho_{2}|
\end{align*}
for all $\rho_{2} \in \Par$ such that 
$\rho_{2} \subset \mu_{\dagger}$. 
Also, we deduce from \eqref{eq:coll}, and 
\eqref{eq:con1}, \eqref{eq:con2} that 
$\phi_{n+1}(\lambda-\Lambda_{0})=
 \iota_{L}(\phi_{n}(\lambda-\Lambda_{0}))$ and 
$\phi_{n+1}(\nuo-\Lambda_{0})=
 \iota_{L}(\phi_{n}(\nu-\Lambda_{0}))$. 
Therefore, by Proposition~\ref{prop:stab}, 
we conclude that 
\begin{equation*}
\CLRm{\phi_{n}(\lambda-\Lambda_{0})}
     {\rho_{2}}{\phi_{n}(\nu-\Lambda_{0})}{n}=
\CLRm{\phi_{n+1}(\lambda-\Lambda_{0})}
     {\rho_{2}}{\phi_{n+1}(\nuo-\Lambda_{0})}{n+1}
\end{equation*}
for all $\rho_{2} \in \Par$ such that 
$\rho_{2} \subset \mu_{\dagger}$, and hence that 
\begin{align*}
\#\BBhiw{n}{\nu} & =
\sum_{
 \begin{subarray}{c}
 \rho_{2} \in \Par, \ 
 \rho_{2} \subset \mu_{\dagger}, \\[1mm]
 \text{$\mu_{\dagger}/\rho_{2}$\,:\,vertical strip}
 \end{subarray}
}
\CLRm{\phi_{n}(\lambda-\Lambda_{0})}
     {\rho_{2}}{\phi_{n}(\nu-\Lambda_{0})}{n} \\[3mm]
& = 
\sum_{
 \begin{subarray}{c}
 \rho_{2} \in \Par, \ 
 \rho_{2} \subset \mu_{\dagger}, \\[1mm]
 \text{$\mu_{\dagger}/\rho_{2}$\,:\,vertical strip}
 \end{subarray}
}
\CLRm{\phi_{n+1}(\lambda-\Lambda_{0})}
     {\rho_{2}}{\phi_{n+1}(\nuo-\Lambda_{0})}{n+1}
= \#\BBhiw{n+1}{\nuo}, 
\end{align*}
as desired. 

\paragraph{Case of type $D_{\infty}$.}
%
Note that 
\begin{equation*}
\ti{\ch} \Vm{\ell}(\mum{\ell})=\ch \Vm{\ell}(\mum{\ell})
\quad \text{for $\ell=n,\,n+1$}
\end{equation*}
since $\co{\mum{\ell}}{0}=0$, 
and also that $\co{\nuo}{0}=\con{0}$ 
since $u_{1} > 0$ in \eqref{eq:con1} and \eqref{eq:con2}.

Suppose first that $L_{\lambda} \in \BZ_{\ge 0}$ and $\col{0}=0$; we have 
$\ti{\ch} \Vm{\ell}(\lambda)=\ch \Vm{\ell}(\lambda)$ 
for $\ell=n,\,n+1$ by definition. 
In this case, we infer from \eqref{eq:KT03} that 
%
%
\begin{equation} \label{eq:KT-D1}
\#\BBhiw{n}{\nu}=c(\nu) 
 \DLRm{\phi_{n}(\lambda)}{\mu_{\dagger}}{\phi_{n}(\nu)}{n} 
\qquad \text{and} \qquad
\#\BBhiw{n+1}{\nuo}=c(\nuo)
 \DLRm{\phi_{n+1}(\lambda)}{\mu_{\dagger}}{\phi_{n+1}(\nuo)}{n+1}.
\end{equation}
We can show by the same reasoning as 
in the case of type $C_{\infty}$ that 
\begin{equation*}
\DLRm{\phi_{n}(\lambda)}
     {\mu_{\dagger}}{\phi_{n}(\nu)}{n}=
\DLRm{\phi_{n+1}(\lambda)}
     {\mu_{\dagger}}{\phi_{n+1}(\nuo)}{n+1}.
\end{equation*}
Also, since $\co{\nuo}{0}=\con{0}$, 
it is obvious from the definitions that $c(\nuo)=c(\nu)$. 
Combining these facts with \eqref{eq:KT-D1}, we find that 
\begin{equation*}
\#\BBhiw{n}{\nun}=
\#\BBhiw{n+1}{\nuo}.
\end{equation*}

Suppose next that $L_{\lambda} \in \BZ_{\ge 0}$ and $\col{0} \ne 0$; we have 
$\ti{\ch} \Vm{\ell}(\lambda)=\ch^{(+)} \Vm{\ell}(\lambda)$ 
for $\ell=n,\,n+1$ by definition. 
If $\co{\nuo}{0}=\con{0}=0$, then we infer 
from \eqref{eq:KT03} and \eqref{eq:KT04} that 
%
%
\begin{equation} \label{eq:KT-D2}
\#\BBhiw{n}{\nu}=\frac{1}{2}
 \DLRm{\phi_{n}(\lambda)}{\mu_{\dagger}}{\phi_{n}(\nu)}{n} 
\qquad \text{and} \qquad
\#\BBhiw{n+1}{\nuo}=\frac{1}{2}
 \DLRm{\phi_{n+1}(\lambda)}{\mu_{\dagger}}{\phi_{n+1}(\nuo)}{n+1}. 
\end{equation}
Now, in exactly the same way as above, we can show that
\begin{equation*}
\DLRm{\phi_{n}(\lambda)}
     {\mu_{\dagger}}{\phi_{n}(\nu)}{n}=
\DLRm{\phi_{n+1}(\lambda)}
     {\mu_{\dagger}}{\phi_{n+1}(\nuo)}{n+1},
\quad \text{and hence} \quad
\#\BBhiw{n}{\nun}=
\#\BBhiw{n+1}{\nuo}. 
\end{equation*}
If $\co{\nuo}{0}=\co{\nu}{0} \ne 0$, then 
we infer from \eqref{eq:KT03} and \eqref{eq:KT04} that 
%
%
\begin{equation} \label{eq:KT-D3}
\#\BBhiw{n}{\nun}=\frac{1}{2}
\left(\frac{1}{2}\DLRm{\phi_{n}(\lambda)}{\mu_{\dagger}}{\phi_{n}(\nu)}{n}  + 
(-1)^{c}
\sum_{
 \begin{subarray}{c}
 \rho_{2},\,\omega \in \Par, \ 
 \rho_{2} \subset \omega \subset \mu_{\dagger}, \\[1mm]
 \text{$\mu_{\dagger}/\omega$\,:\,vertical strip}, \\[1mm]
 \text{$\omega/\rho_{2}$\,:\,vertical strip}
 \end{subarray}
}
(-1)^{|\omega/\rho_{2}|}
\CLRm{\rho_{1}}{\rho_{2}}{\rho}{n}
\right)
\end{equation}
and
%
%
\begin{equation} \label{eq:KT-D4}
\#\BBhiw{n+1}{\nuo}=\frac{1}{2}
\left(\frac{1}{2}
 \DLRm{\phi_{n+1}(\lambda)}{\mu_{\dagger}}{\phi_{n+1}(\nuo)}{n+1}+
(-1)^{c}
\sum_{
 \begin{subarray}{c}
 \rho_{2},\,\omega \in \Par, \ 
 \rho_{2} \subset \omega \subset \mu_{\dagger}, \\[1mm]
 \text{$\mu_{\dagger}/\omega$\,:\,vertical strip}, \\[1mm]
 \text{$\omega/\rho_{2}$\,:\,vertical strip}
 \end{subarray}
}
(-1)^{|\omega/\rho_{2}|}
\CLRm{\kappa_{1}}{\rho_{2}}{\kappa}{n+1}
\right),
\end{equation}
where we set
\begin{equation*}
c:=
 \begin{cases}
 0 & \text{if $\col{0}\co{\nuo}{0}=\col{0}\co{\nu}{0} > 0$}, \\[1.5mm]
 1 & \text{otherwise},
 \end{cases}
\end{equation*}
and
\begin{align*}
& \rho_{1}:=\phi_{n}(\lambda)-(1^{n+1}), & 
& \rho:=\phi_{n}(\nu)-(1^{n+1}), \\
& \kappa_{1}:=\phi_{n+1}(\lambda)-(1^{n+2}), & 
& \kappa:=\phi_{n+1}(\nuo)-(1^{n+2}).
\end{align*}
In exactly the same way as above, we can show that
\begin{equation*}
\DLRm{\phi_{n}(\lambda)}
     {\mu_{\dagger}}{\phi_{n}(\nu)}{n}=
\DLRm{\phi_{n+1}(\lambda)}
     {\mu_{\dagger}}{\phi_{n+1}(\nuo)}{n+1}.
\end{equation*}
Here, observe that 
the first part of $\rho_{1}$ is equal to 
$L-1=L_{\lambda}-1$, and 
\begin{align*}
& \#\bigl\{
  1 \le j \le n+1 \mid 
  \text{the $j$-th part of $\rho_{1}$ is equal to $L-1$}
  \bigr\} > |\mu_{\dagger}| > |\rho_{2}|, \\
& \#\bigl\{
  1 \le j \le n+1 \mid 
  \text{the $j$-th part of $\rho$ is equal to $L-1$}
  \bigr\} > |\mu_{\dagger}| > |\rho_{2}|
\end{align*}
for all $\rho_{2} \in \Par$ such that $\rho_{2} 
\subset \mu_{\dagger}$. 
Also, we deduce from \eqref{eq:coll}, and \eqref{eq:con1}, 
\eqref{eq:con2} that 
$\kappa_{1}=\iota_{L-1}(\rho_{1})$ and 
$\kappa=\iota_{L-1}(\rho)$. 
Therefore, by Proposition~\ref{prop:stab}, 
we conclude that 
\begin{equation*}
\CLRm{\rho_{1}}
     {\rho_{2}}{\rho}{n}=
\CLRm{\kappa_{1}}
     {\rho_{2}}{\kappa}{n+1}
\end{equation*}
for all $\rho_{2} \in \Par$ such that $\rho_{2} 
\subset \mu_{\dagger}$. Combining these facts with
\eqref{eq:KT-D3}, \eqref{eq:KT-D4}, we find that 
\begin{equation*}
\#\BBhiw{n}{\nun}=
\#\BBhiw{n+1}{\nuo}.
\end{equation*}

Finally, we consider the case in which 
the level $L_{\lambda}$ of $\lambda$ is contained in $1/2+\BZ_{\ge 0}$. 
In this case, we infer from \eqref{eq:KT05} and \eqref{eq:KT06} that 
%
%
\begin{equation} \label{eq:KT-D5}
\#\BBhiw{n}{\nu}=\frac{1}{2}
\left(
\sum_{
 \begin{subarray}{c}
 \rho_{2} \in \Par, \ 
 \rho_{2} \subset \mu_{\dagger}, \\[1mm]
 \text{$\mu_{\dagger}/\rho_{2}$\,:\,vertical strip}
 \end{subarray}
}
\bigl\{
(-1)^{|\mu_{\dagger}/\rho_{2}|}+
(-1)^{|\rho_{2}|+|\rho_{1}|+|\rho|+c}
\bigr\}
\BLRm{\rho_{1}}{\rho_{2}}{\rho}{n}
\right)
\end{equation}
and
%
%
\begin{equation} \label{eq:KT-D6}
\#\BBhiw{n+1}{\nuo}=\frac{1}{2}
\left(
\sum_{
 \begin{subarray}{c}
 \rho_{2} \in \Par, \ 
 \rho_{2} \subset \mu_{\dagger}, \\[1mm]
 \text{$\mu_{\dagger}/\rho_{2}$\,:\,vertical strip}
 \end{subarray}
}
\bigl\{
(-1)^{|\mu_{\dagger}/\rho_{2}|}+ 
(-1)^{|\rho_{2}|+|\kappa_{1}|+|\kappa|+c}
\bigr\}
\BLRm{\kappa_{1}}{\rho_{2}}{\kappa}{n+1}
\right), 
\end{equation}
where we set
\begin{equation*}
c:=
 \begin{cases}
 0 & \text{if $\col{0}\co{\nuo}{0}=\col{0}\co{\nu}{0} > 0$}, \\[1.5mm]
 1 & \text{otherwise},
 \end{cases}
\end{equation*}
and
\begin{align*}
& \rho_{1}:=\phi_{n}(\lambda-\Lambda_{0}), & 
& \rho:=\phi_{n}(\nu-\Lambda_{0}), \\
& \kappa_{1}:=\phi_{n+1}(\lambda-\Lambda_{0}), & 
& \kappa:=\phi_{n+1}(\nuo-\Lambda_{0}).
\end{align*}
Here, observe that the first part of $\rho_{1}$ 
is equal to $L:=L_{\lambda}-1/2$, and
\begin{align*}
& \#\bigl\{
  1 \le j \le n+1 \mid 
  \text{the $j$-th part of $\rho_{1}$ is equal to $L$}
  \bigr\} > |\mu_{\dagger}| \ge |\rho_{2}|, \\
& \#\bigl\{
  1 \le j \le n+1 \mid 
  \text{the $j$-th part of $\rho$ is equal to $L$}
  \bigr\} > |\mu_{\dagger}| \ge |\rho_{2}|
\end{align*}
for all $\rho_{2} \in \Par$ such that 
$\rho_{2} \subset \mu_{\dagger}$.
Also, we deduce from \eqref{eq:coll}, and 
\eqref{eq:con1}, \eqref{eq:con2} that 
$\kappa_{1}=\iota_{L}(\rho_{1})$ and 
$\kappa=\iota_{L}(\rho)$. 
Therefore, by Proposition~\ref{prop:stab}, 
we conclude that 
\begin{equation*}
\BLRm{\rho_{1}}{\rho_{2}}{\rho}{n}=
\BLRm{\kappa_{1}}{\rho_{2}}{\kappa}{n+1}
\end{equation*}
for all $\rho_{2} \in \Par$ such that 
$\rho_{2} \subset \mu_{\dagger}$. In addition, since 
$|\kappa_{1}|+|\kappa|=|\rho_{1}|+|\rho|+2(L_{\lambda}-1/2)$, 
we have $(-1)^{|\rho_{1}|+|\rho|}=(-1)^{|\kappa_{1}|+|\kappa|}$. 
Combining these facts with \eqref{eq:KT-D5}, \eqref{eq:KT-D6}, 
we find that 
\begin{equation*}
\#\BBhiw{n}{\nu}=
\#\BBhiw{n+1}{\nuo}. 
\end{equation*}
This completes the proof of Proposition~\ref{prop:DE-bij}.
\end{proof}

%
\subsection{Proof of Proposition~\ref{prop:stab}.}
\label{subsec:prf-key}

If $L=0$, then $\rho_{1}$ is
the empty partition $\emptyset$ by assumption (ii), 
and hence so is $\kappa_{1}=\rho_{L}(\rho_{1})=\rho_{1}$. 
In this case, we deduce from the definitions that 
\begin{equation*}
\XLRm{\emptyset}{\rho_{2}}{\rho}{n}=
 \begin{cases}
 1 & \text{if $\rho=\rho_{2}$}, \\[1.5mm]
 0 & \text{otherwise},
 \end{cases}
\qquad
\XLRm{\emptyset}{\rho_{2}}{\kappa}{n+1}=
 \begin{cases}
 1 & \text{if $\kappa=\rho_{2}$}, \\[1.5mm]
 0 & \text{otherwise}. 
 \end{cases}
\end{equation*}
Since $\kappa=\iota_{L}(\rho)=\rho$, we obtain 
$\XLRm{\rho_{1}}{\rho_{2}}{\rho}{n}=
 \XLRm{\emptyset}{\rho_{2}}{\rho}{n}=
 \XLRm{\emptyset}{\rho_{2}}{\kappa}{n+1}=
 \XLRm{\kappa_{1}}{\rho_{2}}{\kappa}{n}$, 
as desired. 

Assume, therefore, that $L > 0$. We set 
\begin{align*}
& \CQ(\rho_{2}):=
 \bigl\{(\omega_{2},\,\omega_{3}) \in \Par \times \Par \mid 
 \LR^{\rho_{2}}_{\omega_{2},\,\omega_{3}} \ne 0\bigr\}, \\
& 
\CR(\rho_{1},\,\omega_{2}):=
 \bigl\{\omega_{1} \in \Par \mid \omega_{1} \subset \rho_{1}, \ 
 |\rho_{1}|=|\omega_{1}|+|\omega_{2}|
 \bigr\} \quad \text{for $\omega_{2} \in \Par$}, \\
& 
\CR(\kappa_{1},\,\omega_{2}):=
 \bigl\{\omega_{1} \in \Par \mid \omega_{1} \subset \kappa_{1}, \ 
 |\kappa_{1}|=|\omega_{1}|+|\omega_{2}|
 \bigr\} \quad \text{for $\omega_{2} \in \Par$}.
\end{align*}
Then, from the definitions, we have 
%
%
\begin{equation} \label{eq:step1a}
\XLRm{\rho_{1}}{\rho_{2}}{\rho}{n}=
\sum_{
 \begin{subarray}{c}
 J \subset \BZ_{\ge 1},\,\#J < \infty, \\[1mm]
 (\omega_{2},\,\omega_{3}) \in \CQ(\rho_{2}), \\[1mm]
 \omega_{1} \in \CR(\rho_{1},\,\omega_{2})
 \end{subarray}}
 \LR^{\rho_{1}}_{\omega_{1},\,\omega_{2}} \,
 \LR^{\rho_{2}}_{\omega_{2},\,\omega_{3}} \,
 \LR^{\Jn{\rho}}_{\omega_{3},\,\omega_{1}} \, 
 \Jn{\sgn}(\rho), 
\end{equation}
%
%
\begin{equation} \label{eq:step1b}
\XLRm{\kappa_{1}}{\rho_{2}}{\kappa}{n+1}=
\sum_{
 \begin{subarray}{c}
 J \subset \BZ_{\ge 1},\,\#J < \infty, \\[1mm]
 (\omega_{2},\,\omega_{3}) \in \CQ(\rho_{2}), \\[1mm]
 \omega_{1} \in \CR(\kappa_{1},\,\omega_{2})
 \end{subarray}}
 \LR^{\kappa_{1}}_{\omega_{1},\,\omega_{2}} \,
 \LR^{\rho_{2}}_{\omega_{2},\,\omega_{3}} \,
 \LR^{\Jo{\kappa}}_{\omega_{3},\,\omega_{1}} \, 
 \Jo{\sgn}(\kappa).
\end{equation}
%
%
\begin{claim} \label{c:J}
{\rm (1)} Fix $(\omega_{2},\,\omega_{3}) \in \CQ(\rho_{2})$ and 
$\omega_{1} \in \CR(\rho_{1},\,\omega_{2})$. 
If $\LR^{\Jn{\rho}}_{\omega_{3},\,\omega_{1}} \ne 0$ 
for a finite subset $J$ of $\BZ_{\ge 1}$, 
then $J$ is contained in $[1,\,L]=\bigl\{1,\,2,\,\dots,\,L\bigr\}$. 
Also, if $J \subset [1,\,L]$, then the number of parts of $\Jn{\rho}$ 
that are equal to $L$ is greater than or equal to $y$. 

{\rm (2)} Fix $(\omega_{2},\,\omega_{3}) \in \CQ(\rho_{2})$ and 
$\omega_{1} \in \CR(\kappa_{1},\,\omega_{2})$. 
If $\LR^{\Jo{\kappa}}_{\omega_{3},\,\omega_{1}} \ne 0$ 
for a finite subset $J$ of $\BZ_{\ge 1}$, 
then $J$ is contained in $[1,\,L]=\bigl\{1,\,2,\,\dots,\,L\bigr\}$. 
\end{claim}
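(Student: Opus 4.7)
The proof rests on translating the three nonvanishing conditions $\LR^{\rho_1}_{\omega_1,\omega_2}\ne 0$, $\LR^{\rho_2}_{\omega_2,\omega_3}\ne 0$, and $\LR^{\Jn{\rho}}_{\omega_3,\omega_1}\ne 0$ into a sharp geometric constraint on the Young diagram $\Jn{\rho}$. Since $\omega_1\subset\rho_1$ and $\rho_1$ has first part $L$ and length at most $n+1$, the Young diagram of $\omega_1$ is contained in the $(n+1)\times L$ rectangle; combined with $\omega_1\subset\Jn{\rho}$ and $|\Jn{\rho}/\omega_1|=|\omega_3|\le|\rho_2|$, this forces the number of boxes of $\Jn{\rho}$ outside the $(n+1)\times L$ rectangle to be at most $|\rho_2|$. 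The goal of Part~(1) is to contradict this whenever $J$ contains some $j_0>L$.

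I would first establish the size formula $|\Jn{\rho}|=|\rho|+\sum_{j\in J}(R_n-2b_j)$, which follows by comparing ${}^t\Jn{\rho}$ and ${}^t\rho$ term by term and noting that the staircase offsets cancel in the difference, leaving $\sum_k\Jn{b_k}-\sum_k b_k=\sum_{j\in J}(R_n-2b_j)$. For $j_0>L$, strict monotonicity of the $b$-sequence gives $b_{j_0}\le{}^t\rho^{(L)}-L$, hence $\Jn{b_{j_0}}\ge R_n-{}^t\rho^{(L)}+L$; this value exceeds every unflipped $b_k\le b_1=\ell(\rho)\le n+1$, so it becomes the largest entry $\Jn{b_{\tau(1)}}$ of the sorted sequence in \eqref{eq:rhoJ}, and therefore $\ell(\Jn{\rho})\ge R_n-{}^t\rho^{(L)}+L$, forcing many rows of $\Jn{\rho}$ below row $n+1$. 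Combining this length bound with $|\rho|\ge yL+{}^t\rho^{(L)}(L+1)$ (contributions from the $y$ parts equal to $L$ and the ${}^t\rho^{(L)}$ parts of size $\ge L+1$), plus additional boxes of $\Jn{\rho}$ in columns beyond $L$ within the top $n+1$ rows, and using the hypothesis $y>|\rho_2|$, will yield the strict inequality
\[
|\Jn{\rho}|-\sum_{k=1}^{n+1}\min\bigl(L,(\Jn{\rho})^{(k-1)}\bigr)>|\rho_2|,
\]
the desired contradiction. For the \emph{moreover} clause of Part~(1), assuming $J\subset[1,L]$, I will analyze the sorted sequence directly: for $j\in J$ one has $b_j={}^t\rho^{(j-1)}-(j-1)\ge y-L+1$, so each flipped value $R_n-b_j$ strictly exceeds $b_1$, and the flipped values occupy the first $\#J$ positions after sorting, while the remaining positions are filled by the unflipped $b_k$'s for $k\in[1,L]\setminus J$ and $k>L$ in their original order. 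The count of parts of $\Jn{\rho}$ equal to $L$, which equals $({}^t\Jn{\rho})^{(L-1)}-({}^t\Jn{\rho})^{(L)}=\Jn{b_{\tau(L)}}-\Jn{b_{\tau(L+1)}}-1$, can then be read off to be at least $y$. Part~(2) follows by the same reasoning with $n$ replaced by $n+1$ and $\rho_1$ replaced by $\kappa_1=\iota_L(\rho_1)$, which still has first part $L$ and length at most $n+2$.

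The main obstacle will be the case analysis in Part~(1) when $|\rho_2|$ is close to its upper bound $y-1$ and $L$ is large relative to $n$: the coarse bound $|\omega_1|\le L(n+1)$ becomes insufficient, and one must carefully combine boxes of $\Jn{\rho}$ in rows $>n+1$ with boxes in columns $>L$ of rows $\le n+1$, without double counting their shared corner, while simultaneously tracking how the permutation $\tau$ intermixes flipped and unflipped entries.
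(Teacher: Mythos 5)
Your overall strategy---reduce the nonvanishing of $\LR^{\Jn{\rho}}_{\omega_{3},\,\omega_{1}}$ to the requirement $|\Jn{\rho}/\omega_{1}|\le|\omega_{3}|\le|\rho_{2}|$ and contradict it using the fact that every row of $\omega_{1}\subset\rho_{1}$ has length at most $L$---is the right one, but the implementation of the key estimate for the first assertion of part~(1) is a genuine gap, and one you flag yourself. The central inequality $|\Jn{\rho}|-\sum_{k=1}^{n+1}\min\bigl(L,\,\co{(\Jn{\rho})}{k-1}\bigr)>|\rho_{2}|$ is only asserted (``will yield''), and the ingredients you list (the size formula $|\Jn{\rho}|=|\rho|+\sum_{j\in J}(R_{n}-2b_{j})$, the length bound on $\Jn{\rho}$, and $|\rho|\ge yL+{}^{t}\co{\rho}{L}(L+1)$) do not combine to give it without the ``careful case analysis'' you defer; as written, the claim is unproved. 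The global box count is also unnecessary. A single column count suffices: set $u:={}^{t}\co{\rho}{L-1}$ (the number of parts of $\rho$ that are $\ge L$), so $u\ge y$ and $b_{L}=u-(L-1)$. Since $\Jn{b_{j}}\ge n+1\ge u-(L-1)$ for every $j\in J$, the set $\bigl\{j\mid\Jn{b_{j}}\ge u-(L-1)\bigr\}$ contains $[1,\,L]\cup J$; if $J\not\subset[1,\,L]$ it therefore has at least $L+1$ elements, so $\Jn{b_{\tau(L+1)}}\ge u-(L-1)$ and the $(L+1)$-st part of ${}^{t}\Jn{\rho}$ is at least $u+1$. Thus at least $u+1$ rows of $\Jn{\rho}$ have length $\ge L+1$, and each of them meets $\Jn{\rho}/\omega_{1}$ in at least one box, whence $|\Jn{\rho}/\omega_{1}|\ge u+1\ge y+1>|\rho_{2}|\ge|\omega_{3}|$ whenever $\omega_{1}\subset\Jn{\rho}$, and the coefficient vanishes.

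Two further points. For the \emph{moreover} clause your sketch has the right shape but omits the decisive computation, and one intermediate statement is false: when $J\subset[1,\,L]$ the flipped values do \emph{not} in general ``occupy the first $\#J$ positions after sorting''; they are merely intermixed with the unflipped $\Jn{b_{k}}=b_{k}$, $k\in[1,\,L]\setminus J$, within the first $L$ positions. What you actually need is that the top $L$ sorted values are exactly $\{\Jn{b_{j}}\}_{j\in[1,L]}$, each $\ge u-(L-1)$, while $\Jn{b_{\tau(L+1)}}=b_{L+1}={}^{t}\co{\rho}{L}-L=u-y-L$; then the number of parts of $\Jn{\rho}$ equal to $L$ is $\bigl(\Jn{b_{\tau(L)}}+(L-1)\bigr)-\bigl(\Jn{b_{\tau(L+1)}}+L\bigr)\ge u-(u-y)=y$. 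Finally, in your length bound the largest sorted entry need not be $\Jn{b_{j_{0}}}$ itself when $J$ contains indices larger than $j_{0}$; this is harmless but should be phrased as a lower bound on $\max_{j\in J}\Jn{b_{j}}$.
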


\noindent
{\it Proof of Claim~\ref{c:J}.}
We give a proof only for part (1), since the proof of part (2) is similar. 
Let $u \in \BZ_{\ge 1}$ be such that the $u$-th part of 
$\rho$ is equal to $L$, and such that 
the $(u+1)$-st part of $\rho$ is less than $L$; 
note that $u \ge y > |\rho_{2}|$ by assumption (iii). 
It follows that $u$ and $u-y$ are equal to the $L$-th part and 
the $(L+1)$-st part of the conjugate partition ${}^{t}\rho$ of $\rho$, 
respectively (see the figures below). 

\vsp

\begin{center}
\hspace*{15mm}
\unitlength 0.1in
\begin{picture}( 42.0000, 26.8500)(  8.0000,-30.0000)
%
\special{pn 13}%
\special{pa 2000 800}%
\special{pa 1800 800}%
\special{fp}%
%
\special{pn 13}%
\special{pa 1800 800}%
\special{pa 1800 1000}%
\special{fp}%
%
\special{pn 13}%
\special{pa 1800 1000}%
\special{pa 1400 1000}%
\special{fp}%
%
\special{pn 13}%
\special{pa 1400 1000}%
\special{pa 1400 2600}%
\special{fp}%
%
\special{pn 13}%
\special{pa 1400 2600}%
\special{pa 1000 2600}%
\special{fp}%
\special{pa 1000 3000}%
\special{pa 1000 3000}%
\special{fp}%
%
\special{pn 13}%
\special{pa 1000 3000}%
\special{pa 1000 2600}%
\special{fp}%
%
\special{pn 13}%
\special{pa 1000 3000}%
\special{pa 800 3000}%
\special{fp}%
%
\special{pn 13}%
\special{pa 2000 800}%
\special{pa 2000 600}%
\special{fp}%
\special{pa 2000 600}%
\special{pa 800 600}%
\special{fp}%
\special{pa 800 600}%
\special{pa 800 3000}%
\special{fp}%
%
\special{pn 8}%
\special{pa 1200 1200}%
\special{pa 1400 1200}%
\special{fp}%
\special{sh 1}%
\special{pa 1400 1200}%
\special{pa 1334 1180}%
\special{pa 1348 1200}%
\special{pa 1334 1220}%
\special{pa 1400 1200}%
\special{fp}%
\put(11.0000,-12.0000){\makebox(0,0){$L$}}%
%
\special{pn 8}%
\special{pa 1000 1200}%
\special{pa 800 1200}%
\special{fp}%
\special{sh 1}%
\special{pa 800 1200}%
\special{pa 868 1220}%
\special{pa 854 1200}%
\special{pa 868 1180}%
\special{pa 800 1200}%
\special{fp}%
%
\special{pn 8}%
\special{pa 1300 1500}%
\special{pa 1300 600}%
\special{fp}%
\special{sh 1}%
\special{pa 1300 600}%
\special{pa 1280 668}%
\special{pa 1300 654}%
\special{pa 1320 668}%
\special{pa 1300 600}%
\special{fp}%
%
\special{pn 8}%
\special{pa 1300 1700}%
\special{pa 1300 2600}%
\special{fp}%
\special{sh 1}%
\special{pa 1300 2600}%
\special{pa 1320 2534}%
\special{pa 1300 2548}%
\special{pa 1280 2534}%
\special{pa 1300 2600}%
\special{fp}%
\put(13.0000,-16.0000){\makebox(0,0){$u$}}%
\put(14.0000,-4.0000){\makebox(0,0){$\rho$}}%
%
\special{pn 13}%
\special{pa 2800 2396}%
\special{pa 2800 2196}%
\special{fp}%
%
\special{pn 13}%
\special{pa 2800 2196}%
\special{pa 3000 2196}%
\special{fp}%
%
\special{pn 13}%
\special{pa 3000 2196}%
\special{pa 3000 1796}%
\special{fp}%
%
\special{pn 13}%
\special{pa 3000 1796}%
\special{pa 4600 1796}%
\special{fp}%
%
\special{pn 13}%
\special{pa 4600 1796}%
\special{pa 4600 1396}%
\special{fp}%
\special{pa 5000 1396}%
\special{pa 5000 1396}%
\special{fp}%
%
\special{pn 13}%
\special{pa 5000 1396}%
\special{pa 4600 1396}%
\special{fp}%
%
\special{pn 13}%
\special{pa 5000 1396}%
\special{pa 5000 1196}%
\special{fp}%
%
\special{pn 13}%
\special{pa 2800 2396}%
\special{pa 2600 2396}%
\special{fp}%
\special{pa 2600 2396}%
\special{pa 2600 1196}%
\special{fp}%
\special{pa 2600 1196}%
\special{pa 5000 1196}%
\special{fp}%
%
\special{pn 8}%
\special{pa 3200 1596}%
\special{pa 3200 1796}%
\special{fp}%
\special{sh 1}%
\special{pa 3200 1796}%
\special{pa 3220 1728}%
\special{pa 3200 1742}%
\special{pa 3180 1728}%
\special{pa 3200 1796}%
\special{fp}%
%
\special{pn 8}%
\special{pa 3200 1396}%
\special{pa 3200 1196}%
\special{fp}%
\special{sh 1}%
\special{pa 3200 1196}%
\special{pa 3180 1262}%
\special{pa 3200 1248}%
\special{pa 3220 1262}%
\special{pa 3200 1196}%
\special{fp}%
%
\special{pn 8}%
\special{pa 3500 1696}%
\special{pa 2600 1696}%
\special{fp}%
\special{sh 1}%
\special{pa 2600 1696}%
\special{pa 2668 1716}%
\special{pa 2654 1696}%
\special{pa 2668 1676}%
\special{pa 2600 1696}%
\special{fp}%
%
\special{pn 8}%
\special{pa 3700 1696}%
\special{pa 4600 1696}%
\special{fp}%
\special{sh 1}%
\special{pa 4600 1696}%
\special{pa 4534 1676}%
\special{pa 4548 1696}%
\special{pa 4534 1716}%
\special{pa 4600 1696}%
\special{fp}%
\put(32.0000,-14.9500){\makebox(0,0){$L$}}%
\put(36.0000,-16.9500){\makebox(0,0){$u$}}%
\put(38.0000,-9.9500){\makebox(0,0){${}^{t}\rho$}}%
%
\special{pn 8}%
\special{pa 1500 1700}%
\special{pa 1500 1000}%
\special{fp}%
\special{sh 1}%
\special{pa 1500 1000}%
\special{pa 1480 1068}%
\special{pa 1500 1054}%
\special{pa 1520 1068}%
\special{pa 1500 1000}%
\special{fp}%
%
\special{pn 8}%
\special{pa 1500 1900}%
\special{pa 1500 2600}%
\special{fp}%
\special{sh 1}%
\special{pa 1500 2600}%
\special{pa 1520 2534}%
\special{pa 1500 2548}%
\special{pa 1480 2534}%
\special{pa 1500 2600}%
\special{fp}%
\put(15.0000,-18.0000){\makebox(0,0){$y$}}%
%
\special{pn 8}%
\special{pa 3700 1900}%
\special{pa 3000 1900}%
\special{fp}%
\special{sh 1}%
\special{pa 3000 1900}%
\special{pa 3068 1920}%
\special{pa 3054 1900}%
\special{pa 3068 1880}%
\special{pa 3000 1900}%
\special{fp}%
%
\special{pn 8}%
\special{pa 3900 1900}%
\special{pa 4600 1900}%
\special{fp}%
\special{sh 1}%
\special{pa 4600 1900}%
\special{pa 4534 1880}%
\special{pa 4548 1900}%
\special{pa 4534 1920}%
\special{pa 4600 1900}%
\special{fp}%
\put(38.0000,-19.0000){\makebox(0,0){$y$}}%
\end{picture}%
\end{center}

\noindent
Therefore, if we define $\bb({}^{t}\rho)=(b_{j})_{j \in \BZ_{\ge 1}}$ 
as in \eqref{eq:bb1}, then we have $b_{L}=u-(L-1)$, and hence 
\begin{equation*}
\#\bigl\{j \in \BZ_{\ge 1} \mid b_{j} \ge u-(L-1)\bigr\}=
\#[1,\,L]=L.
\end{equation*}
Also, if we define $\Jn{\bb({}^{t}\rho)}=
(\Jn{b_{j}})_{j \in \BZ_{\ge 1}}$ as in \eqref{eq:bb2}, 
then we have $\Jn{b_{j}} \ge n+1 \ge u-(L-1)$ for all $j \in J$ 
since $b_{L}=u-(L-1) \le b_{1} \le n+1$. Consequently, 
%
%
\begin{equation} \label{eq:LJ}
\bigl\{j \in \BZ_{\ge 1} \mid \Jn{b_{j}} \ge u-(L-1)\bigr\}=
[1,\,L] \cup J.
\end{equation}
For the first assertion, 
suppose that $J$ is not contained in $[1,\,L]$. 
Then we get 
%
%
\begin{equation} \label{eq:cJ1}
\# \bigl\{j \in \BZ_{\ge 1} \mid \Jn{b_{j}} \ge u-(L-1)\bigr\} \ge L+1.
\end{equation}
Let $\tau$ be the finite permutation of the set $\BZ_{\ge 1}$ 
such that 
\begin{equation*}
\Jn{b_{\tau(1)}} > \Jn{b_{\tau(2)}} > \cdots > 
\Jn{b_{\tau(j)}} > \Jn{b_{\tau(j+1)}} > \cdots. 
\end{equation*}
We deduce from \eqref{eq:cJ1} that 
$\Jn{b_{\tau(L+1)}} \ge u-(L-1)$, and hence that 
the $(L+1)$-st part $v$ of the conjugate partition
\begin{equation*}
(\Jn{b_{\tau(1)}} \ge \Jn{b_{\tau(2)}}+1 \ge \cdots \ge 
 \Jn{b_{\tau(j)}}+(j-1) \ge \Jn{b_{\tau(j+1)}}+j \ge \cdots)
\end{equation*}
of $\Jn{\rho}$ is greater than or 
equal to $u-(L-1)+L=u+1$. 
From this fact, we find that for each $1 \le j \le v$, 
the $j$-th row of the Young diagram of $\Jn{\rho}$ 
has more than $L+1$ boxes (see the figures below). 

\begin{center}
\unitlength 0.1in
\begin{picture}( 54.0000, 30.8500)(  4.0000,-34.0000)
%
\special{pn 13}%
\special{pa 4800 3200}%
\special{pa 4800 2800}%
\special{fp}%
%
\special{pn 13}%
\special{pa 4800 3200}%
\special{pa 4600 3200}%
\special{fp}%
\put(18.0000,-4.0000){\makebox(0,0){${}^{t}\Jn{\rho}$}}%
%
\special{pn 13}%
\special{pa 5800 800}%
\special{pa 5600 800}%
\special{fp}%
%
\special{pn 13}%
\special{pa 5600 800}%
\special{pa 5600 800}%
\special{fp}%
\special{pa 5600 1000}%
\special{pa 5600 800}%
\special{fp}%
%
\special{pn 13}%
\special{pa 5600 1000}%
\special{pa 5400 1000}%
\special{fp}%
%
\special{pn 13}%
\special{pa 5800 800}%
\special{pa 5800 600}%
\special{fp}%
%
\special{pn 13}%
\special{pa 5000 1800}%
\special{pa 5000 2800}%
\special{fp}%
%
\special{pn 13}%
\special{pa 5000 1800}%
\special{pa 5200 1800}%
\special{fp}%
%
\special{pn 13}%
\special{pa 5200 1800}%
\special{pa 5200 1400}%
\special{fp}%
\special{pa 5400 1400}%
\special{pa 5200 1400}%
\special{fp}%
%
\special{pn 13}%
\special{pa 5400 1400}%
\special{pa 5400 1000}%
\special{fp}%
%
\special{pn 13}%
\special{pa 5000 2800}%
\special{pa 4800 2800}%
\special{fp}%
%
\special{pn 13}%
\special{pa 4600 3200}%
\special{pa 4600 3400}%
\special{fp}%
\special{pa 4600 3400}%
\special{pa 4000 3400}%
\special{fp}%
%
\special{pn 13}%
\special{pa 4000 3400}%
\special{pa 4000 600}%
\special{fp}%
%
\special{pn 13}%
\special{pa 4000 600}%
\special{pa 5800 600}%
\special{fp}%
%
\special{pn 8}%
\special{pa 4900 1600}%
\special{pa 4900 600}%
\special{fp}%
\special{sh 1}%
\special{pa 4900 600}%
\special{pa 4880 668}%
\special{pa 4900 654}%
\special{pa 4920 668}%
\special{pa 4900 600}%
\special{fp}%
\special{pa 4900 1800}%
\special{pa 4900 2800}%
\special{fp}%
\special{sh 1}%
\special{pa 4900 2800}%
\special{pa 4920 2734}%
\special{pa 4900 2748}%
\special{pa 4880 2734}%
\special{pa 4900 2800}%
\special{fp}%
%
\special{pn 13}%
\special{pa 3000 1400}%
\special{pa 2600 1400}%
\special{fp}%
%
\special{pn 13}%
\special{pa 3000 1400}%
\special{pa 3000 1200}%
\special{fp}%
%
\special{pn 13}%
\special{pa 600 2400}%
\special{pa 600 2200}%
\special{fp}%
%
\special{pn 13}%
\special{pa 600 2200}%
\special{pa 600 2200}%
\special{fp}%
\special{pa 800 2200}%
\special{pa 600 2200}%
\special{fp}%
%
\special{pn 13}%
\special{pa 800 2200}%
\special{pa 800 2000}%
\special{fp}%
%
\special{pn 13}%
\special{pa 600 2400}%
\special{pa 400 2400}%
\special{fp}%
%
\special{pn 13}%
\special{pa 1600 1600}%
\special{pa 2600 1600}%
\special{fp}%
%
\special{pn 13}%
\special{pa 1600 1600}%
\special{pa 1600 1800}%
\special{fp}%
%
\special{pn 13}%
\special{pa 1600 1800}%
\special{pa 1200 1800}%
\special{fp}%
\special{pa 1200 2000}%
\special{pa 1200 1800}%
\special{fp}%
%
\special{pn 13}%
\special{pa 1200 2000}%
\special{pa 800 2000}%
\special{fp}%
%
\special{pn 13}%
\special{pa 2600 1600}%
\special{pa 2600 1400}%
\special{fp}%
%
\special{pn 13}%
\special{pa 3000 1200}%
\special{pa 3200 1200}%
\special{fp}%
\special{pa 3200 1200}%
\special{pa 3200 600}%
\special{fp}%
%
\special{pn 13}%
\special{pa 3200 600}%
\special{pa 400 600}%
\special{fp}%
%
\special{pn 13}%
\special{pa 400 600}%
\special{pa 400 2400}%
\special{fp}%
%
\special{pn 8}%
\special{pa 2000 1000}%
\special{pa 2000 600}%
\special{fp}%
\special{sh 1}%
\special{pa 2000 600}%
\special{pa 1980 668}%
\special{pa 2000 654}%
\special{pa 2020 668}%
\special{pa 2000 600}%
\special{fp}%
\special{pa 2000 1200}%
\special{pa 2000 1600}%
\special{fp}%
\special{sh 1}%
\special{pa 2000 1600}%
\special{pa 2020 1534}%
\special{pa 2000 1548}%
\special{pa 1980 1534}%
\special{pa 2000 1600}%
\special{fp}%
\put(20.0000,-11.0000){\makebox(0,0){$L+1$}}%
%
\special{pn 8}%
\special{pa 1400 1500}%
\special{pa 400 1500}%
\special{fp}%
\special{sh 1}%
\special{pa 400 1500}%
\special{pa 468 1520}%
\special{pa 454 1500}%
\special{pa 468 1480}%
\special{pa 400 1500}%
\special{fp}%
\special{pa 1600 1500}%
\special{pa 2600 1500}%
\special{fp}%
\special{sh 1}%
\special{pa 2600 1500}%
\special{pa 2534 1480}%
\special{pa 2548 1500}%
\special{pa 2534 1520}%
\special{pa 2600 1500}%
\special{fp}%
\put(15.0000,-15.0000){\makebox(0,0){$v$}}%
\put(45.0000,-22.0000){\makebox(0,0){$L+1$}}%
\put(49.0000,-17.0000){\makebox(0,0){$v$}}%
\put(49.0000,-4.0000){\makebox(0,0){$\Jn{\rho}$}}%
%
\special{pn 8}%
\special{pa 4200 2200}%
\special{pa 4000 2200}%
\special{fp}%
\special{sh 1}%
\special{pa 4000 2200}%
\special{pa 4068 2220}%
\special{pa 4054 2200}%
\special{pa 4068 2180}%
\special{pa 4000 2200}%
\special{fp}%
%
\special{pn 8}%
\special{pa 4800 2200}%
\special{pa 5000 2200}%
\special{fp}%
\special{sh 1}%
\special{pa 5000 2200}%
\special{pa 4934 2180}%
\special{pa 4948 2200}%
\special{pa 4934 2220}%
\special{pa 5000 2200}%
\special{fp}%
\put(16.0000,-26.0000){\makebox(0,0)[lb]{($v \ge u+1$)}}%
\end{picture}%
\end{center}

If $\omega_{1} \not\subset \Jn{\rho}$, then 
$\LR^{\Jn{\rho}}_{\omega_{3},\,\omega_{1}}=0$. 
Now, suppose that $\omega_{1} \subset \Jn{\rho}$. 
Since $\omega_{1} \in \CR(\rho_{1},\,\omega_{2})$, 
we have $\omega_{1} \subset \rho_{1}$, which implies that 
each row of the Young diagram of $\omega_{1}$ has 
at most $L$ boxes. Consequently, we deduce that 
$|\Jn{\rho}/\omega_{1}| \ge v \ge u+1 > 
|\rho_{2}| \ge |\omega_{3}|$, and hence 
$|\Jn{\rho}| > |\omega_{3}|+|\omega_{1}|$. 
Then, it follows that 
$\LR^{\Jn{\rho}}_{\omega_{3},\,\omega_{1}} = 0$, 
as desired. 

For the second assertion, 
suppose that $J \subset [1,\,L]$. 
As above, let $\tau$ be the finite permutation 
of the set $\BZ_{\ge 1}$ such that 
\begin{equation*}
\Jn{b_{\tau(1)}} > \Jn{b_{\tau(2)}} > \cdots > 
\Jn{b_{\tau(j)}} > \Jn{b_{\tau(j+1)}} > \cdots. 
\end{equation*}
Since 
\begin{equation*}
\bigl\{j \in \BZ_{\ge 1} \mid \Jn{b_{j}} \ge u-(L-1)\bigr\}=
[1,\,L] \cup J=[1,L]
\end{equation*}
by \eqref{eq:LJ}, we have $\Jn{b_{\tau(L)}} \ge u-(L-1)$. 
Also, since $J \subset [1,\,L]$, we see that 
$\tau(j)=j$ for all $j \ge L+1$, and hence 
$\Jn{b_{\tau(L+1)}}=\Jn{b_{L+1}}=b_{L+1}$. 
Recall that 
$b_{L+1}+L$ is equal to the $(L+1)$-st
part of ${}^{t}\rho$, which is equal to $u-y$. 
Therefore, we have 
\begin{align*}
\bigl\{\Jn{b_{\tau(L)}}+(L-1)\bigr\}-
\bigl\{\Jn{b_{\tau(L+1)}}+L\bigr\} & \ge 
\{u-(L-1)+(L-1)\}-(b_{L+1}+L) \\
& =u-(b_{L+1}+L)=y.
\end{align*}
Because $\Jn{b_{\tau(L)}}+(L-1)$ and 
$\Jn{b_{\tau(L+1)}}+L$ are the $L$-th part and 
the $(L+1)$-st part of ${}^{t}\Jn{\rho}$, respectively, 
we conclude that the Young diagram of $\Jn{\rho}$ 
has more than than $y$ rows having exactly $L$ boxes 
(see the figures below). 

\vsp

\begin{center}
\hspace*{5mm}
\unitlength 0.1in
\begin{picture}( 54.0000, 30.8500)(  4.0000,-34.0000)
\put(18.0000,-4.0000){\makebox(0,0){${}^{t}\Jn{\rho}$}}%
%
\special{pn 13}%
\special{pa 3000 1400}%
\special{pa 3000 1200}%
\special{fp}%
%
\special{pn 13}%
\special{pa 600 2400}%
\special{pa 600 2200}%
\special{fp}%
%
\special{pn 13}%
\special{pa 600 2200}%
\special{pa 600 2200}%
\special{fp}%
\special{pa 800 2200}%
\special{pa 600 2200}%
\special{fp}%
%
\special{pn 13}%
\special{pa 600 2400}%
\special{pa 400 2400}%
\special{fp}%
%
\special{pn 13}%
\special{pa 3000 1200}%
\special{pa 3200 1200}%
\special{fp}%
\special{pa 3200 1200}%
\special{pa 3200 600}%
\special{fp}%
%
\special{pn 13}%
\special{pa 3200 600}%
\special{pa 400 600}%
\special{fp}%
%
\special{pn 13}%
\special{pa 400 600}%
\special{pa 400 2400}%
\special{fp}%
%
\special{pn 8}%
\special{pa 2000 1000}%
\special{pa 2000 600}%
\special{fp}%
\special{sh 1}%
\special{pa 2000 600}%
\special{pa 1980 668}%
\special{pa 2000 654}%
\special{pa 2020 668}%
\special{pa 2000 600}%
\special{fp}%
\special{pa 2000 1200}%
\special{pa 2000 1600}%
\special{fp}%
\special{sh 1}%
\special{pa 2000 1600}%
\special{pa 2020 1534}%
\special{pa 2000 1548}%
\special{pa 1980 1534}%
\special{pa 2000 1600}%
\special{fp}%
\put(20.0000,-11.0000){\makebox(0,0){$L$}}%
\put(49.0000,-4.0000){\makebox(0,0){$\Jn{\rho}$}}%
%
\special{pn 13}%
\special{pa 3000 1400}%
\special{pa 2800 1400}%
\special{fp}%
\special{pa 2800 1400}%
\special{pa 2800 1600}%
\special{fp}%
%
\special{pn 13}%
\special{pa 2800 1600}%
\special{pa 800 1600}%
\special{fp}%
%
\special{pn 13}%
\special{pa 800 1600}%
\special{pa 800 2200}%
\special{fp}%
%
\special{pn 8}%
\special{pa 1800 1700}%
\special{pa 800 1700}%
\special{fp}%
\special{sh 1}%
\special{pa 800 1700}%
\special{pa 868 1720}%
\special{pa 854 1700}%
\special{pa 868 1680}%
\special{pa 800 1700}%
\special{fp}%
\special{pa 1800 1700}%
\special{pa 2800 1700}%
\special{fp}%
\special{sh 1}%
\special{pa 2800 1700}%
\special{pa 2734 1680}%
\special{pa 2748 1700}%
\special{pa 2734 1720}%
\special{pa 2800 1700}%
\special{fp}%
\put(18.0000,-18.5000){\makebox(0,0){$\ge y$}}%
%
\special{pn 13}%
\special{pa 4800 3200}%
\special{pa 4600 3200}%
\special{fp}%
%
\special{pn 13}%
\special{pa 5800 800}%
\special{pa 5600 800}%
\special{fp}%
%
\special{pn 13}%
\special{pa 5600 800}%
\special{pa 5600 800}%
\special{fp}%
\special{pa 5600 1000}%
\special{pa 5600 800}%
\special{fp}%
%
\special{pn 13}%
\special{pa 5800 800}%
\special{pa 5800 600}%
\special{fp}%
%
\special{pn 13}%
\special{pa 4600 3200}%
\special{pa 4600 3400}%
\special{fp}%
\special{pa 4600 3400}%
\special{pa 4000 3400}%
\special{fp}%
%
\special{pn 13}%
\special{pa 4000 3400}%
\special{pa 4000 600}%
\special{fp}%
%
\special{pn 13}%
\special{pa 4000 600}%
\special{pa 5800 600}%
\special{fp}%
%
\special{pn 8}%
\special{pa 4400 2200}%
\special{pa 4000 2200}%
\special{fp}%
\special{sh 1}%
\special{pa 4000 2200}%
\special{pa 4068 2220}%
\special{pa 4054 2200}%
\special{pa 4068 2180}%
\special{pa 4000 2200}%
\special{fp}%
\special{pa 4600 2200}%
\special{pa 5000 2200}%
\special{fp}%
\special{sh 1}%
\special{pa 5000 2200}%
\special{pa 4934 2180}%
\special{pa 4948 2200}%
\special{pa 4934 2220}%
\special{pa 5000 2200}%
\special{fp}%
%
\special{pn 13}%
\special{pa 4800 3200}%
\special{pa 4800 3000}%
\special{fp}%
\special{pa 4800 3000}%
\special{pa 5000 3000}%
\special{fp}%
%
\special{pn 13}%
\special{pa 5000 3000}%
\special{pa 5000 1000}%
\special{fp}%
%
\special{pn 13}%
\special{pa 5000 1000}%
\special{pa 5600 1000}%
\special{fp}%
%
\special{pn 8}%
\special{pa 5100 2000}%
\special{pa 5100 1000}%
\special{fp}%
\special{sh 1}%
\special{pa 5100 1000}%
\special{pa 5080 1068}%
\special{pa 5100 1054}%
\special{pa 5120 1068}%
\special{pa 5100 1000}%
\special{fp}%
\special{pa 5100 2000}%
\special{pa 5100 3000}%
\special{fp}%
\special{sh 1}%
\special{pa 5100 3000}%
\special{pa 5120 2934}%
\special{pa 5100 2948}%
\special{pa 5080 2934}%
\special{pa 5100 3000}%
\special{fp}%
\put(53.5000,-20.0000){\makebox(0,0){$\ge y$}}%
\put(45.0000,-22.0000){\makebox(0,0){$L$}}%
\end{picture}%
\end{center}

\noindent
This proves the claim. \bqed

\vsp

By using Claim~\ref{c:J}, we deduce 
from \eqref{eq:step1a} and \eqref{eq:step1b} that 
%
%
\begin{equation} \label{eq:step2a}
\XLRm{\rho_{1}}{\rho_{2}}{\rho}{n}=
\sum_{
 \begin{subarray}{c}
 J \subset [1,\,L], \\[1mm]
 (\omega_{2},\,\omega_{3}) \in \CQ(\rho_{2}), \\[1mm]
 \omega_{1} \in \CR(\rho_{1},\,\omega_{2})
 \end{subarray}}
 \LR^{\rho_{1}}_{\omega_{1},\,\omega_{2}} \,
 \LR^{\rho_{2}}_{\omega_{2},\,\omega_{3}} \,
 \LR^{\Jn{\rho}}_{\omega_{3},\,\omega_{1}} \, 
 \Jn{\sgn}(\rho), 
\end{equation}
%
%
\begin{equation} \label{eq:step2b}
\XLRm{\kappa_{1}}{\rho_{2}}{\kappa}{n+1}=
\sum_{
 \begin{subarray}{c}
 J \subset [1,\,L], \\[1mm]
 (\omega_{2},\,\omega_{3}) \in \CQ(\rho_{2}), \\[1mm]
 \omega_{1} \in \CR(\kappa_{1},\,\omega_{2})
 \end{subarray}}
 \LR^{\kappa_{1}}_{\omega_{1},\,\omega_{2}} \,
 \LR^{\rho_{2}}_{\omega_{2},\,\omega_{3}} \,
 \LR^{\Jo{\kappa}}_{\omega_{3},\,\omega_{1}} \, 
 \Jo{\sgn}(\kappa).
\end{equation}

%
%
\begin{claim} \label{c:iota}
Fix $(\omega_{2},\,\omega_{3}) \in \CQ(\rho_{2})$. 
For every $\omega_{1} \in \CR(\rho_{1},\,\omega_{2})$, 
we have $\iota_{L}(\omega_{1}) \in \CR(\kappa_{1},\,\omega_{2})$. 
Thus, $\iota_{L}$ yields a map from $\CR(\rho_{1},\,\omega_{2})$ 
to $\CR(\kappa_{1},\,\omega_{2})$. Moreover, this map is bijective. 
\end{claim}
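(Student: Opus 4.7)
The strategy is to pass to conjugate partitions, where $\iota_{L}$ becomes the transparent operation of adding $1$ to each of the first $L$ parts of the conjugate. In this picture the inclusion $\iota_{L}(\sigma) \subset \iota_{L}(\tau)$ is equivalent to $\sigma \subset \tau$, which trivializes several of the verifications below.

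First I will check that $\iota_{L}$ is well-defined as a map $\CR(\rho_{1},\,\omega_{2}) \to \CR(\kappa_{1},\,\omega_{2})$. For $\omega_{1} \in \CR(\rho_{1},\,\omega_{2})$, the inclusion $\omega_{1} \subset \rho_{1}$ together with assumption (ii) forces every part of $\omega_{1}$ to be at most $L$, so $\iota_{L}(\omega_{1})$ makes sense. Passing to conjugates, adding $1$ to the first $L$ parts of both ${}^{t}\omega_{1}$ and ${}^{t}\rho_{1}$ preserves termwise inequality, so $\iota_{L}(\omega_{1}) \subset \iota_{L}(\rho_{1}) = \kappa_{1}$; the size relation $|\iota_{L}(\omega_{1})|+|\omega_{2}|=|\kappa_{1}|$ is immediate since $|\iota_{L}(\sigma)|=|\sigma|+L$.

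Injectivity is clear because $\iota_{L}$ has the obvious left inverse of deleting one row of length $L$, so the real content of the claim is surjectivity. Given $\omega_{1}' \in \CR(\kappa_{1},\,\omega_{2})$, my goal is to show $\omega_{1}'$ has at least one part equal to $L$; deleting one such part will then produce a preimage $\omega_{1}$. This existence step is the main obstacle, and it is precisely where assumption (iii) enters: $\kappa_{1}$ has exactly $y_{1}+1$ rows of length $L$ at the top, so if $\omega_{1}'$ had no part equal to $L$, each of these top $y_{1}+1$ rows of $\kappa_{1}$ would contribute at least one box to $\kappa_{1}/\omega_{1}'$, forcing
\[ |\omega_{2}| \;=\; |\kappa_{1}/\omega_{1}'| \;\ge\; y_{1}+1 \;>\; |\rho_{2}| \;\ge\; |\omega_{2}|, \]
a contradiction; the final inequality uses $\omega_{2} \subset \rho_{2}$, which holds because $(\omega_{2},\,\omega_{3}) \in \CQ(\rho_{2})$ forces $\LR^{\rho_{2}}_{\omega_{2},\,\omega_{3}} \ne 0$.

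Finally I will verify that $\omega_{1}$, defined as $\omega_{1}'$ with one $L$-row removed, genuinely lies in $\CR(\rho_{1},\,\omega_{2})$. The size equality $|\omega_{1}|+|\omega_{2}|=|\rho_{1}|$ is immediate by arithmetic, and $\iota_{L}(\omega_{1})=\omega_{1}'$ holds by construction. For the inclusion $\omega_{1} \subset \rho_{1}$, I will once more pass to conjugates: from $\omega_{1}' \subset \kappa_{1}$ I obtain that the $i$-th part of ${}^{t}\omega_{1}'$ is bounded by ${}^{t}\rho_{1}^{(i)}+1$ for $i < L$ and by ${}^{t}\rho_{1}^{(i)}$ for $i \ge L$; subtracting $1$ from each of the first $L$ parts of ${}^{t}\omega_{1}'$ (legitimate because $\omega_{1}'$ contains an $L$-row, making these parts all $\ge 1$) produces ${}^{t}\omega_{1}$ and delivers ${}^{t}\omega_{1}^{(i)} \le {}^{t}\rho_{1}^{(i)}$ in both ranges, completing the proof.
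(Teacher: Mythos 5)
Your proof is correct and follows essentially the same route as the paper's: well-definedness and injectivity are immediate, and surjectivity reduces to showing every element of $\CR(\kappa_{1},\,\omega_{2})$ has a part equal to $L$, via the same pigeonhole count on the $y_{1}+1$ rows of $\kappa_{1}$ of length $L$ against $|\omega_{2}| \le |\rho_{2}| < y_{1}+1$. Your conjugate-partition bookkeeping is only a presentational variant of the paper's ``add one row of $L$ boxes above the top row'' description.
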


\noindent
{\it Proof of Claim~\ref{c:iota}.}
Recall that 
\begin{align*}
& \CR(\rho_{1},\,\omega_{2})=
 \bigl\{\omega_{1} \in \Par \mid \omega_{1} \subset \rho_{1}, \ 
 |\rho_{1}|=|\omega_{1}|+|\omega_{2}|
 \bigr\}, \\
& \CR(\kappa_{1},\,\omega_{2})=
 \bigl\{\omega_{1} \in \Par \mid \omega_{1} \subset \kappa_{1}, \ 
 |\kappa_{1}|=|\omega_{1}|+|\omega_{2}|
 \bigr\}.
\end{align*}
Let $\omega_{1} \in \CR(\rho_{1},\,\omega_{2})$. 
Since the first part of $\rho_{1}$ is equal to $L$ by assumption (ii), 
we see that the first part of $\omega_{1}$ is less than or equal to $L$. 
Therefore, the Young diagrams of $\kappa_{1}=\iota_{L}(\rho_{1})$ 
(resp., $\iota_{L}(\omega_{1})$) 
is obtained by adding one row having exactly $L$ boxes just above 
the top row of the Young diagram of $\rho_{1}$ (resp., $\omega_{1}$). 
Hence it is obvious that 
$\iota_{L}(\omega_{1}) \in \CR(\kappa_{1},\,\omega_{2})$. 

We need to show that the map 
$\iota_{L}:\CR(\rho_{1},\,\omega_{2}) \rightarrow 
\CR(\kappa_{1},\,\omega_{2})$ is bijective. 
Since the injectivity of the map $\iota_{L}$ is obvious, 
it remains to show the surjectivity of 
the map $\iota_{L}$. 
If $\delta_{1} \in \CR(\kappa_{1},\,\omega_{2})$, then 
there exists a part of $\delta_{1}$ that is equal to $L$.
Indeed, since $\delta_{1} \subset \kappa_{1}=\iota_{L}(\rho_{1})$, 
all parts of $\delta_{1}$ are less than or equal to $L$. 
Suppose, contrary to our assertion, that all parts of $\delta_{1}$ are
less than $L$. Then the skew Young diagram $\kappa_{1}/\delta_{1}$ 
must contain at least $y_{1}+1$ boxes, where $y_{1}+1$ is equal to 
the number of rows of the Young diagram of $\kappa_{1}$ having 
exactly $L$ boxes since $\kappa_{1}=\iota_{L}(\rho_{1})$. 
Because $y_{1}+1$ is greater than $|\rho_{2}|$ by assumption (iii), and 
because $(\omega_{2},\,\omega_{3}) \in \CQ(\rho_{2})$, 
we deduce that 
$|\kappa_{1}/\delta_{1}| \ge y_{1}+1 > |\rho_{2}| \ge |\omega_{2}|$, 
and hence $|\kappa_{1}| > |\delta_{1}|+|\omega_{2}|$, 
which contradicts the assumption that 
$\delta_{1} \in \CR(\kappa_{1},\,\omega_{2})$. 
Therefore, if we let $\omega_{1}$ be the partition 
whose Young diagram is obtained 
from the Young diagram of $\delta_{1}$ by removing one row 
having exactly $L$ boxes, then we have 
$\omega_{1} \in \CR(\rho_{1},\,\omega_{2})$ and 
$\iota_{L}(\omega_{1})=\delta_{1}$. 
This proves the surjectivity of the map $\iota_{L}$, 
as desired. \bqed

\vsp

%
\begin{claim} \label{c:sp}
For each $J \subset [1,\,L]$, 
we have $\iota_{L}(\Jn{\rho})=\Jo{\kappa}$ and 
$\Jn{\sgn}(\rho)=\Jo{\sgn}(\kappa)$. 
\end{claim}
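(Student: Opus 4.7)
The proof plan proceeds by direct computation, tracking how the combinatorial operation $\iota_{L}$ interacts with the $J$-twist on the $\bb$-sequence.

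First I would unravel the effect of $\iota_L$ on conjugate partitions: since the Young diagram of $\kappa=\iota_{L}(\rho)$ is obtained from that of $\rho$ by inserting one extra row of length $L$ at the appropriate place, its conjugate ${}^{t}\kappa$ is obtained from ${}^{t}\rho$ by adding $1$ to each of the first $L$ parts and leaving all later parts unchanged. Writing $\bb({}^{t}\rho)=(b_{j})_{j\ge 1}$ and $\bb({}^{t}\kappa)=(c_{j})_{j\ge 1}$ as in \eqref{eq:bb1}, this translates to $c_{j}=b_{j}+1$ for $1\le j\le L$ and $c_{j}=b_{j}$ for $j\ge L+1$. Next, using the elementary identity $R_{n+1}=R_{n}+2$ (which holds in all three types $B_{\infty}$, $C_{\infty}$, $D_{\infty}$ by inspection of \eqref{eq:bb2}), I compute case by case (splitting on whether $j\in J$ and whether $j\le L$) that for every $J\subset[1,L]$
\begin{equation*}
\Jo{c_{j}}=\Jn{b_{j}}+1 \quad \text{for } 1\le j\le L, \qquad
\Jo{c_{j}}=\Jn{b_{j}}=b_{j} \quad \text{for } j\ge L+1.
\end{equation*}

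Next comes the comparison of the two sorting permutations. Let $\tau$ (resp.\ $\tau'$) be the finite permutation of $\BZ_{\ge 1}$ that sorts $(\Jn{b_{j}})$ (resp.\ $(\Jo{c_{j}})$) in strictly decreasing order. The proof of Claim~\ref{c:J}, second assertion of part~(1), shows that $\tau$ permutes the block $[1,L]$ and fixes $\BZ_{\ge L+1}$ pointwise, with the key estimate $\Jn{b_{\tau(L)}}\ge u-(L-1)$ and $\Jn{b_{L+1}}=b_{L+1}=u-y-L$. Running the same analysis on $\kappa$ (whose number of rows equal to $L$ is $y+1$, still strictly bigger than $|\rho_{2}|$, so the hypotheses of Claim~\ref{c:J} apply to $\kappa$ as well) yields that $\tau'$ also permutes $[1,L]$ and fixes $\BZ_{\ge L+1}$. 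From the displayed relations above, the relative ordering of $\{\Jo{c_{j}}\}_{j\in[1,L]}$ coincides with that of $\{\Jn{b_{j}}\}_{j\in[1,L]}$ (a uniform shift by $1$), and the gap between the $L$-th and $(L+1)$-st largest terms is preserved; hence $\tau'=\tau$ as permutations of $\BZ_{\ge 1}$.

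Finally I read off the two conclusions. The equality $\tau'=\tau$ gives $\sgn(\tau')=\sgn(\tau)$, and since both $\Jn{\sgn}(\rho)$ and $\Jo{\sgn}(\kappa)$ are defined using the \emph{same} $J$ (hence the same $\#J$), we get $\Jn{\sgn}(\rho)=\Jo{\sgn}(\kappa)$. For the partition identity, I compare the conjugates using formula \eqref{eq:rhoJ}: for $1\le j\le L$ the $j$-th part of ${}^{t}\Jo{\kappa}$ equals $\Jo{c_{\tau(j)}}+(j-1)=\Jn{b_{\tau(j)}}+1+(j-1)$, which exceeds the corresponding part of ${}^{t}\Jn{\rho}$ by exactly $1$; for $j\ge L+1$ the parts coincide. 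Therefore ${}^{t}\Jo{\kappa}$ is obtained from ${}^{t}\Jn{\rho}$ by adding $1$ to each of the first $L$ parts, which is precisely the conjugate description of inserting a row of length $L$, giving $\Jo{\kappa}=\iota_{L}(\Jn{\rho})$. The only genuine subtlety is step two (verifying $\tau'=\tau$), and that is handled cleanly by invoking Claim~\ref{c:J} for both $\rho$ and $\kappa$, which is legitimate since the ``many rows of length $L$'' hypothesis passes from $\rho$ to $\kappa=\iota_{L}(\rho)$.
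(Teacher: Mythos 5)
Your proposal is correct and follows essentially the same route as the paper: compute that $\Jo{\bb({}^{t}\kappa)}=\Jn{\bb({}^{t}\rho)}+(1^{L},0,0,\dots)$ using ${}^{t}\kappa={}^{t}\rho+(1^{L},0,\dots)$ and $R_{n+1}=R_{n}+2$, observe that the sorting permutation is therefore unchanged, and read off both conclusions. Your extra appeal to Claim~\ref{c:J} to pin down that the sorting permutation preserves the block $[1,L]$ is harmless but not needed — the paper gets this directly from the general fact that $\tau(j)=j$ for $j>\max J$ together with $J\subset[1,L]$ and the uniform shift on the first $L$ entries.
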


\noindent
{\it Proof of Claim~\ref{c:sp}.}
Since $\kappa=\iota_{L}(\rho)$, we deduce that 
\begin{equation*}
{}^{t}\kappa={}^{t}\rho+
(\underbrace{1,\,1,\,\dots,\,1}_{\text{$L$ times}},\,0,\,0,\,\dots).
\end{equation*}
Consequently, if we define the sequences 
$\bb({}^{t}\kappa)=(c_{j})_{j \in \BZ_{\ge 1}}$ and 
$\bb({}^{t}\rho)=(b_{j})_{j \in \BZ_{\ge 1}}$ 
as in \eqref{eq:bb1}, then
\begin{equation*}
\bb({}^{t}\kappa)=\bb({}^{t}\rho)+
(\underbrace{1,\,1,\,\dots,\,1}_{\text{$L$ times}},\,0,\,0,\,\dots). 
\end{equation*}
Furthermore, if we define the sequences 
$\Jo{\bb({}^{t}\kappa)}=(\Jo{c_{j}})_{j \in \BZ_{\ge 1}}$ and 
$\Jn{\bb({}^{t}\rho)}=(\Jn{b_{j}})_{j \in \BZ_{\ge 1}}$ 
as in \eqref{eq:bb2}, then
\begin{align*}
\Jo{c_{j}} & =c_{j}=b_{j}+1=\Jn{b_{j}}+1
\qquad \text{for $j \in [1,\,L] \setminus J$}; \\[1.5mm]
\Jo{c_{j}} & =
 R_{n+1}-c_{j}=R_{n}+2-(b_{j}+1) \\
& = R_{n}-b_{j}+1=\Jn{b_{j}}+1
\qquad \text{for $j \in J \subset [1,\,L]$}; \\[1.5mm]
\Jo{c_{j}} & =c_{j}=b_{j}=\Jn{b_{j}}
\qquad \text{for $j > L$}.
\end{align*}
These equations imply that 
%
%
\begin{equation} \label{eq:kr+1}
\Jo{\bb({}^{t}\kappa)}=\Jn{\bb({}^{t}\rho)}+
(\underbrace{1,\,1,\,\dots,\,1}_{\text{$L$ times}},\,0,\,0,\,\dots).
\end{equation}
Let $\tau$ be the finite permutation of the set $\BZ_{\ge 1}$ such that 
\begin{equation*}
\Jo{c_{\tau(1)}} > \Jo{c_{\tau(2)}} > \cdots > 
\Jo{c_{\tau(j)}} > \Jo{c_{\tau(j+1)}} > \cdots; 
\end{equation*}
note that $\tau(j)=j$ for all $j \ge L+1$ 
since $J \subset [1,\,L]$. 
It follows from \eqref{eq:kr+1} that 
\begin{equation*}
\Jn{b_{\tau(1)}} > \Jn{b_{\tau(2)}} > \cdots > 
\Jn{b_{\tau(j)}} > \Jn{b_{\tau(j+1)}} > \cdots, 
\end{equation*}
and hence that $\Jo{\sgn}(\kappa)=\Jn{\sgn}(\rho)$. 
Also, it follows from the definitions that 
\begin{equation*}
\Jo{\kappa}=
\Jn{\rho}+
(\underbrace{1,\,1,\,\dots,\,1}_{\text{$L$ times}},\,0,\,0,\,\dots),
\end{equation*}
which means that 
$\iota_{L}(\Jn{\rho})=\Jo{\kappa}$. 
This proves the claim. \bqed

\vsp

Using Claims~\ref{c:iota} and \ref{c:sp}, 
we deduce from \eqref{eq:step2b} that 
%
%
\begin{align}
\XLRm{\kappa_{1}}{\rho_{2}}{\kappa}{n+1} & =
\sum_{
 \begin{subarray}{c}
 J \subset [1,\,L], \\[1mm]
 (\omega_{2},\,\omega_{3}) \in \CQ(\rho_{2}), \\[1mm]
 \omega_{1} \in \CR(\kappa_{1},\,\omega_{2})
 \end{subarray}}
 \LR^{\kappa_{1}}_{\omega_{1},\,\omega_{2}} \,
 \LR^{\rho_{2}}_{\omega_{2},\,\omega_{3}} \,
 \LR^{\Jo{\kappa}}_{\omega_{3},\,\omega_{1}} \, 
 \Jo{\sgn}(\kappa) \nonumber  \\[5mm]
& = 
\sum_{
 \begin{subarray}{c}
 J \subset [1,\,L], \\[1mm]
 (\omega_{2},\,\omega_{3}) \in \CQ(\rho_{2}), \\[1mm]
 \omega_{1} \in \CR(\rho_{1},\,\omega_{2})
 \end{subarray}}
 \LR^{\iota_{L}(\rho_{1})}_{\iota_{L}(\omega_{1}),\,\omega_{2}} \,
 \LR^{\rho_{2}}_{\omega_{2},\,\omega_{3}} \,
 \LR^{\iota_{L}(\Jn{\rho})}_{\omega_{3},\,\iota_{L}(\omega_{1})} \, 
 \Jn{\sgn}(\rho). \label{eq:step3}
\end{align}
First, observe that 
for each $J \subset [1,\,L]$, 
$(\omega_{2},\,\omega_{3}) \in \CQ(\rho_{2})$, and 
$\omega_{1} \in \CR(\rho_{1},\,\omega_{2})$, there holds 
%
%
\begin{equation} \label{eq:LR1}
\LR^{\iota_{L}(\rho_{1})}_{\iota_{L}(\omega_{1}),\,\omega_{2}}=
\LR^{\rho_{1}}_{\omega_{1},\,\omega_{2}}.
\end{equation}
Indeed, since $\omega_{1} \subset \rho_{1}$ by the definition of 
$\CR(\rho_{1},\,\omega_{2})$, it follows from assumption (ii) that 
the Young diagram of $\iota_{L}(\rho_{1})$ (resp., $\iota_{L}(\omega_{1})$)
is obtained by adding one row having exactly $L$ boxes 
just above the top row of the Young diagram of 
$\rho_{1}$ (resp., $\omega_{1}$). 
This implies that the skew Young diagram 
$\iota_{L}(\rho_{1})/\iota_{L}(\omega_{1})$ is identical to 
the skew Young diagram $\rho_{1}/\omega_{1}$. 
Therefore, by the Littlewood-Richardson rule 
(see, for example, \cite[Chapter~5, Section~2, Proposition~3]{Fulton} 
or \cite[\S2]{Ko98}), we obtain \eqref{eq:LR1}. 
Next, we show that for each $J \subset [1,\,L]$, 
$(\omega_{2},\,\omega_{3}) \in \CQ(\rho_{2})$,  and 
$\omega_{1} \in \CR(\rho_{1},\,\omega_{2})$, there holds 
%
%
\begin{equation} \label{eq:LR2}
\LR^{\iota_{L}(\Jn{\rho})}_{\omega_{3},\,\iota_{L}(\omega_{1})}=
\LR^{\Jn{\rho}}_{\omega_{3},\,\omega_{1}}.
\end{equation}
Suppose that $\omega_{1} \not\subset \Jn{\rho}$.
Let $j \in \BZ_{\ge 1}$ be such that 
the $j$-th part of $\omega_{1}$ is 
greater than the $j$-th path of $\Jn{\rho}$. 
Since $\omega_{1} \subset \rho_{1}$ by the definition of 
$\CR(\rho_{1},\,\omega_{2})$, it follows from assumption (ii) that 
\begin{equation*}
L \ge \text{the $j$-th part of $\omega_{1}$} 
  > \text{the $j$-th part of $\Jn{\rho}$}, 
\end{equation*}
and hence that 
\begin{align*}
& \text{the $(j+1)$-st part of $\iota_{L}(\omega_{1})$}=
  \text{the $j$-th part of $\omega_{1}$} \\
& \qquad > 
  \text{the $j$-th part of $\Jn{\rho}$}=
  \text{the $(j+1)$-st part of $\iota_{L}(\Jn{\rho})$}. 
\end{align*}
This implies that 
$\iota_{L}(\omega_{1}) \not\subset \iota_{L}(\Jn{\rho})$.
Thus, in this case, we obtain 
$\LR^{\iota_{L}(\Jn{\rho})}_{\omega_{3},\,\iota_{L}(\omega_{1})}=0=
\LR^{\Jn{\rho}}_{\omega_{3},\,\omega_{1}}$. 
Also, if $|\Jn{\rho}| \ne |\omega_{1}|+|\omega_{3}|$, then 
it is obvious that 
$|\iota_{L}(\Jn{\rho})| \ne |\iota_{L}(\omega_{1})|+|\omega_{3}|$, 
and hence 
$\LR^{\iota_{L}(\Jn{\rho})}_{\omega_{3},\,\iota_{L}(\omega_{1})}=0=
\LR^{\Jn{\rho}}_{\omega_{3},\,\omega_{1}}$. 
We may, therefore, assume that 
\begin{equation*}
\text{
$\omega_{1} \subset \Jn{\rho}$ and 
$|\Jn{\rho}|=|\omega_{1}|+|\omega_{3}|$}.
\end{equation*}
In this case, there exists 
$j_{0} \in \BZ_{\ge 1}$ such that 
the $j_{0}$-th parts of $\Jn{\rho}$ and 
$\omega_{1}$ are both equal to $L$. 
Indeed, it follows from Claim~\ref{c:J}\,(1) that 
the number of parts of $\Jn{\rho}$ that are equal to $L$ is 
greater than or equal to $y$. 
Let $j_{1},\,j_{2} \in \BZ_{\ge 1}$ be such that 
the $j$-th part of $\Jn{\rho}$ is equal to $L$ 
for all $j_{1}+1 \le j \le j_{2}$, and $j_{2}-j_{1} \ge y$. 
Suppose, contrary to our assertion, 
that the $j$-th part of $\omega_{1}$ is less than 
$L$ for all $j_{1}+1 \le j \le j_{2}$; note that all parts of 
$\omega_{1} \in \CR(\rho_{1},\,\omega_{2})$ are at most $L$. 
Then we must have 
\begin{equation*}
|\Jn{\rho}/\omega_{1}| \ge j_{2}-j_{1} \ge y > 
|\rho_{2}|=|\omega_{2}|+|\omega_{3}| \ge |\omega_{3}|, 
\end{equation*}
and hence $|\Jn{\rho}| > |\omega_{1}|+|\omega_{3}|$, 
which contradicts our assumption above. 
Consequently, the Young diagram of 
$\iota_{L}(\Jn{\rho})$ (resp., $\iota_{L}(\omega_{1})$) is 
obtained by inserting one row having exactly $L$ boxes 
between the $j_{0}$-th row and the $(j_{0}+1)$-st row 
of the Young diagram of $\Jn{\rho}$ (resp., $\omega_{1}$). 
This implies that the skew Young diagram 
$\iota_{L}(\Jn{\rho})/\iota_{L}(\omega_{1})$ is identical to 
the skew Young diagram $\Jn{\rho}/\omega_{1}$. 
Therefore, by the Littlewood-Richardson rule, 
we obtain \eqref{eq:LR2}.

Now, substituting \eqref{eq:LR1} and \eqref{eq:LR2}
into \eqref{eq:step3}, we finally obtain
\begin{align*}
\XLRm{\kappa_{1}}{\rho_{2}}{\kappa}{n+1} & =
\sum_{
 \begin{subarray}{c}
 J \subset [1,\,L], \\[1mm]
 (\omega_{2},\,\omega_{3}) \in \CQ(\rho_{2}), \\[1mm]
 \omega_{1} \in \CR(\rho_{1},\,\omega_{2})
 \end{subarray}}
 \LR^{\iota_{L}(\rho_{1})}_{\iota_{L}(\omega_{1}),\,\omega_{2}} \,
 \LR^{\rho_{2}}_{\omega_{2},\,\omega_{3}} \,
 \LR^{\iota_{L}(\Jn{\rho})}_{\omega_{3},\,\iota_{L}(\omega_{1})} \, 
 \Jn{\sgn}(\rho) \\[3mm]
& = 
\sum_{
 \begin{subarray}{c}
 J \subset [1,\,L], \\[1mm]
 (\omega_{2},\,\omega_{3}) \in \CQ(\rho_{2}), \\[1mm]
 \omega_{1} \in \CR(\rho_{1},\,\omega_{2})
 \end{subarray}}
 \LR^{\rho_{1}}_{\omega_{1},\,\omega_{2}} \,
 \LR^{\rho_{2}}_{\omega_{2},\,\omega_{3}} \,
 \LR^{\Jn{\rho}}_{\omega_{3},\,\omega_{1}} \, 
 \Jn{\sgn}(\rho) \\[3mm]
& = \XLRm{\rho_{1}}{\rho_{2}}{\rho}{n}
\qquad \text{by \eqref{eq:step2a}}.
\end{align*}
This completes the proof of Proposition~\ref{prop:stab}.

\paragraph{Acknowledgments.}
We would like to express our sincere thanks to Jae-Hoon Kwon 
for several helpful comments concerning this work, and also 
for his kindness. 


{\small
\setlength{\baselineskip}{13pt}
\renewcommand{\refname}{References}

}

\end{document}